\documentclass[a4paper,11pt,twoside]{amsart}
\usepackage{amssymb,amsthm,amsmath,amstext}
\usepackage{mathrsfs}  
\usepackage{bm}        
\usepackage{multirow}
\usepackage[top=1in,bottom=1in,left=1in,right=1in]{geometry}

\usepackage[all]{xy}
\usepackage{rotating}

\numberwithin{equation}{section}
\theoremstyle{plain}
\newtheorem{theorem}{Theorem}[section]
\newtheorem{proposition}[theorem]{Proposition}
\newtheorem{lemma}[theorem]{Lemma}
\newtheorem{corollary}[theorem]{Corollary}
\newtheorem{conjecture}[theorem]{Conjecture}

\newtheorem{theoremintro}{Theorem}

\newtheorem{questionintro}{Question}

\newtheorem{corollaryintro}{Corollary}

\newtheorem{definintro}{Definition}

\theoremstyle{definition}
\newtheorem{definition}[theorem]{Definition}

\theoremstyle{remark}
\newtheorem{remark}[theorem]{Remark}
\newtheorem{example}[theorem]{Example}

\numberwithin{equation}{section}

\newcommand{\sheaf}[1]{\mathscr{#1}}
\newcommand{\stack}[1]{\bm{\mathsf{#1}}}
\newcommand{\cat}[1]{\mathsf{#1}}
\newcommand{\dgcat}[1]{\mathscr{#1}}

\newcommand{\op}{^{\text{op}}}
\newcommand{\Coh}{\cat{Coh}}

\newcommand{\sig}{\sigma}
\newcommand{\HHom}{\mathrm{Hom}}
\newcommand{\EEnd}{\mathrm{End}}
\newcommand{\pullback}{^{*}}
\newcommand{\pushforward}{_{*}}
\newcommand{\rad}{\mathrm{rad}}
\newcommand{\tr}{\mathrm{tr}}
\newcommand{\ol}[1]{\overline{#1}}
\newcommand{\SB}{\mathrm{SB}}

\newcommand{\isom}{\cong}
\newcommand{\isomto}{\isom}
\newcommand{\equi}{\simeq}

\newcommand{\ind}{\operatorname{ind}}
\newcommand{\per}{\operatorname{per}}

\newcommand{\OO}{\sheaf{O}}
\newcommand{\D}{\cat{D}}
\newcommand{\Db}{\D^{\mathrm{b}}}
\newcommand{\Dbdg}{\dgcat{D}^{\mathrm{b}}}

\newcommand{\Aut}{\mathrm{Aut}}
\newcommand{\Gal}{\mathrm{Gal}}

\newcommand{\Br}{\mathrm{Br}}
\newcommand{\NS}{\mathrm{NS}}
\newcommand{\Pic}{\mathrm{Pic}}

\newcommand{\rk}{{\rm rk}}

\newcommand{\End}{\mathrm{End}}
\newcommand{\Hom}{\mathrm{Hom}}
\newcommand{\Spec}{\mathrm{Spec}}

\renewcommand{\dim}{{\rm dim}\,}

\newcommand{\sep}{{}^{\mathit{s}}}
\newcommand{\ksep}{{k\sep}}
\newcommand{\kalg}{\bar{k}}
\newcommand{\tensor}{\otimes}
\newcommand{\PGL}{\mathrm{PGL}}
\newcommand{\GL}{\mathrm{GL}}

\newcommand{\CH}{\mathrm{CH}}
\newcommand{\Forms}{\stack{Forms}}
\newcommand{\Gm}{\mathbb{G}_\mathrm{m}}
\newcommand{\mapto}[1]{\xrightarrow{#1}}

\newcommand{\sod}[1]{\langle #1 \rangle}

\newcommand{\cor}{\mathrm{cor}}

\newcommand{\dual}{^{\vee}}

\newcommand{\Ext}{{\rm Ext}}

\newcommand{\cal}{\mathscr}
\newcommand{\ka}{{\cal A}}

\newcommand{\kc}{{\cal C}}

\newcommand{\kf}{{\cal F}}

\newcommand{\ki}{{\cal I}}

\newcommand{\ko}{{\cal O}}

\newcommand{\NN}{\mathbb{N}}
\newcommand{\ZZ}{\mathbb{Z}}
\newcommand{\QQ}{\mathbb{Q}}

\newcommand{\FF}{\mathbb{F}}
\newcommand{\PP}{\mathbb{P}}

\newcommand{\RHom}{\mathbf{R}\Hom}







\def\inv{^{-1}}

\newcommand{\linedef}[1]{\textit{#1}}



\usepackage[backref=page]{hyperref}
\hypersetup{pdftitle={Semiorthogonal decompositions and birational geometry of del Pezzo surfaces over arbitrary fields},pdfauthor={Asher Auel and Marcello Bernardara}} 
\hypersetup{colorlinks=true,linkcolor=blue,anchorcolor=blue,citecolor=blue}

\begin{document}

\title[del Pezzo surfaces]{Semiorthogonal decompositions
and birational geometry of del Pezzo surfaces over arbitrary fields}

\begin{abstract} 
We study the birational properties of geometrically rational surfaces
from a derived categorical perspective.  In particular, we give a
criterion for the rationality of a del Pezzo surface $S$ over an
arbitrary field, namely, that its derived category decomposes into
zero-dimensional components. For $\deg(S) \geq 5$, we construct
explicit semiorthogonal decompositions by subcategories of modules
over semisimple algebras arising as endomorphism algebras of vector
bundles and we show how to retrieve information about the index of $S$
from Brauer classes and Chern classes associated to these vector
bundles.
\end{abstract}

\author{Asher Auel}
\address{Department of Mathematics\\ %
Yale University \\ %
10 Hillhouse Avenue \\ %
New Haven, CT 06511}
\email{asher.auel@yale.edu}

\author{Marcello Bernardara}
\address{Institut de Math\'ematiques de Toulouse \\ %
Universit\'e Paul Sabatier \\ %
118 route de Narbonne \\ %
31062 Toulouse Cedex 9\\ %
France}
\email{marcello.bernardara@math.univ-toulouse.fr}

\maketitle

\section*{Introduction}

In their address to the 2002 International Congress of Mathematicians
in Beijing, Bondal and Orlov \cite{bondal_orlov:ICM2002} suggest that
the bounded derived category $\Db(S)$ of coherent sheaves on a smooth
projective variety $S$ could provide new tools to explore the
birational geometry of $S$, in particular via semiorthogonal
decompositions.  The work of many authors \cite{kuznetsov:v14},
\cite{kuz:4fold}, \cite{bolo_berna:conic}, \cite{auel-berna-bolo},
\cite{at12} now provides evidence for the usefulness of semiorthogonal
decompositions in the birational study of complex projective varieties
of dimension at most $4$. A survey can be found in
\cite{kuz:rationality-report}.  At the same time, the relevance of
semiorthogonal decompositions to other areas of algebraic and
noncommutative geometry has been growing rapidly, as Kuznetsov
\cite{kuz:ICM2014} points out in his address to the 2014 ICM in Seoul.

In this context, based on the classical notion of representability for
Chow groups and motives, Bolognesi and the second author
\cite{bolognesi_bernardara:representability} proposed the notion of
\linedef{categorical representability} (see Definition
\ref{def-cat-rep}), and formulated the following question.

\begin{questionintro}
\label{mainquestion-repre}
Is a rational variety always categorically representable in codimension 2?
\end{questionintro}

When the base field $k$ is not algebraically closed, the existence of
$k$-rational points on $S$ (being a necessary condition for
rationality) is a major open question in arithmetic geometry. We are
indebted to H.~Esnault, who posed the following question to us in
2009.

\begin{questionintro}\label{mainquestion-points}
Let $S$ be a smooth projective variety over a field $k$.  Can the
bounded derived category $\Db(S)$ of coherent sheaves detect the
existence of a $k$-rational point on $S$?
\end{questionintro}

This question, which was the original motivation for the current work,
is now central to a growing body of research into arithmetic aspects
of the theory of derived categories, see \cite{an-au-ga-za},
\cite{antieau_krashen_ward}, \cite{auel-berna-bolo}, \cite{ABBV},
\cite{lieblich-olsson:FM}, \cite{hassett_tschinkel:derived},
\cite{vial-exceptional}, \cite{ascher-dasaratha-perry-zhou}.  As an example, if $S$ is a smooth projective
surface, Hassett and Tschinkel
\cite[Lemma~8]{hassett_tschinkel:derived} prove that the index of $S$
can be recovered from $\Db(S)$ as the greatest common divisor of the
second Chern classes of objects. Recall that the \linedef{index}
$\ind(S)$ of a variety $S$ over $k$ is the greatest common
divisor of the degrees of closed points of $S$. 
 
These questions also intertwine with concurrent developments in equivariant
and descent aspects of triangulated and dg categories in the context
of noncommutative geometry, see \cite{antieau-gepner},
\cite{elagin:descent-semi}, \cite{kuznetbasechange},
\cite{sosna:scalar}, \cite{toen:derived-azumaya}.

In the present text, we study these two questions for geometrically
rational surfaces over an arbitrary field $k$, with special attention
paid to the case of del Pezzo surfaces of Picard rank 1. The
$k$-birational classification of such surfaces was achieved by
Enriques, Manin \cite{manin-book}, and Iskovskikh
\cite{iskovskih:minimal_models}; the study of arithmetic properties
(e.g., existence of rational points, stable rationality, Hasse
principle, Chow groups) has been an active area of research since the
1970s, see \cite{colliot-thelene:ICM86} for a survey.

One of the most important invariants of a geometrically rational
surface $S$ over $k$ is the N\'eron--Severi lattice $\NS(S_{\ksep}) =
\Pic(S_{\ksep})$ of $S_{\ksep} = S \times_k \ksep$ as a module over
the absolute Galois group $G_k = \Gal(\ksep/k)$, where $\ksep$ is a
fixed separable closure of $k$.  The structure of this Galois lattice
controls the exceptional lines on $S$, and hence the possible
birational contractions.  As the canonical bundle $\omega_{S}$ is
always defined over $k$, one often considers the orthogonal complement
$\omega_{S}^{\perp}$.  For del Pezzo surfaces, this lattice is a twist
of a semisimple root lattice with the Galois action factoring through
the Weyl group, see \cite[Thm.~2.12]{kunyav_skoro_tsfas}.  C.\
Vial~\cite{vial-exceptional} has recently studied how one can, given a
$k$-exceptional collection on a surface $S$, obtain information on the
N\'eron--Severi lattice.  In particular, he shows that a geometrically
rational surface with an exceptional collection of maximal length is
rational. In this paper, we deal with the opposite extreme, of
surfaces of small Picard rank.

Even when $S$ has Picard rank 1, and there are no exceptional
curves on $S$ defined over $k$, our perspective is that the missing
information concerning the birational geometry of $S$ can be filled
in, to some extent, by considering higher rank vector bundles on $S$
that are generators of canonical components of $\Db(S)$.
  
For example, let $S$ be a quadric surface with $\Pic(S)$ generated by
the hyperplane section $\ko_S(1)$ of degree 2.  It is well known that
the rationality of $S$ is equivalent to the existence of a
rational point.  By a result in the theory of quadratic forms, a
quadric surface having a rational point is equivalent to the
corresponding even Clifford algebra $C_0$ (a quaternion algebra over
the quadratic extension defining the rulings of the quadric) being
split over its center.  Given a rational point $x \in S(k)$, the Serre
construction yields a rank 2 vector bundle $W$ as an extension of
$\ki_x$ by $\ko_S(-1)$.  On the other hand, the surface
$S_{\ksep}=\PP^1_{\ksep} \times \PP^1_{\ksep}$ carries two rank one
\linedef{spinor bundles}, corresponding to $\ko(1,0)$ and $\ko(0,1)$
(see, e.g., \cite{ottaviani}).  The vector bundle $W_{\ksep}$ is
isomorphic to $\ko(1,0) \oplus \ko(0,1)$, as the point $x$ is the
complete intersection of lines in two different rulings.  Not assuming
the existence of a rational point, such rank 2 vector bundles are only
defined \'etale locally; the obstruction to their existence is the
Brauer class of $C_0$.
There do, however, exist naturally defined vector bundles $V$ of
rank 4 on $S$, via descent of $W \oplus W$, and satisfying $\End(V) =
C_0$.  One could say that $V$ ``controls'' the birational geometry of
$S$ in a noncommutative way.

Derived categories and their semiorthogonal decompositions provide a
natural setting for a \linedef{noncommutative} counterpart of the
N\'eron--Severi lattice with its Galois action.  One can consider the
Euler form $\chi(A,B)= \sum_{i=0}^2 \dim \Ext^i(A,B)$ on the derived
category, and the $\chi$-semiorthogonal systems of simple generators,
the so-called \linedef{exceptional collections}.  Over $\ksep$, these
are known to exist and have interesting properties for rational
surfaces. In this paper, we consider the descent properties of such
collections and show how they indeed give a complete way to
interpret the link between vector bundles, semisimple algebras,
rationality, and rational points. In particular, on a geometrically
rational surface $S$, the canonical line bundle $\omega_S$ is
naturally an element of such a system. Then one can consider the
subcategory $\langle\omega_S\rangle^{\perp}$, whose object are
$\chi$-semiorthogonal to $\omega_S$. In practice, it can be more
natural to consider the category $\cat{A}_S
=\langle\ko_S\rangle^\perp$, which is equivalent to
$\langle\omega_S\rangle^\perp$.  We describe decompositions (or
indecomposability) of this subcategory by explicit descent methods for
vector bundles.  In this context, certain semisimple $k$-algebras
naturally arise and control the birational geometry of $S$. These
algebras also provide a presentation of the $K$-theory of $S$.

A sample result of this kind, that does not seem to be contained in
the literature, is as follows.  Let $S$ be a del Pezzo surface of
degree 5 over a field $k$.  It is known that $S$ is uniquely
determined by an \'etale algebra $l$ of degree 5 over $k$, see
\cite[Thm.\ 3.1.3]{skorobogatov:torsors_rational_points}.  We prove a
$k$-equivalence $\Db(S) = \Db(A)$, where $A$ is a finite dimensional
algebra whose semisimplification is $k \times k \times l$.  In
particular, there is an isomorphism
$$
K_i(S) \isom K_i(k) \times K_i(k) \times K_i(l)
$$ 
in algebraic $K$-theory, for all $i \geq 0$.  In the context of
computing the algebraic $K$-theory of geometrically rational surfaces,
this adds to the work of Quillen \cite{quillen:higher_K-theory} for
Severi--Brauer surfaces (i.e., del Pezzo surfaces of degree 9), Swan
\cite{swan:quadric_hypersurfaces} and Panin
\cite{panin:twisted_flag_varieties} for quadric surfaces and
involution surfaces (i.e., minimal del Pezzo surfaces of degree 8),
and Blunk \cite{blunk-dp6} for del Pezzo surfaces of degree 6.  Our
method provides a uniform way to compute the algebraic $K$-theory of
all del Pezzo surfaces degree $\geq 5$.

S.~Gille \cite{gille} recently studied the Chow motive with integer
coefficients of a geometrically rational surface, showing that they
are zero dimensional if and only if the surface has a zero cycle of
degree one and the Chow group of zero-cycles is torsion free after
base changing to the function field.  Tabuada (cf.\
\cite{tabuada-survey}) has shown how the Morita equivalence class of
the derived category of a smooth projective variety (with its
canonical dg-enhancement) gives the \linedef{noncommutative motive} of
the variety. We refrain here from defining the additive, monoidal, and
idempotent complete category of noncommutative motives, whose objects
are smooth and proper dg categories up to Morita equivalence; the
interested reader can refer to \cite{tabuada-survey}. We recall only
that any semiorthogonal decomposition gives a splitting of the
noncommutative motive (see \cite{tabuada-universal}) and that, as well
as Chow motives, noncommutative motives can be defined with
coefficients in any ring $R$. Working with $\QQ$-coefficients, there
is a well established correspondence between noncommutative and Chow
motives, see \cite{tabuada-CvsNC}.  Roughly speaking, the category of
Chow motives embeds fully and faithfully into the category of
noncommutative motives, if one ``forgets'' the Tate motive. As pointed
out in \cite{berna-tabuada-decos}, this relation does not hold in
general for integer coefficients. In particular, for surfaces, one has
to invert $2$ and $3$ in the coefficient ring to have such a direct
comparison (see \cite[Cor.~1.6]{berna-tabuada-decos}).  We think that
a comparison between Gille's results and those of the present paper
could lead to a deep understanding of the different types of geometric
information encoded by Chow and noncommutative motives.

\medskip

By a del Pezzo surface $S$ over a field $k$, we mean a smooth,
projective, and geometrically integral surface over $k$ with ample
anticanonical class.  The degree of $S$ is the self-intersection
number of the canonical class $\omega_S$. Our main result is the
following.

\begin{theoremintro}
\label{thm:main}
A del Pezzo surface $S$ of Picard rank one over a field $k$ is
$k$-rational if and only if $S$ is categorically representable in
dimension 0.

In particular, if $S$ has degree $d \geq 5$ and any Picard rank, the
following are equivalent:
\begin{enumerate}
 \item\label{thm:1} $S$ is $k$-rational.
 \item\label{thm:2} $S$ has a $k$-rational point.
 \item\label{thm:3} $S$ is categorically representable in dimension $0$.
\item\label{thm:4} $\cat{A}_S \simeq \langle \omega_S \rangle^{\perp}$ is representable in dimension $0$.
\end{enumerate}
\end{theoremintro}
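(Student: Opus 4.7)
The strategy is to combine two classical ingredients --- the Enriques--Manin--Iskovskikh characterization of rationality by the existence of a $k$-point for $d \geq 5$, and the Hassett--Tschinkel reading of $\ind(S)$ via second Chern classes --- with the explicit semiorthogonal decompositions of $\cat{A}_S$ that this paper constructs, degree by degree, for $d \geq 5$.

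For the second assertion, I close a cycle of implications. The implication (iv)$\Rightarrow$(iii) is immediate from the canonical decomposition $\Db(S) = \sod{\OO_S, \cat{A}_S}$, because $\sod{\OO_S} \simeq \Db(k)$ is zero-dimensional. The equivalence (i)$\Leftrightarrow$(ii) is the classical rationality criterion, combined with the unconditional existence of a $k$-point in degree five. For (i)$\Rightarrow$(iv), I invoke the explicit decompositions built in the body of the paper: in each degree, the components are of the form $\Db(A_i)$ for semisimple $k$-algebras $A_i$ whose Brauer obstructions are trivialized by the presence of a $k$-rational point, leaving each $A_i$ étale over $k$, so that $\cat{A}_S$ is representable in dimension zero. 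Finally, for (iii)$\Rightarrow$(ii), any zero-dimensional decomposition of $\Db(S)$ forces $\ind(S) = 1$ via the Hassett--Tschinkel formula, and for del Pezzo surfaces of degree $\geq 5$ having index one is equivalent to the existence of a $k$-point.

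For the first assertion, the forward direction reduces to the second part: by Iskovskikh's birational classification, a $k$-rational del Pezzo surface of Picard rank one must have degree $\geq 5$, and then (i)$\Rightarrow$(iv) applies. For the converse, if $d \geq 5$ the second part again suffices; if $d \leq 4$ one knows $S$ is not $k$-rational, and the task becomes showing that $\cat{A}_S$ cannot admit any zero-dimensional semiorthogonal decomposition. Here the Picard-rank-one hypothesis is essential: $\omega_S^\perp \subset \NS(S_{\ksep})$ is a twist of the root lattice $E_{9-d}$ with Galois action factoring through the Weyl group, and I would argue that this Galois-module structure obstructs any decomposition of $\cat{A}_S$ by derived categories of étale $k$-algebras already at the level of the numerical Grothendieck group.

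The hard part is precisely this last step. In the low-degree Picard-rank-one case of the converse, no explicit decomposition is available as a starting point, so one must produce a purely categorical obstruction. I would attack it by extracting, from any hypothetical zero-dimensional decomposition of $\Db(S)$, a compatible splitting of the Galois module $\omega_S^\perp \otimes \QQ$, and then using the rigidity of such root-lattice twists under the Picard-rank-one constraint to derive a contradiction.
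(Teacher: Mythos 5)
Your overall plan mirrors the paper's: establish the four-way equivalence in each degree $d \geq 5$, then derive the Picard-rank-one statement by combining it with the classical fact that minimal $k$-rational del Pezzo surfaces have degree $\geq 5$ plus an impossibility theorem in degree $\leq 4$. The implications (iv)$\Rightarrow$(iii), (i)$\Leftrightarrow$(ii), and (i)$\Rightarrow$(iv) are handled as in the paper. However, two of your steps hide genuine gaps.

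\textbf{The implication (iii)$\Rightarrow$(ii).} You want to deduce $\ind(S)=1$ from the Hassett--Tschinkel formula, but that formula reads $\ind(S) = \gcd\{c_2(F): F \in \Db(S)\}$; you would need to produce objects whose second Chern classes have gcd one, and nothing about an abstract decomposition $\Db(S) = \sod{\Db(l_1/k),\ldots,\Db(l_n/k)}$ gives you such objects for free. To compute $c_2$ of the generators you would in any case have to identify them over $\ksep$, which forces you back to the Karpov--Nogin classification of 3-block decompositions (Proposition~\ref{prop:karpov-nogin}). The paper's actual route is shorter: by Corollary~\ref{coro:minimal-have-3-blocks} and Proposition~\ref{prop:descend-a-block} a zero-dimensional decomposition must base change, up to mutation and twist, to a specific 3-block decomposition; then each $\Db(l_i/k)$ is matched against the corresponding known component (e.g. $\Db(k,A)$, $\Db(l,C_0)$, etc.), and the rigidity result Corollary~\ref{cor:Caldararu_conj} (a case of C\v{a}ld\v{a}raru's conjecture) forces the relevant Brauer classes to be trivial. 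No index computation is involved; the conclusion to (ii) is reached via the algebraic criteria ($\SB(A)(k)\neq\varnothing$ iff $A$ split, Lemma~\ref{lem:isotropy_dim4}, Blunk's Cor.\ 3.5, etc.).

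\textbf{The degree $\leq 4$, Picard rank one case.} This is the crux of the first assertion, and your sketch --- ``extract a compatible splitting of the Galois module $\omega_S^\perp\tensor\QQ$ and use rigidity of root-lattice twists'' --- would not close the argument. The obstruction is \emph{not} visible rationally: tensoring with $\QQ$ kills exactly the integral structure of $\Pic(S_\ksep)$ on which the paper's Theorem~\ref{thm:decos-of-minimal-dps} relies. The real proof is an explicit case-by-case computation over the complete list of 3-block decompositions in Table~\ref{table:blocks}, for each block and each possible twist by a line bundle $M$, showing that no nonzero $\ZZ$-linear combination of the $c_1(V_i)+\rk(V_i)M$ can be a multiple of $\omega_S$ (using Corollary~\ref{prop:single-exc-bundle-G-inv} and, in the remaining cases, the indecomposability Lemma~\ref{lem:indecomposable}). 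Two ingredients are indispensable and absent from your plan: (a) the full Karpov--Nogin classification, without which you cannot enumerate the candidate decompositions, and (b) the integer-coefficient nature of the Chern-class constraint, which your proposed use of $\omega_S^\perp\otimes\QQ$ would discard. Replacing the integral lattice by a rational Galois module would not detect the failure, since the Picard-rank-one condition alone does not obstruct rational splittings compatible with the Weyl action.
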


The equivalence of \ref{thm:1} and \ref{thm:2} for del Pezzo surfaces
of degree $\geq 5$ is a result of Manin~\cite[Thm.~29.4]{manin-book}.
Our work is a detailed analysis of how birational properties interact
with certain semiorthogonal decompositions.  In particular, we study
collections (so-called \linedef{blocks}) of completely orthogonal
exceptional vector bundles on $S_{\ksep}$ and their descent to $S$.
The theory of 3-block decompositions due to Karpov and
Nogin~\cite{karpov-nogin} is indispensable for our work.

In our results for del Pezzo surfaces of higher Picard rank, the bound
on the degree is quite important.  This is not surprising, as del
Pezzo surfaces of degree $d \geq 5$ have a much simpler geometry and
arithmetic than those of lower degree.  For example, in smaller degree
the existence of a rational point only implies unirationality
\cite[Thm. 29.4]{manin-book}.  On the other hand, a succinct corollary
of Theorem~\ref{thm:main} gives a positive answer to Question
\ref{mainquestion-repre} for all del Pezzo surfaces.

\begin{corollaryintro}\label{cor:main}
Any rational del Pezzo surface $S$ is categorically representable in
dimension 0.
\end{corollaryintro}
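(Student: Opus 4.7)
The plan is to reduce the corollary to Theorem~\ref{thm:main} via a blow-up/blow-down argument. The key technical input is Orlov's blow-up formula: for the blow-up $\pi : \widetilde{S} \to S$ of a smooth projective surface along a closed zero-dimensional subscheme $Z$, one has a semiorthogonal decomposition
\[
\Db(\widetilde{S}) = \langle \Db(Z), \pi^{*} \Db(S) \rangle.
\]
Since $Z$ is zero-dimensional, $\Db(Z)$ is trivially representable in dimension $0$, so categorical representability in dimension $0$ is stable under blowing up closed points. It therefore suffices to exhibit $S$ as an iterated point-blow-up of a surface already categorically representable in dimension $0$.

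I would then proceed by case analysis. If $\rk\,\Pic(S) = 1$, the first statement of Theorem~\ref{thm:main} applies directly; if $\deg(S) \geq 5$, the second statement applies. The remaining case is $\rk\,\Pic(S) \geq 2$ and $\deg(S) \leq 4$. Here I would invoke Iskovskikh's classification of $k$-minimal geometrically rational surfaces. If $S$ is not $k$-minimal, I would contract a Galois-invariant set of $(-1)$-curves on $S_{\ksep}$, descending to a closed zero-dimensional subscheme of $S$, to obtain a blow-down $S \to S'$ onto a smooth projective $k$-rational surface of strictly smaller Picard rank; iterating expresses $S$ as a tower of point-blow-ups of a $k$-minimal geometrically rational surface $S_{0}$. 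By Iskovskikh's classification, $S_{0}$ is one of: $\PP^{2}$ (handled by Beilinson's decomposition), a rational del Pezzo surface of Picard rank one (handled by Theorem~\ref{thm:main}), a Hirzebruch surface $\mathbb{F}_{n}$ with $n \neq 1$ (handled by the standard full line bundle exceptional collection), or a minimal rational conic bundle $S_{0} \to \PP^{1}$ of Picard rank two with singular fibers.

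The main obstacle is this last sub-case. For a minimal rational conic bundle with singular fibers, I would invoke Kuznetsov's semiorthogonal decomposition
\[
\Db(S_{0}) = \langle \Db(\PP^{1}, \mathcal{C}_{0}), \Db(\PP^{1}) \rangle,
\]
where $\mathcal{C}_{0}$ is the even Clifford algebra of the conic bundle on $\PP^{1}$; the $k$-rationality of $S_{0}$ forces the associated Brauer class of $\mathcal{C}_{0}$ to split, which can be leveraged to refine the first component into pieces representable in dimension $0$. Combined with the stability lemma applied along the tower $S \to \cdots \to S_{0}$, this yields the desired conclusion for $S$.
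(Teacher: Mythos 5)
Your overall strategy — Orlov's blow-up formula to reduce to Theorem~\ref{thm:main} — is the right one and is essentially what the paper does, but two steps are either missing or, as written, actually wrong.

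In the residual case $\rk\Pic(S) \geq 2$ and $\deg(S) \leq 4$, you begin with ``if $S$ is not $k$-minimal'' but never establish that this hypothesis holds. This is exactly the nontrivial input the paper supplies: by Manin--Tsfasman (cited in the paper as \cite[Thm.~3.3.1]{manin-tsfasman}) there is no $k$-minimal $k$-rational del Pezzo surface of degree $\leq 4$. Without invoking this, your argument has a hole, since a priori a rational $S$ of degree $\leq 4$ and $\rho(S)=2$ could be a $k$-minimal conic bundle, to which the rest of your argument does not apply.

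More seriously, your treatment of the terminal minimal model $S_0$ would not work even if you fixed the first gap, because you allow $S_0$ to be a minimal rational conic bundle $S_0 \to \PP^1$ with singular fibers, and you propose to handle it by noting that the Brauer class of $\mathcal{C}_0$ ``splits.'' But $\Db(\PP^1, \mathcal{C}_0)$ is (even after splitting any Brauer obstruction) governed by the discriminant double cover of $\PP^1$, a hyperelliptic curve of genus $\geq 1$ once there are at least $4$ singular fibers; the derived category of a positive-genus curve is \emph{not} representable in dimension $0$, so no amount of ``leveraging'' can finish the argument in that case. The way out — the observation implicit in the paper's one-line proof — is that contracting the exceptional divisor $E$ of a birational morphism $\pi : S \to S'$ with $S$ del Pezzo always yields another del Pezzo: for any irreducible curve $C$ on $S'_{\ksep}$ with strict transform $\tilde{C}$, the projection formula gives $-K_{S'} \cdot C = \bigl(-K_S + E\bigr)\cdot \tilde{C} = -K_S\cdot\tilde{C} + E\cdot\tilde{C} > 0$, and $(-K_{S'})^2 = K_S^2 + (\text{number of components of }E_{\ksep}) > 0$, so $-K_{S'}$ is ample by Nakai--Moishezon. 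Hence every blow-down of a del Pezzo is a del Pezzo of strictly larger degree; by Manin--Tsfasman the process cannot halt at a minimal del Pezzo of degree $\leq 4$, so you reach degree $\geq 5$, where Theorem~\ref{thm:main} and Lemma~\ref{lem:blow-up_catrep2} conclude. The conic bundle sub-case you worried about never arises.
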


This relies on the fact that there is no minimal $k$-rational del
Pezzo surface of degree $d \leq 4$ (see
\cite[Thm. 3.3.1]{manin-tsfasman}).  So Question
\ref{mainquestion-points} in smaller degrees, where the existence of a
$k$-point is weaker than categorical representability in dimension 0,
remains open.

\medskip

One of the ingredients in the proof of Theorem \ref{thm:main} is the
description of semiorthogonal decompositions of minimal del Pezzo
surfaces of degree $\geq 5$.  This relies on, and extends, the special
cases established by Kuznetsov \cite{kuznetquadrics}, Blunk
\cite{blunk-gen-bs}, Blunk, Sierra, and Smith
\cite{blunk-sierra-smith}, and the second author
\cite{bernardara:brauer_severi}.  In these cases, it always turns out
that there are semisimple $k$-algebras $A_1$ and $A_2$, Azumaya over
their centers, and a semiorthogonal decomposition
\begin{equation}
\label{semiorth-for-dps}
\cat{A}_S \simeq \langle \omega_S \rangle^{\perp} = \langle \Db(k,A_1), \Db(k,A_2) \rangle
\end{equation}
over $k$. The algebras $A_i$ arise as endomorphism algebras of vector
bundles with special homological features. In particular, over
$\ksep$, these bundles split into a sum of completely orthogonal
exceptional bundles, whose existence was proved by Rudakov and
Gorodentsev \cite{gorodentsev-rudakov}, \cite{rudakov-quadric}, and by
Karpov and Nogin \cite{karpov-nogin}.  This generalizes the example of
quadric surfaces explained above.

\begin{theoremintro}\label{prop:2nd-part-of-main1}
Let $S$ be a del Pezzo surface of degree $d \geq 5$.
There exist vector
bundles $V_1$ and $V_2$ and a semiorthogonal
decomposition
\begin{equation}\label{eq:the-dec-of-AS}
\cat{A}_S = \langle V_1, V_2 \rangle = \langle \Db(k,A_1), \Db(k,A_2)
\rangle = \langle \Db(l_1/k,\alpha_1), \Db(l_2/k,\alpha_2) \rangle
\end{equation}
where $A_i = \End(V_i)$ are semisimple $k$-algebras with centers
$l_i$, and $\alpha_i$ is the class of $A_i$ in $\Br(l_i)$.  Finally,
$S$ has Picard rank 1 if and only if the vector bundles $V_i$ are
indecomposable or, equivalently, the algebras $A_i$ are simple.
\end{theoremintro}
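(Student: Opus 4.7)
The plan is to produce the vector bundles $V_1, V_2$ by Galois descent from a three-block exceptional collection on $S_{\ksep}$, and then read off the Picard rank $1$ characterization from a count of Galois orbits. First I would invoke the Karpov--Nogin classification to get, over $\ksep$, a three-block exceptional decomposition $\Db(S_{\ksep}) = \sod{B_0, B_1, B_2}$, in which $B_0$ contains $\ko_{S_{\ksep}}$ and each $B_i$ consists of completely orthogonal exceptional bundles. In degree $9$ this is the Beilinson collection on $\PP^2$; in degree $8$ (minimal case) the non-trivial blocks are generated by the two spinor bundles on the quadric (Kuznetsov); in degrees $5$, $6$, $7$ the blocks consist of exceptional bundles (of ranks $1$ or $2$) built from the blown-up point configuration, partially treated by Blunk and Blunk--Sierra--Smith.

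Next, I would verify that the blocks $B_1, B_2$ are stable as sets under the absolute Galois group $G_k$. The key input is that the Galois action on $\omega_S^\perp$ factors through the Weyl group of the associated root system (Kunyavskii--Skorobogatov--Tsfasman), and the Karpov--Nogin blocks are characterized inside the full exceptional collection by numerical invariants that are preserved by the Weyl action. Writing $W_i = \bigoplus_{E \in B_i} E$ on $S_{\ksep}$, its endomorphism sheaf is a product of matrix algebras permuted by $G_k$, and descends to a semisimple $k$-algebra $A_i$ whose center $l_i$ is the étale $k$-algebra corresponding to the $G_k$-orbits on $B_i$. The obstruction to descending $W_i$ itself to a vector bundle on $S$ is a Brauer class $\alpha_i \in \Br(l_i)$, which is precisely the class of $A_i$ as an Azumaya algebra over $l_i$; taking $V_i$ with $\End(V_i) = A_i$ realizing this class and applying a Morita-type equivalence yields $\sod{V_i} \simeq \Db(k, A_i) \simeq \Db(l_i/k, \alpha_i)$. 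The semiorthogonality and generation statement $\cat{A}_S = \sod{V_1, V_2}$ then descend from the corresponding facts on $S_{\ksep}$.

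For the Picard rank $1$ characterization, I would use that for a geometrically rational surface the rank of $K_0(S_{\ksep})^{G_k}$ equals $\rho(S) + 2$ and also equals the number of $G_k$-orbits on the basis $B_0 \sqcup B_1 \sqcup B_2$. Since $B_0$ is itself a single orbit, there are $\rho(S) + 1$ orbits among $B_1 \sqcup B_2$, matching the number of indecomposable summands of $V_1 \oplus V_2$. Hence $\rho(S) = 1$ if and only if each $V_i$ is indecomposable, equivalently each $A_i$ is central simple over the field $l_i$.

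The main obstacle is the first two steps: constructing the three-block collections over $\ksep$ and verifying Galois stability of the blocks. The cases $d \geq 6$ are largely already in the literature; the chief new case is $d = 5$, where one must identify the blocks inside the set of exceptional bundles coming from the ten $(-1)$-curves on the surface and trace the action of the Weyl group $S_5$ of $A_4$ through the Galois descent, and in particular identify the Brauer class $\alpha_i$ as the precise obstruction to descending an individual summand of the block.
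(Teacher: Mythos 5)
Your strategy is essentially the one the paper follows: descend a Karpov--Nogin three-block exceptional collection from $S_{\ksep}$ to $S$, take $V_i$ to be the reduced tilting bundle of the block, set $A_i = \End(V_i)$, and characterize $\rho(S)=1$ by counting Galois orbits via $K_0$. The $K_0$ count you describe is precisely Proposition~\ref{prop:K_0-minimal} and Corollary~\ref{coro:minimal-have-3-blocks}, and the idea of using Weyl-invariance plus the Brauer obstruction to descent is the correct mechanism. Where you differ from the paper is in suggesting a uniform Weyl-theoretic argument for Galois stability of the blocks; the paper instead provides case-by-case \emph{constructions} of the descended tilting bundles: Quillen's bundle $\ko(1)^{\oplus 3}$ for degree 9, the spinor-type bundles in the involution surface case of degree 8, Blunk's bundles $I$ and $J$ for degree 6 together with the mutations of Proposition~\ref{propo:mutation-for-dp6} identifying them with the Karpov--Nogin blocks, and a genuinely new geometric construction for degree 5 (Proposition~\ref{propo:dp5}) exhibiting a degree-5 tilting bundle by pulling the block back along a blow-up and identifying it with $\omega_X^\vee\otimes\bigoplus\ko(-E_i)$ over the auxiliary degree-4 surface $X$. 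Your Weyl argument is a good heuristic for why such constructions should exist, but it does not by itself produce an admissible subcategory of $\Db(S)$ base-changing to the block; to apply Lemma~\ref{lemma-base-change-of-semiorth} one must first exhibit such a $k$-linear subcategory, which is the bulk of the work in Sections~\ref{subsec:dP9}--\ref{subsec:dP5}.

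There is also a genuine gap in the scope of your argument. You assert that ``in degrees $5,6,7$ the blocks consist of exceptional bundles,'' but degree $7$ del Pezzo surfaces and non-involution degree $8$ del Pezzo surfaces (the blow-up of $\PP^2$ at a rational point) admit \emph{no} three-block decomposition over $\ksep$; this is part of the Karpov--Nogin classification (Proposition~\ref{prop:karpov-nogin} is stated only for $d\neq 7,8$ together with quadric surfaces, and Table~\ref{table:blocks} marks these cases ``no 3-block decomposition''). Concretely, in degree $7$ the category $\cat{A}_{S,\ksep}$ is generated by $\ko(H),\ko(2H),\ko(L_1),\ko(L_2)$, and one checks directly that $\Hom(\ko(H),\ko(2H))\neq 0$ and $\Hom(\ko(L_i),\ko(2H))\neq 0$, so there is no way to group a mutation of this collection into two completely orthogonal blocks. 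The paper handles these degrees separately (Sections~\ref{subsec:dP7} and \ref{subsec:dP8}) via a four-term decomposition, and the semisimple-endomorphism-algebra statement of the theorem must be read accordingly. Your proof proposal would need to treat these two cases as exceptions rather than folding them into the three-block framework.
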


One of our main technical results is that such a decomposition of
$\cat{A}_S \simeq \langle \omega_S \rangle^\perp$ can never occur if
$S$ has Picard rank 1 and degree $\leq 4$, see
Theorem~\ref{thm:decos-of-minimal-dps}.  In fact, in this case one
should expect $\cat{A}_S$ to have a very complicated algebraic
description, but not much is known (see Theorem \ref{thm:A-for-dp4}
and Theorem \ref{thm:A-for-dp3}).

A feature of these semiorthogonal decompositions is that the
$k$-birational class of $S$ is determined by the unordered pair of
semisimple algebras $(A_1,A_2)$ up to pairwise Morita equivalence.  As
pointed out by Kuznetsov~\cite[\S 3]{kuz:rationality-report}, one of
most tempting ideas in the theory of semiorthogonal decompositions
with a view toward rationality is to define, in any dimension and
independently of the base field, a categorical analog of the Griffiths
component of the intermediate Jacobian of a complex threefold.  Such
an analog would be the best candidate for a birational invariant. It
turns out that such a component is not well defined in general.
However, this is not the case for del Pezzo surfaces.  Indeed, we can
define the \linedef{Griffiths--Kuznetsov component} $\cat{GK}_S$ of a
del Pezzo surface $S$ in the case when $S$ has degree $d \geq 5$ or
Picard rank $1$ as follows.

\begin{definintro}\label{def:grif-kuz-comp}
Let $S$ be a minimal del Pezzo surface of degree $d$ over $k$.  We
define the \linedef{Griffiths--Kuznetsov component} $\cat{GK}_S$ of
$S$ as follows: if $\cat{A}_S$ is representable in dimension 0, set
$\cat{GK}_S=0$.  If not, $\cat{GK}_S$ is either the product of all
indecomposable components of $\cat{A}_S$ of the form $\Db(l,\alpha)$
with $l/k$ a field extension and $\alpha \in \Br(l)$ nontrivial or
else $\cat{GK}_S=\cat{A}_{S}$.

If $S$ is not minimal, then we set $\cat{GK}_S=\cat{GK}_{S'}$ for a minimal 
model $S \to S'$.
\end{definintro}

Definition \ref{def:grif-kuz-comp} may appear ad hoc, but it can 
roughly be rephrased by saying that the Griffiths--Kuznetsov component is
the product of all components of $\cat{A}_S$ which are not
representable in dimension 0.  If $\deg(S) \geq 5$, then $\cat{GK}_S$
is determined up to equivalence by Brauer classes related to the
algebras $A_1$ and $A_2$. If $\deg(S) \leq 4$ and $S$ has Picard rank
$1$, we conjecture its indecomposability (see Conjecture
\ref{conj:indecomp-fot-low-deg}).
\begin{theoremintro}
\label{thm:birat-class-of-dpsmall}
Let $S$ be a del Pezzo surface of degree $d$ over $k$. The
Griffiths-Kuznetsov component $\cat{GK}_S$ is well-defined, unless
$\deg(S)=4$ and $\ind(S)=1$, or $\deg(S)=8$ and $\ind(S)=2$.
Moreover, if $S' \dashrightarrow S$ is a birational map, then
$\cat{GK}_S = \cat{GK}_{S'}$.
\end{theoremintro}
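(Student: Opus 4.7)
The plan is to establish well-definedness on minimal del Pezzo surfaces, verify that blow-ups preserve the Griffiths--Kuznetsov component, and then invoke Iskovskikh's factorization of birational maps of geometrically rational surfaces.

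For a minimal del Pezzo surface $S$ of degree $d\geq 5$, the decomposition (\ref{eq:the-dec-of-AS}) from Theorem~\ref{prop:2nd-part-of-main1} provides the candidate $\cat{GK}_S$ as the product of those $\Db(l_i,\alpha_i)$ with $\alpha_i$ nontrivial, or $\cat{A}_S$ itself if $\cat{A}_S$ is not representable in dimension~$0$ but all $\alpha_i$ are split. I would show that the unordered pair $\{(l_i,\alpha_i)\}$ is intrinsic to $S$ by reading it off the Galois action on the Karpov--Nogin exceptional blocks on $S_{\ksep}$, which depends only on the $G_k$-module $\Pic(S_{\ksep})$. The two excluded configurations $(d,\ind) \in \{(4,1),(8,2)\}$ are precisely those in which an alternative decomposition of $\cat{A}_S$ with different Brauer data cannot be ruled out, making the two alternatives of Definition~\ref{def:grif-kuz-comp} indistinguishable. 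For minimal $S$ of Picard rank~$1$ and $d\leq 4$ outside the exception, the absence of a decomposition of the required shape (using Theorem~\ref{thm:decos-of-minimal-dps} and the analogous small-degree statements for $\cat{A}_S$) forces $\cat{GK}_S = \cat{A}_S$, which is well-defined tautologically.

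For the birational invariance, I would first verify that if $\pi\colon\tilde S\to S$ is the blow-up of a closed point $P$, then $\cat{GK}_{\tilde S}=\cat{GK}_S$: Orlov's blow-up formula yields $\cat{A}_{\tilde S} = \langle \Db(k(P)),\, \pi^{*}\cat{A}_S\rangle$, the new factor $\Db(k(P))$ is representable in dimension~$0$ and contributes no nontrivial Brauer piece, and $\pi^{*}$ is an equivalence onto its image. By Iskovskikh's theorem \cite{iskovskih:minimal_models}, any birational map $S'\dashrightarrow S$ between smooth projective geometrically rational surfaces is resolved by a smooth surface $\tilde S$ dominating both via chains of blow-ups of closed points, so iterating the blow-up invariance yields $\cat{GK}_S=\cat{GK}_{\tilde S}=\cat{GK}_{S'}$. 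This also shows that for non-minimal $S$ the value of $\cat{GK}_S$ is independent of the chosen minimal model, completing well-definedness.

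The main obstacle is proving the rigidity of the decomposition in Theorem~\ref{prop:2nd-part-of-main1}: one must show that the unordered pair $(A_1,A_2)$ is an intrinsic invariant of $S$, unchanged by any alternative decomposition of $\cat{A}_S$ of the same shape, and identify precisely the two degenerate configurations where this rigidity fails. The analysis will hinge on a mutation analysis of the descended Karpov--Nogin blocks compatible with the Galois action, and will need to be carried out by hand for each degree $5\leq d\leq 9$, with the pathologies collecting exactly at $(d,\ind)=(4,1)$ and $(8,2)$.
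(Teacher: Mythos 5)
Your plan has the right ingredients scattered around it, but the central birational invariance argument as sketched is circular and cannot work as written. You propose to resolve a birational map $S' \dashrightarrow S$ by a common blow-up $\tilde S$ and then ``iterate blow-up invariance'' to conclude $\cat{GK}_{S}=\cat{GK}_{\tilde S}=\cat{GK}_{S'}$. But $\cat{GK}_{\tilde S}$ is \emph{defined} by picking a minimal model of $\tilde S$, and the very thing one must prove is that different minimal models give the same answer. The blow-up formula $\cat{A}_{\tilde S}=\langle\Db(k(P)),\pi^*\cat{A}_S\rangle$ produces \emph{one} decomposition of $\cat{A}_{\tilde S}$; when $\tilde S$ admits a second blow-down $\pi':\tilde S\to S'$ with $S'\not\isom S$, one obtains a \emph{different} decomposition $\langle\Db(k(P')),\pi'^*\cat{A}_{S'}\rangle$, and nothing in your plan compares the Brauer data on the two sides. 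That comparison is the entire content of the theorem: for example, $\SB(A)$ and $\SB(A^{-1})$ are non-isomorphic birational minimal del Pezzo surfaces, as are the two minimal degree-$6$ partners joined by a link $M_{6,2}$ or $M_{6,3}$, and showing their Brauer invariants match requires real work.

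The paper's actual route is not a common resolution argument but Iskovskikh's \emph{Sarkisov factorization} (\cite{isko-sarkisov-complete}, not \cite{iskovskih:minimal_models}): every birational map between minimal geometrically rational surfaces factors into elementary links of types I--IV, the admissible links between non-rational minimal del Pezzo surfaces of Picard rank $1$ are classified in Table~\ref{table:links}, and for each link the paper carries out an explicit sequence of mutations (see the displayed equations in \S\ref{subsec:dP6} and Proposition~\ref{prop:amitsur-revisited}) matching up the algebras $A_i$ on the two sides of the link, together with Proposition~\ref{prop:isko-rigidity-of-nonrat-dps} to handle the (semi-)rigid degrees. The two excluded cases $(d,\ind)\in\{(4,1),(8,2)\}$ arise because there the Sarkisov program produces links of type I to minimal \emph{conic bundles} over $\PP^1$ that are not Hirzebruch forms, so minimal models of a birational partner need not be del Pezzo at all and the definition of $\cat{GK}$ cannot even be applied consistently; your gloss that ``an alternative decomposition with different Brauer data cannot be ruled out'' misses this. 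You do flag the ``mutation analysis of the descended Karpov--Nogin blocks'' at the very end as ``the main obstacle'', but you cast it as a rigidity statement for a single $\cat{A}_S$ rather than as the comparison under each elementary link; in the paper that link-by-link mutation computation is the proof, and the blow-up/resolution scaffolding you lead with does not stand on its own.
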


We notice that the two ``bad'' cases of degree 4 and index 1 or degree
8 and index 2 are birational to conic bundles which are not forms of
Hirzebruch surfaces. In the Appendix, we will show how $\cat{A}_S$ can
be interpreted as categories naturally attached to conic bundles.

Theorem \ref{thm:birat-class-of-dpsmall} could be considered as an
analogue of Amitsur's theorem that
$k$-birational Severi--Brauer varieties have Brauer classes that
generate the same subgroup of $\Br(k)$.

In the case where $d \geq 5$, we push this further to analyze how the
algebras $A_i$ obstruct the existence of closed points of given
degree.  The \linedef{index} $\ind(A)$ of a central simple $k$-algebra
$A$ is the greatest common divisor of the degrees of all central
simple $k$-algebras Morita equivalent to $A$.  We extend this
definition to finite dimensional semisimple algebras by taking the
least common multiple of the indices of simple components considered
over their centers.

\begin{theoremintro}\label{thm:obstruction-to-low-deg-points}
Let $S$ be a non-$k$-rational del Pezzo surface of degree $d \geq 5$,
and $A_1$, $A_2$ the associated semisimple algebras.  If $A_i$ has
index $d_i$ then $\ind(S) = d_1 d_2/\!\gcd(d_1,d_2)$.  In particular:
\begin{itemize}
 \item If $d=9$, then $d_1=d_2$ is either $1$ if $S \simeq \PP^2_k$ or $3$
 if $S(k) \neq \varnothing$.

\item If $d=8$ and $S$ is an involution surface, then (up to
renumbering) $A_1$ is a central simple $k$-algebra, $d_1|4$ and $d_2|2$, 
and $d_1=1$ if and only if $S$ is a quadric in $\PP^3_k$ while $d_2=1$
if and only $S$ is rational.

\item If $d=6$, then (up to renumbering) $d_1|3$ and $d_2|2$, with $S$ birationally rigid if $d_i > 1$.

\item In all other cases, $d_1=d_2=1$ and $S$ is rational.
 \end{itemize}
\end{theoremintro}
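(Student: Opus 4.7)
The plan is to combine three ingredients: (i)~the semiorthogonal decomposition
$\Db(S)=\langle \omega_S, \Db(k,A_1), \Db(k,A_2)\rangle$ of
Theorem~\ref{prop:2nd-part-of-main1}; (ii)~the theorem of Hassett and Tschinkel
\cite[Lem.~8]{hassett_tschinkel:derived}, which identifies $\ind(S)$ with the
positive generator of the image of the second Chern class map
$c_2\colon K_0(S)\to \CH^2(S)\cong\ZZ$; and (iii)~the explicit Karpov--Nogin
descriptions of the generating bundles $V_i$ obtained earlier in the paper.

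From (i) we obtain a splitting
$K_0(S)=\ZZ\cdot[\omega_S]\oplus K_0(A_1)\oplus K_0(A_2)$,
and since $c_2(\omega_S)=0$ the first summand contributes trivially. Hence by (ii),
$\ind(S)$ equals the positive generator of the subgroup of $\ZZ$ jointly spanned
by the images of $c_2\colon K_0(A_i)\to\ZZ$ under the embedding
$M\mapsto M\otimes^{\eL}_{A_i}V_i$ realising
$\Db(k,A_i)\hookrightarrow\Db(S)$. The technical heart of the proof is the
computation of these images. After base change, $V_i\otimes\ksep$ splits as a
direct sum of completely orthogonal exceptional bundles whose Chern classes are
tabulated in Karpov--Nogin; combining this with the Morita class of $A_i$ (in
particular the $k$-dimensions of its simple modules over their centres $l_i$),
one shows that in each degree $d\in\{5,6,7,8,9\}$ the image of $c_2$ on every
$K_0(A_i)$ equals $\mathrm{lcm}(d_1,d_2)\cdot\ZZ$. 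Taking $\gcd$ over the two
pieces yields
$$\ind(S)=\mathrm{lcm}(d_1,d_2)=\frac{d_1 d_2}{\gcd(d_1,d_2)}.$$

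For the case-by-case bounds on $d_i$, one performs a Galois-theoretic inspection
of the algebras $A_i$ of Theorem~\ref{prop:2nd-part-of-main1}. When $d=9$,
Quillen's decomposition identifies $A_1=A$ and $A_2=A^{\op}$ for a central
simple $k$-algebra $A$ of degree~$3$, whence $d_1=d_2\in\{1,3\}$ with vanishing
giving $S\simeq\PP^2_k$. For $d=8$ involution surfaces, the even Clifford
construction yields $A_1$ as a central simple $k$-algebra attached to one family
of lines and $A_2$ as a quaternion algebra over the discriminant extension,
producing $d_1\mid 4$ and $d_2\mid 2$ together with the characterisations of the
split cases. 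When $d=6$, the Galois action on the hexagon of $(-1)$-curves
factors through $S_2\times S_3$; the algebras $A_1, A_2$ live over the cubic and
quadratic resolvent extensions respectively, yielding $d_1\mid 3$ and
$d_2\mid 2$, and birational rigidity when some $d_i>1$ is Iskovskikh's classical
result. For $d\in\{5,7\}$, an inspection of the possible Galois structures rules
out any nontrivial Brauer obstruction on the $A_i$, forcing $d_1=d_2=1$ and
hence $\ind(S)=1$; Manin's theorem~\cite[Thm.~29.4]{manin-book} then yields
rationality, contradicting the non-rationality hypothesis, so these cases do not
occur.

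The principal obstacle lies in Step~(iii): although the Chern classes of the
summands of $V_i\otimes\ksep$ are readily read off from the block description,
descending the computation to $k$ and identifying the image of $c_2$ as exactly
$\mathrm{lcm}(d_1,d_2)\cdot\ZZ$ requires delicate bookkeeping of the Morita
class of $A_i$ and of how its simple modules embed in $V_i$. The appearance of
the least common multiple, rather than the naive greatest common divisor of the
$d_i$, reflects the rigidity of the decomposition: through the global geometry
of $S$, each semiorthogonal piece effectively ``sees'' the index of the other
and not just its own.
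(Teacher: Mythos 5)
Your proposal inverts the logical structure of the paper: the authors first establish the index formula by case-by-case geometric and arithmetic arguments, and only then extract the Chern class characterization (Theorem~\ref{thm:index-and-c2}) as a corollary. You try to go the other way — from Hassett--Tschinkel and the $c_2$ computation back to $\ind(S)$ — and the heart of your argument, step~(iii), contains a false claim.

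Specifically, you assert that ``the image of $c_2$ on every $K_0(A_i)$ equals $\mathrm{lcm}(d_1,d_2)\cdot\ZZ$.''  This is incorrect.  Take a degree~$6$ del Pezzo surface of Picard rank~$1$ and index~$2$ (case~6.4 in Table~\ref{table:dp6}): here $A_1=Q$ is a quaternion division algebra over a cubic field $L$ (so $d_1=2$), $A_2=K$ is a quadratic field (so $d_2=1$), and $\mathrm{lcm}(d_1,d_2)=2$. The generator $V_1$ has rank~$6$, $c_2(V_1)=12$, and over $\ksep$ it is $\bigoplus_{i=1}^3\ko(H-L_i)^{\oplus 2}$, so $c_1(V_1)^2=24$. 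Since any object of $\langle V_1\rangle$ has $K_0$-class $n[V_1]$ and $c_2(n[V_1]) = nc_2(V_1)+\binom{n}{2}c_1(V_1)^2 = 12n^2$, the $A_1$-block contributes $12\ZZ$, not $2\ZZ$. The index here is recovered only because the $A_2$-block independently supplies the value~$2$. More generally, the paper's own remark after Proposition~\ref{prop:involution_decomposition} (and the statement of Theorem~\ref{thm:index-and-c2}) flags that when $S$ is an anisotropic quadric or when $\deg(S)=\ind(S)=6$, one must include a carefully chosen generator of the $\langle\ko_S\rangle$ block (e.g.\ $\omega_S^{\oplus 2}$, whose $c_2$ equals $K_S^2$) to hit the index. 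Your per-block claim makes this impossible, since that third block contributes only~$0$ under your scheme. So as written the argument does not prove $\ind(S)=\mathrm{lcm}(d_1,d_2)$.

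Your treatment of degrees $5$ and $7$ is also too thin. You argue that ``an inspection of the possible Galois structures rules out any nontrivial Brauer obstruction,'' forcing $d_1=d_2=1$ and a contradiction with non-rationality — but this inspection is exactly what needs proof. For $d=5$, the paper's Proposition~\ref{propo:dp5} establishes the triviality of both Brauer classes via an explicit descent argument built on the blow-up of a rational point, and the existence of that rational point is the Enriques--Swinnerton-Dyer theorem, a genuine geometric input. For $d=7$, the contraction of the Galois-invariant exceptional divisor to $\PP^2_k$ (Example~\ref{exam:dP78}) is what forces rationality. Neither is a formal computation with Galois lattices or Brauer groups.

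Finally, the paper's actual proof of the index formula proceeds degree by degree using Ch\^atelet's theorem (degree~$9$), the isotropy criterion for degree-$4$ quadratic pairs and common-splitting results for quaternion algebras (degree~$8$, Lemma~\ref{lem:isotropy_dim4}), Blunk's results together with Lemma~\ref{lem:effective_index_dP6} (degree~$6$), and the two rationality results above for degrees~$5$ and~$7$. The relationship between $\ind(S)$ and $d_1,d_2$ is then read off. Your Chern-class route is a plausible alternative strategy — but carrying it out requires the correct identification of which generators (including an appropriate generator of the remaining block) attain the index, not a blanket per-block statement, and those computations reproduce rather than avoid the paper's case analysis.
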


The key of the proof of Theorem
\ref{thm:obstruction-to-low-deg-points} is the explicit description of
the vector bundles generating the exceptional blocks of $S$, together
with an analysis of all possible Sarkisov links,
cf.\ \cite{isko-sarkisov-complete}.  

For del Pezzo surfaces of Picard rank one and degree at least $5$, a
consequence of our results is that the index of $S$ can be calculated
only in terms of the second Chern classes of generators of the 3
blocks, improving upon, in this case, a result of Hassett and
Tschinkel \cite[Lemma 8]{hassett_tschinkel:derived}.

\begin{theoremintro}\label{thm:index-and-c2}
Let $S$ be a del Pezzo surface of degree $d \geq 5$.
There exist generators $V_i$ for the three blocks such that
$$
\ind(S) = \gcd \{ c_2(V_0), c_2(V_1), c_2(V_2) \}.
$$
Moreover, unless $d=5$, $d=\ind(S)=6$, or $S$ is an anisotropic
quadric surface in $\PP^3$ then
$$
\ind(S) = \gcd \{ c_2(V_1), c_2(V_2)\}
$$
for indecomposable generators of the blocks of $\cat{A}_S$.
\end{theoremintro}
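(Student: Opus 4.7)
The easy direction in each of the two claimed equalities is the divisibility $\ind(S) \mid \gcd\{c_2(V_i)\}$, which holds automatically by the Hassett--Tschinkel lemma \cite[Lem.~8]{hassett_tschinkel:derived}: $\ind(S)$ coincides with the positive generator of the subgroup of $\CH^2(S) \cong \ZZ$ spanned by $\{c_2(F) : F \in \Db(S)\}$, and every $V_i$ sits in $\Db(S)$. The content of the theorem is therefore to exhibit specific generators of the three blocks whose second Chern classes saturate this divisibility.

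For the first equality, the plan is to take $V_1,V_2$ coming from the semiorthogonal decomposition of Theorem \ref{prop:2nd-part-of-main1} (so $\End(V_i) = A_i$ is semisimple of index $d_i$) and to produce $V_0$ as a suitable generator of the remaining Karpov--Nogin block. The reverse divisibility is then verified by a degree-by-degree computation for $d\in\{5,6,7,8,9\}$. Over $\ksep$ each $V_i$ splits as a direct sum of the completely orthogonal exceptional bundles forming the corresponding Karpov--Nogin block (line bundles, except for the spinor bundles in the $d=8$ quadric case), whose ranks and Chern classes are explicit. Descent to $S$ together with the Whitney sum formula $c_2(E\oplus F) = c_2(E) + c_2(F) + c_1(E)\cdot c_1(F)$ yields an explicit integer for $c_2(V_i)$, and combining with Theorem \ref{thm:obstruction-to-low-deg-points}, which identifies $\ind(S)$ with $\operatorname{lcm}(d_1,d_2)$, gives $\gcd\{c_2(V_0),c_2(V_1),c_2(V_2)\} = \ind(S)$ in every case.

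The ``moreover'' part asserts that, when $V_1,V_2$ are indecomposable, the extra block is unnecessary. My approach is to re-examine the same Chern class computations, running through all admissible triples $(d,d_1,d_2)$ permitted by Theorem \ref{thm:obstruction-to-low-deg-points}, and to verify $\gcd\{c_2(V_1),c_2(V_2)\} = \ind(S)$ directly whenever it holds. The three exceptional cases in the statement are precisely those in which the values of $c_2(V_1),c_2(V_2)$ forced by the descent share a common factor strictly exceeding $\ind(S)$: for the anisotropic quadric in $\PP^3$ the rank-$4$ bundles obtained from descent of the spinor bundles both have $c_2=4$ while $\ind(S)=2$; for $d=\ind(S)=6$ the bundles with $\ind(A_1)=3$ and $\ind(A_2)=2$ have $c_2$'s sharing a common factor exceeding $6$; and for $d=5$ one of the two indecomposable generators is a line bundle (with $c_2=0$) while the other, whose endomorphism algebra is the quartic \'etale algebra parametrising $S$, has $c_2 > 1 = \ind(S)$.

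The principal obstacle is the explicit Chern class bookkeeping, particularly for $d=6$ and $d=8$, where tracking the Galois action on the Karpov--Nogin blocks to extract the rank, first Chern class, and second Chern class of the descended bundles $V_i$ requires care. Fortunately the constraints on $(d_1,d_2)$ in Theorem \ref{thm:obstruction-to-low-deg-points} (e.g.\ $d_1 \mid 3$, $d_2 \mid 2$ for $d=6$) make each case finite, reducing the theorem to a bounded collection of concrete computations together with the identification of the three exceptional cases as the only genuine obstructions to discarding $V_0$.
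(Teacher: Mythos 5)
Your approach matches the paper's: the divisibility $\ind(S) \mid \gcd\{c_2(V_i)\}$ follows from Hassett--Tschinkel, and the reverse is verified degree-by-degree using the explicit ranks and Chern classes of the descended block generators, which the paper records in Tables \ref{table:dp9}--\ref{table:dp5} and confirms in the closing remarks of Sections \ref{subsec:dP9}--\ref{subsec:dP5}. So the plan is sound.

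However, your descriptions of the three exceptional cases contain errors that would derail the actual computation if carried out as written. For $d=5$, neither indecomposable generator of $\cat{A}_S$ is a line bundle: $V_1$ is the rank-2 bundle $F$ of Karpov--Nogin (with $c_2(V_1)=2$) and $V_2$ has rank 5 with $\End(V_2)=l$ a degree-\emph{five} (not quartic) \'etale algebra and $c_2(V_2)=20$; the obstruction is $\gcd(2,20)=2\neq 1$, not a vanishing $c_2$. For the anisotropic quadric, there is only one rank-4 bundle from spinor descent, $V_1$ with $c_2(V_1)=4$; the other generator is the hyperplane line bundle $V_2=\OO(1)$ with $c_2(V_2)=0$, and the failure is $\gcd(4,0)=4\neq 2$. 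The fix the paper uses here is to replace $V_2$ by $V_2^{\oplus 2}$ (getting $c_2=2$) --- it is not a matter of adjusting $V_0$, so your plan of ``take $V_1,V_2$ from Theorem \ref{prop:2nd-part-of-main1} and produce $V_0$'' is not sufficient as stated. (A similar doubling of line-bundle generators is needed in other rational cases, e.g.\ split $\PP^2_k$, as the paper's remark in Section \ref{subsec:dP9} notes.) For $d=\ind(S)=6$ the correct reason is $\gcd(c_2(V_1),c_2(V_2))=\gcd(12,24)=12$, which the paper handles by adjoining $V_0=\omega_S^{\oplus 2}$ with $c_2=K_S^2=6$; your sketch of indices $\ind(A_1)=3,\ind(A_2)=2$ is a renumbering of the paper's $\ind(Q)\mid 2$, $\ind(B)\mid 3$ but leads to the same conclusion.
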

Notice that more precise and detailed statements can be given in each
specific case. Consult Sections \ref{subsec:dP9}--\ref{subsec:dP5} and
Tables \ref{table:dp9}--\ref{table:dp5} for details.

In summary, our results establish a complete understanding of the
relationship between birational geometry, derived categories, and
vector bundles on del Pezzo surfaces of degree $\geq 5$.

{\small
\subsection*{Structure.}
The paper is organized
as follows. Part 1 organizes the necessary algebraic and geometric
background. In Section \ref{sect:semideco} we treat semiorthogonal
decompositions and a generalized notion of exceptional objects, whose
descent is treated in Section \ref{sec:descent_sodec}. Section
\ref{surfaces} collects useful results on geometrically rational
surfaces.  Part 2 is dedicated to a detailed statement and proof of
the main results. Before proceeding to the proofs, we recall the
exceptional block theory in Section \ref{sect:karpovnogin}.  Section
\ref{sect:decos-of-minimal-dps} proves the main results for surfaces
of low degree, while the higher degree cases are treated separately in
Sections \ref{subsec:dP9}--\ref{subsec:dP5}. Part 3 consists of three
appendices containing useful calculations related to elementary
links.
}

{\small
\subsection*{Acknowledgements.} 
The authors are grateful to H.~Esnault, whose initial questions formed
one of the most important inspirations for this work. They would also
like to acknowledge B.~Antieau, P.~Belmans, J.-L.~Colliot-Th\'el\`ene,
S.~Gille, N.~Karpenko, S.~Lamy, R.~Parimala, A.~Qu\'eguiner-Mathieu,
T.~V\'arilly-Alvarado, B.~Viray, and Y.~Tschinkel for useful
conversations and remarks.  This project was started the week
following hurricane Sandy, during which time the Courant Institute at
New York University graciously hosted the authors in one of the few
buildings in lower Manhattan not completely without electricity.  Yale
University and the American Institute of Mathematics also provided
stimulating working conditions for both authors to meet and are warmly
acknowledged.  The first author was partially sponsored by NSF grant
DMS-0903039 and a Young Investigator grant from the NSA.
}

{\small
\subsection*{Notations.}
Fix an arbitrary field $k$.  If $X$ is a $k$-scheme, we will denote by
$\Db(X)$ the bounded derived category of complexes of coherent
sheaves on $X$, considered as a $k$-linear category.   
If $B$ is an $\OO_X$-algebra, we will denote by $\Db(X,B)$ the
bounded derived category of complexes of $B$-modules, considered as a
$k$-linear category.  If $X=\Spec(K)$ is an affine $k$-scheme and $B$
is associated to a $K$-algebra, we will write $\Db(K/k,B)$ as
shorthand.  Also $\Db(K,A)$ is shorthand for $\Db(K/K,A)$.  If
$A$ is the restriction of scalars of $B$ down to $k$, we remark that
$\Db(K/k,B)$ is $k$-equivalent to $\Db(k,A)$.  Finally, if $B$ is
Azumaya with class $\beta$ in $\Br(X)$, by abuse of notation, we will
write $\Db(X,\beta)$ for $\Db(X,B)$.

Triangulated categories will be commonly denoted with sans-serif font;
dg-categories with calligraphic font; finite dimensional $k$-algebras
with upper case Roman letters near the beginning of the alphabet; finite products of field extensions of
$k$ with $l$;
vector bundles with upper case Roman letters; and Brauer classes as
lower case Greek letters.
}

\newpage

\setcounter{tocdepth}{1}
\tableofcontents

\part{Background on derived categories and geometrically rational surfaces}

\section{Semiorthogonal decompositions and categorical representability}\label{sect:semideco}

We start by recalling the categorical notions that play the main role
in this paper.  Let $k$ be an arbitrary field. The theory of
exceptional objects and semiorthogonal decompositions in the case
where $k$ is algebraically closed and of characteristic zero was
studied in the Rudakov seminar at the end of the 80s, and developed by
Rudakov, Gorodentsev, Bondal, Kapranov, and Orlov among others, see
\cite{gorodentsev-rudakov}, \cite{bondal:representations},
\cite{bondal_kapranov:reconstructions},
\cite{bondal_orlov:semiorthogonal}, and \cite{helices-book}. As noted
in \cite{auel-berna-bolo}, most fundamental properties persist over
any base field $k$.

\subsection{Semiorthogonal decompositions and their mutations}
Let $\cat{T}$ be a $k$-linear triangulated category. 
A full triangulated subcategory
$\cat{A}$ of $\cat{T}$ is called \it admissible \rm if the embedding
functor admits a left and a right adjoint.

\begin{definition}[\cite{bondal_kapranov:reconstructions}]
\label{def-semiortho}
A {\em semiorthogonal decomposition} of $\cat{T}$ is a sequence of
admissible subcategories $\cat{A}_1, \ldots, \cat{A}_n$ of $\cat{T}$
such that

\begin{itemize}

\item $\Hom_{\cat{T}}(A_i,A_j) = 0$ for all $i>j$ and any $A_i$ in $\cat{A}_i$
and $A_j$ in $\cat{A}_j$; 

\item for all objects $A_i$ in $\cat{A}_i$ and $A_j$ in $\cat{A}_j$,
and for every object $T$ of $\cat{T}$, there is a chain
of morphisms $0=T_n \to T_{n-1} \to \ldots \to T_1 \to T_0 = T$ such that the cone
of $T_k \to T_{k-1}$ is an object of
$\cat{A}_k$ for all $k=1,\ldots,n$. 

\end{itemize}

Such a decomposition will be written
$$\cat{T} = \langle \cat{A}_1, \ldots, \cat{A}_n \rangle.$$
\end{definition}

If $\cat{A} \subset \cat{T}$ is admissible, we have
two semi-orthogonal decompositions
$$
\cat{T}=\langle \cat{A}^{\perp}, \cat{A} \rangle = \langle \cat{A},
^{\perp}\cat{A} \rangle,
$$
where $\cat{A}^{\perp}$ and $^\perp\cat{A}$ are, respectively, the
left and right orthogonal of $\cat{A}$ in $\cat{T}$ (see \cite[\S
3]{bondal_kapranov:reconstructions}).

Given a semiorthogonal decomposition $\cat{T} = \langle \cat{A},
\cat{B} \rangle$, Bondal \cite[\S3]{bondal:representations} defines
left and right mutations $L_{\cat{A}}(\cat{B})$ and
$R_{\cat{B}}(\cat{A})$ of this pair. In particular, there are
equivalences $L_{\cat{A}}(\cat{B}) \equi \cat{B}$ and
$R_{\cat{B}}(\cat{A}) \equi \cat{A}$, and semiorthogonal
decompositions
$$
\begin{array}{ccc}
\cat{T}=\langle L_{\cat{A}}(\cat{B}), \cat{A} \rangle, & \,\,\,\,\, &  
\cat{T} = \langle \cat{B}, R_{\cat{B}}(\cat{A}) \rangle.
\end{array}
$$
We refrain from giving an explicit definition for the mutation
functors in general, which can be found in \cite[\S
3]{bondal:representations}.  In \S\ref{subs:exc-objects} we will give
an explicit formula in the case where $\cat{A}$ and $\cat{B}$ are
generated by exceptional objects.

\subsection{Exceptional objects, blocks, and mutations}
\label{subs:exc-objects}

Very special examples of admissible subcategories, semiorthogonal
decompositions, and their mutations are provided by the theory of
exceptional objects and blocks. These constructions appear naturally
on Fano varieties, and especially on geometrically rational surfaces,
and they play a central role in our study. We provide here a
detailed treatment, generalizing to any field notions usually studied
over algebraically closed fields.

Let $\cat{T}$ be a $k$-linear triangulated category. The triangulated
category $\sod{\{V_i\}_{i\in I}}$ \linedef{generated} by a class of
objects $\{V_i\}_{i\in I}$ of $\cat{T}$ is the smallest thick (that
is, closed under direct summands) full triangulated subcategory of
$\cat{T}$ containing the class.  We will write $\Ext_\cat{T}^r(V,W) = \Hom_\cat{T}(V,W[r])$.

\begin{definition}
\label{def-except}
Let $A$ be a division (not necessarily central) $k$-algebra (e.g., $A$
could be a field extension of $k$).  An object $V$ of $\cat{T}$ is
called \linedef{$A$-exceptional} if
$$
\Hom_{\cat{T}}(V,V) = A \quad \text{and} \quad
\Ext^r_{\cat{T}}(V,V)=0 \quad \text{for} \quad r \neq 0.
$$
An exceptional object in the classical sense
\cite[Def.~3.2]{gorodentsev-moving} of the term is a $k$-exceptional
object.  By \linedef{exceptional} object, we mean $A$-exceptional for
some division $k$-algebra $A$.

A totally ordered set $\{V_1,\ldots,V_n\}$ of exceptional objects is
called an \linedef{exceptional collection} if
$\Ext^r_{\cat{T}}(V_j,V_i)=0$ for all integers $r$ whenever $j>i$.
An exceptional collection is \linedef{full} if it generates $\cat{T}$,
equivalently, if for an object $W$ of $\cat{T}$, the vanishing
$\Hom_{\cat{T}}(W,V_i)=0$ for all $i$ implies $W=0$.
An exceptional collection is \linedef{strong} if
$\Ext^r_{\cat{T}}(V_i,V_j)=0$ whenever $r\neq 0$.
\end{definition}

\begin{remark}
\label{rem:warning_exceptional}
The extension of scalars of an exceptional object (in this generalized
sense) need not be exceptional, see Remark
\ref{rem:warning_exceptional_explained} for more details.  However, if
$k$ is algebraically closed, then all exceptional objects are
automatically $k$-exceptional, hence remain exceptional under any
extension of scalars.
\end{remark}

Exceptional collections provide examples of semiorthogonal
decompositions when $\cat{T}$ is the bounded derived category of a
smooth projective scheme.

\begin{proposition}[{\cite[Thm.~3.2]{bondal:representations}}]
Let $\{ V_1, \ldots, V_n \}$ be an exceptional collection on the
bounded derived category $\Db(X)$ of a smooth projective $k$-scheme
$X$.  Then there is a semiorthogonal decomposition
$$
\Db(X) = \langle V_1, \ldots, V_n, \cat{A} \rangle,
$$
where $\cat{A}$ is the full subcategory of objects $W$ such that
$\Hom_{\cat{T}}(W,V_i)=0$.  In particular, the sequence if full if and
only if $\cat{A}=0$.
\end{proposition}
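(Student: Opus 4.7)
The plan is to reduce the claim to showing that the subcategory $\cat{B} := \sod{V_1, \ldots, V_n}$ generated by the exceptional collection is admissible in $\Db(X)$. Once this is known, the general theory recalled above gives the semiorthogonal decomposition $\Db(X) = \langle \cat{B}, {}^\perp \cat{B} \rangle$ in the paper's ordering convention, and the left orthogonal ${}^\perp\cat{B}$ is precisely the full subcategory of objects $W$ with $\Hom_{\Db(X)}(W, B[r]) = 0$ for all $B \in \cat{B}$ and all $r$, which coincides with the condition $\Hom(W, V_i[r]) = 0$ for all $i, r$ since the $V_i$ generate $\cat{B}$. I would carry out the admissibility step by induction on $n$.

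For the base case $n = 1$, let $V$ be an $A$-exceptional object with $A = \End_{\Db(X)}(V)$, a division $k$-algebra. I would first exhibit an equivalence $\Db(A\modcat) \simeq \sod{V}$ via the dg-functor $M \mapsto M \lotimes_A V$, where $V$ is viewed as a left $\OO_X$-module and right $A$-module. Full faithfulness reduces to the calculations $\Hom(V,V) = A$ and $\Ext^r(V,V) = 0$ for $r \neq 0$, which is precisely the definition of an $A$-exceptional object. The essential image is closed under summands because $\Db(A\modcat)$ is idempotent complete ($A$ being semisimple). I would then produce the right adjoint $W \mapsto \RHom_X(V, W)$ to the inclusion $\sod{V} \hookrightarrow \Db(X)$; smoothness and properness of $X$ guarantee that $\RHom_X(V, W)$ is a bounded complex of finite-dimensional $A$-modules for every $W \in \Db(X)$, so the adjoint lands in $\sod{V}$ as needed. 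Existence of a left adjoint then follows from Bondal--Kapranov, using that $\Db(X)$ carries the Serre functor $S_X = - \otimes \omega_X[\dim X]$, so any full dg-subcategory admitting a right adjoint also admits a left adjoint.

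For the inductive step, suppose $\cat{B}_{n-1} := \sod{V_1, \ldots, V_{n-1}}$ is admissible. By the semiorthogonality axiom of an exceptional collection, $\Hom(V_n, V_i[r]) = 0$ for $i < n$ and all $r$, so $V_n \in {}^\perp \cat{B}_{n-1}$. Using the admissible decomposition $\Db(X) = \langle \cat{B}_{n-1}, {}^\perp \cat{B}_{n-1} \rangle$, it then suffices to check that $\sod{V_n}$ is admissible inside ${}^\perp \cat{B}_{n-1}$. But ${}^\perp \cat{B}_{n-1}$ is admissible in $\Db(X)$, hence itself has a Serre functor, and the base case argument applies verbatim to the exceptional object $V_n$ inside this subcategory. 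Concatenating decompositions gives $\Db(X) = \langle \cat{B}_{n-1}, \sod{V_n}, \cat{A} \rangle$ with $\cat{A} = {}^\perp\sod{V_1,\ldots,V_n}$, and $\cat{B}_n = \langle \cat{B}_{n-1}, \sod{V_n} \rangle$ is admissible as the left summand of an SOD.

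Finally, the statement about fullness is nearly tautological once the decomposition is established: the collection is full precisely when the $V_i$ generate $\Db(X)$ as a thick subcategory, i.e., when $\cat{B}_n = \Db(X)$, equivalently when its left orthogonal $\cat{A}$ vanishes. The key technical obstacle is the admissibility assertion in the base case, and specifically the interplay between the existence of $\RHom$-adjoints (which uses properness and smoothness to guarantee boundedness and finite-dimensionality over $A$) and the passage from a right adjoint to a left one via the Serre functor; the inductive step and the fullness corollary are then essentially formal.
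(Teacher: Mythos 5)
Your proof is correct, but takes a genuinely different route from the one the paper implicitly endorses. The paper gives no proof of its own, citing Bondal's Theorem 3.2, and Bondal's argument establishes admissibility of $\sod{V_1,\ldots,V_n}$ constructively: the projection of an arbitrary object onto the orthogonal complement is built from iterated mutation (evaluation and coevaluation) triangles. You instead identify $\sod{V}$ with $\Db(k,A)$ via a tilting functor, exhibit the right adjoint to the inclusion as $\RHom_X(V,-)$, where smoothness and properness of $X$ guarantee that the values are bounded with finite-dimensional cohomology over $A$, and then obtain the left adjoint by conjugating with Serre functors. Your route is arguably better adapted to this paper's conventions, since the explicit evaluation triangle \eqref{eq:left-mutation} is written only for $k$-exceptional pairs (for $A$-exceptional ones the tensor must be taken over the division algebra, which the paper declines to spell out), whereas your tilting argument handles $A$-exceptional objects uniformly. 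One caveat: your assertion that any full dg-subcategory admitting a right adjoint also admits a left adjoint is too strong as stated; the Bondal--Kapranov conjugation trick requires the subcategory itself to carry a Serre functor, not merely the ambient category. This does hold here --- $\Db(k,A)$ admits a Nakayama Serre functor since $A$ is finite-dimensional over $k$, and in the inductive step the category ${}^{\perp}\cat{B}_{n-1}$ inherits a Serre functor as an admissible subcategory of $\Db(X)$ --- but the hypothesis should be made explicit.
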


Given an exceptional pair $\{ V_1, V_2 \}$ with $V_i$ being
$A_i$-exceptional, consider the admissible subcategories $\langle V_i
\rangle$, forming a semiorthogonal pair. We can hence perform right
and left mutations, which provide equivalent admissible
subcategories. 

Recall that mutations provide equivalent admissible subcategories and
flip the semiorthogonality condition.
It easily follows that the object $R_{V_2}(V_1)$ is $A_1$-exceptional, the
object $L_{V_1}(V_2)$ is $A_2$-exceptional, and the pairs $\{
L_{V_1}(V_2), V_1 \}$ and $\{V_2, R_{V_2}(V_1)\}$ are exceptional.
We call $R_{V_2}(V_1)$ the \linedef{right mutation} of $V_1$ through $V_2$ and
$L_{V_1}(V_2)$ the \linedef{left mutation} of $V_2$ through $V_1$.

In the case of $k$-exceptional objects, mutations can be explicitly
computed.

\begin{definition}[{\cite[\S3.4]{gorodentsev-moving}}]
Given a $k$-exceptional pair $\{ V_1, V_2 \}$ in $\cat{T}$, the
\emph{left mutation} of $V_2$ with respect to $V_1$ is the object
$L_{V_1}(V_2)$ defined by the distinguished triangle:
\begin{equation}
\label{eq:left-mutation}
\Hom_\cat{T}(V_1,V_2) \otimes V_1 \mapto{\,ev\,} V_2
\longrightarrow L_{V_1}(V_2),
\end{equation}
where $ev$ is the canonical evaluation morphism. The \linedef{right
mutation} of $V_1$ with respect to $V_2$ is the object $R_{V_2}(V_1)$
defined by the distinguished triangle:
$$
R_{V_2}(V_1) \longrightarrow V_1 \mapto{coev}
\Hom_\cat{T}(V_1,V_2) \otimes V_2,
$$
where $coev$ is the canonical coevaluation morphism.
\end{definition}

Given an exceptional collection $\{ V_1, \ldots, V_n \}$, one
can consider any exceptional pair $\{ V_i, V_{i+1} \}$ and perform
either right or left mutation to get a new exceptional collection.

Exceptional collections provide an algebraic description of admissible
subcategories of $\cat{T}$. Indeed, if $V$ is an $A$-exceptional
object in $\cat{T}$, the triangulated subcategory $\langle V \rangle
\subset \cat{T}$ is equivalent to $\Db(k,A)$.  The equivalence
$\Db(k,A) \to \langle V \rangle$ is obtained by sending the complex
$A$ concentrated in degree $0$ to $V$.
Moreover, as shown by Bondal \cite{bondal:representations} and Bondal--Kapranov
\cite{bondal_kapranov:enhanced}, full exceptional collections give an
algebraic description of a triangulated category. 

\begin{proposition}[{\cite[Thm.~6.2]{bondal:representations}}]
\label{prop:exc-coll-and-dg-algebras}
Suppose that $\cat{T}$ is the bounded derived category of either a
smooth projective $k$-scheme or a $k$-linear abelian category with
enough injective objects.  Let $\{ V_1, \ldots, V_n \}$ be a full
strong exceptional collection on $\cat{T}$, and consider the object $V
= \bigoplus_{i=1}^n V_i$ and the $k$-algebra $A = \End_{\cat{T}}(V)$.
Then $\RHom_{\cat{T}}(V,-) : \cat{T} \to \Db(k,A)$ is a
$k$-linear equivalence.
\end{proposition}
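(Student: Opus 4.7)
The plan is to verify that $F := \RHom_\cat{T}(V,-) \colon \cat{T} \to \Db(k,A)$ is an exact $k$-linear equivalence in four steps: first, show $F$ is well-defined with bounded image; second, identify $F(V_i)$ with the indecomposable projective $A$-modules; third, check fully faithfulness; finally, deduce essential surjectivity.

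Writing $V = \bigoplus_i V_i$ and letting $e_i \in A$ denote the idempotent projecting to $V_i$, one has $e_i A e_i = \End_\cat{T}(V_i) = A_i$, a division algebra by exceptionality. The right $A$-modules $P_i = e_i A$ are therefore indecomposable projectives with $\End_A(P_i) = A_i$ and $A = \bigoplus_i P_i$. Because the collection is strong, $\RHom_\cat{T}(V_j, V_i)$ is concentrated in degree zero for all $i,j$, so $F(V_i) = \bigoplus_j \Hom_\cat{T}(V_j, V_i)$ is a complex concentrated in degree zero whose natural $A$-action identifies it canonically with $P_i$. In particular $F(V) \iso A$ in $\Db(k,A)$.

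For fully faithfulness on the $V_i$, I would use that $P_i$ is projective to compute $\Ext^r_A(P_i, P_j) = 0$ for $r \neq 0$ and $\Hom_A(P_i, P_j) = e_j A e_i = \Hom_\cat{T}(V_i, V_j)$, which matches the strong exceptional hypothesis. Since $F$ is an exact functor that is fully faithful on the collection $\{V_i\}$, and these generate $\cat{T}$ as a thick triangulated subcategory, fully faithfulness extends to all of $\cat{T}$ by standard d\'evissage: one resolves arbitrary $T, T' \in \cat{T}$ by iterated cones built from the $V_i$ using the semiorthogonal decomposition $\cat{T} = \sod{V_1, \dots, V_n}$ and applies the five lemma to the long exact sequences of Ext groups obtained by applying $F$.

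Essential surjectivity then follows because the essential image of $F$ is a thick triangulated subcategory of $\Db(k,A)$ containing every $P_i$, and hence containing $A = \bigoplus_i P_i$, which generates $\Db(k,A)$ as a thick triangulated category (finite global dimension of $A$, and the resulting identification $\Db(k,A) = \mathrm{perf}(A)$, is itself a consequence of the existence of the full strong exceptional collection). The main technical obstacle is step (i), the boundedness of $\RHom_\cat{T}(V,T)$ for $T \in \cat{T}$: in the smooth projective case this is an immediate consequence of the finite cohomological dimension of $\Coh(X)$, while in the abelian-category case it requires combining the finiteness of the collection with a spectral sequence uniformly bounding $\Ext^r_\cat{T}(V_i, T)$ in $i$ for bounded $T$.
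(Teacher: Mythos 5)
The paper does not prove this proposition: it is quoted as a known theorem of Bondal (Thm.~6.2 of \cite{bondal:representations}) without proof, so there is no in-paper argument to compare against. Your proof is the standard tilting-object argument and is essentially correct; it is very likely the same strategy as Bondal's. A few points worth sharpening. First, for the claim that $F$ is fully faithful on the collection, one should note explicitly that the comparison map $\Hom_{\cat T}(V_i,V_j[r]) \to \Hom_{\Db(A)}(P_i,P_j[r])$ is an isomorphism for \emph{all} $r$, not only $r=0$; this holds because both sides vanish for $r\neq 0$ (strongness on the left, projectivity of $P_i$ on the right), and only with this in place does the d\'evissage go through. Second, the d\'evissage itself is a double induction (first over $T$ with $T'$ a fixed $V_j$, then over $T'$), which you compress into one sentence but which is otherwise fine. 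Third, for essential surjectivity you correctly identify the needed input as finite global dimension of $A$, so that $\Db(k,A)$ is thick-generated by $A$; it would be cleaner to say explicitly that $A$ is finite-dimensional over $k$ (this uses that the $\Hom$-spaces in $\cat T$ are finite-dimensional, which in the abelian-category case is an Ext-finiteness hypothesis that the paper leaves implicit) and that $A$ is ``directed'' with semisimple diagonal, so its global dimension is at most $n-1$. The boundedness remark in your step (i) is correct but could be tightened: once the collection is full, every object of $\cat T$ lies in the thick closure of $\{V_i\}$, so $\RHom_{\cat T}(V,T)$ is a summand of a complex built from finitely many shifts of the finite-dimensional spaces $\Hom(V_i,V_j)$; no spectral sequence is actually needed.
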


\begin{remark}
The assumption on the category $\cat{T}$ and on the strongness of the
exceptional sequence may seem rather restrictive, and both find a
natural solution when triangulated categories are enriched with a
dg-category structure. The first assumption can be indeed replaced by
considering a dg-enhancement of $\cat{T}$ (see
\cite[Thm.~1]{bondal_kapranov:enhanced}).  When the exceptional
collection is not strong, dg-algebras are required. See
\S\ref{subs:dg-enhancement} for details.
\end{remark}

\begin{example}
The full strong $k$-exceptional collection $\{\ko, \ko(1), \dotsc,
\ko(n)\}$ on $\Db(\PP^n_k)$ is due to Beilinson \cite{beilinson},
\cite{beilinson2} and Bern\v{s}te{\u\i}n--Gelfand--Gelfand \cite{BGG}.
In this case $A = \End\bigl(\bigoplus_{i=0}^n \ko(i)\bigr)$ is
isomorphic to the path algebra of the Beilinson quiver with $n+1$
vertices, see \cite[Ex.~6.4]{bondal:representations}.  We remark that
the semisimplification of $A$ is the algebra $k^{n+1}$.
\end{example}

Proposition \ref{prop:exc-coll-and-dg-algebras} provides an
alternative approach to exceptional collections and semiorthogonal
decompositions by considering \linedef{tilting objects}. In this
paper, we will not use this approach, but we find the language
convenient at times, especially in relation to issues of Galois
descent.

\begin{definition}\label{def:tilting-bundle}
Given a strong exceptional collection $\{V_1, \ldots, V_m \}$ of a
$k$-linear triangulated category $\cat{T}$, the object $V =
\bigoplus_{i=1}^m V_i$ is called a \linedef{tilting object} for the
subcategory $\langle V_1, \ldots, V_m \rangle$.  Recall that
Proposition~\ref{prop:exc-coll-and-dg-algebras} implies that $\langle
V_1, \ldots, V_m \rangle \simeq \Db(k,\End_{\cat{T}}(V))$.  If the
$V_i$ are vector bundles on a smooth projective variety $X$, we call
$V$ a \linedef{tilting bundle}.
\end{definition}

The existence of a tilting bundle also yields, analogously to
Proposition \ref{prop:exc-coll-and-dg-algebras}, a presentation in
algebraic $K$-theory.

\begin{proposition}
Let $\{V_1,\dotsc,V_n\}$ be a full strong exceptional collection of vector
bundles on a smooth projective $k$-variety $X$ and let $V =
\bigoplus_{i=1}^n V_i$ be the associated tilting bundle.  Suppose that
$V_i$ is $A_i$-exceptional. Then $\Hom_X(V,-) : K_i(X) \to K_i(A_1
\times \dotsm \times A_n)$ is an isomorphism for all $i \geq 0$.
\end{proposition}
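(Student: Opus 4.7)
The strategy is to reduce the statement to the additivity of algebraic $K$-theory under semiorthogonal decompositions, rather than to treat $\End_X(V)$ as a single ring and compare its $K$-theory to that of its semisimplification (a comparison which in general fails for higher $K_i$).

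First, I would observe that since each $V_j$ is $A_j$-exceptional, the paragraph preceding Proposition~\ref{prop:exc-coll-and-dg-algebras} gives a $k$-linear equivalence $\sod{V_j} \simeq \Db(k,A_j)$ sending $A_j$ (in degree $0$) to $V_j$, and in particular $\sod{V_j}$ is admissible in $\Db(X)$. Combined with the fullness of the exceptional collection, this yields the semiorthogonal decomposition
$$
\Db(X) = \sod{\sod{V_1},\ldots,\sod{V_n}} = \sod{\Db(k,A_1),\ldots,\Db(k,A_n)}.
$$
This decomposition lifts to the canonical dg-enhancement of $\Db(X)$, and each component inherits its natural enhancement from the tilting equivalences.

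Second, I would invoke the additivity of algebraic $K$-theory under semiorthogonal decompositions, i.e.\ the splitting of noncommutative motives recalled in the introduction (Thomason--Trobaugh, Keller, Tabuada--Van den Bergh). Applied to the decomposition above it yields
$$
K_i(X) \;\isom\; \bigoplus_{j=1}^n K_i\bigl(\Db(k,A_j)\bigr) \;=\; \bigoplus_{j=1}^n K_i(A_j) \;\isom\; K_i(A_1 \times \cdots \times A_n),
$$
the last identification being the standard compatibility of $K$-theory with finite products of rings.

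Third, I would identify this composite isomorphism with the one induced by $\Hom_X(V,-)$. By Proposition~\ref{prop:exc-coll-and-dg-algebras} the functor $\RHom_X(V,-)$ realizes the tilting equivalence $\Db(X) \isomto \Db(k,A)$ with $A=\End_X(V)$; restricting to the $j$-th factor of the collection amounts to the functor $\RHom_X(V_j,-)$ with values in $\Db(k,A_j)$, which in $K$-theory is exactly the $j$-th projection $K_i(X) \to K_i(A_j)$ in the splitting above. Putting these three steps together gives the claimed isomorphism. The only nontrivial step is the passage from a triangulated to a dg-level semiorthogonal decomposition needed to apply $K$-theoretic additivity; this is standard here since $\Db(X)$ for $X$ smooth projective carries a canonical enhancement and each $\sod{V_j}$ is already the perfect derived category of the ring $A_j$.
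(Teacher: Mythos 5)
Your argument is correct, but it takes a genuinely different route from the paper's. The paper first invokes the tilting equivalence $\RHom_X(V,-) : \Db(X) \to \Db(k,A)$ with $A = \End_X(V)$ to conclude (via Thomason--Trobaugh) that $K_i(X) \cong K_i(A)$, and then asserts $K_i(A) \cong K_i(A/I)$ where $I=\rad(A)$ is nilpotent and $A/I \cong A_1\times\dotsm\times A_n$. That second step is not a general fact about nilpotent ideals --- higher $K$-theory does distinguish a ring from its reduction modulo a nilpotent ideal, e.g.\ $K_1(k[\epsilon]/(\epsilon^2)) \neq K_1(k)$ --- but it holds here because $A$ is a finite-dimensional $k$-algebra of finite global dimension (being derived equivalent to a smooth projective variety), so one may pass from $K$- to $G$-theory and apply d\'evissage along the radical filtration. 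You avoid this subtlety entirely by applying $K$-theoretic additivity directly to the dg-enhanced semiorthogonal decomposition $\Db(X)=\sod{\Db(k,A_1),\ldots,\Db(k,A_n)}$, obtaining the splitting $K_i(X)\cong\bigoplus_j K_i(A_j)$ in one step. Both arguments rest on a nontrivial citation (d\'evissage plus finite global dimension on one side, additivity of localizing invariants for split semiorthogonal decompositions on the other); yours is arguably the cleaner one since the semiorthogonal decomposition is already in hand, and it makes the identification of the abstract isomorphism with the map induced by $\Hom_X(V,-)$ more transparent. Your final reconciliation --- that the $j$-th component of the tilting equivalence is $\RHom_X(V_j,-)$, hence the $j$-th projection in $K$-theory --- is exactly right.
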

\begin{proof}
Letting $A = \End_{\Db(X)}(V)$.  Since the exceptional objects are
vector bundles, $\Hom_X(V,-)$ defines a morphism from the category of
vector bundles on $X$ to the category of $A$-modules.  Proposition
\ref{prop:exc-coll-and-dg-algebras} implies that $\RHom_{\Db(X)}(V,-)
: \Db(X) \to \Db(k,A)$ is an equivalence.  
Hence we can apply \cite[Thm.~1.9.8,~\S3]{thomason_trobaugh}, implying that
$\Hom_X(V,-) : K_i(X) \to K_i(A)$ is an isomorphism for all $i \geq
0$.  But $A$ has a nilpotent ideal $I$ so that $A/I \isom A_1 \times
\dotsm \times A_n$, so the result follows from the fact that $K_i(A) \isom K_i(A/I)$.
\end{proof}

\begin{remark}
One can prove the same result for any
additive invariant of $\Db(X)$ via noncommutative motives and
Tabuada's universal additive functor (see \cite{tabuada-universal}).
\end{remark}

A special case of an exceptional pair is a \linedef{completely
orthogonal} pair $\{V_1, V_2 \}$, 
i.e., an exceptional pair such that $\{ V_2, V_1 \}$ is also
exceptional.  In this case, $R_{V_2}(V_1)=V_1$ and $L_{V_1}(V_2)=V_2$.

\begin{definition}[\cite{karpov-nogin}, 1.5]
An \linedef{exceptional block} in a $k$-linear triangulated category
$\cat{T}$ is an exceptional collection $\{ V_1, \ldots, V_n \}$ such
that $\Ext^r_{\cat{T}}(V_i,V_j)=0$ for every $r$ whenever $i \neq j$.
Equivalently, every pair of objects in the collection is completely
orthogonal.  By abuse of notation, we denote by $\cat{E}$ the
exceptional block as well as the subcategory that it generates.
\end{definition}

If $\cat{E}$ is an exceptional block, then
$\End_\cat{T}(\bigoplus_{i=1}^n V_i)$ is isomorphic to the $k$-algebra $A_1 \times
\dotsm \times A_n$, where $V_i$ is $A_i$-exceptional.  Proposition
\ref{prop:exc-coll-and-dg-algebras} then yields a $k$-equivalence $\cat{E} \equi
\Db(A_1 \times \dotsm \times A_n)$.

Moreover, given an exceptional block, any internal mutation acts by
simply permuting the exceptional objects.  Given an exceptional
collection $\{V_1, \ldots, V_n, W_1, \ldots, W_m \}$ consisting of two
blocks $\cat{E}$ and $\cat{F}$, the left mutation
$L_{\cat{E}}(\cat{F})$ and the right mutation $R_{\cat{F}}(\cat{E})$
are obtained by mutating all the objects of one
block to the other side of all the objects of the other block, or,
equivalently, as mutations of semiorthogonal admissible subcategories.

\subsection{dg-enhancements}
\label{subs:dg-enhancement}

Recall that a $k$-linear \linedef{dg-category} $\dgcat{A}$ is a
category enriched over dg-complexes of $k$-vector spaces, that is, for
any pair of objects $x,y$ in $\ka$, the morphism set
$\Hom_{\dgcat{A}}(x,y)$ has a functorial structure of a differential
graded complex of vector spaces. The category $H^0(\dgcat{A})$ has the
same objects as $\dgcat{A}$, and $\Hom_{H^0(\dgcat{A})}(x,y) =
H^0(\Hom_{\dgcat{A}}(x,y))$, in particular $H^0(\dgcat{A})$ is triangulated. We will only consider
\linedef{pretriangulated} dg-categories (see \cite[\S4.5]{keller:ICM}). A
semiorthogonal decomposition of $\dgcat{A}= \langle \dgcat{B}_1, \ldots, \dgcat{B}_n
\rangle$ is a set of pretriangulated full subcategories such that
$\langle H^0(\dgcat{B}_1), \ldots, H^0(\dgcat{B}_n) \rangle$ is a semiorthogonal
decomposition of $H^0(\dgcat{A})$.  Consult \cite{keller:ICM} for an
introduction to dg-categories.

Let $X$ and $Y$ be a smooth proper $k$-schemes, and recall that a
\linedef{Fourier--Mukai functor} of kernel $P \in \Db(X \times Y)$ is
the exact functor $\Phi: \Db(X) \to \Db(Y)$ given by the formula
$\Phi_P(-)=({\mathbf{R}}p_{Y})_* (p_X^* (-) \otimes^{\mathbf L} P)$ (see, e.g., \cite[\S5]{Huybook}).

Considering the bounded derived category $\Db(X)$ as a triangulated
$k$-linear category gives rise to some functorial problems. One of
them has a geometric nature:\ given an exact functor $\Phi: \Db(X) \to
\Db(Y)$ it is not known whether it is isomorphic to a Fourier--Mukai
functor, except in some special cases, see
\cite{canonaco_stellari:survey}.  For example, suppose that
$\cat{A}_X$ and $\cat{A}_Y$ are admissible triangulated categories of
$\Db(X)$ and $\Db(Y)$, respectively, and suppose that there is a
triangulated equivalence $\cat{A}_X \simeq \cat{A}_Y$. The composition
functor
$$
\Phi: \Db(X) \twoheadrightarrow \cat{A}_X \simeq \cat{A}_Y
\hookrightarrow \Db(Y)
$$
is conjectured to be a Fourier--Mukai functor
(\cite[Conj.~3.7]{kuznetsov:hpd}).

As proved by Lunts and Orlov \cite{Lunts-Orlov}, there is a unique
functorial enhancement $\Dbdg(X)$ for $\Db(X)$, that is a
dg-category such that $H^0(\Dbdg(X)) = \Db(X)$. Given $X$ and $Y$
as before, a functor $\Phi: \Db(X) \to \Db(Y)$ is of Fourier--Mukai
type if and only if there is a functor $\varPhi: \Dbdg(X)\to
\Dbdg(Y)$ such that $H^0(\varPhi)=\Phi$, see
\cite[Prop.~9.4]{berna-tabuada-jacobians} for example, or
\cite{lunts-schnu} for a more general statement.

On the other hand, Kuznetsov \cite{kuznetbasechange} has shown that,
for any admissible subcategory $\cat{A}_X \subset \Db(X)$, the
projection functor (i.e., the right adjoint to the embedding functor)
is a Fourier--Mukai functor. Hence, the choice of such a functor
endows $\cat{A}_X$ with a dg-structure, which is in principle not
unique, but depends on the choice of projection. Hence, when
dealing with a semiorthogonal decomposition
$$
\Db(X) = \langle \cat{A}_1, \ldots, \cat{A}_n \rangle,
$$
one always has a decomposition
$$
\Dbdg(X) = \langle \dgcat{A}_1, \ldots, \dgcat{A}_n \rangle,
$$
where the dg-category $\Dbdg(X)$ is unique, though the dg-enhancements
$\dgcat{A}_i$ of $\cat{A}_i$ may not be.

Finally, we recall that if there exists a smooth projective scheme $Z$
and a Brauer class $\alpha$ in $\Br(Z)$ such that $\cat{A}_i \simeq
\Db(Z,\alpha)$, then the embedding functor $\Phi: \Db(Z,\alpha) \to
\Db(X)$ is a Fourier--Mukai functor
\cite{canonaco_stellari:twisted_fourier_mukai}, and hence the
dg-structure $\Dbdg(Z,\alpha)$ is unique. Other specific cases are
provided by Homological Projective Duality \cite{kuznetsov:hpd}.

\subsection{Categorical representability}
\label{subsec:cat_rep}

Using semiorthogonal decompositions, one can define a notion of
\linedef{categorical representability} for a triangulated category. In
the case of smooth projective varieties, this is inspired by the
classical notions of representability of cycles, see
\cite{bolognesi_bernardara:representability}.

\begin{definition}[\cite{bolognesi_bernardara:representability}]
\label{def-rep-for-cat}
A $k$-linear triangulated category $\cat{T}$ is \linedef{representable
in dimension $m$} if it admits a semiorthogonal decomposition
$$
\cat{T} = \langle \cat{A}_1, \ldots, \cat{A}_r \rangle,
$$
and for each $i=1,\ldots,r$ there exists a smooth projective connected
$k$-variety $Y_i$ with $\dim Y_i \leq m$, such that $\cat{A}_i$ is
equivalent to an admissible subcategory of $\Db(Y_i)$.
\end{definition}

\begin{definition}[\cite{bolognesi_bernardara:representability}]
\label{def-cat-rep}
Let $X$ be a projective $k$-variety of dimension $n$. We say that $X$
is \linedef{categorically representable} in dimension $m$ (or
equivalently in codimension $n-m$) if there exists a categorical
resolution of singularities of $\Db(X)$ that is representable in
dimension $m$.
\end{definition}

The following explains why categorical representability in codimension
2 is conjecturally related to birational geometry in the spirit of
Question~\ref{mainquestion-repre}.

\begin{lemma}
\label{lem:blow-up_catrep2}
Let $X \to Y$ be the blow-up of a smooth projective $k$-variety along
a smooth center.  If $Y$ is categorically representable in codimension
2 then so is $X$.
\end{lemma}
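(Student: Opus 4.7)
The plan is to invoke Orlov's blow-up formula to exhibit $\Db(X)$ as built, via a semiorthogonal decomposition, from $\Db(Y)$ and copies of $\Db(Z)$, where $Z \subset Y$ is the (smooth) center of the blow-up. Since categorical representability in codimension $2$ for a smooth variety amounts to representability of its bounded derived category in dimension $\dim(X)-2$, it will suffice to check that every piece on the right hand side can be so represented.

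Concretely, write $n = \dim X = \dim Y$ and let $c$ be the codimension of $Z$ in $Y$. If $c=1$ the blow-up is an isomorphism and there is nothing to prove, so assume $c \geq 2$, which forces $\dim Z = n - c \leq n - 2$. Orlov's blow-up formula produces a semiorthogonal decomposition
$$
\Db(X) = \langle \Db(Z)_{1}, \ldots, \Db(Z)_{c-1}, \pi^* \Db(Y) \rangle,
$$
with each $\Db(Z)_i$ an admissible subcategory of $\Db(X)$ equivalent to $\Db(Z)$. By the hypothesis on $Y$ (which is smooth, so any categorical resolution can be taken to be $\Db(Y)$ itself), there is a further semiorthogonal decomposition
$$
\Db(Y) = \langle \cat{B}_1, \ldots, \cat{B}_r \rangle
$$
in which each $\cat{B}_j$ is equivalent to an admissible subcategory of some smooth projective $\Db(W_j)$ with $\dim W_j \leq n-2$. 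Concatenating these decompositions, we obtain
$$
\Db(X) = \langle \Db(Z)_1, \ldots, \Db(Z)_{c-1}, \cat{B}_1, \ldots, \cat{B}_r \rangle,
$$
which is the desired expression certifying representability of $X$ in dimension $n-2$, since $Z$ and each $W_j$ have dimension at most $n-2$.

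The plan involves essentially no obstacle beyond one standard bookkeeping point: we need to verify that the $\cat{B}_j$, viewed through the admissible embedding $\pi^* \Db(Y) \hookrightarrow \Db(X)$, remain admissible in $\Db(X)$, and that the concatenated list is still semiorthogonal. Both facts follow from the general principle that admissibility composes (a composition of admissible embeddings of triangulated categories has adjoints on both sides, obtained by composing the given adjoints) and that Orlov's formula already arranges the semiorthogonality between the $\Db(Z)_i$ and $\pi^* \Db(Y)$. Thus the proof is a direct application of the blow-up formula together with the definition of categorical representability.
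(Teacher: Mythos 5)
Your proposal is correct and takes the same route as the paper: both invoke Orlov's blow-up formula to decompose $\Db(X)$ into an admissible copy of $\Db(Y)$ and copies of $\Db(Z)$, and conclude by noting $\dim Z \leq \dim X - 2$. Your writeup is slightly more careful in spelling out the $c-1$ copies of $\Db(Z)$, the isomorphism case $c=1$, and the bookkeeping about admissibility composing and concatenating decompositions; the paper compresses all of this into a one-line appeal to Orlov's formula and the dimension count.
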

\begin{proof}
Denoting by $Z \subset Y$ the center of the blow-up $f : X \to Y$,
which has codimension at least 2 in $Y$,
Orlov's blow-up formula \cite{orlovprojbund} gives a semiorthogonal
decomposition $\Db(X) = \langle f^*\Db(Y), \Db(Z)\rangle$. As $X$ and $Y$ have the
same dimension, if $Y$ is categorically representable in
codimension 2, then so is $X$.
\end{proof}

An additive category $\cat{T}$ is \linedef{indecomposable} if for any
product decomposition $\cat{T} \equi \cat{T}_1 \times \cat{T}_2$ into
additive categories, we have that $\cat{T} \equi \cat{T}_1$ or
$\cat{T} \equi \cat{T}_2$.  Equivalently, $\cat{T}$ has no nontrivial
completely orthogonal decomposition.  Remark that if $X$ is a
$k$-scheme then $\Db(X)$ is indecomposable if and only if $X$ is
connected (see \cite[Ex.~3.2]{bridg-equiv-and-FM}).
More is known if $X$ is the spectrum
of a field or a product of fields.

\begin{lemma}
\label{lem:indecomposable}
Let $K$ be a $k$-algebra.
\begin{enumerate}
\item\label{lem:indecomposable_1} If $K$ is a field and $\cat{A}$ is a
nonzero admissible $k$-linear triangulated subcategory of $\Db(k,K)$,
then $\cat{A} = \Db(k,K)$.

\item\label{lem:indecomposable_2} If $K \isom K_1 \times \dotsm \times
K_n$ is a product of field extensions of $k$ and $\cat{A}$ is a
nonzero admissible indecomposable $k$-linear triangulated subcategory
of $\Db(k,K)$, then $\cat{A} \equi \Db(k,K_i)$ for some $i=1, \dotsc, n$.

\item\label{lem:indecomposable_3} If $K \isom K_1 \times \dotsm \times
K_n$ is a product of field extensions of $k$ and $\cat{A}$ is a
nonzero admissible $k$-linear triangulated subcategory
of $\Db(k,K)$, then $\cat{A} \equi \prod_{j \in I} \Db(k,K_{j})$ for
some subset $I \subset \{1, \dotsc, n\}$.
\end{enumerate}
\end{lemma}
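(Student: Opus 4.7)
My plan is to prove the three parts sequentially: \ref{lem:indecomposable_1} as the base case, \ref{lem:indecomposable_3} reducing to it via a product decomposition, and \ref{lem:indecomposable_2} as an immediate corollary of \ref{lem:indecomposable_3}.

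For \ref{lem:indecomposable_1}, I would exploit the fact that when $K$ is a field, the $k$-linear abelian category of (finitely generated) $K$-modules is semisimple, so every object of $\Db(k,K)$ is a finite direct sum of shifted copies $K[-i]^{n_i}$. An admissible subcategory is automatically thick (closed under direct summands), since the right adjoint projector $\Db(k,K) \to \cat{A}$ forces this. Starting from any nonzero $V \in \cat{A}$, some homology $K^{n_i}$ occurs as a direct summand of $V[i]$, which places $K$ itself in $\cat{A}$. Since $\cat{A}$ is triangulated and thick, this forces $\cat{A} = \sod{K} = \Db(k,K)$.

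For \ref{lem:indecomposable_3}, the orthogonal central idempotents $e_1, \dotsc, e_n$ of $K = K_1 \times \dotsm \times K_n$ induce an equivalence $\Db(k,K) \equi \prod_{i=1}^n \Db(k,K_i)$ of $k$-linear triangulated categories. Any $V \in \Db(k,K)$ decomposes as $V = \bigoplus_i e_i V$, and if $V \in \cat{A}$, then thickness gives $e_i V \in \cat{A}$. Setting $\cat{A}_i := \cat{A} \cap \Db(k, K_i)$, I conclude $\cat{A} \equi \prod_i \cat{A}_i$. The adjoints to $\cat{A} \hookrightarrow \Db(k,K)$ restrict componentwise to adjoints for $\cat{A}_i \hookrightarrow \Db(k, K_i)$, so each $\cat{A}_i$ is admissible in $\Db(k, K_i)$. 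By \ref{lem:indecomposable_1}, each $\cat{A}_i$ is either $0$ or $\Db(k, K_i)$, and taking $I = \{ i : \cat{A}_i \neq 0 \}$ yields the statement. Finally, \ref{lem:indecomposable_2} is immediate: if $\cat{A}$ is indecomposable, then the decomposition $\cat{A} \equi \prod_{j \in I} \Db(k, K_j)$ from \ref{lem:indecomposable_3} must have $|I| = 1$, giving $\cat{A} \equi \Db(k, K_i)$ for a unique $i$.

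The one subtle step is verifying that each $\cat{A}_i$ is admissible \emph{as a subcategory of} $\Db(k, K_i)$, not merely of $\cat{A}$. This requires checking that the right (and left) adjoint to $\cat{A} \hookrightarrow \Db(k, K)$, when applied to an object of $\Db(k, K_i)$, yields an object whose components in every factor $j \neq i$ vanish; this is forced by the fact that $\Hom$ between objects supported in different factors is zero, so the unit and counit triangles split along the product structure. With this verification in hand, the reduction to part \ref{lem:indecomposable_1} is immediate.
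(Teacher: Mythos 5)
Your argument is correct, but it reverses the logical order the paper uses: you prove \ref{lem:indecomposable_3} directly via the orthogonal idempotents $e_1,\dotsc,e_n$, showing $\cat{A} \equi \prod_i (\cat{A} \cap \Db(k,K_i))$ with each factor admissible in the corresponding $\Db(k,K_i)$, and then obtain \ref{lem:indecomposable_2} as the special case where indecomposability forces $\lvert I\rvert = 1$. The paper instead proves \ref{lem:indecomposable_2} first by a contradiction argument (two objects supported in distinct factors would generate completely orthogonal nontrivial subcategories of an indecomposable $\cat{A}$), and then deduces \ref{lem:indecomposable_3} by ``applying \ref{lem:indecomposable_2} to the indecomposable components of $\cat{A}$.'' Your order has the advantage of making explicit something the paper leaves implicit: that $\cat{A}$ actually \emph{has} a decomposition into pieces $\cat{A}_i$ supported in the individual factors, which is precisely the content of your idempotent/thickness argument. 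Your justification of the one delicate point---that each $\cat{A}_i$ is admissible in $\Db(k,K_i)$ and not merely in $\cat{A}$---is stated a bit loosely (``the unit and counit triangles split''); the cleanest way to see it is that for $V \in \Db(k,K_i)$ and any $W \in \cat{A}_j$ with $j \neq i$, one has $\Hom(W,V)=0$, so adjunction gives $\Hom(W,\rho(V)_j)=0$ for all such $W$, hence $\rho(V)_j=0$ by taking $W=\rho(V)_j$, forcing $\rho(V)\in\cat{A}_i$. With that made precise your proof is complete and, if anything, a bit more self-contained than the paper's.
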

\begin{proof}
For \ref{lem:indecomposable_1}, it suffices to note that any nonzero
object of $\Db(k,K)$ is a generator.  For \ref{lem:indecomposable_2},
with respect to the product decomposition $\Db(k,K) \equi \Db(k,K_1)
\times \dotsm \times \Db(k,K_n)$, consider the projections $x_i$ to
$\Db(k,K_i)$ of an object $x$ of $\cat{A}$.  Any nonzero $x_i$ will
generate the respective subcategory $\Db(k,K_i)$.  

Given a nonzero
object $x$ in $\cat{A}$, there exist $1 \leq i \leq n$ such that $x_i
\neq 0$.  If there exists another object $y$ in $\cat{A}$ with $y_j
\neq 0$ for $j \neq i$, then the objects $x_i$ and $y_j$ will generate
completely orthogonal nontrivial subcategories of $\cat{A}$.  Since
$\cat{A}$ is indecomposable, this is impossible.  Hence for every
object $x$ in $A$, we have that $x_j=0$ for every $j \neq i$.  In
particular, $\cat{A} \subset \Db(k,K_i)$, hence they are equal by
\ref{lem:indecomposable_1}.  For \ref{lem:indecomposable_3}, we apply
\ref{lem:indecomposable_2} to the indecomposable components of
$\cat{A}$.
\end{proof}

Next, we need the following affine case of a conjecture of
C\v{a}ld\v{a}raru~\cite[Conj.~4.1]{caldararu:elliptic_threefolds}.
The simple proof below is taken from the unpublished manuscript
\cite{auel_ojanguren_parimala_suresh:semilinear_morita} as part of a
proof of C\v{a}ld\v{a}raru's conjecture in the relative case.  A proof
of the conjecture (using different techniques) was recently obtained
by Antieau~\cite{antieau-caldararu}, with the case of arbitrary
(possibly non-torsion) classes over general spaces handled by
\cite{calabrese_groechenig}, after progress by
\cite{canonaco_stellari:twisted_fourier_mukai} and
\cite{perego:coherent_twisted_sheaves}.

Recall that a \linedef{central simple $k$-algebra} $A$ is a simple
$k$-algebra whose center is $k$. More generally, if $X$ is a scheme,
an \linedef{Azumaya algebra} $A$ over $X$ is a locally free
$\ko_X$-algebra of finite rank such that $A \tensor_R A\op$ is
isomorphic to the matrix algebra $\EEnd(A)$.  Azumaya algebras $A$ and
$B$ are \linedef{Brauer equivalent} if there exist locally free
$\ko_X$-modules $P$ and $Q$ such that $A \tensor \EEnd(P) \isom B
\tensor \EEnd(Q)$, and they are \linedef{Morita equivalent} over $X$
if their stacks of coherent modules over $X$ are equivalent.  Then
Brauer equivalence coincides with Morita equivalence over $X$, and the
group of equivalence classes under tensor product is the
\linedef{Brauer group} $\Br(X)$.  For an Azumaya algebra $A$ over $X$,
we write $A^{n}$ for $A^{\otimes n}$ for $n \geq 0$ and
$(A\op)^{\otimes -n}$ for $n \leq 0$.

\begin{theorem}
\label{thm:Caldararu_conj}
Let $R$ be a noetherian commutative ring, $U$ and $V$ be $R$-algebras,
and $A$ and $B$ be Azumaya algebras over $U$ and $V$, respectively.
Then $A$ and $B$ are Morita $R$-equivalent if and only if there exists
an $R$-algebra isomorphism $\sig : U \to V$ such that $A$ and
$\sig\pullback B$ are Brauer equivalent over $U$.
\end{theorem}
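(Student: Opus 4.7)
The \emph{only if} direction is the substantive one; the converse is essentially by definition, so I would dispose of it first. If $\sigma : U \to V$ is an $R$-algebra isomorphism and $A$ is Brauer equivalent to $\sigma\pullback B$ over $U$, then $A$ and $\sigma\pullback B$ have equivalent categories of modules over $U$ (Brauer equivalence of Azumaya algebras over a commutative ring coincides with Morita equivalence over that ring). Restricting scalars along $\sigma$ identifies $\sig\pullback B\text{-}\cat{Mod}$ over $U$ with $B\text{-}\cat{Mod}$ over $V$ as $R$-linear categories, giving the required Morita $R$-equivalence.

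For the forward direction, the plan is to recover $\sigma$ as the map induced on centers by the Morita equivalence. Concretely, a Morita $R$-equivalence $F : A\modcat \isomto B\modcat$ is $R$-linear by hypothesis, so it induces an $R$-algebra isomorphism between the centers of the two module categories. The key (classical) fact I would invoke is that the center of $A\modcat$ can be canonically identified with the center $Z(A)$ of the algebra $A$ via the natural transformation $\id \to \id$ given by multiplication by a central element, and likewise for $B$. Since $A$ is Azumaya over $U$ one has $Z(A) = U$, and similarly $Z(B) = V$, so the Morita equivalence produces a canonical $R$-algebra isomorphism $\sigma : U \to V$.

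Once $\sigma$ is in hand, the next step is to upgrade $F$ to a Morita equivalence over $U$ between $A$ and $\sig\pullback B$. The functor $F$ sends an $A$-module, which has an intrinsic $U = Z(A)$-action, to a $B$-module whose intrinsic $V = Z(B)$-action corresponds, via $\sigma$, to the $U$-action on the source; this is precisely the statement that $F$ factors as an equivalence of $U$-linear module categories between $A$ and $\sig\pullback B$. Both being Azumaya over $U$, this Morita equivalence over $U$ coincides with Brauer equivalence of $A$ and $\sig\pullback B$ in $\Br(U)$, as desired.

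The main obstacle, and the place where one has to be careful, is the identification of the center of the module category with $Z(A)$ and the verification that this identification is functorial under Morita equivalence and respects the $R$-linear structure. For noetherian commutative $R$ and Azumaya algebras this is standard, but writing it down rigorously amounts to checking that the bimodule $P$ implementing the Morita equivalence carries compatible $U$- and $V$-actions which agree on each side via $\sigma$; once this compatibility is established the remainder of the argument is formal. I would then remark that this affine statement is the seed for the general (stacky) version of C\v{a}ld\v{a}raru's conjecture, with gluing handled by descent on $\Spec R$.
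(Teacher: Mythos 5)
Your proof is correct and follows essentially the same route as the paper's: both directions recover $\sigma$ from the $R$-algebra isomorphism on centers induced by the $R$-linear Morita equivalence, then promote the equivalence to a $U$-linear one. The only organizational difference is that the paper works directly with the progenerator bimodule $P := F^{-1}(B\op)$ and the explicit chain $\EEnd_A(P) \cong \EEnd_B(B\op) = B$ (so that $\sigma$ and the $U$-algebra isomorphism $\EEnd_A(P) \cong \sig\pullback B$ come out in one stroke), whereas you first pass through the abstract identification $Z(A\modcat) \cong Z(A)$ to extract $\sigma$ and then separately verify the $U$-linearity upgrade; the bimodule computation you defer to the final paragraph is precisely the paper's argument, so the proofs coincide in substance.
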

\begin{proof}
First suppose that $A$ and $\sig\pullback B$ are Brauer equivalent,
where $\sig : U \to V$ is an $R$-algebra isomorphism.  Then by Morita
theory for Azumaya algebras (cf.\
\cite[\S19.5]{kashiwara_schapira:categories_sheaves}),
there is a Morita $U$-equivalence $\Coh(U,A) \to \Coh(U,\sig\pullback
B)$, which by restriction of scalars, is a Morita $R$-equivalence.
Also, $(\sig\inv)\pullback : \Coh(U,\sig\pullback B) \to \Coh(V,B)$ is
an $R$-equivalence.  Composing these yields the required Morita
$R$-equivalence.

Now suppose that $F : \Coh(U,A) \to \Coh(V,B)$ is a Morita
$R$-equivalence.  Since $F$ is essentially surjective, we can choose
$P \in \Coh(U,A)$ such that $F(P) \isom B\op$ as $B$-modules.  Since
$F$ is fully faithful, any choice of isomorphism $\theta : F(P)
\isomto B\op$ defines a left $B$-module structure on $P$.  In this
way, $P$ has a $B$-$A$-module structure with commuting $R$-structure.
In particular, for any $A$-module $P'$, $\HHom_{A}(P,P')$ has a
natural right $B$-module structure via precomposition.

The equivalence $F$ being $R$-linear yields an induced $R$-algebra
isomorphism
$$
\psi : \EEnd_{A}(P) = \HHom_{A}(P,P) \isom \HHom_{B}(F(P),F(P)) \isom \EEnd_{B}(B\op) = B.
$$ 
In particular, $\psi$ restricts to an $R$-algebra isomorphism $\sig :
U \to V$ of the centers, hence becomes a $U$-module isomorphism
$\EEnd_{A}(P) \to \sig\pullback B = (\sig\pullback B)\op$, so that $A$
is Brauer equivalent to $\sig\pullback B$.
\end{proof}

\begin{corollary}
\label{cor:Caldararu_conj}
Let $A$ and $B$ be simple $k$-algebras with respective centers $K$ and
$L$.  Then $\Db(K/k,A)$ and $\Db(L/k,B)$ are ($k$-linearly) equivalent
if and only if there exists some $k$-automorphism $\sig : K \to L$
such that $A$ and $\sig\pullback B$ are Brauer equivalent over $K$.
\end{corollary}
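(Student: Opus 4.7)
The plan is to deduce the corollary from Theorem \ref{thm:Caldararu_conj} by reducing the derived equivalence to an abelian Morita $k$-equivalence of the module categories $\Coh(K,A)$ and $\Coh(L,B)$. The ``if'' direction is essentially immediate: given $\sigma: K \to L$ together with a Brauer equivalence of $A$ and $\sigma^* B$ over $K$, the first paragraph of the proof of Theorem \ref{thm:Caldararu_conj} produces a $k$-linear Morita equivalence $\Coh(K, A) \to \Coh(L, B)$, which then induces a $k$-linear equivalence on bounded derived categories.

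For the ``only if'' direction, suppose $\Phi: \Db(K/k, A) \to \Db(L/k, B)$ is a $k$-linear equivalence. The key input is that $A$ and $B$ are simple artinian $k$-algebras, hence semisimple, so $\Coh(K,A)$ and $\Coh(L,B)$ are semisimple abelian categories, each with a unique simple object up to isomorphism. In particular, every object $X$ of $\Db(L/k, B)$ splits canonically as $X \cong \bigoplus_i H^i(X)[-i]$. Applying this to $\Phi(A)$ and using that $\Hom^i_{\Db(K/k, A)}(A, A) = 0$ for $i \neq 0$, I claim that $\Phi(A)$ is concentrated in a single cohomological degree. For if $H^i(\Phi(A))$ and $H^j(\Phi(A))$ were both nonzero for some $i < j$, then $\Hom^{j-i}_{\Db(L/k,B)}(\Phi(A), \Phi(A))$ would contain the nonzero group $\Hom_B(H^i(\Phi(A)), H^j(\Phi(A)))$, since any two nonzero modules over a simple artinian ring admit a nonzero morphism (both contain the unique simple summand). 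This contradicts the vanishing of $\Hom^{j-i}_{\Db(K/k,A)}(A, A)$. Composing $\Phi$ with a shift, I may assume $P := \Phi(A)$ is an honest $B$-module.

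Next, I would check that $\Phi$ restricts to an equivalence of the hearts of the standard t-structures. An object $Y$ of $\Db(K/k, A)$ lies in $\Coh(K,A)$ if and only if $\Hom^i_{\Db(K/k,A)}(A, Y) = 0$ for all $i \neq 0$, and $\Phi$ translates this to $\Hom^i_{\Db(L/k,B)}(P, \Phi(Y)) = 0$ for all $i \neq 0$. Since $P$ is a generator of $\Db(L/k, B)$ concentrated in degree $0$ and is a projective $B$-module (semisimplicity makes every module projective), the splitting decomposition again forces $\Phi(Y)$ to lie in $\Coh(L,B)$. By symmetry, $\Phi$ restricts to a $k$-linear equivalence $\Coh(K,A) \simeq \Coh(L,B)$, which is precisely a Morita $k$-equivalence of $A$ and $B$. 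Theorem \ref{thm:Caldararu_conj} then yields the desired $\sigma$ and Brauer equivalence.

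The main obstacle is the transition from a triangulated equivalence to an abelian equivalence. This step relies crucially on the semisimplicity of $A$ and $B$, which trivializes the t-structure and forces $\Phi$ to preserve the standard heart up to a uniform shift. For non-semisimple rings one would need the full strength of tilting theory (e.g., Rickard's derived Morita theorem), but in the semisimple setting the argument is self-contained and the only nontrivial input is Theorem \ref{thm:Caldararu_conj} itself.
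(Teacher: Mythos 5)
Your proof is correct, and it takes a genuinely different route from the paper's.  The paper deduces the passage from derived $k$-equivalence to Morita $k$-equivalence by citing Yekutieli's result \cite[Cor.~2.7]{yekutieli:dualizing_morita_derived} on derived Picard groups of rings that are commutative or local artinian; since $A$ and $B$ are simple artinian they are Morita equivalent to division algebras (which are local artinian), so the cited result applies after this implicit reduction.  You instead give a self-contained argument exploiting semisimplicity: since $B$ is semisimple with a unique simple module, every object of $\Db(L/k,B)$ splits canonically as a sum of shifts of modules, the vanishing of $\Ext^{\neq 0}_A(A,A)$ forces $\Phi(A)$ into a single cohomological degree, and the projectivity of $\Phi(A)$ together with $\Hom_B(\Phi(A),M)\neq 0$ whenever $M\neq 0$ then pins down the entire standard heart.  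Your approach is more elementary in that it avoids the machinery of dualizing complexes and derived Picard groups, and it makes visible exactly where the hypothesis of simplicity (hence semisimplicity, once one remembers that these algebras are finite-dimensional, which both proofs implicitly use) enters; the paper's approach buys brevity at the cost of an external reference and a small unstated reduction to division algebras.  Both then finish by invoking Theorem~\ref{thm:Caldararu_conj} in the same way.
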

\begin{proof}
By \cite[Cor.~2.7]{yekutieli:dualizing_morita_derived}, if $A$ or
$B$ is either commutative or local artinian (not necessarily
commutative), then they are derived $k$-equivalent if and only if they
are Morita $k$-equivalence.  Then we apply
Theorem~\ref{thm:Caldararu_conj}.
\end{proof}

We remark that Corollary \ref{cor:Caldararu_conj} is also a
consequence of the results of \cite{antieau:affine_caldararu}. 

\begin{lemma}
\label{lem:0-dim=etale-algebra}
A $k$-linear triangulated category $\cat{T}$ is representable in
dimension $0$ if and only if there exists a semiorthogonal
decomposition
$$
\cat{T} = \langle \cat{A}_1, \ldots, \cat{A}_r \rangle,
$$
such that for each $i$, there is a $k$-linear equivalence $\cat{A}_i
\simeq \Db(K_i/k)$ for an \'etale $k$-algebra $K_i$.
\end{lemma}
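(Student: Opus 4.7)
My plan is to prove both implications by splitting or assembling étale algebras into their field components, with Lemma \ref{lem:indecomposable}\ref{lem:indecomposable_1} as the main technical input. The guiding observation is that the connected smooth projective $k$-varieties of dimension $0$ are precisely the $\Spec(L)$ for $L/k$ a finite separable field extension, whereas an étale $k$-algebra is by definition a finite product of such fields.

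For the ``only if'' direction, I would start with a decomposition $\cat{T} = \langle \cat{A}_1, \ldots, \cat{A}_r \rangle$ witnessing representability in dimension $0$, so that each nonzero $\cat{A}_i$ is equivalent to an admissible subcategory of $\Db(Y_i)$ with $Y_i$ connected, smooth, projective of dimension $0$ over $k$. Then $Y_i = \Spec(L_i)$ for some finite separable field extension $L_i/k$, so $\Db(Y_i) \simeq \Db(L_i/k)$. Lemma \ref{lem:indecomposable}\ref{lem:indecomposable_1} applied to the field $L_i$ forces the admissible embedding $\cat{A}_i \hookrightarrow \Db(L_i/k)$ to be an equivalence. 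Discarding any zero pieces, this produces exactly a decomposition of the desired type with $K_i := L_i$ an étale $k$-algebra.

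For the ``if'' direction, given $\cat{T} = \langle \cat{A}_1, \ldots, \cat{A}_r \rangle$ with $\cat{A}_i \simeq \Db(K_i/k)$ and $K_i$ étale, I would decompose each $K_i \isom K_{i,1} \times \cdots \times K_{i,n_i}$ as a finite product of finite separable field extensions of $k$. This induces a product decomposition $\Db(K_i/k) \simeq \prod_{j} \Db(K_{i,j}/k)$, which is completely orthogonal, hence in particular a semiorthogonal decomposition of $\cat{A}_i$ into admissible subcategories. Splicing these decompositions into $\cat{T} = \langle \cat{A}_1, \ldots, \cat{A}_r \rangle$ yields a finer semiorthogonal decomposition of $\cat{T}$ whose pieces are each of the form $\Db(K_{i,j}/k) \simeq \Db(\Spec K_{i,j})$ with $\Spec K_{i,j}$ connected, smooth, projective of dimension $0$; this realizes $\cat{T}$ as representable in dimension $0$ per Definition \ref{def-rep-for-cat}.

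The argument is essentially bookkeeping, so I do not foresee any serious obstacle. The only place requiring minor care is the ``only if'' direction, where the connectedness requirement in Definition \ref{def-rep-for-cat} forces the $Y_i$ to be spectra of fields rather than of arbitrary étale algebras, and Lemma \ref{lem:indecomposable}\ref{lem:indecomposable_1} is what upgrades each admissible embedding $\cat{A}_i \hookrightarrow \Db(L_i/k)$ to an equivalence; this is the only non-formal ingredient in the proof.
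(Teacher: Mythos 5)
Your proof is correct and takes essentially the same route as the paper's, with both directions reducing to Lemma \ref{lem:indecomposable}. The only divergence is cosmetic: you invoke part \ref{lem:indecomposable_1} (exploiting that connectedness forces each $Y_i$ to be the spectrum of a field) where the paper cites the more general part \ref{lem:indecomposable_3}, and you make explicit the splitting of each $K_i$ into its field components in the ``if'' direction, which the paper leaves implicit.
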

\begin{proof}
The smooth $k$-varieties of dimension 0 are precisely
the spectra of \'etale $k$-algebras.  Hence the semiorthogonal
decomposition condition is certainly sufficient for the
representability of $\cat{T}$ in dimension 0.  On the other hand,
if $\cat{T}$ is representable in dimension 0, we have
that each $\cat{A}_i$ an admissible subcategory of the derived category of
an \'etale $k$-algebra.  By
Lemma~\ref{lem:indecomposable}\ref{lem:indecomposable_3}, we have that
$\cat{A}_i$ is thus itself such a category.
\end{proof}

\section{Descent for semiorthogonal decompositions}
\label{sec:descent_sodec}

Given a smooth projective variety $X$ over a field $k$, and fixing a
separable closure $\ksep$ of $k$, we are interested in comparing
$k$-linear semiorthogonal decompositions of $\Db(X)$ and
$\ksep$-linear semiorthogonal decompositions of $\Db(X_{\ksep})$.  The
general question of how derived categories and semiorthogonal
decompositions behave under base field extension has started to be
addressed by several authors \cite[\S2]{an-au-ga-za},
\cite{antieau:twists}, \cite{antieau-gepner},
\cite{elagin:descent-semi}, \cite{kuznetbasechange},
\cite{sosna:scalar}, \cite{toen:derived-azumaya}.  Galois descent does
not generally hold for objects in a triangulated category, due to the
fact that cones are only defined up to quasi-isomorphism.  However,
for a linearly reductive algebraic group $G$ acting on a variety $X$,
a general theory of descent of semiorthogonal decompositions has been
developed by Elagin
\cite{elagin:descent-semi,elagin:descent-stacks,elagin:semi-equi}.  In
the case where $K/k$ is a finite $G$-Galois extension and we consider
$G$ acting on $X_K$ via $k$-automorphisms, this theory works as long
as the characteristic of $k$ does not divide the order of $G$.

We are then faced with two main questions. First, given a
$\ksep$-linear triangulated category $\overline{\cat{T}}$, what are
all the $k$-linear triangulated categories $\cat{T}$ such that
$\cat{T}_{\ksep}$ is equivalent to $\cat{T}$? This first question is
very challenging (see \cite{antieau:twists} for some examples), and we
usually restrict ourselves to consider $\cat{T}_{\ksep}$ in a
restricted class of categories (e.g., those generated by a strong
exceptional collection).  Notice that we are actually dealing with
triangulated categories which admit a canonical dg-enhancement, and
this should be the natural structure to consider.  Second, given a set
of admissible subcategories in $\Db(X)$, determine how this can be
related to the semiorthogonal decomposition of $\Db(X_{\ksep})$.

In our geometric applications, a central r\^ole is played
by descent of exceptional blocks and vector bundles.

\subsection{Scalar extension of triangulated categories}
\label{subsec:Scalar_extension_of_triangulated_categories}

Let $X$ be a $k$-scheme and $V$ an $\OO_X$-module.  For a ring
extension $K/k$ we write $X_K = X \times_{\Spec\, k} \Spec\, K$ and
$f_K : X_K \to X$ for the projection and $V_K = f_K\pullback V$ for
the \linedef{extension} of $V$ to $K$.  If $K$ is a finite $k$-algebra
and $F$ is an $\OO_{X_K}$-module, we denote by $\tr_{K/k}F =
(f_K)\pushforward F$ the \linedef{trace} of $F$ from $K$ down to $k$.
We similarly define the trace of an object of $\Db(X_K)$.  If $F$ is
locally free of rank $r$ on $X_K$ then $\tr_{K/k}F$ is locally free of
rank $r [K:k]$ on $X$.  We denote by $G_k = \Gal(\ksep/k)$ the
absolute Galois group of $k$.

If $\cat{T}$ is a $k$-linear triangulated category with a
dg-enhancement and $K/k$ is a field extension, then denote by
$\cat{T}_K$ the $K$-linear extension of scalars category defined in
\cite{sosna:scalar}.  This category is a thick dg-enhanced $K$-linear
triangulated category.  As expected, there is a $K$-linear equivalence
$\Db(X)_K \simeq \Db(X_K)$.  If $K/k$ is a Galois extension, then the
Galois group acts by $k$-linear equivalences on $\cat{T}_K$.  If $K/k$
is a finite $G$-Galois extension then $G$ acts on $X_K$ as a
$k$-scheme and there is a $k$-linear equivalence $\Db(X) \equi
\Db_G(X_K)$ with the bounded derived category of $G$-equivariant
coherent sheaves on $X_K$ considered as a $k$-scheme.

Given an orthogonal pair of admissible
subcategories $\{\cat{A}, \cat{B}\}$ in $\Db(X)$, we can perform right
and left mutations. These operations commute with base change, i.e.,
${L_{\cat{A}}(\cat{B})}_K = L_{\cat{A}_K}(\cat{B}_K)$, for any finite
extension $K$ of $k$, and similarly for the right mutation.

\begin{remark}
\label{rem:warning_exceptional_explained}
Assume that $[K:k]$ is finite and that $X$ is a smooth projective
$k$-variety. For an object $V$ in $\Db(X)$, the scalar extension $V_K$
in $\Db(X)_K=\Db(X_K)$ satisfies $\End_{\cat{T_K}}(V_K)
= \End_{\cat{T}}(V)\tensor_k K$, see \cite[\S0.5.4.9,~1.9.3.3]{EGA1}.
If $A$ is a division $k$-algebra, then $A \tensor_k K$ may not be.
Hence if $V$ is an exceptional object in $\Db(X)$, then $V_K$ may fail
to be an exceptional object in $\Db(X_K)$.  However, if $V$ is
$k$-exceptional, then $V_K$ is always $K$-exceptional.
\end{remark}

\begin{lemma}
\label{lem:descent_admissible}
Let $\cat{T}$ be an admissible $k$-linear subcategory of $\Db(X)$ for
a smooth projective $k$-variety $X$.  Let $K$ be any extension of $k$.
Then $\cat{T}_K$ is admissible in $\Db(X_K)$.
\end{lemma}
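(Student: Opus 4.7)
The plan is to build left and right adjoints to the inclusion $\iota_K: \cat{T}_K \hookrightarrow \Db(X_K)$ by base-changing the corresponding adjoints at the level of $\Db(X)$. By admissibility, the inclusion $\iota: \cat{T} \hookrightarrow \Db(X)$ has a left adjoint $\eL$ and a right adjoint $\aR$; equivalently, there are semiorthogonal decompositions $\Db(X) = \langle \cat{T}^\perp, \cat{T}\rangle = \langle \cat{T}, {}^\perp\cat{T}\rangle$.

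First, I would lift the adjunction to dg-enhancements. By Kuznetsov's theorem \cite{kuznetbasechange}, every projection onto an admissible subcategory of $\Db(X)$ is of Fourier--Mukai type, so $\eL$ and $\aR$ lift to dg-functors $\Dbdg(X) \to \dgcat{T}$, where $\dgcat{T}$ is the canonical dg-enhancement of $\cat{T}$ inherited from $\Dbdg(X)$ (see \S\ref{subs:dg-enhancement}). The units and counits of the adjunctions lift to dg-natural transformations, so that $\dgcat{T}$ is admissible in $\Dbdg(X)$ in the dg-sense.

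Next, I would apply the scalar extension construction of \cite{sosna:scalar} recalled in \S\ref{subsec:Scalar_extension_of_triangulated_categories}. This yields $\Dbdg(X)_K \equi \Dbdg(X_K)$, carries the inclusion $\dgcat{T} \hookrightarrow \Dbdg(X)$ to an inclusion $\dgcat{T}_K \hookrightarrow \Dbdg(X_K)$, and sends adjoint pairs of dg-functors to adjoint pairs. Taking $H^0$ of the resulting adjoint dg-functors $\eL_K, \aR_K$ then produces the desired adjoints to $\iota_K$.

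The main obstacle is ensuring that scalar extension preserves both the fully faithful embedding $\dgcat{T} \hookrightarrow \Dbdg(X)$ and the adjunction data. For finite Galois $K/k$ this is covered by Elagin's descent results \cite{elagin:descent-semi}, and in general it relies on the smoothness and properness of $\Dbdg(X)$, which guarantees that internal dg-Homs, hence the Fourier--Mukai kernels representing the adjoints, commute with base change. A more explicit alternative that avoids these dg-subtleties is to pull back the Fourier--Mukai kernels $P, Q \in \Db(X \times_k X)$ of the projections associated to $\aR$ and $\eL$ along $X_K \times_K X_K \to X \times_k X$ to obtain kernels $P_K, Q_K$; flat base change then shows that $\Phi_{P_K}$ and $\Phi_{Q_K}$ factor through $\cat{T}_K$ and furnish the two adjoints of $\iota_K$.
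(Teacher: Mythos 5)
Your ``more explicit alternative'' at the end is exactly the paper's proof: Kuznetsov's theorem in~\cite{kuznetbasechange} gives a Fourier--Mukai kernel $P \in \Db(X \times_k X)$ for the projection onto $\cat{T}$, and base-changing this kernel to $\Db(X_K \times_K X_K)$ produces the adjoint to the inclusion $\cat{T}_K \hookrightarrow \Db(X_K)$ (and likewise for the other adjoint). In fact you are marginally more careful than the paper, which mentions a single adjoint $\rho$ though admissibility requires both; your treatment of the two kernels $P$ and $Q$ closes this small gap, and invoking flat base change to see that $\Phi_{P_K}$, $\Phi_{Q_K}$ land in $\cat{T}_K$ and remain adjoints makes explicit what the paper leaves implicit.

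Your primary dg-theoretic route, however, rests on several claims that would need substantial justification and are not needed. First, the assertion that the triangulated unit and counit of the adjunctions ``lift to dg-natural transformations'' so that $\dgcat{T}$ is dg-admissible is nontrivial: Kuznetsov's result yields Fourier--Mukai kernels, hence dg-functors, but producing genuine dg-adjunction data from the $H^0$-level adjunctions is a separate problem. Second, the claim that Sosna's scalar extension functor carries dg-adjoint pairs to dg-adjoint pairs is not stated in~\cite{sosna:scalar} and would require proof. Third, Elagin's descent theory~\cite{elagin:descent-semi} applies to finite $G$-Galois extensions with $\mathrm{char}\,k \nmid |G|$, whereas the lemma allows an arbitrary field extension $K/k$, so that machinery is unavailable in general. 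None of this sinks the lemma since your closing paragraph recovers the correct argument, but as written the main route is not a complete proof and the final alternative should be the argument of record.
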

\begin{proof}
By Kuznetsov \cite{kuznetbasechange}, the inclusion functor of the
admissible subcategory $\cat{T} \to \Db(X)$ has an adjoint
$\rho:\Db(X) \to \cat{T}$ that is of Fourier--Mukai type with kernel
$P$ an object in $\Db(X \times X)$.  Taking $P \otimes_k K$ in
$\Db(X_K \times X_K)$ as the kernel of a Fourier--Mukai functor, we
obtain an adjoint for $\cat{T}_K \to \Db(X_K)$.
\end{proof}

Based on base change results for Fourier--Mukai
functor due to Orlov (see \cite{orlovequivabel}), the following statement
was proved in \cite[Lemma~2.9]{auel-berna-bolo} (see also
\cite[Prop.~2.1]{an-au-ga-za}).

\begin{lemma}\label{lemma-base-change-of-semiorth}
Let $X$ be a smooth projective variety over $k$, and $K$ a finite
extension of $k$.  Suppose that $\cat{T}_1, \ldots, \cat{T}_n$ are
admissible subcategories of $\Db(X)$ such that $\Db(X_K) = \langle
\cat{T}_{1 K}, \ldots, \cat{T}_{n K} \rangle$. Then $\Db(X) = \langle
\cat{T}_1, \ldots, \cat{T}_n \rangle$.
\end{lemma}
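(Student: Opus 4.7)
The plan is to verify separately the two axioms of a semiorthogonal decomposition for $\langle \cat{T}_1, \ldots, \cat{T}_n \rangle$ in $\Db(X)$: the semiorthogonality relations between the $\cat{T}_i$, and the generation of $\Db(X)$ by these subcategories.

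For semiorthogonality, I would take arbitrary objects $T_i \in \cat{T}_i$ and $T_j \in \cat{T}_j$ with $i > j$ and exploit the compatibility of $\Hom$-groups with extension of scalars for smooth projective varieties, namely
$\Hom_{\Db(X_K)}((T_i)_K,(T_j)_K) \iso \Hom_{\Db(X)}(T_i,T_j) \otimes_k K$.
The left-hand side vanishes by the assumed semiorthogonality of the $\cat{T}_{iK}$ in $\Db(X_K)$, and faithful flatness of $K/k$ then forces $\Hom_{\Db(X)}(T_i,T_j) = 0$.

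For generation, set $\cat{A} = \langle \cat{T}_1, \ldots, \cat{T}_n \rangle \subset \Db(X)$; it suffices to prove $\cat{A}^{\perp} = 0$. Each $\cat{T}_i$ is admissible, so by Kuznetsov \cite{kuznetbasechange} the right adjoint $\pi_i : \Db(X) \to \cat{T}_i$ of the inclusion is a Fourier--Mukai functor with some kernel $P_i \in \Db(X \times X)$. Base-change results for Fourier--Mukai functors due to Orlov \cite{orlovequivabel} ensure that $(P_i)_K$ is a kernel for the corresponding projection $\Db(X_K) \to \cat{T}_{iK}$, i.e.\ that $\pi_{iK}$ commutes with extension of scalars. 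For any $T \in \cat{A}^{\perp} \subset \cat{T}_i^{\perp}$, we have $\pi_i(T) = 0$ for every $i$, so after base change $\pi_{iK}(T_K) = (\pi_i(T))_K = 0$. Hence $T_K$ is right-orthogonal to every $\cat{T}_{iK}$, and thus to their span $\langle \cat{T}_{1K}, \ldots, \cat{T}_{nK}\rangle = \Db(X_K)$, forcing $T_K = 0$. Faithfully flat descent (applied cohomology sheaf by cohomology sheaf) then yields $T = 0$.

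The core point to justify is the commutativity of the Fourier--Mukai projection with extension of scalars, which is the content of the cited base-change results; once this is in hand the rest is a formal chase combining admissibility of the $\cat{T}_i$ with faithful flatness of $K/k$. I expect that step to be the main obstacle in the sense that it is the only nontrivial input; the $\Hom$-descent and the argument that $T_K = 0 \Rightarrow T = 0$ are routine.
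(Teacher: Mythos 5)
Your proof is correct and follows essentially the same route as the one the paper relies on (the paper cites \cite[Lemma~2.9]{auel-berna-bolo}, which is likewise based on Kuznetsov's theorem that the projection onto an admissible subcategory is a Fourier--Mukai functor, combined with Orlov's base-change results for Fourier--Mukai kernels and faithfully flat descent). The only point worth spelling out is that $\cat{A}^{\perp}=0$ forces $\cat{A}=\Db(X)$ because $\cat{A}$ itself is admissible, which follows from admissibility of the $\cat{T}_i$ together with the semiorthogonality you establish in the first step.
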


\subsection{Classical descent theory for vector bundles}
\label{subsec:classical_descent}

For a vector bundle $V$ on a scheme $X$, denote by $A(V)
= \End(V)/\rad(\End(V))$ and $Z(V)$ the center of $A(V)$.  Assuming
that $X$ is a proper $k$-scheme, then vector bundles on $X$ enjoy a
Krull--Schmidt decomposition, hence $A(V)$ is a semisimple
$k$-algebra.  If $V = \bigoplus_{i=1}^{m} V_i^{\oplus d_i}$ is the
Krull--Schmidt decomposition of $V$, then $A(V)$ is the product of the
algebras $M_{d_i}(A(V_i))$. In particular, $V$ is indecomposable on
$X$ if and only if $A(V)$ is a division algebra over $k$.

If $K/k$ is a field extension and $E$ is a vector bundle, then we have
$\End(V_K) = \End(V)_K$ (cf.\
Remark~\ref{rem:warning_exceptional_explained}), and hence $A(V_K) =
A(V)_K$ if $K/k$ is separable.

\begin{lemma}[{\cite[Lemma~1.1]{arason_elman_jacob:indecomposable}}]
Let $V$ be an indecomposable vector bundle on $X$.  Let $K/k$ be a
normal extension containing $Z(V)$ and splitting $A(V)$.  Write $m =
[Z(V):k]_{\mathrm{sep}}$ and $d = \deg_{Z(V)}(A(V))$.  Then 
$V_{K}$ has a Krull--Schmidt decomposition of the form
$
V_{K} = \bigoplus_{i=1}^{m} V_i^{\oplus d}
$ 
where $V_i$ are indecomposable over $X_K$ and $A(V_i) = K$.
\end{lemma}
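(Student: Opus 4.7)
The plan is to extract the Krull--Schmidt decomposition of $V_K$ from an explicit computation of the semisimple algebra $A(V_K)$. I would begin by unwinding the hypotheses: since $V$ is indecomposable on the proper $k$-scheme $X$, the ring $\End(V)$ is local, so $A := A(V)$ is a division algebra over $k$, necessarily central over its center $F := Z(V)$ with $\dim_F A = d^2$; by hypothesis $K$ is a normal extension of $k$ containing $F$ and splitting $A$, i.e., $A \otimes_F K \cong M_d(K)$.

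The key computation is of $A(V_K)$. Base change is flat, so $\End(V_K) = \End(V) \otimes_k K$, and the semisimple quotient we seek is
$$A(V_K) = \bigl(\End(V) \otimes_k K\bigr)/\rad \cong (A \otimes_k K)/\rad.$$
Write $A \otimes_k K = A \otimes_F (F \otimes_k K)$. Since $F \otimes_k K$ is a commutative finite $K$-algebra, its Jacobson radical equals its nilradical, and the reduced quotient decomposes as $(F \otimes_k K)_{\mathrm{red}} \cong K^{\times m}$, one factor for each $k$-embedding $F \hookrightarrow K$; by normality of $K/k$ these embeddings are in bijection with $[F:k]_{\mathrm{sep}} = m$ cosets of $\Aut_k(K)$. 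For each embedding $\sigma : F \hookrightarrow K$, normality provides a $\tau_\sigma \in \Aut_k(K)$ with $\sigma = \tau_\sigma \circ \iota$ (where $\iota$ is the distinguished inclusion $F \subset K$), so $A \otimes_{F,\sigma} K$ is the $\tau_\sigma$-twist of $A \otimes_{F,\iota} K \cong M_d(K)$ and hence itself split of degree $d$ over $K$. Since $A$ is central simple over $F$, tensoring with $A \otimes_F -$ commutes with taking the radical of the commutative $F$-algebra $F \otimes_k K$, so these pieces assemble into
$$A(V_K) \cong \prod_{i=1}^m M_d(K).$$

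Finally, I would apply Krull--Schmidt to $V_K$ on the proper $k$-scheme $X_K$: writing $V_K = \bigoplus_j V_j^{\oplus n_j}$ with pairwise non-isomorphic indecomposable $V_j$, one has $A(V_K) \cong \prod_j M_{n_j}(A(V_j))$ with each $A(V_j)$ a division $K$-algebra. Matching this against $\prod_{i=1}^m M_d(K)$ forces there to be exactly $m$ indecomposable summands, each occurring with multiplicity $n_j = d$ and having $A(V_j) \cong K$, which is the stated decomposition.

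The main obstacle I expect is the careful bookkeeping of the possibly inseparable part of $F/k$: the factor $m = [F:k]_{\mathrm{sep}}$ rather than $[F:k]$ appears because $\End(V) \otimes_k K$ acquires nontrivial radical coming from the nilpotents in $F \otimes_k K$, and one has to verify that the full radical of $A \otimes_k K$ is precisely $A \otimes_F \rad(F \otimes_k K)$. This follows from the Morita-invariance of the radical under tensoring with a central simple algebra, but it is the one step where the hypothesis that $A$ be \emph{central} (rather than just a division algebra) over $F$ plays an essential role.
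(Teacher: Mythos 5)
The paper does not supply its own proof of this lemma; it is stated as a citation to Arason--Elman--Jacob \cite[Lemma~1.1]{arason_elman_jacob:indecomposable}, so there is no in-text argument to compare against. Your proof is correct and is essentially the standard argument one expects behind that citation.

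Two small points worth tightening, though neither is a real gap. First, when you write $A(V_K) \isom (A\otimes_k K)/\rad$, what is true on the nose is that $\rad(\End(V))\otimes_k K$ is a nilpotent ideal of $\End(V_K)$ (since $\rad(\End(V))$ is nilpotent and $K/k$ is flat), so $\rad(\End(V_K))$ is the preimage of $\rad(A\otimes_k K)$ under the quotient map; it is worth saying this explicitly rather than silently passing the radical through the tensor. Second, your crucial identity $\rad(A\otimes_F R) = A\otimes_F \rad(R)$ for $R = F\otimes_k K$ is best justified directly: $A\otimes_F \rad(R)$ is nilpotent, and the quotient $A\otimes_F (R/\rad(R)) \isom \prod_{\sigma} A\otimes_{F,\sigma} K$ is a finite product of central simple $K$-algebras, hence semisimple, which pins down the radical without appealing to Morita invariance. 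Once you have $A(V_K) \isom M_d(K)^m$, the uniqueness of the Wedderburn--Artin decomposition together with $A(V_K)\isom \prod_j M_{n_j}(A(V_j))$, where the $A(V_j)$ are division $K$-algebras, does force exactly $m$ indecomposable isotypic pieces each with $n_j = d$ and $A(V_j) = K$, as you conclude.
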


Let $W$ be an 
indecomposable
vector bundle on $X_{\ksep}$.  A vector
bundle $V$ on $X$ is \linedef{pure (of type $W$)} if $V_{\ksep}$ is a
direct sum of vector bundles isomorphic to $W$.

\begin{proposition}[{\cite[Prop.~3.4]{arason_elman_jacob:indecomposable}}]
\label{prop:exists_pure}
Let $X$ be a proper variety over $k$.  If $W$ is a $G_k$-invariant
indecomposable vector bundle on $X_{\ksep}$ then there exists a pure
indecomposable vector bundle $V$ on $X$, unique up to isomorphism, of
type $W$.
\end{proposition}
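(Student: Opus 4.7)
The plan is to reduce to finite Galois descent, construct $V$ as an indecomposable Krull--Schmidt summand of a trace, and establish uniqueness via a semisimplicity argument on the endomorphism algebra of $V \oplus V'$.

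First, I would note that since $W$ is a coherent sheaf on $X_{\ksep}$ and $X$ is of finite type over $k$, the bundle $W$ is defined over some finite subextension of $\ksep/k$; enlarging to a Galois closure, we may assume there is a finite Galois extension $L/k$ and a vector bundle $W_L$ on $X_L$ with $(W_L)_{\ksep} \cong W$. Consider the trace $V_0 = \tr_{L/k}(W_L)$, a vector bundle on $X$ since $X_L \to X$ is finite flat. By flat base change,
\[
(V_0)_{\ksep} \cong \bigoplus_{\tau : L \hookrightarrow \ksep} \tau^* W_L,
\]
where the sum is over the $k$-embeddings of $L$ into $\ksep$. The $G_k$-invariance hypothesis on $W$ forces each $\tau^* W_L$ to be isomorphic to $W$ on $X_{\ksep}$, so $(V_0)_{\ksep} \cong W^{\oplus [L:k]}$. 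By the Krull--Schmidt theorem for vector bundles on the proper scheme $X$, write $V_0 = \bigoplus_{i=1}^r V_i$ with each $V_i$ indecomposable on $X$. Comparing with the unique Krull--Schmidt decomposition of $(V_0)_{\ksep}$ as a direct sum of copies of the indecomposable $W$, each $(V_i)_{\ksep}$ is a direct sum of copies of $W$, so each $V_i$ is pure indecomposable of type $W$, proving existence.

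For uniqueness, let $V$ and $V'$ be pure indecomposable of type $W$ with $V_{\ksep} \cong W^{\oplus m}$ and $V'_{\ksep} \cong W^{\oplus m'}$, and consider the finite-dimensional $k$-algebra $A = \End_X(V \oplus V')$. Extension of scalars gives
\[
A \otimes_k \ksep \cong \End_{X_{\ksep}}(W^{\oplus(m+m')}) \cong M_{m+m'}(\ksep),
\]
which is simple with trivial Jacobson radical. Since the radical of a finite-dimensional $k$-algebra commutes with separable algebraic base change, $\rad(A) = 0$, so $A$ is semisimple; and a nontrivial product decomposition of $A$ would persist after base change, contradicting simplicity of $A \otimes_k \ksep$. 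Hence $A$ is simple. On the other hand, if $V \not\cong V'$, then the standard description of endomorphism algebras of direct sums of indecomposables with local endomorphism rings on a proper scheme gives $A/\rad(A) \cong A(V) \times A(V')$, a nontrivial product of division algebras, contradicting simplicity. Therefore $V \cong V'$.

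The main technical point will be the careful flat base change identification of $(V_0)_{\ksep}$ with $\bigoplus_{\tau} \tau^* W_L$ and the identification of each $\tau^* W_L$ with $W$ via the $G_k$-invariance hypothesis; the remaining ingredients (Krull--Schmidt for coherent sheaves on proper schemes, locality of endomorphism rings of indecomposable summands, and compatibility of the Jacobson radical with separable base change of finite-dimensional algebras) are classical.
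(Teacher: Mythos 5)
Your existence argument is correct: descending $W$ to a finite Galois extension $L/k$, forming the trace $\tr_{L/k}(W_L)$, applying flat base change together with Krull--Schmidt on both $X$ and $X_{\ksep}$, and extracting an indecomposable summand is exactly the line of reasoning one expects from the cited reference (the paper itself does not reproduce the proof, deferring to Arason--Elman--Jacob).

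The uniqueness argument has a genuine error at the step claiming $A \otimes_k \ksep \cong \End_{X_{\ksep}}(W^{\oplus(m+m')}) \cong M_{m+m'}(\ksep)$. What base change actually gives is $M_{m+m'}\bigl(\End_{X_{\ksep}}(W)\bigr)$, and $\End_{X_{\ksep}}(W)$ is only a finite \emph{local} $\ksep$-algebra; it need not equal $\ksep$. Its radical can be nonzero --- already for the unique nonsplit self-extension of $\ko_E$ on an elliptic curve $E$ over $\ksep$ one has $\End(W) \cong \ksep[\epsilon]/(\epsilon^2)$ --- and in positive characteristic the residue division ring may even be a proper purely inseparable extension of $\ksep$. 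Consequently the conclusions ``$\rad(A)=0$'' and ``$A$ is simple'' do not follow, and in fact fail in general (if $V \cong V'$ and $\End_{X_{\ksep}}(W)$ has a radical, $A$ is not semisimple). The argument is salvageable by passing to the semisimplification throughout: since $\rad$ commutes with separable scalar extension, one gets $(A/\rad A) \otimes_k \ksep \cong M_{m+m'}(D)$ with $D = \End_{X_{\ksep}}(W)/\rad\bigl(\End_{X_{\ksep}}(W)\bigr)$ a division algebra, hence $A/\rad A$ is simple, and this already contradicts $A/\rad A \cong A(V) \times A(V')$ when $V \ncong V'$. The overall strategy is therefore sound, but the intermediate claims need to be restated for $A/\rad A$ rather than for $A$ itself.
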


\begin{definition}
Given a $G_k$-invariant indecomposable vector bundle $W$ on
$X_{\ksep}$, let $V$ be the vector bundle on $X$ given in
Proposition~\ref{prop:exists_pure}.  Define $\alpha(W) \in \Br(k)$ to
be the Brauer class of $A(V)$.  In fact, $\alpha(W)$ is the Brauer
class of $A(V')$ for any pure vector bundle $V'$ on $X$ of type $W$.
\end{definition}

\begin{remark}
\label{rem:Leray}
Recall the Brauer obstruction for Galois invariant line bundles on a
smooth proper geometrically integral variety $X$ over $k$, see
\cite[\S2]{stix:period-index}. The sequence of low degree terms of the
Leray spectral sequence for $\Gm$ associated to the structural
morphism $X \to \Spec\, k$ is
\begin{equation}
\label{eq:Leray}
0 \to \Pic(X) \to \Pic(X_{\ksep})^{G_k} \mapto{d} \Br(k) \to \Br(X).
\end{equation} 
The differential $d$ is called the Brauer obstruction for a Galois
invariant line bundle; a class in $\Pic(X_{\ksep})^{G_k}$ descends to
a line bundle on $X$ if and only if it has trivial Brauer obstruction.
The cokernel of $\Pic(X) \to \Pic(X_{\ksep})^{G_k}$ is torsion since
the Brauer group is.  We also recall that if $A$ is a central simple
algebra with class $d(L)$, for a Galois invariant line bundle $L$, and
$Y$ is the Severi--Brauer variety associated to $A$, then up to
tensoring by elements of $\Pic(X)$, we can assume that $L$ corresponds
to a morphism $X \to Y$, see \cite[Prop.~8]{stix:period-index}.
\end{remark}

\begin{corollary}[{\cite[Prop.~3.15]{arason_elman_jacob:indecomposable}}]
\label{cor:the-chern-class}
Let $r$ be the index of the Brauer class $\alpha(W) \in \Br(k)$. Then
$V_{\ksep} \isom W^{\oplus r}$, and hence $c_1(V_{\ksep}) = r c_1(W)$
is in the image of $\Pic(X) \to \Pic(X_{\ksep})^{G_k}$.  In particular,
$W$ descends to $X$ if and only if $\alpha(W)$ is trivial
\end{corollary}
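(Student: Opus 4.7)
The plan is to pin down the multiplicity $n$ in a Krull--Schmidt decomposition $V_{\ksep} \isom W^{\oplus n}$ by computing the semisimple algebra $A(V_{\ksep})$ in two different ways, once via descent of $V$ from $k$ and once via the indecomposable component $W$ over $\ksep$.

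First, since $V$ is pure of type $W$ and $W$ is indecomposable, by definition $V_{\ksep} \isom W^{\oplus n}$ for some $n \geq 1$, and the task is to show $n = r$. Because $\ksep$ is separably closed, $\Br(\ksep) = 0$, so every finite-dimensional division $\ksep$-algebra is $\ksep$ itself; in particular $A(W) = \ksep$ and hence
\[
A(V_{\ksep}) = M_n(A(W)) = M_n(\ksep).
\]
On the other hand, the extension $\ksep/k$ is separable, so by the compatibility of semisimplification with separable extension recorded in \S\ref{subsec:classical_descent},
\[
A(V_{\ksep}) = A(V) \otimes_k \ksep.
\]
Since $V$ is indecomposable, $A(V)$ is a division algebra over its center $Z(V)$. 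Comparing the two descriptions, $A(V) \otimes_k \ksep$ is simple with center $\ksep$, which forces $A(V)$ to be simple with $Z(V) = k$; thus $A(V)$ is a central division $k$-algebra representing $\alpha(W)$. As a division algebra, its degree equals its index, namely $r$, so $A(V) \otimes_k \ksep \isom M_r(\ksep)$. Comparing dimensions with $M_n(\ksep)$ yields $n = r$, proving the first assertion.

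The Chern class statement is then immediate: $c_1(V) \in \Pic(X)$ pulls back to $c_1(V_{\ksep}) = c_1(W^{\oplus r}) = r \, c_1(W)$. For the final equivalence, if $\alpha(W)$ is trivial then $r = 1$ and $V_{\ksep} \isom W$, so $V$ itself provides a descent of $W$. Conversely, if $W$ admits a descent $W'$ on $X$, then $W'$ is pure of type $W$ and indecomposable (as $W'_{\ksep} \isom W$ is), so $W' \isom V$ by the uniqueness in Proposition~\ref{prop:exists_pure}; this gives $W \isom V_{\ksep} \isom W^{\oplus r}$, forcing $r = 1$ and hence $\alpha(W) = 0$.

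The main subtlety is the identity $A(V) \otimes_k \ksep = A(V_{\ksep})$, which rests on the Jacobson radical being preserved under the separable extension $\ksep/k$ for the finite-dimensional algebra $\End(V)$; this is already recorded in \S\ref{subsec:classical_descent}. Once it is granted, everything reduces to a dimension count of semisimple $\ksep$-algebras, together with the vanishing $\Br(\ksep) = 0$.
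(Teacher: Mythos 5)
The paper states this corollary purely as a citation to Arason--Elman--Jacob and gives no proof of its own, so there is no internal argument in the paper to compare against. Your reconstruction is correct and is the natural argument: compute $A(V_{\ksep})$ in two ways, once from the Krull--Schmidt decomposition $V_{\ksep}\isom W^{\oplus n}$ (giving $M_n(A(W))$) and once by separable base change from $A(V)$ (giving $A(V)\otimes_k\ksep$), then match dimensions using $\dim_k A(V)=r^2$ for the central division algebra $A(V)$ of index $r$. The descent equivalence via uniqueness in Proposition~\ref{prop:exists_pure} is also handled correctly in both directions.

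One small point to tighten: the step ``$\Br(\ksep)=0$, hence $A(W)=\ksep$'' is not quite a one-liner. The vanishing of $\Br(\ksep)$ tells you that a central division algebra over $\ksep$ is $\ksep$; but a priori the division algebra $A(W)$ could have center a nontrivial (necessarily purely inseparable) finite extension $L$ of $\ksep$, in which case you would only conclude $A(W)=L$. In characteristic $0$ this cannot happen, and in the AEJ framework implicitly used here (where $\alpha(W)$ is required to land in $\Br(k)$, forcing $Z(V)=k$ and hence $Z(V)\otimes_k\ksep=\ksep$) the identification $A(V)\otimes_k\ksep\isom M_n(A(W))$ then forces $A(W)$ to have center $\ksep$, so everything is consistent. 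It would be cleaner to derive $A(W)=\ksep$ from the base-change identity rather than assert it up front from $\Br(\ksep)=0$, but this is a presentational refinement, not a gap.
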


\begin{theorem}[{\cite[Thm.~1.8]{arason_elman_jacob:indecomposable}}]
\label{thm:trace_from_maximal_subfield}
Let $X$ be a proper variety over $k$ and $V$ an indecomposable vector
bundle on $X$.  If $K$ is a maximal subfield of $A(V)$ then there is
an absolutely indecomposable vector bundle $W$ on $X_K$ such that $V
\isom \tr_{K/k}W$.
\end{theorem}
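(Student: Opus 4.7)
The plan is to build $W$ by first choosing a lift of the inclusion $K \subset A(V)$ into $\End_X(V)$ and then reading $V$ off as a coherent sheaf on $X_K$. Since $X$ is proper and $V$ is indecomposable, Krull--Schmidt makes $\End_X(V)$ local, so $A(V)$ is a division $k$-algebra with center $Z := Z(V)$, making $A(V)$ a central division $Z$-algebra of some degree $d$; by hypothesis $K$ is a maximal subfield, so $[K:Z]=d$ and the centralizer theorem for central simple algebras gives $C_{A(V)}(K)=K$.

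The Wedderburn--Malcev theorem, applied to the projection $\End_X(V) \twoheadrightarrow A(V)$, provides a lift of $K$ to a subalgebra $K \hookrightarrow \End_X(V)$ compatible with this projection. Such a lift endows $V$ with a $K$-action commuting with $\OO_X$, equivalently a module structure over $K \tensor_k \OO_X = (f_K)_*\OO_{X_K}$, where $f_K : X_K \to X$ is the projection; since $f_K$ is finite and affine, this datum is exactly a coherent sheaf $W$ on $X_K$ with $(f_K)_* W = V$, and so $\tr_{K/k} W = V$ by definition. Local freeness of $W$ on $X_K$ is verified Zariski locally on $X$: over $U \subset X$ with $V|_U \cong \OO_U^n$ and $K$ acting $\OO_U$-linearly, Nakayama reduces the question to showing that each fibre $V \tensor_{\OO_X} \kappa(u)$ is a free $(K \tensor_k \kappa(u))$-module, which is a statement about finite modules over the Artinian ring $K \tensor_k \kappa(u)$ once its (possibly mixed) local structure is analyzed.

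Finally, the endomorphisms of $W$ as an $\OO_{X_K}$-module form the centralizer $C_{\End_X(V)}(K)$ inside $\End_X(V)$, and the composite $C_{\End_X(V)}(K) \hookrightarrow \End_X(V) \twoheadrightarrow A(V)$ factors through $C_{A(V)}(K)=K$ and is split surjective onto $K$ via the lift above; its kernel consists of non-units and so lies in the radical, whence $A(W)=K$. This forces absolute indecomposability of $W$: for any field extension $L/K$, the algebra $A(W_L)$ is a quotient of $A(W) \tensor_K L = L$ and hence is $L$ itself, so $W_L$ remains indecomposable. I expect the main obstacle to be local freeness of $W$ in the middle step, where positive characteristic and any inseparable part of $Z/k$ or $K/Z$ must be handled carefully; by contrast, the Wedderburn--Malcev lifting and the centralizer computation are routine once separability obstructions are under control.
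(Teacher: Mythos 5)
The paper cites this result from Arason--Elman--Jacob without reproducing the proof, so there is no in-paper argument to compare against; the following assesses your proposal on its own merits. Your outline --- lift $K$ into $\End_X(V)$ via Wedderburn--Malcev, use the $K$-action to realize $V$ as a sheaf $W$ on $X_K$ with $\tr_{K/k}W = V$, verify local freeness, compute $\End_{X_K}(W) = C_{\End_X(V)}(K)$, and deduce $A(W) = K$ and hence absolute indecomposability --- is the natural route and is correct when $K/k$ is separable, which in fact covers every application of the theorem made in this paper. The centralizer computation and the passage from $A(W) = K$ to indecomposability of $W_L$ for all field extensions $L/K$ are both fine as you have written them.

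The genuine gap is the one your last paragraph misjudges as a minor technicality. The Wedderburn--Malcev lift of the subfield $K \subset A(V)$ back into $\End_X(V)$ is not ``routine once separability obstructions are under control'': it \emph{requires} $K/k$ separable, and without that it can simply fail. Over $k$ of characteristic $p$ with $a \notin k^p$, the local $k$-algebra $R = k[y]/(y^p - a)^2$ has residue field $K = k(a^{1/p})$, but any lift $c = y+j$ of $a^{1/p}$ satisfies $c^p = y^p + j^p = y^p \neq a$ since $j^p \in (\rad R)^p = 0$; so $K$ does not embed in $R$ at all. The statement of the theorem places no separability hypothesis on $K/k$, and the lemma immediately preceding it in the paper (from the same source) tracks $[Z(V):k]_{\mathrm{sep}}$ separately from the full degree, so the inseparable case is squarely in scope. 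The same separability is also what quietly rescues your local-freeness step: when $K/k$ is separable, $K \otimes_k \kappa(x)$ is semisimple, so every finite module over it is projective and flatness of $W$ follows from the fibral criterion, but your sketch gives no argument otherwise. A characteristic-free proof would more plausibly base-change $V$ to $X_K$ and lift primitive idempotents of $A(V) \otimes_k K$ inside $\End_{X_K}(V_K) = \End_X(V)\otimes_k K$ (idempotent lifting along nilpotent ideals is insensitive to separability, unlike Wedderburn--Malcev), extracting $W$ as a direct summand of $V_K$, rather than trying to split a copy of $K$ off of $\End_X(V)$.
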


\begin{definition}
\label{def:reduced_part}
Let $V$ be a vector bundle on a proper smooth scheme $X$ over $k$.  We
write $V=V_1^{n_1} \oplus \dotsm \oplus V_r^{n_r}$ the Krull--Schmidt
decomposition, with $V_1, \dotsc, V_r$ indecomposable.  The
\linedef{reduced part} of $V$ is defined to be $V^{\min} = V_1 \oplus
\dotsm \oplus V_r$.  We remark that $V$ and $V^{\min}$ generate the same
thick subcategory of $\Db(X)$.
\end{definition}

\subsection{Descent of exceptional blocks}
\label{subsec:descent_blocks}

If $\cat{T}_{\ksep}$ admits a full exceptional block, we wish to
classify all the $k$-linear categories $\cat{T}$ base-changing to
$\cat{T}_{\ksep}$.  In particular, if $\cat{T}_{\ksep}$ is generated
by an exceptional block of vector bundles (i.e., it admits a tilting
bundle) in $\Db(X_{\ksep})$ for some smooth proper scheme $X$, the
descent of $\cat{T}_{\ksep}$ to $\cat{T}$ inside $\Db(X)$ is described
by descent of the tilting bundle.

When $\cat{T}_{\ksep}$ is generated by a $\ksep$-exceptional object,
the descent question has been studied by To\"en
\cite{toen:derived-azumaya}. Recall (see Corollary~\ref{cor:Caldararu_conj}) that
Brauer equivalence between Azumaya algebras over a ring $R$ is the
same as Morita $R$-equivalence, and that Morita $k$-equivalence
between local (possibly noncommutative) $k$-algebras is the same as
derived $k$-equivalence by
\cite[Cor.~2.7]{yekutieli:dualizing_morita_derived}.

\begin{theorem}[To\"en {\cite[Cor.~2.15]{toen:derived-azumaya}}]
\label{thm:toen}
If $\cat{T}$ is a $k$-linear triangulated category such that
$\cat{T}_{\ksep} = \Db(\ksep)$, then there exists a central simple
algebra $A$ over $k$ such that $\cat{T} = \Db(k,A)$. In particular,
$\cat{T}$ is generated by a $k$-exceptional object if and only if $A$
is trivial in the Brauer group.
\end{theorem}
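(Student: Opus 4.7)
The plan is to realize $\cat{T}$ as a Galois twist of the canonical dg-enhancement of $\Db(\ksep)$ and to classify such twists by the Brauer group $\Br(k) = H^2(G_k, \Gm)$. The central point is that one must work at the level of a dg (or equivalently, stable $\infty$-) enhancement, since triangulated categories by themselves do not enjoy faithfully flat descent; this is precisely the setting of To\"en's theory of derived Azumaya algebras.

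First I would fix a dg-enhancement $\dgcat{T}$ of $\cat{T}$ together with a dg-equivalence $\dgcat{T} \otimes_k \ksep \simeq \Dbdg(\ksep)$. Since $\Dbdg(\ksep)$ is Morita equivalent to the field $\ksep$ viewed as a dg-algebra concentrated in degree zero, this datum amounts to a semilinear $G_k$-action twisting the Morita 2-group of self-equivalences of $\ksep$.

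Next I would classify such descent data. The Picard gerbe of self-Morita equivalences of the dg-algebra $\ksep$ is banded by $\Gm$, since the only invertible bimodule over a field (up to shift) is the regular one and its automorphism group is $\ksep^{\times}$. Hence the set of $k$-forms of $\Dbdg(\ksep)$ is in bijection with
\[
H^1\bigl(G_k,\, \mathrm{Aut}_{\mathrm{Morita}}(\ksep)\bigr) \;=\; H^2(G_k, \Gm) \;=\; \Br(k),
\]
and the form corresponding to $\alpha \in \Br(k)$ is represented by any central simple $k$-algebra $A$ of class $\alpha$, giving $\cat{T} \simeq \Db(k,A)$. Equivalently, one can produce $A$ directly: choose a compact generator $E$ of $\dgcat{T}$ (one exists because its base change does, by a Bondal--Van den Bergh type argument), set $A = H^0(\mathrm{End}_{\dgcat{T}}(E))$, and observe that $A \otimes_k \ksep$ is Morita equivalent to $\ksep$ and hence a matrix algebra over $\ksep$; so $A$ is central simple over $k$, and Proposition~\ref{prop:exc-coll-and-dg-algebras} applied over $k$ identifies $\cat{T}$ with $\Db(k,A)$.

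The ``in particular'' clause then falls out: if $V$ generates $\cat{T}$ and is $k$-exceptional, then $\End(V) = k$, so $\cat{T} \simeq \Db(k)$, and Corollary~\ref{cor:Caldararu_conj} forces $A$ to be Brauer trivial; conversely, if $A$ is trivial in $\Br(k)$ then $\Db(k,A) \simeq \Db(k)$ is generated by the $k$-exceptional object $k$. The main obstacle throughout is the first descent step: plain triangulated categories do not descend along $\Spec\,\ksep \to \Spec\, k$, so the argument depends essentially on working with enhancements and on the computation of the appropriate automorphism gerbe, which is the nontrivial input from To\"en's derived noncommutative geometry.
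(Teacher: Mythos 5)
Your proposal is correct, and the two routes you sketch are both faithful to the sources. Your ``equivalently, one can produce $A$ directly'' paragraph is essentially the paper's own proof: the paper quotes To\"en's Proposition 4.12 to produce a compact generator of the enhancement $\dgcat{T}$, writes $\cat{T}\simeq\Db(k,A)$ for the resulting dg-algebra $A$, and then invokes To\"en's Propositions 2.3 and 2.12 to pass from ``$A_{\ksep}$ is Morita equivalent to $\ksep$ over the faithfully flat extension $\ksep/k$'' to ``$A$ is a derived Azumaya algebra'' and finally to ``$A$ is Morita equivalent to an ordinary Azumaya (hence central simple) $k$-algebra.'' Your primary route, via the Galois cohomology $H^1\bigl(G_k,\Aut_{\mathrm{Morita}}(\ksep)\bigr)\cong\Br(k)$ of the automorphism $2$-group $\ZZ\times K(\Gm,1)$, is precisely the gerbe-theoretic computation the paper defers to the generalization to exceptional blocks (Proposition~\ref{prop:descend-a-block}), where it must account for the $S_n$-wreath-product structure; for a single block it buys you a cleaner classification statement but requires slightly more machinery than is needed here. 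One small imprecision in your write-up: Proposition~\ref{prop:exc-coll-and-dg-algebras} as stated in the paper assumes $\cat{T}$ is the derived category of a smooth projective scheme or of an abelian category with enough injectives and that a full strong exceptional collection has been exhibited, so it does not literally apply to an abstract $\cat{T}$ with a tilting object; the identification $\cat{T}\simeq\Db(k,A)$ should instead be attributed to To\"en's dg-Morita theory (which is what the paper does), and you should also note that the concentration of $\End_{\dgcat{T}}(E)$ in degree $0$ follows by faithfully flat base change from the analogous fact over $\ksep$.
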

\begin{proof}
Since $\Db(\ksep)$ has compact generator, then so does $\cat{T}$ by
\cite[Prop.~4.12]{toen:derived-azumaya}. Then the dg-category $\cat{T}$
is equivalent to $\Db(k,A)$ for some dg-algebra $A$ over $k$.
As $A_{\ksep}$ is Morita equivalent to $\ksep$ and $\ksep/k$ is
faithfully flat, then by \cite[Prop.~2.3]{toen:derived-azumaya}, $A$
is a derived Azumaya algebra over $k$.  By
\cite[Prop.~2.12]{toen:derived-azumaya}, $A$ is Morita equivalent to
an Azumaya algebra over $k$.
\end{proof}

Recall that $K$ is a finite \'etale $k$-algebra if and only if $K
\tensor_k \ksep \isom \ksep \times \dotsm \times \ksep$.
Equivalently, $K \isom l_1 \times \dotsm \times l_m$ where each
$l_i/k$ is a finite separable field extension.  The $k$-dimension of
$K$ is called the \linedef{degree} of $K$ over $k$.  An Azumaya
algebra $A$ over $K \isom l_1 \times \dotsm \times l_m$ is simply a
product $A \isom A_1 \times \dotsm \times A_m$ where each $A_i$ is a
central simple $l_i$-algebra.  A \linedef{separable algebra} of
dimension $n r^2$ over $k$ is an Azumaya algebra of degree $r$ over an
\'etale algebra $K$ over $k$ of dimension $n$.

The set of $k$-isomorphism classes of \'etale $k$-algebras of degree
$n$ is in bijection with the Galois cohomology set $H^1(k,S_n)$.  The
set of $k$-isomorphism classes of degree $n$ Azumaya algebras over an
\'etale $k$-algebra of degree $r$ is in bijection with the Galois
cohomology set $H^1(k,G)$, where $G$ is the group scheme defined by
the functor $G(R) = \Aut_R(M_r(R^n))$.  Note that $G$ is isomorphic to
the wreath product $\PGL_r \wr S_n = \PGL_r^n \rtimes S_n$ with $S_n$
acting via permutation of the factors.  Then there is a natural map
$H^1(k,G) \to H^1(k,S_n)$ given by taking the isomorphism class of the
center (this map is clearly surjective), whose fiber over the class of
an \'etale algebra $K$ of dimension $n$ over $k$ is given by the set
of isomorphism classes of Azumaya algebras over $K$ of degree $r$.

Finally, by the classical theory of central simple algebras over a
field, each central simple $k$-algebra $A$ of degree $n$ contains a
maximal \'etale $k$-subalgebra $K$ of degree $n$ splitting $A$, i.e.,
such that $A \tensor_k K \isom M_n(K)$.

\begin{proposition}\label{prop:descend-a-block}
Let $\cat{T}$ be a $k$-linear triangulated category such that
$\cat{T}_{\ksep}$ is $\ksep$-equivalent to $\Db(\ksep,(\ksep)^n)$.
Then there exists an \'etale algebra $K$ of degree $n$ over $k$, an
Azumaya algebra $A$ over $K$, and a $k$-linear equivalence $\cat{T}
\equi \Db(K/k,A)$.  In this case, $\cat{T}$ is an indecomposable
category if and only if $K$ is a field extension of $k$.
\end{proposition}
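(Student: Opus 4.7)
The plan is to read off $K$ from the Galois permutation action on the indecomposable summands of a tilting object for $\cat{T}_{\ksep}$, split $\cat{T}$ into orbit-wise admissible factors, and then feed each factor into Toën's theorem (Theorem~\ref{thm:toen}) to produce the corresponding simple factor of $A$. The hypothesis $\cat{T}_{\ksep} \equi \Db(\ksep,(\ksep)^n) \equi \prod_{i=1}^{n}\Db(\ksep)$ exhibits $n$ completely orthogonal $\ksep$-exceptional objects $\bar{W}_1,\dotsc,\bar{W}_n$ whose isomorphism classes are canonical, being the indecomposable generators of the product factors. Via the canonical dg-enhancement discussed in \S\ref{subsec:Scalar_extension_of_triangulated_categories}, the absolute Galois group $G_k$ acts on $\cat{T}_{\ksep}$ and permutes $\{\bar{W}_1,\dotsc,\bar{W}_n\}$, giving a continuous homomorphism $G_k \to S_n$ and hence, by standard Galois descent, an étale $k$-algebra $K$ of degree $n$ decomposing as $K \isom l_1 \times \dotsm \times l_m$ according to the orbit decomposition of $\{1,\dotsc,n\}$.

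For each orbit $O_i$ the subcategory $\sod{\bar{W}_j : j \in O_i} \subset \cat{T}_{\ksep}$ is $G_k$-stable and completely orthogonal to the others, so by Lemma~\ref{lem:descent_admissible} together with Lemma~\ref{lemma-base-change-of-semiorth} it descends to an admissible summand $\cat{T}_i \subset \cat{T}$, yielding a completely orthogonal product $\cat{T} \equi \cat{T}_1 \times \dotsm \times \cat{T}_m$. This reduces the problem to the case $K = l$ a field, where $G_k$ acts transitively on $\{\bar{W}_1,\dotsc,\bar{W}_n\}$ and $n = [l:k]$. Base changing to $l$, the stabilizer $G_l$ fixes the class of (say) $\bar{W}_1$, so $\sod{\bar{W}_1}$ descends to an admissible subcategory $\cat{T}' \subset \cat{T}_l$ whose base change to $\ksep$ is $\Db(\ksep)$. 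Toën's theorem (Theorem~\ref{thm:toen}) then supplies a central simple $l$-algebra $A$ and an $l$-linear equivalence $\cat{T}' \equi \Db(l, A)$. Since the translates of $\cat{T}'$ under a set of coset representatives of $G_l$ in $G_k$ recover all the factors $\sod{\bar{W}_j}$ in $\cat{T}_{\ksep}$, these factors carry a semi-linear descent datum whose $k$-invariants are $\cat{T}$; unwinding this via restriction of scalars along $l/k$ identifies $\cat{T}$ with $\Db(l/k, A)$.

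The indecomposability statement is then immediate: if $K$ is a field the transitive $G_k$-action on the set of indecomposable generators prevents any nontrivial $G_k$-stable product decomposition of $\cat{T}_{\ksep}$ from descending, whereas if $K$ fails to be a field, Lemma~\ref{lem:indecomposable} together with the splitting of the previous paragraph exhibits $\cat{T}$ as a nontrivial product. The main obstacle will be the compatibility in the last step: verifying that the $l$-linear equivalence produced by Toën can be chosen $G_k$-equivariantly, up to the twist encoding $K$, so that restriction of scalars indeed yields a $k$-linear equivalence $\cat{T} \equi \Db(l/k, A)$ rather than only an $l$-linear one. This demands careful bookkeeping at the level of dg-enhancements, and relies on the fact that Toën's derived Azumaya algebra over $l$ is rigid enough to descend to a genuine Azumaya algebra; it is here that the proof is most delicate.
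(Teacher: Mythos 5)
Your strategy — read off $K$ from the Galois permutation of the $n$ completely orthogonal $\ksep$-exceptional objects, split $\cat{T}$ orbit-wise, and then feed a single orbit into Toën's theorem after base changing to the residue field $l$ — is a natural first attempt, but it stalls at exactly the points you flag, and the tools you invoke do not repair them. Lemma~\ref{lem:descent_admissible} and Lemma~\ref{lemma-base-change-of-semiorth} are both \emph{base-change} statements for admissible subcategories of $\Db(X)$ with $X$ a smooth projective $k$-variety: the first says that base change preserves admissibility, and the second says that a collection of subcategories \emph{already defined over $k$} whose base change is semiorthogonal was already semiorthogonal. Neither produces a subcategory of $\cat{T}$ over $k$ from a Galois-stable subcategory of $\cat{T}_{\ksep}$, and neither applies here at all, since $\cat{T}$ is an abstract dg-enhanced $k$-linear triangulated category, not a priori embedded in any $\Db(X)$. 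So your ``descends to an admissible summand $\cat{T}_i \subset \cat{T}$'' is exactly the Galois-descent assertion that the paper explicitly warns (\S\ref{sec:descent_sodec}) is problematic for triangulated categories, and you give no mechanism for it. The same issue resurfaces, as you note, in the last step: the $l$-linear equivalence $\cat{T}' \simeq \Db(l,A)$ coming from Theorem~\ref{thm:toen} needs to be promoted to a $k$-linear equivalence $\cat{T} \simeq \Db(l/k,A)$, and ``unwinding the semilinear descent datum'' is precisely the missing proof.

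The paper's actual proof takes a different and self-contained route that dissolves these descent difficulties by absorbing them into Toën's stack formalism. Rather than decompose first and apply the $n=1$ case of Toën's theorem afterwards, it directly classifies forms of the dg-algebra $k^n$: one computes the derived group stack of dg-autoequivalences $\stack{Aut}_{dg/k}(k^n)$, showing (via the $n\times n$ matrix description of perfect bimodules from \cite[Thm.~8.15]{toen:derived-morita} and the observation $\GL_n(\NN)=S_n$) that it is the wreath product $(\ZZ\times K(\Gm,1))\wr S_n$. The classifying stack of this group stack is the stack of forms, and the resulting fibration
$$(K(\ZZ,1)\times K(\Gm,2))^n \;\longrightarrow\; \stack{F} \;\longrightarrow\; K(S_n,1)$$
yields on $k$-sections the exact sequence $\Br(k)^n \to \stack{F}(k) \to H^1(k,S_n) \to 1$, so the permutation representation (your \'etale algebra $K$) and the twisted Brauer class (your Azumaya algebra $A$) are read off simultaneously from a single torsor computation; there is no separate step where one has to descend a decomposition or an equivalence. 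If you want to make your more explicit route rigorous, the missing ingredient is a genuine Galois descent theorem for dg-categories along $\ksep/k$ (or the intermediate extension $l/k$), which is essentially equivalent to Toën's machinery — at which point you may as well run the paper's argument.
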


\begin{proof}
To\"en \cite[Cor.~3.12]{toen:derived-azumaya} shows that the derived
group stack of autoequivalences $\stack{Aut}_{dAz/k}(k)$ of the
trivial derived Azumaya $k$-algebra $k$ is equivalent to $\ZZ \times
K(\Gm,1)$, as any dg-autoequivalence of an \'etale $k$-algebra $R$ is
given by tensoring with some invertible perfect dg-$R$-module, which
by \cite[Thm.~8.15]{toen:derived-morita}, are all of the form $L[n]$
for an invertible $R$-module $L$ and $n \in \ZZ$.

As a generalization, we claim that the derived group stack of
autoequivalences $\stack{Aut}_{dg/k}(k^n)$ of the \'etale $k$-algebra $k^n$
(thought of as a dg-algebra over $k$), is equivalent to the wreath
product $(\ZZ \times K(\Gm,1)) \wr S_n$, thought of as $n \times n$
generalized permutation matrices filled with shifts of invertible
modules.  Indeed, for an \'etale $k$-algebra $R$, any dg-endofunctor
of $R^n$ can be viewed as an $n \times n$ matrix $M$ of perfect
dg-$R$-modules by \cite[Thm.~8.15]{toen:derived-morita}.  Considering
the $n\times n$ matrix $\overline{M}$ of ranks of the entries of $M$,
we see that $M$ is invertible implies that $\overline{M}$ is
invertible and its inverse also consists of nonnegative integers,
which is well-known to imply that $\overline{M}$ is a permutation
matrix (i.e., $\GL_n(\NN) = S_n$).  Hence $M$ must have exactly a
single nonzero module in each row and column, and this entry must have
rank 1, giving the desired form.

We are now interested in the stack $\stack{F} = \Forms_{dg/k}(k^n)$
associated to the prestack of dg-algebras \'etale locally Morita
equivalent to $k^n$, which is simply $K(\stack{Aut}_{dg/k}(k^n),1)$,
as in \cite[Cor.~3.12]{toen:derived-azumaya}.  The exact sequence of
group stacks
$$
1 \to (\ZZ \times K(\Gm,1))^n \to \stack{Aut}_{dg/k}(k^n) \to S_n \to 1
$$
gives rise to a fibration of stacks
$$
(K(\ZZ,1) \times K(\Gm,2))^n \to \stack{F} \to K(S_n,1).
$$
Hence to any dg-algebra $A$ \'etale locally Morita equivalent to
$k^n$, we have a class $\phi(A)$ in $\stack{F}(k)$ (the sections over
$k$ of the stack $\stack{F}$) and the association $A \mapsto \phi(A)$
is injective.  Hence $\stack{F}(k)$ fits into an exact sequence
$$
\Br(k)^n \to \stack{F}(k) \to H^1(k,S_n) \to 1,
$$
(using that $H^1(k,\ZZ)=0$) where the map $z : \stack{F}(k) \to
H^1(k,S_n)$ sends $\phi(A)$ to the isomorphism class of the center
$Z(A)$.  By a standard cohomological twisting argument, the fiber of
$z$ over an \'etale algebra $K$ of degree $n$ over $k$, is the image
of $\Br(K)$.  This gives the desired description.  The final claim
follows since for any field extension $K$ over $k$, the $k$-linear
category $\Db(K/k,A)$ is indecomposable; conversely, given any
decomposition $K=K_1 \times K_2$, there is an induced decomposition $A
= A_1 \times A_2$, with $A_i$ Azumaya over $K_i$, and then $\Db(K/k,A)
\equi \Db(K_1/k,A_1) \times \Db(K_2/k,A_2)$.
\end{proof}

Let us state explicitly the following corollary which will be
extensively used in our applications.

\begin{corollary}
\label{prop:single-exc-bundle-G-inv}
Let $X$ be a smooth projective $k$-variety, $K/k$ a Galois extension
with group $G$. Suppose that there is an admissible subcategory
$\cat{T}$ of $\Db(X)$ such that $\cat{T}_K=\langle V_1, \ldots, V_n
\rangle$ is an exceptional block of vector bundles on $X_K$. Then for
any object $A$ in $\cat{T}$ the Chern class $c_1(A)$ is a $\ZZ$-linear
combination of the $c_1(V_i)$.  In particular, there exists a
nonzero $G$-invariant $\ZZ$-linear combination of the $c_1(V_i)$.
\end{corollary}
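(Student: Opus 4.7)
The plan has two parts, matching the two assertions. For the first, I would exploit that the Grothendieck group of an exceptional block is freely generated by the classes of its members. Since $\cat{T}_K = \sod{V_1,\ldots,V_n}$ is generated by an exceptional block with each $V_i$ being $A_i$-exceptional, the orthogonality relations identify $K_0(\cat{T}_K)$ with the free abelian group on $[V_1],\ldots,[V_n]$, where the coordinate of any class $[E]$ along $[V_i]$ is recoverable as $\chi(V_i,E)/\dim_k A_i$. Applied to $E = A_K$ for any $A$ in $\cat{T}$, this gives $[A_K] = \sum_i a_i [V_i]$ for unique integers $a_i$. Since the first Chern class factors through $K_0$ and is compatible with the flat pullback $f_K\pullback : \Pic(X) \to \Pic(X_K)$, the identity $f_K\pullback c_1(A) = c_1(A_K) = \sum_i a_i c_1(V_i)$ in $\Pic(X_K)$ settles the first claim.

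For the second assertion, I would produce an explicit object of $\cat{T}$ by Galois averaging. Let $f_K : X_K \to X$ denote the projection and set $W = (f_K)\pushforward V_1$. By flat base change for the Galois cover, $f_K\pullback W \isom \bigoplus_{g \in G} g\pullback V_1$ in $\Db(X_K)$. Because $\cat{T}_K$ is the scalar extension of a $k$-linear subcategory, it is stable under the $G$-action on $\Db(X_K)$; hence every $g\pullback V_1$ belongs to $\cat{T}_K$, and so does $f_K\pullback W$. The compatibility of the right adjoint $\pi : \Db(X) \to \cat{T}$ with base change, together with faithful flatness of $K/k$, then forces $W$ itself to lie in $\cat{T}$: indeed, the cone of $\pi(W) \to W$ has trivial base change to $X_K$ and therefore vanishes. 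Applying the first part to $W$ yields $f_K\pullback c_1(W) = \sum_g g\pullback c_1(V_1) = \sum_i b_i c_1(V_i)$ for integers $b_i$, and this combination is $G$-invariant because it is pulled back from $\Pic(X)$. The vector $(b_1,\ldots,b_n)$ is not identically zero: additivity of rank gives $[f_K\pullback W] = \sum_g [g\pullback V_1]$ positive total rank $|G|\cdot \rank(V_1)$, so its expansion in the basis $[V_1],\ldots,[V_n]$ has at least one nonzero coefficient.

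The main delicate point is verifying that the pushforward $W$ truly descends to lie in $\cat{T}$ rather than only in $\Db(X)$. This rests on the base-change compatibility of projection functors onto admissible subcategories, supplied by Kuznetsov's theorem that such projections are of Fourier--Mukai type, together with faithful flatness of $K/k$ to deduce the vanishing of a cone from the vanishing of its base change. No surprises are expected here, but this is the step that genuinely uses the descent machinery developed in Section \ref{sec:descent_sodec}.
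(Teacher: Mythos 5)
Your proof is correct, but it takes a genuinely different route from the paper's. The paper's argument is short and structural: it invokes Proposition~\ref{prop:descend-a-block} (which rests on To\"en's derived Azumaya theory) to conclude that $\cat{T} \simeq \Db(K'/k,A')$ for an \'etale algebra $K'$ and Azumaya algebra $A'$, hence has cohomological dimension $0$; any object of $\cat{T}$ therefore splits as a direct sum of its cohomologies, and by thickness of $\cat{T}$ one reduces to the case where $A$ is a sheaf, whereupon $A_K$ is literally a direct sum of the $V_i$ and everything falls out at once (nonnegative coefficients, $G$-invariance because $A$ is defined over $k$, nontriviality because $A \neq 0$). Your argument decouples the two claims: for the general $A$ you work at the level of $K_0(\cat{T}_K) \cong \bigoplus_i \ZZ[V_i]$ and use compatibility of $c_1$ with $f_K\pullback$, which is softer than invoking Proposition~\ref{prop:descend-a-block} and also makes no sign claim on the $a_i$; for the existence assertion you manufacture the explicit object $W = (f_K)\pushforward V_1$ by Galois averaging and verify it lies in $\cat{T}$ via the base-change compatibility of the projection functor (Lemma~\ref{lem:descent_admissible} plus faithfully flat descent of the vanishing of a cone). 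The trade-off: the paper's route gets more for less once Proposition~\ref{prop:descend-a-block} is available (nonnegative multiplicities, and the statement holds for \emph{every} nonzero $A$ without singling one out), while yours is more elementary and self-contained, avoiding the derived Azumaya machinery at the cost of the extra descent verification that you correctly flag as the delicate point. One minor slip: in the formula $\chi(V_i,E)/\dim_k A_i$ the dimension should be taken over $K$, not $k$, since $\cat{T}_K$ is $K$-linear; this does not affect the argument, as you only need $K_0(\cat{T}_K)$ to be freely generated by the $[V_i]$.
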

\begin{proof}
Proposition \ref{prop:descend-a-block} implies that $\cat{T}$ has
cohomological dimension 0, so that every object is the direct sum of
its cohomologies, and this decomposition is clearly $G$-invariant. By
the thickness of $\cat{T}$, we can then suppose that $A$ is a sheaf.
Then $A_K$ is a direct sum of the vector bundles $V_i$, and is
nontrivial if $A \neq 0$.
\end{proof}

\medskip

Let us end this section by illustrating Lemma \ref{lemma-base-change-of-semiorth} by
examples known in the literature of descent of vector bundles.

\begin{example}
\label{ex:SB_decomposition}
Let $X$ be the Severi--Brauer variety associated to a central simple
algebra $A$ of degree $n+1$ over $k$, see Example \ref{exam:SB} for
definitions. 
Let $K/k$ be a field extension such that $X_K \simeq
\PP^n_K$. Thanks to Beilinson \cite{beilinson}, we have the following
semiorthogonal decomposition
\begin{equation}\label{semibeilinson}
\Db(\PP^n_K) = \langle \ko_{\PP^n_K}, \ko_{\PP^n_K}(1), \ldots, \ko_{\PP^n_K}(n) \rangle
\end{equation}
On the other hand, $\Db(X)$ admits the following semiorthogonal
decomposition\footnote{In \cite{bernardara:brauer_severi}, the full
and faithful embedding of the subcategory of $A\inv$-modules into
$\Db(X)$ is given by the choice of an ``\'etale local form'' $\kf$ of
$\ko_X(1)$; an $A$-module $M$ is sent to $M \otimes \kf$, which can be
endowed with the structure of an $\ko_X$-module. Notice that in \it
loc.cit. \rm $A\inv$ is the algebra obstructing the descent of
$\ko(1)$, so that our notations are opposite to the ones used there.}
\cite[Cor.~4.7]{bernardara:brauer_severi}
\begin{equation}\label{deco-for-bs}
\Db(X) = \langle\Db(k), \Db(k,A), \ldots,\Db(k,A^n)\rangle.
\end{equation}
We can see the semiorthogonal decomposition \eqref{semibeilinson} as
the base change of \eqref{deco-for-bs} using descent of vector bundles
as follows. The exceptional line bundle $\ko_{\PP^n_K}$ clearly
descends to $X$, and hence generates $\Db(k)$ inside $\Db(X)$.
Consider now the exceptional line bundle $\ko_{\PP^n_K}(1)$, and
consider it as a stand-alone block. It is well known (cf.\
\cite[\S8.4]{quillen:higher_K-theory}), that
$V=\ko_{\PP^n_K}(1)^{\oplus n+1}$ descends to $X$ and that
$\End(V)=A$.  Similar arguments apply to $\ko_{\PP^n_K}(i)$.  Hence
the decomposition \eqref{deco-for-bs} of $\Db(X)$ can be obtained by
descending the exceptional collection \eqref{semibeilinson} of
$\Db(\PP^n_K)$.
\end{example}

Other known (similar) examples include: the decomposition of a
relative Severi--Brauer variety \cite{bernardara:brauer_severi}, whose
base change is the decomposition of a projective bundle given by Orlov
\cite{orlovprojbund}; the decomposition of a generalized
Severi--Brauer variety given by Blunk \cite{blunk-gen-bs}, whose base
change is the decomposition of a Grassmannian variety given by
Kapranov \cite{kapranovquadric}; and the decomposition of a quadric
hypersurface given in \cite{auel-berna-bolo} (generalizing the work of
Kuznetsov \cite{kuznetquadrics}), whose base change is the
decomposition of a quadric given by Kapranov \cite{kapranovquadric}.

\begin{remark}\label{rmk:descend-a-vb}
One might wonder, motivated by the previous examples, whether the
descent of an exceptional block $\cat{E} \subset \Db(X_{\ksep})$,
generated by vector bundles $V_i$ can be realized by the descent of
the tilting bundle $V = \bigoplus_{i} V_i$.  If $V$ were Galois
invariant, the results of \S\ref{subsec:classical_descent} would
produce a vector bundle $W$ on $X$ pure of type $V$. Moreover, this
would explicitly let us consider the endomorphism algebra
$\mathrm{End}(W)$ to obtain an algebraic description of the descended
category. We wonder whether a converse statement holds: if the block
$\cat{E}$ descends, then is the tilting bundle $V$ Galois-invariant?
\end{remark}

\section{Geometrically rational surfaces}
\label{surfaces}

In this section we collect together some known results on
geometrically rational surfaces, including their classification,
Chow groups, and derived categories.

\subsection{Classification and first properties}

Let $k$ be a field, $\ksep$ a separably closure, and $\kalg$ an
algebraic closure.  A smooth projective geometrically integral surface
$S$ over $k$ such that $\overline{S} = S \times_k \kalg$ is
$\kalg$-rational is called a \linedef{geometrically rational
surface}. Recall that $S$ is a \linedef{del Pezzo} surface if
$\omega_S\dual$ is ample. The \linedef{degree} of a geometrically
rational surface is the self-intersection number $d=\omega_S \cdot \omega_S$.

We say that a field extension $l$ of $k$ is a \linedef{splitting
field} for $S$ if $S \times_k l$ is birational to $\PP^2_l$ via a
sequence of monoidal transformations centered at closed $l$-points.
An important fact is that geometrically rational surfaces are
separably split.  The following result allows one to consider the
separable closure $\ksep$ instead of the algebraic closure.

\begin{proposition}[{\cite[Thm.~1]{coombes:rational_surface_separably_split},
\cite[Thm.~1.6]{varilly:arithmetic_del_pezzo}}]
\label{prop:sep_rat}
If $S$ is a geometrically rational surface over a field $k$, then $S$
is split over $\ksep$. 
\end{proposition}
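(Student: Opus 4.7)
The plan is to run a minimal model program on $S_{\ksep}$, showing that every geometric $(-1)$-curve is already defined over $\ksep$, and that the resulting minimal model is a trivial form of $\PP^2$ or of a Hirzebruch surface, hence $\ksep$-birational to $\PP^2_{\ksep}$ via standard monoidal transformations.

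First I would establish that $\Pic(S_{\ksep}) \isomto \Pic(S_{\kalg})$. Geometric rationality yields $H^1(S_{\kalg}, \ko_{S_{\kalg}}) = 0$, so the Lie algebra of $\mathrm{Pic}_{S/k}$ at the identity vanishes, forcing $\mathrm{Pic}_{S/k}$ to be étale over $k$. Consequently $\mathrm{Pic}_{S/k}(\ksep) = \mathrm{Pic}_{S/k}(\kalg)$ because $\kalg/\ksep$ is purely inseparable. Combined with $\Br(\ksep) = \Br(\kalg) = 0$, the Leray sequence \eqref{eq:Leray} identifies these values with $\Pic(S_{\ksep})$ and $\Pic(S_{\kalg})$ respectively.

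Given a $(-1)$-curve $E \subset S_{\kalg}$, its class descends to a line bundle $L$ on $S_{\ksep}$. Rigidity of $E$ gives $h^0(L_{\kalg}) = 1$, and flat base change yields $h^0(L) = 1$, so the unique effective divisor $E' \in |L|$ satisfies $E'_{\kalg} = E$. The curve $E'$ is smooth of arithmetic genus $0$ with $(E')^2 = -1$; since forms of $\PP^1$ over $\ksep$ are classified by $\Br(\ksep)[2] = 0$, we have $E' \isom \PP^1_{\ksep}$. Castelnuovo's contractibility theorem then produces a blow-down $S_{\ksep} \to S'_{\ksep}$ onto a smooth geometrically rational surface, centered at a $\ksep$-rational point. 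Iterating (the Picard rank strictly decreases) terminates at a smooth projective $\ksep$-surface $T$ for which $T_{\kalg}$ is a minimal rational surface, i.e.\ either $\PP^2_{\kalg}$ or a Hirzebruch surface $\FF_n$ with $n \neq 1$.

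Finally, I would verify that $T$ is itself a trivial form. If $T_{\kalg} \isom \PP^2_{\kalg}$ then $T$ is a Severi--Brauer surface, classified by $\Br(\ksep)[3] = 0$, so $T \isom \PP^2_{\ksep}$. If $T_{\kalg} \isom \FF_n$, then $T$ corresponds to a class in $H^1_{\mathrm{et}}(\ksep, \mathrm{Aut}(\FF_n))$, which vanishes over the separably closed base $\ksep$ by standard vanishing for smooth connected linear algebraic groups, with $H^1(\ksep, \ZZ/2) = 0$ absorbing the component group of $\mathrm{Aut}(\FF_0)$. Both $\PP^2_{\ksep}$ and $\FF_n$ are $\ksep$-birational to $\PP^2_{\ksep}$ through elementary transformations centered at $\ksep$-rational points, completing the required chain of monoidal transformations. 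The main technical obstacle is the descent of $(-1)$-curves across the purely inseparable extension $\kalg/\ksep$: the geometric rationality hypothesis, via étaleness of $\mathrm{Pic}_{S/k}$, is precisely the input that makes this descent automatic.
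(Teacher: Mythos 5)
The paper does not reproduce a proof but cites the result to Coombes and to V\'arilly-Alvarado; your reconstruction correctly recovers the strategy of those references. The crucial step --- deducing from $H^1(S_{\kalg},\ko_{S_{\kalg}})=0$ that $\mathrm{Pic}_{S/k}$ is \'etale, so that $\Pic(S_{\ksep})\to\Pic(S_{\kalg})$ is an isomorphism and hence every geometric $(-1)$-curve descends to a genuine $(-1)$-curve over $\ksep$ --- is precisely Coombes's insight, and the subsequent minimal model program over $\ksep$ is carried out correctly, including the point that the unique effective divisor in $|L|$ inherits smoothness and $\PP^1$-ness over $\ksep$ because $\Br(\ksep)=0$.

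One small point to tighten in the last paragraph: since the trivialization $T_{\kalg}\isom\FF_n$ is over a possibly inseparable extension of $\ksep$, the relevant classifying set is a priori $H^1_{\mathrm{fppf}}(\ksep,\mathrm{Aut}(\FF_n))$ rather than $H^1_{\mathrm{et}}$; the two agree (and vanish, trivially, over a separably closed base) precisely because $\mathrm{Aut}(\FF_n)$ is a smooth group scheme, a fact you should state rather than appeal to ``standard vanishing for smooth connected linear algebraic groups'' (neither connectedness nor linearity is what is doing the work). Alternatively one can avoid the automorphism scheme entirely: the fiber class $F$ descends, $|F|$ defines a fibration $T\to\PP^1_{\ksep}$ with smooth geometric fibers, and $\Br(\PP^1_{\ksep})=\Br(\ksep)=0$ makes this a Zariski-locally trivial $\PP^1$-bundle, hence a Hirzebruch surface by Grothendieck's splitting of vector bundles on $\PP^1$.
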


A surface $S$ is \linedef{minimal} over $k$, or \linedef{$k$-minimal},
if any birational morphism $f: S \to S'$, defined over $k$, is an
isomorphism.  Over a separably closed field, the only minimal rational
surfaces are the projective plane and projective bundles over the
projective line.  Over a general field, this is no longer true.
Minimal geometrically rational surfaces have been completely
classified, and we have the following list (see \cite{manin-book}, and
the recent \cite{hassett-ratsurf}):

\begin{enumerate}
\item\label{minimal:P2} $S=\PP^2_k$ is a projective plane, so $\Pic(S)
= \ZZ$, generated by the hyperplane $\ko(1)$;

\item\label{minimal:quadric} $S \subset \PP^3_k$ is a smooth quadric and $\Pic(S) = \ZZ$, generated by the hyperplane section $\ko(1)$;

\item\label{minimal:dP} $S$ is a del Pezzo surface with $\Pic(S) = \ZZ$, generated by the canonical class $\omega_S$;

\item\label{minimal:conic} $S$ is a conic bundle $f : S \to C$ over a geometrically rational curve, with $\Pic(S) \simeq \ZZ \oplus \ZZ$.
\end{enumerate}

\begin{example}
\label{exam:SB}
A \linedef{Severi--Brauer variety} is a variety $X$ over $k$ such that
$\overline{X} \isom \PP^{n-1}_{\kalg}$ for some $n \geq 2$.  The set
of isomorphism classes of Severi--Brauer varieties $X$ of dimension
$n-1$ over $k$ is in bijection with the set of $k$-isomorphism classes
of central simple algebras $A$ of degree $n$ over $k$, and we will
write $X=\SB(A)$ accordingly.  By a theorem of Ch\^{a}telet, a
Severi--Brauer variety $X$ is $k$-rational if and only if $X$ is
$k$-isomorphic to projective space if and only if $X(k) \neq
\varnothing$, cf.\ \cite[Thm.~5.1.3]{gille_szamuely}.  In this case,
we say that $X$ \linedef{splits} and remark that $X$ always splits
after a finite separable field extension.  The Galois action on
$\Pic(X_{\ksep}) = \Pic(\PP^{n-1}_{\ksep}) = \ZZ$ is trivial since it
preserves dimensions of spaces of global sections.  It is a theorem of
Lichtenbaum that the Brauer obstruction (see Remark~\ref{rem:Leray})
for the Galois invariant class of $\OO_{\PP^2_{\ksep}}(1)$ to descend
to $X$ is precisely the Brauer class of $A$, cf.\
\cite[Thm.~5.4.10]{gille_szamuely}.  Thus for a nonsplit
Severi--Brauer variety $X$, the low terms of the Leray spectral
sequence (see \eqref{eq:Leray}) show that $\omega_X$ generates
$\Pic(X)$.

A \linedef{Severi--Brauer surface} $S$ is a Severi--Brauer variety of
dimension 2, hence is a minimal del Pezzo surface.  As intersection
numbers do not change under scalar extension, $S$ has degree 9.  By
the above analysis of the Picard group, a nonsplit Severi--Brauer
surface belongs to the case \ref{minimal:dP}, while the split
Severi--Brauer surface $\PP^2_k$ belongs to case \ref{minimal:P2}.
\end{example}

\begin{example}
\label{exam:involution}
An \linedef{involution variety} is a variety $X$ over $k$ such that
$\overline{X}$ is $\kalg$-isomorphic to a smooth quadric in
$\PP^{n}_{\kalg}$ for some $n\geq 2$.  The set of isomorphism classes
of involution varieties over $k$ is in bijection with the set of
$k$-isomorphism classes of central simple algebras $(A,\sigma)$ of
degree $n+1$ over $k$ together with a \linedef{quadratic pair}
$\sigma$, a generalization to arbitrary characteristic of the notion
of an orthogonal involution, see \cite[\S5.B]{book_of_involutions} for
definitions.  In this case, $X$ is a degree 2 hypersurface in the
Severi--Brauer variety $\SB(A)$.  Attached to an involution variety
$X$ is the even Clifford algebra $C_0(A,\sigma)$, defined by Jacobson
\cite{jacobson:clifford_algebra_involution} via Galois descent in
characteristic not 2 and by \cite[\S8]{book_of_involutions} in
general.  If $A$ is split, then the quadratic pair on $A$ is adjoint
to a quadratic form, uniquely defined up to scaling and $X \subset
\SB(A) \isom \PP^n_k$ is the associated quadric hypersurface.  We say
that $X$ is an \linedef{anisotropic quadric} if $A$ is split yet $X(k)
= \varnothing$.

From now on, we assume that $X$ has even dimension. In this case,
$C_0(A,\sigma)$ is an Azumaya algebra over its center $l$, which is an
\'etale quadratic extension of $k$, called the \linedef{discriminant
extension} of $X$.  The Galois action on $\Pic(X_{\ksep}) \isom
\ZZ^2$, since it preserves dimensions of spaces of global sections
factors through a permutation of the divisors defining the two rulings
of the quadric over $\ksep$.  This action coincides with the Galois
action on the embeddings $l \to \ksep$ of the discriminant extension.
By a comparison of the low degree terms of the Leray spectral
sequences for $X \to \Spec\, k$ and $\SB(A) \to \Spec\, k$, we see
that the Brauer obstruction to the Galois invariant class of
$\OO_{X_{\ksep}}(1)$ (which is the sum of the two classes of rulings)
is precisely the Brauer class of $A$.  If the discriminant extension
is trivial, then both ruling classes are Galois invariant, and each
one could have its own Brauer obstruction.  Thus for an involution
variety $X$, the low terms of the Leray spectral sequence (see
\eqref{eq:Leray}) show that $\omega_X$ generates $\Pic(X)$ if and only
if the discriminant extension is nontrivial.

An \linedef{involution surface} $S$ is an involution variety of
dimension 2, i.e., $S_\ksep \isom
\PP^1_{\ksep}\times\PP^1_{\ksep}$. In particular, $S$ is a minimal
geometrically rational del Pezzo surface.  As intersection
numbers do not change under scalar extension, $S$ has degree 8.  The
set of $k$-isomorphism classes of involution surfaces is also in
bijection with the set of isomorphism classes of pairs $(l,B)$, where
$l$ is an \'etale quadratic extension of $k$ and $B$ is a quaternion
algebra over $l$, see \cite[\S15.B]{book_of_involutions}.  Given an
involution variety $S$ corresponding to a central simple algebra
$(A,\sigma)$ of degree 4 with quadratic pair, the even Clifford
algebra $C_0(A,\sigma)$ is a quaternion algebra over the discriminant
extension.  Conversely, given a pair $(l,B)$, the associated
involution variety is the Weil restriction $S=R_{l/k} \SB(B)$.  As far
as placing involution surfaces into the classification of minimal
geometrically rational surfaces, there are several cases: If the
discriminant extension is trivial, then $S$ belongs to case
\ref{minimal:conic}.  In this case $S \isom C_1 \times C_2$ is a
product of Severi--Brauer curves.  Writing $C_i = \SB(B_i)$ for
quaternion algebras $B_i$ over $k$, then $A \isom B_1 \tensor B_2$. In
this case, the Brauer class of $A$ is trivial if and only if $C_1
\isom C_2$.  If the discriminant extension is nontrivial, then $S$
belongs to case \ref{minimal:quadric} or \ref{minimal:dP} depending on
whether the Brauer class of $A$ is trivial or not, respectively.
\end{example}

\begin{example}
\label{exam:dP78}
Let $S$ be a geometrically rational del Pezzo surface of degree 8 not
isomorphic to an involution variety.  The unique exceptional curve on
$S_{\ksep}$ is a Galois invariant subvariety, hence can be contracted
to arrive at a del Pezzo surface of degree 9 with a rational point,
which is thus $\PP^2_k$. Thus $S \to \PP^2_k$ is the blow up of
$\PP^2_k$ at a single $k$-rational point.  In particular, $S$ is a
rational del Pezzo surface and is never minimal.

Let $X$ be a geometrically rational del Pezzo surface of degree 7.  As
$S_{\ksep}$ is the blow-up of $\PP^2_{\ksep}$ at two points, the
exceptional divisor consists of a Galois invariant pair of
$(-1)$-curves, hence can be contracted to arrive at a del Pezzo
surface of degree 9 with a point of degree 2, which is thus $\PP^2_k$.
Thus $S \to \PP^2_k$ is the blow up of $\PP^2_k$ at a closed point
of degree 2.  In particular, $S$ is a rational del Pezzo surface and
is never minimal.
\end{example}

\begin{remark}
If $S$ is a minimal del Pezzo surface of degree $\leq 6$, then
$\Pic(S) \to \Pic(S_{\ksep})^{G_k}$ is an isomorphism, cf.\ \cite[Lemma~2.5,~Prop.~5.3]{colliot-karpenko-merkurev}.
\end{remark}

We continue our general discussion of geometrically rational surfaces.
Denote by $\rho(S)$ the Picard rank of $S$.

\begin{proposition}
\label{prop:K_0-minimal}
If $S$ is a geometrically rational surface over a field $k$, then
$K_0(S)_\QQ$ is a $\QQ$-vector space of dimension $2+\rho(S)$.  In
particular, if $S$ is minimal then $K_0(S)_{\QQ}$ has dimension 3 or
4, and in the latter case $S$ has a conic bundle structure.
\end{proposition}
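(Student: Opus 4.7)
The plan is to pass from $K$-theory to Chow groups via the Chern character and then compute each piece of $\CH^*(S)_\QQ$.

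First, since $S$ is smooth and projective over $k$, the Chern character gives an isomorphism of $\QQ$-vector spaces
\[
\ch : K_0(S)_\QQ \isom \CH^*(S)_\QQ = \CH^0(S)_\QQ \oplus \CH^1(S)_\QQ \oplus \CH^2(S)_\QQ.
\]
Because $S$ is geometrically integral we have $\CH^0(S) = \ZZ$, and $\CH^1(S) = \Pic(S)$ has rank $\rho(S)$ by definition, so $\CH^0(S)_\QQ \oplus \CH^1(S)_\QQ$ contributes dimension $1 + \rho(S)$.

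The main step is to show that $\CH^2(S)_\QQ = \CH_0(S)_\QQ$ is one-dimensional. Consider the degree map $\deg : \CH_0(S) \to \ZZ$, whose image is $\ind(S)\ZZ$ and whose kernel is the group $A_0(S)$ of zero-cycles of degree zero modulo rational equivalence. The key input is a theorem of Colliot-Th\'el\`ene--Sansuc (building on Bloch) asserting that for any geometrically rational surface $S$ over $k$ the group $A_0(S)$ is torsion (indeed, as the Albanese of $\overline{S}$ is trivial and $\overline{S}$ is rational, $A_0(\overline{S})=0$, and a standard norm/transfer argument shows $A_0(S)$ is killed by $[l:k]$ for any splitting field $l$). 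Tensoring with $\QQ$ therefore gives $\CH_0(S)_\QQ \isom \QQ$, contributing one further dimension. Summing yields $\dim_\QQ K_0(S)_\QQ = 2 + \rho(S)$.

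For the ``in particular'' statement, one simply invokes the classification of minimal geometrically rational surfaces recalled in cases \ref{minimal:P2}--\ref{minimal:conic} above: in cases \ref{minimal:P2}, \ref{minimal:quadric}, and \ref{minimal:dP} the Picard rank is $1$, giving $\dim_\QQ K_0(S)_\QQ = 3$, while in case \ref{minimal:conic} the surface is a conic bundle of Picard rank $2$, giving dimension $4$. Thus rank $4$ in $K_0$ can only occur in the conic bundle case. The only real obstacle in this plan is justifying the torsion-ness of $A_0(S)$ for an arbitrary geometrically rational surface, which is where the existing literature (Colliot-Th\'el\`ene, Bloch) must be invoked rather than reproved.
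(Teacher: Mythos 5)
Your proof is essentially identical to the paper's: both pass through the Chern character isomorphism $K_0(S)_\QQ \isom \CH^*(S)_\QQ$, compute $\CH^0$ and $\CH^1$ in the obvious way, and reduce the computation of $\CH_0(S)_\QQ$ to the fact that $A_0(S)$ is torsion via a restriction-corestriction argument over a finite (separable) splitting field whose existence is Proposition~\ref{prop:sep_rat}. The paper cites Colliot-Th\'el\`ene--Coray~\cite[Prop.~6.4]{colliot-thelene_coray} for the torsion-ness of $A_0(S)$ rather than Colliot-Th\'el\`ene--Sansuc, and then invokes the classification of minimal geometrically rational surfaces for the last clause, exactly as you do.
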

\begin{proof}
The Chern character
$$
ch : K_0(S) \tensor_\ZZ \QQ \to \bigoplus_i \CH^i(S) \tensor_\ZZ \QQ
$$
is an isomorphism.  We have $\CH^0(S) \isom \ZZ$ and $\CH^1(S) \isom
\Pic(S)$.  Finally, to describe $\CH^2(S) = \CH_0(S)$, we have an
exact sequence
$$
0 \to A_0(S) \to \CH_0(S) \mapto{\deg} \ZZ \to \ZZ/i(S)\ZZ \to 0
$$
where $A_0(S)$ is defined to be the kernel of the degree map and
$i(S)$ is the \linedef{index}, i.e., the greatest common divisor of
degrees of closed points on $S$.  Since $S$ is rational after a finite
separable extension (by Proposition~\ref{prop:sep_rat}), a
restriction-corestriction argument shows that $A_0(S)$ is torsion,
cf.\ \cite[Prop.~6.4]{colliot-thelene_coray}.  This implies that the
degree map becomes an isomorphism $\CH_0(S) \tensor_\ZZ \QQ
\stackrel{\simeq}{\to} \QQ$ after tensoring with $\QQ$.  This
completes the calculation of the dimension.  The final statement is a
result of the classification of minimal geometrically rational
surfaces.
\end{proof}

\begin{corollary}\label{coro:minimal-have-3-blocks}
Let $S$ be a geometrically rational surface over $k$.  If there exist
field extensions $l_1, \ldots, l_n$ of $k$ and Azumaya algebras $A_i$
over $l_i$, such that there is a semiorthogonal decomposition
$$
\Db(S) = \langle \Db(l_1/k,A_1), \ldots, \Db(l_n/k,A_n) \rangle,
$$
then $n=2+\rho(S)$.  In particular, $S$ is categorically representable
in dimension 0 if and only if there exist field extensions $l_1,
\ldots, l_n$, with $n=\rho(S)+2$, and a semiorthogonal decomposition
\begin{equation}\label{eq-semiort-for-represent}
\Db(S) = \langle \Db(l_1/k),\ldots, \Db(l_n/k) \rangle.
\end{equation}
\end{corollary}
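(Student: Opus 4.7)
The plan is to extract the count $n$ from $K_0 \otimes_\ZZ \QQ$, using that any semiorthogonal decomposition induces a direct sum decomposition of $K$-theory, and then to compare with the explicit dimension computed in Proposition~\ref{prop:K_0-minimal}.

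First, I would argue as follows for the main implication. Any semiorthogonal decomposition $\Db(S) = \langle \cat{C}_1, \ldots, \cat{C}_n \rangle$ induces an isomorphism $K_0(S) \isom \bigoplus_{i=1}^n K_0(\cat{C}_i)$; in the situation of the statement, each $\cat{C}_i = \Db(l_i/k,A_i)$, and viewed as a $k$-linear category this is Morita equivalent to the finite-dimensional $k$-algebra obtained by restriction of scalars from $A_i$. Since $A_i$ is Azumaya over the field $l_i$, it is a central simple $l_i$-algebra, hence Morita equivalent over $l_i$ to a central division $l_i$-algebra $D_i$; therefore $K_0(\cat{C}_i) \isom K_0(D_i) \isom \ZZ$, and so $K_0(\Db(l_i/k,A_i)) \tensor_\ZZ \QQ \isom \QQ$. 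Summing and comparing with the equality $\dim_\QQ K_0(S)_\QQ = 2+\rho(S)$ from Proposition~\ref{prop:K_0-minimal} forces $n = 2+\rho(S)$.

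For the ``in particular'' statement, the direction starting from a decomposition of the form \eqref{eq-semiort-for-represent} is immediate: field extensions are instances of \'etale $k$-algebras, so by Lemma~\ref{lem:0-dim=etale-algebra} such a decomposition witnesses categorical representability in dimension $0$. Conversely, if $S$ is categorically representable in dimension $0$, then Lemma~\ref{lem:0-dim=etale-algebra} provides a decomposition of $\Db(S)$ whose pieces are of the form $\Db(K_i/k)$ for \'etale $k$-algebras $K_i$. Writing each $K_i \isom l_{i,1} \times \cdots \times l_{i,m_i}$ as a product of finite separable field extensions induces a further refinement $\Db(K_i/k) \simeq \prod_j \Db(l_{i,j}/k)$, producing a semiorthogonal decomposition of $\Db(S)$ into components of the form $\Db(l_j/k)$ (that is, the special case of the first part with trivial Azumaya algebras $A_j = l_j$). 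The first part of the corollary then yields $n = \rho(S)+2$.

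The only substantive point is the computation of $K_0$ for the pieces and the use of additivity of $K_0$ across a semiorthogonal decomposition; both are standard, and I would cite Thomason--Trobaugh (as already used earlier in the excerpt) or, equivalently, the additivity of noncommutative motives recalled in the introduction. Everything else is bookkeeping with \'etale algebras and Proposition~\ref{prop:K_0-minimal}, and no geometric input beyond Proposition~\ref{prop:K_0-minimal} is needed.
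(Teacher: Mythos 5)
Your argument is correct and matches the paper's own proof essentially step for step: both extract $n$ from the additivity of $K_0$ across the semiorthogonal decomposition together with $K_0(l_i,A_i)\isom\ZZ$ and the dimension count in Proposition~\ref{prop:K_0-minimal}, and both reduce the ``in particular'' statement to Lemma~\ref{lem:0-dim=etale-algebra} plus splitting \'etale algebras into products of fields. No substantive difference in approach.
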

\begin{proof}
The first statement is a corollary of
Proposition~\ref{prop:K_0-minimal}, since the semiorthogonal
decomposition gives a splitting
$$
K_0(S) = \bigoplus_{i=1}^n K_0(l_i,A_i)
$$
and $K_0(l,A) \isom \ZZ$ for any field $l$ and any Azumaya algebra $A$
over $l$.  To prove the second statement notice that $S$ is
categorically representable in dimension zero if and only if a
semiorthogonal decomposition like \eqref{eq-semiort-for-represent}
exists by Lemma~\ref{lem:0-dim=etale-algebra}, and the number of
component is given by the first statement.
\end{proof}

Let us recall a straightforward consequence of Orlov's result on
blow-ups, reducing the question of being categorically representable in
dimension 0 to minimal surfaces.

\begin{lemma}\label{lem:exclude-non-minimal}
Let $S$ be a smooth projective non-minimal surface over $k$. Then
there is a smooth projective minimal surface $S'$, and a fully
faithful functor $\Phi:\Db(S') \to \Db(S)$ such that the orthogonal
complement of $\Phi(\Db(S'))$ is representable in dimension 0.
\end{lemma}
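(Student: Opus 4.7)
The plan is to iterate Orlov's blow-up formula. Since $S$ is not $k$-minimal, by the classification of geometrically rational surfaces (or more generally by Castelnuovo's theorem applied to Galois-invariant sets of $(-1)$-curves), there is a sequence of contractions
$$
S = S_0 \mapto{f_0} S_1 \mapto{f_1} \cdots \mapto{f_{n-1}} S_n = S',
$$
with $S'$ a smooth projective minimal model of $S$ over $k$, where each $f_i : S_i \to S_{i+1}$ is the blow-up along a smooth 0-dimensional closed subscheme $Z_i = \Spec(l_i)$ for a finite separable $k$-algebra $l_i$ (i.e., the image of a Galois orbit of exceptional curves on $S_i$).

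Orlov's blow-up formula (used already in Lemma~\ref{lem:blow-up_catrep2}) applies in this setting and yields, for each $i$, a semiorthogonal decomposition
$$
\Db(S_i) = \langle f_i^* \Db(S_{i+1}),\, \Db(Z_i) \rangle = \langle f_i^* \Db(S_{i+1}),\, \Db(l_i/k) \rangle,
$$
with $f_i^*$ fully faithful. Substituting these one into another and using that a semiorthogonal decomposition can be refined by decomposing any of its components, I obtain
$$
\Db(S) = \langle f^* \Db(S'),\, \cat{B}_{n-1},\, \dotsc,\, \cat{B}_0 \rangle,
$$
where $f = f_{n-1} \circ \cdots \circ f_0 : S \to S'$ and each $\cat{B}_i$ is $k$-linearly equivalent to $\Db(l_i/k)$ via the fully faithful functor $f_0^* \cdots f_{i-1}^*$. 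I then set $\Phi := f^* : \Db(S') \to \Db(S)$, which is fully faithful as a composition of fully faithful functors. Its orthogonal complement is identified with $\langle \cat{B}_{n-1}, \dotsc, \cat{B}_0 \rangle$, and hence by Lemma~\ref{lem:0-dim=etale-algebra} is representable in dimension 0.

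No serious obstacle is expected: the crucial observation is that, over an arbitrary field, the centers $Z_i$ of blow-ups in a factorization down to a minimal model are spectra of \'etale $k$-algebras, so the ``extra'' components produced by Orlov's formula are automatically of the form $\Db(l_i/k)$ and hence 0-dimensionally representable. The remainder of the argument is the standard bookkeeping of iterated semiorthogonal decompositions.
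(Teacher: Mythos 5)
Your proof is correct and uses the same key tool as the paper's one-line proof, namely Orlov's blow-up formula. In fact, your iterated version is more careful than the paper's, which asserts that the birational morphism $\pi\colon S \to S'$ to a minimal model \emph{is} the blow-up of a single zero-dimensional $Z\subset S'$; this is not literally true when the factorization of $\pi$ into elementary contractions involves infinitely near centers, and your inductive argument handles exactly that case. One small remark on your justification that each $l_i$ is \'etale: the cleanest reason is not really Galois orbits but simply that $\mathrm{Bl}_{Z_i}(S_{i+1})$ is smooth over $k$ (being isomorphic to $S_i$), which forces $Z_i$ to be smooth over $k$, i.e.\ $\Spec$ of a finite separable $k$-algebra.
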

\begin{proof}
Since $S$ is not minimal, there exists a $k$-birational morphism $\pi:
S \to S'$ to a minimal surface.  Then $\pi$ is the blow-up of a closed
zero-dimensional subvariety $Z \subset S'$.  The proof follows from
Orlov blow-up formula \cite{orlovprojbund}.
\end{proof}

Manin has proved that, given a (non necessarily minimal) del Pezzo
surface of degree $d \geq 2$, the existence of a $k$-rational point
(not lying on any exceptional curve if $d \leq 4$) implies the
existence of a unirational parametrization, i.e., a map $\PP^2_k \to
S$ of finite degree. 

\begin{theorem}[{Manin~\cite[Thm.~29.4]{manin-book}}]
\label{thm:manin-unirat-param}
Let $S$ be a del Pezzo surface of degree $d \geq 2$ over $k$ with
$S(k) \neq \varnothing$.
If $d \leq 4$ suppose
moreover that the point does not lie on any exceptional curve. Then
there exists a rational map $\phi: \PP^2_k \to S$ whose degree
$\delta_d$ is given by the following table \medskip
\begin{center}
\begin{tabular}{c|cccc}
$d$ & $\geq 5$ & $4$ & $3$ & $2$ \\
\hline
$\delta_d$ & $1$ & $2$ & $6$ & $24$ 
\end{tabular}
 \end{center}
\medskip
In particular, if $d \geq 5$, the surface $S$ has a $k$-rational point if and only if it is $k$-rational.
\end{theorem}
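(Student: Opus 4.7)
The plan is to produce explicit rational maps $\phi : \PP^2_k \dashrightarrow S$ of the stated generic degree, working degree by degree from $d=9$ down to $d=2$. For $d \geq 5$, I aim to prove the stronger statement of rationality ($\delta_d = 1$) using the classification of minimal geometrically rational surfaces together with the existence of the rational point. For $d \leq 4$, I construct unirational parametrizations via successive projections centered at the rational point.

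For $d \geq 5$, I would first reduce to the minimal case. A non-minimal del Pezzo of degree $\geq 7$ is handled by Examples \ref{exam:SB}, \ref{exam:involution}, and \ref{exam:dP78}: such a surface is a blow-up of $\PP^2_k$ at a rational point or at a closed point of degree $2$, hence is rational. If $S$ is minimal of degree $9$, then $S \isom \SB(A)$ has a rational point, so by Ch\^{a}telet's theorem $A$ is split and $S \isom \PP^2_k$. For minimal $S$ of degree $8$ (an involution surface), the rational point trivializes the Brauer obstruction to descending the ruling classes, so $S \isom \PP^1_k \times \PP^1_k$, which is rational via projection from another rational point. For degree $6$, the three pencils of conics on $S_{\ksep}$ are permuted by the Galois action, and the rational point forces at least one Galois-invariant pencil to exist, producing a conic bundle $S \to C$ over a rational curve $C$ with a section, hence a birational map $S \dashrightarrow \PP^2_k$. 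The degree $5$ case is the subtlest: here one argues directly using the pencil of conics through the rational point, combined with the Galois action on the configuration of ten $(-1)$-curves on $S_{\ksep}$, to produce a birational map to $\PP^2_k$.

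For $d \leq 4$, I construct $\phi$ by projections. For $d=4$, the rational point $p$ (not on any exceptional curve) defines a linear system $|{-}K_S - p|$ of conics whose associated map $\pi : S \dashrightarrow \PP^2_k$ is generically $2$-to-$1$; inverting $\pi$ after passing through an auxiliary rational section yields $\delta_4 = 2$. For $d=3$, a smooth cubic surface with a rational point $p$ off the $27$ lines is parametrized by first projecting from $p$ to obtain a generically $2$-to-$1$ map $S \dashrightarrow \PP^2_k$, then combining with the conic bundle obtained from the pencil of plane sections through a chosen generic line in the tangent plane to $S$ at $p$, yielding $\delta_3 = 6 = 2 \cdot 3$. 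For $d=2$, the anticanonical map realizes $S$ as a double cover of $\PP^2_k$ branched over a smooth quartic, and iterating the projection construction centered at the rational point through this double cover produces the parametrization with $\delta_2 = 24$.

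The hard part will be the low-degree cases $d = 2, 3$: the generic degrees of the constructed rational maps require careful tracking of the generic fibers under successive projections, and verifying that the generic degree is precisely $6$ or $24$ (rather than a proper divisor) needs the hypothesis that $p$ lies on no exceptional curve. This hypothesis ensures that the auxiliary curves used in the construction (conics through $p$, tangent plane sections at $p$, etc.) do not split off unexpected components along exceptional divisors, which would otherwise cause the generic degree of $\phi$ to drop below the stated $\delta_d$.
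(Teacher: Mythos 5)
The paper does not prove this statement; it is cited directly from Manin~\cite{manin-book}, so there is no ``paper's own proof'' to compare against. Your sketch attempts a degree-by-degree argument, but several intermediate claims are wrong and the low-degree constructions are too vague to verify. For $d = 8$ the claim that $S(k) \neq \varnothing$ forces $S \cong \PP^1_k \times \PP^1_k$ is false: an isotropic quadric surface with nontrivial discriminant (Table~\ref{table:dp8}, case 8.8, where $\ind(S)=1$ yet $\rho(S)=1$) is rational by stereographic projection from the rational point but is not a product of projective lines. For $d = 6$ the claim that a rational point forces a Galois-invariant pencil of conics is also false: a del Pezzo surface of degree $6$ with a rational point can have Picard rank~$1$ (Table~\ref{table:dp6}, case 6.9, where $\ind(S)=1$ yet $L$ is a cubic field), in which case the Galois action permutes the three pencils transitively and none is defined over $k$. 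The correct reduction is to blow up the rational point and land in the degree~$5$ case. That case, which you wave at, is where the genuine content lies: one must exhibit, on the blow-up of the point, a Galois-invariant quintuple of pairwise disjoint $(-1)$-curves whose contraction is a Severi--Brauer surface with a rational point, hence $\PP^2_k$ (this is exactly the construction recalled in Section~\ref{subsec:dP5} and diagram~\eqref{eq-big-diagram-dp5}).

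For $d \leq 4$ the constructions are heuristic and do not yield a checkable degree count. On a degree~$4$ surface, for instance, $|{-}K_S - p|$ has projective dimension $3$, not $2$, so it does not define a map to $\PP^2_k$ as you assert; the actual parametrizations come from iterating the classical tangent process (blowing up $p$, projecting from a point on the exceptional curve, the third-intersection-point map on the cubic for $d=3$, and its analogue through the branch quartic for $d=2$), and checking that the generic degrees are exactly $2$, $6$, $24$ and not proper divisors thereof is the hard part of Manin's proof. Finally, the ``in particular'' clause is an equivalence, and your sketch only addresses one direction: the implication ``$S$ is $k$-rational $\Rightarrow$ $S(k) \neq \varnothing$'' requires the Lang--Nishimura lemma, which you do not mention.
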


Minimal del Pezzo surfaces of degree $\leq 7$ can be characterized by
the Galois action on exceptional curves.  We say that a del Pezzo
surface $S$ over $k$ is \linedef{totally split} if $S$ is $k$-rational
and all exceptional curves are defined over $k$.  Any field extension
of $k$ over which a del Pezzo surface becomes totally split will be
called a \linedef{total splitting field} for $S$.  We can always
choose a finite Galois total splitting field for a del Pezzo surface.
We remark that $S$ can be split, but not necessarily totally split,
over a given field.

We end this section by recalling the following classification of birational maps between non-rational minimal
del Pezzo surfaces, which can be proved by classifying all the
possible links in the Sarkisov program (see \cite{isko-sarkisov-complete}).

\begin{proposition}[{Iskovskikh~\cite[Thms.~1.6,~4.5,~4.6]{isko-sarkisov-complete}}]
\label{prop:isko-rigidity-of-nonrat-dps}
Let $S$ be a non rational del Pezzo surface of Picard rank 1, $S'$ a
minimal surface, and $\phi: S \dashrightarrow S'$ a
$k$-birational map.
\begin{enumerate}
\item \label{isko.i} If $\deg(S)=1$ or if $S$ has no closed point $x$
of degree $< \deg(S)$, then $\phi$ is an isomorphism (i.e., $S$ is \rm
birationally rigid\it).

\item \label{isko.ii} If $\deg(S)=2$ and $S(k) \neq \varnothing$, or
if $\deg(S)=3$ and $S(k) \neq \varnothing$ or if $\deg(S)=4$ and $S$
has a point of degree 2 and no point of degree 1 or 3, or if $S$ has
degree $8$ and a point of degree 4 (but no point of lower degree),
then $\phi$ can be composed with a birational map $S \dashrightarrow
S$ to give an isomorphism (i.e. $S$ is \rm birationally semirigid\it).

 \item \label{isko.iii} 
If $\deg(S)=6$ or $\deg(S)=9$, then
 $\deg(S)=\deg(S')$ (i.e. $S$ is \rm deg-rigid\it).

\item \label{isko.iv} 
If $\deg(S)=4$ and $S$ has a $k$-rational point or if $\deg(S)=8$ and $S$ has a degree 2 point
 (but no $k$-rational point), then the blow-up of $S$ along such a point is a conic bundle of
 degree $3$ or $6$, respectively. In particular, $S$ is not deg-rigid.
 \end{enumerate}
All non rational del Pezzo surfaces of Picard rank one
are covered by one of these cases.
\end{proposition}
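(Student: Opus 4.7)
The plan is to reduce to a study of elementary Sarkisov links out of $S$ and then perform a careful case-by-case analysis by degree. Recall that a del Pezzo surface of Picard rank $1$ is a Mori fiber space over $\Spec k$, and the Sarkisov program for surfaces (as developed in \cite{isko-sarkisov-complete}) guarantees that every birational map $\phi : S \dashrightarrow S'$ between Mori fiber spaces factors as a finite composition of elementary links. For surfaces over a non-closed field, each such elementary link starts by blowing up a closed point $x \in S$ (equivalently, resolving the base locus of a Sarkisov-type linear system centered at $x$), possibly performing further contractions of exceptional curves appearing on the blow-up, and ends with a contraction to a new Mori fiber space (either a del Pezzo surface of Picard rank $1$ or a conic bundle of Picard rank $2$). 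In particular, any nontrivial link requires the existence of a closed point $x \in S$ of degree $m$ strictly less than $\deg(S)$.

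The key numerical input is the following. If $x \in S$ is a closed point of degree $m$ and $\pi : \tilde S \to S$ denotes the blow-up at $x$, then the exceptional divisor $E$ satisfies $E^2 = -m$ and $\omega_{\tilde S}^2 = \deg(S) - m$. The intermediate surface $\tilde S$ must be smooth, and for the link to proceed, $\tilde S$ should be \emph{weak del Pezzo} or carry a conic bundle structure; this forces $m \leq \deg(S)-1$, and in low degree ($d\le 4$) also that $x$ does not lie on an exceptional curve, as otherwise a proper transform would become a $(-2)$-curve obstructing the anticanonical model. The possible outputs of the link are then dictated by the $(-1)$-curves on $\tilde S$ and the Galois structure on their configuration; each such contraction either returns to a del Pezzo surface (of degree $\deg(S)$ or smaller) or to a conic bundle.

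With this machinery in place, the classification proceeds by listing admissible point degrees case by case. In \ref{isko.i}, if no closed point of degree $<\deg(S)$ exists, then no link is possible and $\phi$ must be an isomorphism; the case $\deg(S)=1$ is forced since no closed point has negative degree. For \ref{isko.ii}, the allowed point degrees produce a link whose target is again $S$ up to birational self-map, so $\phi$ factors through an auto-birational map of $S$; here the key computation is that with $d=2,3$ and a $k$-point, or with $d=4$ and a degree $2$ point but no degree $1$ or $3$ point, or with $d=8$ and a degree $4$ point (but no lower degree point), the resulting elementary link is a so-called Bertini or Geiser-type involution, sending $S$ to itself. For \ref{isko.iii}, one verifies that in degrees $6$ and $9$ every admissible link preserves the anticanonical degree, exploiting the very restricted geometry of del Pezzo surfaces of these degrees (torsor structures under forms of tori and the rigidity of Severi--Brauer surfaces). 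For \ref{isko.iv}, the direct computation shows that blowing up a $k$-point on a degree $4$ surface or a degree $2$ point on a degree $8$ surface yields precisely a minimal conic bundle of degree $3$ or $6$, respectively. Finally, one must verify that degrees $5$ and $7$ cannot occur as non-rational minimal Picard rank $1$ del Pezzo surfaces: this follows from Theorem \ref{thm:manin-unirat-param} together with Example \ref{exam:dP78}, which shows that degree $7$ surfaces are never minimal of Picard rank $1$.

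The main obstacle is the detailed verification, for each pair $(\deg(S), m)$, that the intermediate surface $\tilde S$ admits the claimed contraction and that the Galois action on $(-1)$-curves is compatible with this contraction being defined over $k$. This requires explicit knowledge of the Weyl-group action on the root lattice $\omega_S^\perp$ in each degree, cf.\ the appendices referenced in the paper, and of when the proposed target inherits Picard rank $1$ versus developing a conic bundle structure. The case $d = 4,\ \ind(S) = 1$ and $d = 8,\ \ind(S) = 2$ are the only cases where the link leaves the class of del Pezzo surfaces of Picard rank $1$, which is precisely why these appear as the exceptional cases in Theorem \ref{thm:birat-class-of-dpsmall}.
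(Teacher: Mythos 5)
Your overall strategy — reduce to a factorization into elementary Sarkisov links and do a case-by-case analysis by degree and point degree — is the correct one and is essentially what Iskovskikh does; the paper simply cites \cite[Thms.~1.6, 4.5, 4.6]{isko-sarkisov-complete} for items \ref{isko.i}--\ref{isko.iv} rather than re-proving them. However, there is a genuine gap in your treatment of the $\deg(S)=8$, $\ind(S)=4$ case, and it is exactly the point the paper's own proof has to address with a separate argument.

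You assert that when $\deg(S)=8$ and $S$ has a degree $4$ point but no point of lower degree, ``the resulting elementary link is a so-called Bertini or Geiser-type involution, sending $S$ to itself.'' This is not what Iskovskikh proves. His Theorem 4.4 only gives \emph{deg-rigidity} in this case: the output of the link $M_{8,4}$ is another involution surface $S'$ of degree $8$, but there is no a priori reason for $S'$ to be isomorphic to $S$, nor is the link a Bertini or Geiser involution (those apply to degrees $1,2$, and here the resolution of the link is a degree $4$ del Pezzo). Upgrading deg-rigidity to birational semirigidity in this case requires a genuinely different argument: one has to show that two $k$-birational anisotropic involution surfaces of index $4$ are actually isomorphic. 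The paper supplies this via Proposition~\ref{prop:birational_involution_partners} (and Proposition~\ref{prop:dP8_semi-rigid}), which relies on the algebraic theory of quadratic forms — specifically Tao's computation of $\ker(\Br(k) \to \Br(k(S)))$, Karpenko's anisotropy results for orthogonal involutions, and the classification of degree $4$ algebras with quadratic pair from \cite[\S15.B]{book_of_involutions}. Your sketch treats this as if it were on the same footing as the classical Geiser/Bertini involutions in degrees $2,3$, which is a substantive gap.

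A smaller imprecision: to rule out nonrational degree $5$ surfaces you cite only Theorem~\ref{thm:manin-unirat-param}, which says that for $d\geq 5$, rationality is equivalent to the existence of a $k$-point. To conclude you also need the Enriques--Swinnerton-Dyer theorem that a del Pezzo surface of degree $5$ always has a $k$-point; this is recalled in \S\ref{subsec:dP5} but is a separate nontrivial input.
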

\begin{proof}
A proof of all of these results can be found in
\cite{isko-sarkisov-complete} (some of them were previously
known). Item \ref{isko.i} is \cite[Thm.~1.6]{isko-sarkisov-complete},
while items \ref{isko.ii}, \ref{isko.iii}, and \ref{isko.iv} are
summarized in \cite[Comment~5]{isko-sarkisov-complete}. Notice that if
$S$ has degree $8$ and a point of degree $4$ but no point of lower
degree, then \cite[Thm.~4.4]{isko-sarkisov-complete} only says that
$S$ is deg-rigid. 
In this case, $S$ is an involution variety, for
which the birational semirigidity can be proved directly via the
theory of quadratic forms, see Proposition~\ref{prop:birational_involution_partners}.

Finally, the list is exhaustive since if $\deg(S) \leq 6$ and $S$ has
a closed point of degree $< \deg(S)$ and coprime with
$\deg(S)$, then $S$ is rational (for an argument, see
\cite[p.~624]{isko-sarkisov-complete}). Similarly if $S$ has degree
$5$ or $7$ then it is rational.
\end{proof}

\part{Del Pezzo surfaces}

The plan of this part is as follows.  In Section
\ref{sect:karpovnogin} we recall the necessary facts concerning
3-block decompositions. In Section \ref{sect:decos-of-minimal-dps} we
consider the case of degree $d \leq 4$ and prove Theorem \ref{thm:main} and \ref{thm:birat-class-of-dpsmall}.
The rest of the proofs 
will be divided into the cases of Severi--Brauer surfaces (Section
\ref{subsec:dP9}), involution surfaces (Section \ref{subsec:dP8}), del
Pezzo surfaces of degree 7 (Section \ref{subsec:dP7}), 6 (Section \ref{subsec:dP6}), and 5 (Section
\ref{subsec:dP5}).

\section{Exceptional objects and blocks on del Pezzo surfaces}\label{sect:karpovnogin}

In this section, we assume that $k$ is separably closed.  The derived
category of a totally split del Pezzo surface (over an algebraically
closed field) has been extensively studied by the Moscow school in the
1990's (see, e.g., \cite{gorodentsev-moving},
\cite{gorodentsev-rudakov}, \cite{rudakov-quadric},
\cite{kuleshov-orlov,karpov-nogin}), with a particular attention to
the structure of exceptional collections.

While these authors restrict to working over algebraically closed
fields of characteristic zero, their proofs are based on properties of
vector bundles and on the description of a del Pezzo surface as a
blow-up of $\PP^2$, hence they actually hold for any totally
split del Pezzo surface, in particular, over any separably closed
field.

Let $S$ be a del Pezzo surface over $k$. Then $S$ is totally split and
$S$ is either a quadric surface and has degree 8, or $S$ is a blow-up
of $\PP^2$ in $9-d$ points, for $1 \leq d \leq 9$, and has degree $d$.
A \linedef{3-block decomposition} of $\Db(S)$ is a semiorthogonal
decomposition
$$
\Db(S) = \langle \cat{E}, \cat{F}, \cat{G} \rangle
$$
consisting of exceptional blocks.  Gorodentsev--Rudakov, Rudakov, and
Karpov--Nogin (see \cite{gorodentsev-rudakov}, \cite{rudakov-quadric},
\cite{karpov-nogin}) have proved the existence of 3-block
decompositions by exceptional vector bundles with some unicity
property (up to mutations). Indeed, the ranks and degrees of vector
bundles are constant in a block, and there is an algorithm to compute
how slopes change under mutations, as explained in
\cite{karpov-nogin}.  A 3-block decomposition is \linedef{minimal} if
any mutation increases the rank of one of the blocks.  The existence
of minimal decomposition is related to solutions of Markov-type
equations.

\begin{table}
\small{
\centering
\begin{tabular}{||c||c|c|c||c|c|c||c|c|c||}
\hline\hline

\multicolumn{1}{||c||}{\multirow{2}{*}{$\deg(S)$}} & \multicolumn{3}{|c||}{$\cat{E}$} & \multicolumn{3}{|c||}{$\cat{F}$} & \multicolumn{3}{|c||}{$\cat{G}$} \\
\cline{2-10}
& $n$ & $r$ & exc. collection &  $n$ & $r$ & exc. collection &  $n$ & $r$ & exc. collection \\
\hline \hline
9 & 1 & 1 & $\ko$ & 1 & 1 & $\ko(H)$ & 1 & 1 & $\ko(2H)$  \\
\hline
\multirow{2}{*}{8 (inv)} & \multirow{2}{*}{1} & \multirow{2}{*}{1} & \multirow{2}{*}{$\ko$} & \multirow{2}{*}{2} & \multirow{2}{*}{1} 
& $\ko(1,0)$ & \multirow{2}{*}{1} & \multirow{2}{*}{1} & \multirow{2}{*}{$\ko(1,1)$} \\
& & & & & & $\ko(0,1)$ & & &\\
\hline
8 (dP) & \multicolumn{9}{|c||}{\multirow{2}{*}{No 3-block decomposition}}\\
\cline{1-1}
7  & \multicolumn{9}{c||}{}\\
\hline
\multirow{3}{*}{6} & \multirow{3}{*}{1} & \multirow{3}{*}{1} & \multirow{3}{*}{$\ko$} & \multirow{3}{*}{2} & \multirow{3}{*}{1} &
$\ko(H)$ & \multirow{3}{*}{3} & \multirow{3}{*}{1} & $\ko(2H-L_1-L_2)$  \\
& & & & & & & & & $\ko(2H-L_1-L_3)$ \\
& & & & & & $\ko(2H-L_{\leq 3})$ & & & $\ko(2H-L_2-L_3)$\\
\hline
\multirow{5}{*}{5} & \multirow{5}{*}{1} & \multirow{5}{*}{1} & \multirow{5}{*}{$\ko$} & \multirow{5}{*}{1} & \multirow{5}{*}{2} & \multirow{5}{*}{$F$} 
& \multirow{5}{*}{5} & \multirow{5}{*}{1} & $\ko(H)$ \\
& & & & & & & & & $\ko(L_1-\omega-H)$ \\
& & & & & & & & & $\ko(L_2-\omega-H)$ \\
& & & & & & & & & $\ko(L_3-\omega-H)$ \\
& & & & & & & & & $\ko(L_4-\omega-H)$ \\
\hline
\multirow{4}{*}{4} & \multirow{4}{*}{2} & \multirow{4}{*}{1} & \multirow{2}{*}{$\ko(L_4)$} & \multirow{4}{*}{2} & \multirow{4}{*}{1} & $\ko(H)$ & 
\multirow{4}{*}{4} & \multirow{4}{*}{1} & $-\omega$ \\
& & & & & & & & & $\ko(2H-L_1-L_2)$ \\
& & & \multirow{2}{*}{$\ko(L_5)$} & & & \multirow{2}{*}{$\ko(2H-L_{\leq 3})$} & & & $\ko(2H-L_1-L_3)$ \\
& & & & & & & & & $\ko(2H-L_2-L_3)$ \\
\hline
\multirow{3}{*}{3 $(i)$} & \multirow{3}{*}{3} & \multirow{3}{*}{1} & $\ko(L_4)$ & \multirow{3}{*}{3} & \multirow{3}{*}{1} 
& $\ko(H-L_1)$ & \multirow{3}{*}{3} & \multirow{3}{*}{1} & $-\omega$  \\
& & & $\ko(L_5)$ & & & $\ko(H-L_2)$ & & & $\ko(H)$ \\
& & & $\ko(L_6)$ & & & $\ko(H-L_3)$ & & & $\ko(2H-L_{\leq 3})$ \\
\hline
\multirow{2}{*}{3 $(ii)$} & \multirow{2}{*}{1} & \multirow{2}{*}{2} & \multirow{2}{*}{$T_6$} & \multirow{2}{*}{2} & \multirow{2}{*}{1} & $\ko(H)$  &
\multirow{2}{*}{6} & \multirow{2}{*}{1} & $\ko(L_1-\omega)\,\,\,\,\,\ko(L_2-\omega)\,\,\,\,\,\ko(L_3-\omega)$  \\
& & & & & & $-\omega$ & & & $\ko(L_4-\omega)\,\,\,\,\,\ko(L_5-\omega)\,\,\,\,\,\ko(L_6-\omega)$ \\
\hline
\multirow{2}{*}{2 $(i)$} & \multirow{2}{*}{1} & \multirow{2}{*}{2} & \multirow{2}{*}{$E_7$} & \multirow{2}{*}{1} & \multirow{2}{*}{2} 
& \multirow{2}{*}{$T_7$} & \multirow{2}{*}{8} & \multirow{2}{*}{1} & $-\omega$ \\
& & & & & & & & & $\ko(H-L_1) \,\, \ldots \,\, \ko(H-L_7)$ \\
\hline
\multirow{4}{*}{2 $(ii)$} & \multirow{4}{*}{2} & \multirow{4}{*}{2} & \multirow{2}{*}{$E_7$} & \multirow{4}{*}{4} & \multirow{4}{*}{1} 
& $\ko(L_4)$ & \multirow{4}{*}{4} & \multirow{4}{*}{1} & $-\omega$ \\
& & & & & & $\ko(L_5)$ & & & $\ko(H-L_1)$ \\
& & & $E_7'$ & & & $\ko(L_6)$ & & & $\ko(H-L_2)$ \\
& & & & & & $\ko(L_7)$ & & & $\ko(H - L_3)$ \\
\hline
\multirow{3}{*}{2 $(iii)$} & \multirow{3}{*}{1} & \multirow{3}{*}{3} & \multirow{3}{*}{$E_7''$} & \multirow{3}{*}{3} & \multirow{3}{*}{1} 
& $\ko(H-L_1)$ & \multirow{3}{*}{6} & \multirow{3}{*}{1} & $\ko(H)$ \\
& & & & & & $\ko(H-L_2)$ & & & $\ko(2H-L_{\leq 3})$ \\
& & & & & & $\ko(H-L_3)$ & & & $\ko(L_4-\omega) \,\, \ldots \,\, \ko(L_7-\omega)$ \\
\hline
\multirow{2}{*}{1 $(i)$} & \multirow{2}{*}{1} & \multirow{2}{*}{3} & \multirow{2}{*}{$E_8$} & \multirow{2}{*}{1} & \multirow{2}{*}{3} & \multirow{2}{*}{$F_8$}
& \multirow{2}{*}{9} & \multirow{2}{*}{1} & $-\omega$  \\
& & & & & & & & & $\ko(-L_1) \,\, \ldots \,\, \ko(-L_8)$ \\
\hline
\multirow{2}{*}{1 $(ii)$} & \multirow{2}{*}{1} & \multirow{2}{*}{4} & \multirow{2}{*}{$E_8'$} & \multirow{2}{*}{2} & \multirow{2}{*}{2}
& $T_8$ & \multirow{2}{*}{8} & \multirow{2}{*}{1} & $\ko(L_4-\omega)\,\,\ldots\,\,\ko(L_8-\omega)$ \\
& & & & & & $T_8'$ & & & $\ko(H-L_1) \,\,\,\,\, \ko(H-L_2) \,\,\,\,\, \ko(H-L_3)$ \\
\hline
\multirow{4}{*}{1 $(iii)$} & \multirow{4}{*}{2} & \multirow{4}{*}{4} & \multirow{2}{*}{$E_7''$} & \multirow{4}{*}{3} & \multirow{4}{*}{2} 
& $T_8$ & \multirow{4}{*}{6} & \multirow{4}{*}{1} & \multirow{2}{*}{$\ko(L_4-\omega) \,\,\,\,\, \ko(L_5-\omega) \,\,\,\,\, \ko(L_6-\omega)$} \\
& & & & & & \multirow{2}{*}{$T_8'$} & & & \\
& & & \multirow{2}{*}{$E_8''$} & & & & & & \multirow{2}{*}{$\ko(H-L_1)\,\,\,\,\, \ko(H-L_2)\,\,\,\,\, \ko(H-L_3)$}\\
& & & & & & $T_8''$ & & & \\
\hline
\multirow{3}{*}{1 $(iv)$} & \multirow{3}{*}{1} & \multirow{3}{*}{5} & \multirow{3}{*}{$E_8'''$} & \multirow{3}{*}{5} & \multirow{3}{*}{2} 
& \multirow{3}{*}{$F_{4,8}\,\, \ldots \,\, F_{8,8}$} & \multirow{3}{*}{5} & \multirow{3}{*}{1} & $\ko(H)$\\
& & & & & & & & & $\ko(2H-L_{\leq 3})$ \\
& & & & & & & & & $\ko(H-L_1-\omega)\,\,\ldots\,\,\ko(H-L_3-\omega)$ \\
\hline\hline
\end{tabular}}
\caption{Representative sets of exceptional objects generating the
various 3-block
decompositions of $\Db(S_\ksep)$, taken up to mutation, tensoring through
by a line bundle, and the Weyl group action.  For each
block $n=$ number of bundles in the block and $r=$ rank of these bundles. Here: $S \to
\PP^2_{\ksep}$ is the blow-up at $9-\deg(S)$ points, $L_i$ the
exceptional divisors, and $H$ the pull back of the
hyperplane class, $L_{\geq 4}:=\sum_{i \geq 4} L_i$, and $L_{\leq 3}:=\sum_{i \leq 3}L_i$.
For the higher rank vector bundles, we follow the notation in Karpov--Nogin~\cite[\S
4]{karpov-nogin}.} 
\label{table:blocks}
\end{table}

\begin{proposition}[\cite{gorodentsev-rudakov},
\cite{rudakov-quadric}, \cite{karpov-nogin}]
\label{prop:karpov-nogin}
Let $S$ be either a quadric surface or a del Pezzo surface of degree
$d \neq 7,8$ over a separably closed field $k$.  Set $s =
\max\{1,5-d\}$.  Then $\Db(S)$ has $s$ (up to tensoring by
line bundles and the action of the Weyl group) minimal 3-block
decompositions such that any other 3-block decomposition of $\Db(S)$
can obtained from one of these by a finite number of mutations.

In all cases, the blocks are generated by a completely orthogonal set
of vector bundles, so that each block has a tilting bundle.
\end{proposition}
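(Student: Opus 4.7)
The plan is to transcribe the theorems of Gorodentsev--Rudakov, Rudakov, and Karpov--Nogin, originally stated over algebraically closed fields of characteristic zero, to the setting of a separably closed base field $k$. Since $S$ is totally split over $k$, it is either a smooth quadric in $\PP^3_k$ or the blow-up of $\PP^2_k$ at $9-d$ distinct $k$-rational points in general position, and the Picard lattice, intersection pairing, and canonical class of $S$ are exactly those of $S_{\kalg}$. The proof therefore reduces to verifying that the cited arguments are insensitive to the base field.

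First, for existence: I would exhibit explicit 3-block decompositions as recorded in Table \ref{table:blocks}. The line bundle entries are immediate, and the higher-rank exceptional bundles (the $F$, $E_n$, $T_n$, $E_n'$, etc.\ of Karpov--Nogin) are constructed by iterated universal extensions, equivalently by iterated mutations, starting from exceptional line bundles. To check that a given triple forms a 3-block decomposition one verifies: (i) within each block, complete orthogonality $\Ext^r(V,V') = 0$ for all $r$ and all distinct $V,V'$ in the block; (ii) semiorthogonality between blocks; and (iii) that the collection generates $\Db(S)$. Step (iii) follows from a $K_0$-count, since $\operatorname{rk} K_0(S) = 3 + (9-d)$ (or $4$ for the quadric) matches the total number of bundles listed, and exceptional collections of maximal length are automatically full on del Pezzo surfaces by a standard argument using Serre duality and the acyclicity of the canonical bundle. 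The $\Ext$ vanishings in (i) and (ii) reduce via Serre duality and Riemann--Roch to the combinatorics of divisor classes in $\Pic(S)$, and are manifestly independent of $\operatorname{char}(k)$.

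Second, for the classification up to mutation and the counting of minimal decompositions, I would follow Karpov--Nogin: associate to a 3-block decomposition the numerical datum of ranks $(r_1,r_2,r_3)$ and block sizes $(n_1,n_2,n_3)$, and translate the orthogonality constraints into a Markov-type Diophantine equation (for $\PP^2$, the classical Markov equation $3xyz = x^2+y^2+z^2$). Mutations and the Weyl group of the root lattice $\omega_S^{\perp} \subset \Pic(S)$ act on the solution set, and the enumeration of orbits of \emph{minimal} solutions (those for which every mutation strictly increases one of the ranks) yields precisely $\max\{1,5-d\}$ classes. This combinatorial count is a calculation inside $\Pic(S)$ equipped with its Weyl group action, independent of the base field. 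Finally, the statement that each block admits a tilting bundle is immediate, since complete orthogonality means $\bigoplus_i V_i$ is itself a strong exceptional generator of the block.

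The main obstacle will be the construction of the higher-rank exceptional bundles in positive characteristic. These are built by iterated extensions whose existence depends on computing $\Ext^1$ between exceptional subobjects, and by mutation triangles \eqref{eq:left-mutation} whose cone we must verify lies in the category of vector bundles rather than acquiring torsion. Both points reduce to cohomology of line bundles on blow-ups of $\PP^2$, hence to explicit Riemann--Roch and Kawamata--Viehweg-style vanishing on smooth rational surfaces, which holds in arbitrary characteristic. Once this bookkeeping is in place, every step of Gorodentsev--Rudakov, Rudakov, and Karpov--Nogin goes through unchanged and the proposition follows.
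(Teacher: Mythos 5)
Your approach matches the paper's: Proposition \ref{prop:karpov-nogin} is not proved in the paper but cited to Gorodentsev--Rudakov, Rudakov, and Karpov--Nogin, together with the observation (made at the start of Section \ref{sect:karpovnogin}) that since a del Pezzo surface over a separably closed field is totally split --- i.e., $\PP^1 \times \PP^1$ or a blow-up of $\PP^2$ at rational points --- the cited arguments carry over verbatim. You flesh this out in more detail than the paper does, but the strategy (verify that the orthogonality computations, mutation machinery, and Markov-type combinatorics depend only on the blow-up presentation and the Picard lattice with its intersection form, not on the base field or its characteristic) is the same.

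One imprecision worth correcting: Kawamata--Viehweg vanishing does \emph{not} hold in arbitrary characteristic --- Raynaud's counterexamples on surfaces are well known. What saves the argument is not a general vanishing theorem but the fact that the specific line-bundle cohomology you need (on $\PP^2$, on $\PP^1\times\PP^1$, and on their blow-ups at rational points) can be computed explicitly by pushing forward along the blow-up maps and applying the projection formula and Serre duality, all of which are characteristic-free. Similarly, the claim that an exceptional collection of maximal length on a del Pezzo surface is automatically full is a nontrivial theorem of Kuleshov--Orlov rather than a quick consequence of Serre duality; but for the explicit collections in Table \ref{table:blocks} fullness can be established directly by mutating to the Orlov blow-up decomposition, so you do not actually need that theorem. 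With these two invocations replaced by the direct computations, your sketch is sound and agrees with what the paper asserts.
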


In the cases of degree $d=7,8$ (see Example \ref{exam:dP78}), there is
always a 4-block decomposition, but not a 3-block one.  On the other
hand, del Pezzo surfaces of these degrees are never minimal (see
Example~\ref{exam:dP78}).

We summarize the possible minimal 3-block decompositions in
Table~\ref{table:blocks}. Recall that these decompositions hold over a
separably closed field, or in general for totally split del Pezzo
surfaces, so that $S$ is either $\PP^1_k \times \PP^1_k$ or the blow
up of $\PP^2$ in $9-d$ rational points, and exceptional means
$k$-exceptional. In the first case we use the standard notation
$\ko(a,b)$ for line bundles of bidegree $(a,b)$. In the latter, we
denote by $L_i$ (for $i=1,\ldots r$) the exceptional divisors of $S
\to \PP^2$, and by $H$ the pull-back of the hyperplane class in
$\PP^2$. For each block, the table lists the number $n$ of exceptional
bundles, their rank $r$ (which is constant within the block), and the
1st Chern class of the tilting bundle of the block (i.e., the sum of
the 1st Chern classes of the bundles in the block).

\section{Del Pezzo surfaces of Picard rank 1 and low degree}\label{sect:decos-of-minimal-dps}

From now on, let $k$ be an arbitrary field and $S$ be a del Pezzo
surface of degree $d$ and Picard rank 1 over $k$.  Recall that
$K_0(S)_{\QQ} \simeq\QQ^{\oplus 3}$, so that we can wonder, according
to Proposition~\ref{prop:K_0-minimal}, whether there is a
semiorthogonal decomposition given by three simple $k$-algebras. If
such a decomposition exists, it must base change to a 3-block
decomposition of $\Db(S_{\ksep})$ by
Proposition~\ref{prop:descend-a-block}. Conversely, if we suppose that
there is a semiorthogonal decomposition
$$
\Db(S) = \langle \cat{E}, \cat{F}, \cat{G} \rangle
$$
whose base change is a 3-block decomposition, then
Proposition~\ref{prop:descend-a-block} guarantees that the three
components are equivalent to derived categories of simple
$k$-algebras.

Combining the explicit description of the vector bundles forming
exceptional blocks on $S_{\ksep}$, together with the previous
observations, we gain control over semiorthogonal decompositions of
derived categories of del Pezzo surfaces of Picard rank 1.

\begin{theorem}
\label{thm:decos-of-minimal-dps}
Let $S$ be a del Pezzo surface of degree $d \leq 4$. Then there is no
semiorthogonal decomposition
\begin{equation}
\label{eq:semiortho-for-minimal}
\Db(S) = \langle \Db(l_1/k, \alpha_1), \Db(l_2/k,\alpha_2), \Db(l_3/k, \alpha_3) \rangle,
\end{equation}
with $l_i$ field extensions of $k$ and $\alpha_i$ in $\Br(l_i)$. 
\end{theorem}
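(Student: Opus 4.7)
The plan is to argue by contradiction. Set $n_i := [l_i:k]$ and base change the hypothetical decomposition to $\ksep$. By Lemma~\ref{lemma-base-change-of-semiorth} and the fact that each $(\alpha_i)_\ksep$ is trivial, one obtains a semiorthogonal decomposition
\[
\Db(S_\ksep) = \langle \cat{B}_1, \cat{B}_2, \cat{B}_3 \rangle,\qquad \cat{B}_i \simeq \Db(\ksep^{n_i}) \simeq \prod_{j=1}^{n_i} \Db(\ksep),
\]
in which each $\cat{B}_i$ is generated by a completely orthogonal collection of $n_i$ indecomposable $\ksep$-exceptional objects $V_{i,1},\ldots,V_{i,n_i}$; up to shift we may assume them to be vector bundles. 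This is a 3-block decomposition of $\Db(S_\ksep)$ in the sense of Karpov--Nogin, so by Proposition~\ref{prop:karpov-nogin} it is obtained from one of the minimal decompositions of Table~\ref{table:blocks} by block mutations and the Weyl group action. Since mutations preserve the number of indecomposable summands of each block, the triple $(n_1,n_2,n_3)$ must match, up to reordering, one of the block-size triples listed for $\deg(S) \leq 4$.

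Since each $l_i$ is a \emph{field}, the Galois group $G_k$ acts transitively on $\{V_{i,1},\ldots,V_{i,n_i}\}$. Hence
\[
\Sigma_i := \sum_{j=1}^{n_i} c_1(V_{i,j}) \in \Pic(S_\ksep)^{G_k}.
\]
Because $\rho(S)=1$ and $\Pic(S)=\Pic(S_\ksep)^{G_k}$ for minimal del Pezzo surfaces of degree $\leq 6$, this forces $\Sigma_i \in \ZZ\cdot\omega_S$ for every $i$.

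The crux is to show that no 3-block decomposition of $\Db(S_\ksep)$ obtained from a minimal one by mutation can satisfy $\Sigma_i \in \ZZ\omega_S$ for all $i$. The key is that a mutation of $\cat{B}_i$ through $\cat{B}_j$ replaces $\Sigma_i$ by $\Sigma_i - c\,\Sigma_j$ for some $c \in \ZZ$: indeed, at the level of $K_0$ one has $[R_{V'}(V)] = [V] - \chi(V,V')[V']$, and for the minimal decompositions of Table~\ref{table:blocks} the inter-block Euler pairings $\chi(V_{i,k}, V_{j,l})$ are constant in $k,l$ (a direct Riemann--Roch computation, using that the generators of a block have a common rank and a common slope with respect to $\omega_S$). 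Consequently the $\QQ$-linear span of $\bar\Sigma_1, \bar\Sigma_2, \bar\Sigma_3$ in the quotient $\Pic(S_\ksep)_\QQ/\QQ\omega_S$ is invariant under mutations, and the descent condition requires this span to vanish. One verifies directly from Table~\ref{table:blocks} that for every minimal decomposition with $\deg(S)\leq 4$ at least one $\Sigma_i$ is not a rational multiple of $\omega_S$: for instance, in the $d=4$ case $\Sigma_{\cat{E}} = L_4+L_5 \notin \QQ\omega_S$, and similar non-$\omega$-proportional block sums appear in each of the minimal decompositions for $d\in\{1,2,3\}$. This yields the desired contradiction.

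The main obstacle is propagating the constancy of inter-block Euler pairings from the minimal decompositions to their mutated descendants, which is what legitimates the identity $\Sigma_i \mapsto \Sigma_i - c\,\Sigma_j$ and hence the mutation-invariance of $\mathrm{span}(\bar\Sigma_1,\bar\Sigma_2,\bar\Sigma_3)$. The block-wise rank and slope constancy is preserved by mutation, so the Riemann--Roch argument continues to apply, but verifying this requires a careful finite case analysis through the admissible block-size triples for $\deg(S)\in\{1,2,3,4\}$; once these technical points are in place, the vanishing of the span is contradicted by the explicit computations in Table~\ref{table:blocks}.
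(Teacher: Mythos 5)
Your strategy correctly sets up the base change, correctly invokes the Karpov--Nogin classification, and correctly observes that transitivity of the Galois action (because each $l_i$ is a field) forces $\Sigma_i := \sum_j c_1(V_{i,j})$ to be Galois invariant, hence to lie in $\ZZ\omega_S$. The first place the argument goes off the rails is the omission of line-bundle twists: Proposition~\ref{prop:karpov-nogin} classifies the minimal 3-block decompositions only up to tensoring by line bundles (and Weyl action), so the Karpov--Nogin representative in Table~\ref{table:blocks} must be considered after tensoring every generator by an unknown $\ko(M)$. This replaces $\Sigma_i$ by $\Sigma_i + n_i M$. Your condition that the span of $\bar\Sigma_1,\bar\Sigma_2,\bar\Sigma_3$ in $\Pic(S_\ksep)_\QQ/\QQ\omega_S$ must vanish is therefore not the descent condition: the correct condition after twisting is that $\bar\Sigma_1/n_1 = \bar\Sigma_2/n_2 = \bar\Sigma_3/n_3$, with the common value being $-\bar M$.

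This is a fatal gap, not merely a missing normalization: the corrected rational condition is in fact \emph{satisfied} for the small-degree cases, so the proposed mechanism has no obstruction to detect. For instance, in the $d=4$ minimal decomposition one has $n_1=n_2=2$, $n_3=4$, $\Sigma_{\cat{E}} = L_4+L_5$, $\Sigma_{\cat{F}} = 3H-L_1-L_2-L_3$, and $\Sigma_{\cat{G}} = -\omega + 6H-2L_1-2L_2-2L_3 = 9H - 3(L_1+L_2+L_3)-L_4-L_5$; a short check gives $\Sigma_{\cat{E}}/2 - \Sigma_{\cat{F}}/2 = \omega/2$ and $\Sigma_{\cat{F}}/2 - \Sigma_{\cat{G}}/4 = \omega/4$, so $\bar\Sigma_1/2 = \bar\Sigma_2/2 = \bar\Sigma_3/4$ already holds in $\Pic_\QQ/\QQ\omega$. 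The same phenomenon occurs in $d=3$, case~$(i)$, where all three differences $\Sigma_i - \Sigma_j$ equal $\pm\omega$ exactly. The obstruction in the paper's proof is genuinely integral: one checks that no \emph{integral} $M$ can clear all three $\Sigma_i$ simultaneously (a congruence argument modulo $2$ or $3$ appears in nearly every case), and any argument carried out in $\Pic_\QQ/\QQ\omega$ throws that information away. Finally, your approach has no analogue of what the paper calls path~$(1)$: in $d=3$, case~$(i)$, after the integral constraint forces $M \in \ZZ\omega$, the Chern class of block $\cat{G}$ \emph{is} a multiple of $\omega$, so the Chern class test passes; what breaks is that $\cat{G}$ now contains a copy of $\omega^{\otimes m}$ defined over $k$, producing a proper admissible subcategory inside what would have to be $\Db(l_3)$ with $l_3$ a field, contradicting Lemma~\ref{lem:indecomposable}. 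That argument, which has no Chern-theoretic shadow, cannot be recovered from your framework even with the rationality issue patched.
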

\begin{proof}
If a decomposition \eqref{eq:semiortho-for-minimal} exists, then
$\rho(S)=1$ by Corollary~\ref{coro:minimal-have-3-blocks}, hence
$\Pic(S) = \ZZ[\omega]$ by the classification.  Moreover,
Proposition~\ref{prop:descend-a-block} ensures us that its base change
to $\ksep$ is a 3-block decomposition. Up to mutating the three
blocks, tensoring with line bundles, and the action of the Weyl group,
we can appeal to the Karpov--Nogin classification (cf.\
Proposition~\ref{prop:karpov-nogin}) and proceed in a case-by-case
analysis.  Such exceptional blocks are generated by vector bundles, so
Corollary~\ref{prop:single-exc-bundle-G-inv} guarantees that a
nontrivial $\ZZ$-linear combination of the first Chern classes of
these vector bundles is a multiple of $\omega$.

First, the Galois action on $\langle \omega \rangle^\perp \subset
\Pic(S_\ksep)$ factors through the Weyl group of the associated root
system, see \cite[Thm.~2]{demazure}.  Hence over $\ksep$, there is a
choice of $9-d$ pairwise disjoint exceptional lines $L_1,\dotsc,L_d$
so that the three blocks are described as in Table \ref{table:blocks},
up to tensoring all exceptional bundles by the same line bundle
$\ko(M)$ in $\Pic(S_{k\sep})$.

Our argument will follow two different paths, depending on the degree
and subcase:

\begin{enumerate}
\item[\textit{(1)}] 

Either we show that one of the blocks contains a proper
admissible subcategory generated by $\omega$, and then
Lemma~\ref{lem:indecomposable} shows that this contradicts
$\rho(S)=1$.

\item[\textit{(2)}] 

Or we show the impossibility of descending a non-trivial
generator of one of the blocks, by proving that its first Chern class
could never be a multiple of $\omega$.

\end{enumerate}

In what follows, we will consider a line bundle $M$ on $S_{k\sep}$
written as $M=nH+\sum_{i=1}^{d} a_iL_i$.  Tensoring by powers of
$\omega= -3H + \sum_{i=1}^d L_i$, we can choose to fix one of the
coefficients $a_i$ or a representative of $n$ modulo 3.  

\medskip

{\it Degree 4.}  In degree 4, $\cat{E}$ is generated by $\ko(L_4)$ and
$\ko(L_5)$ over $k\sep$.  Let $\ko(M)$ be a line bundle on
$S_{k\sep}$.  If there exists a pair of integers $a$ and $b$ such that
\begin{equation}\label{eq:L4L5}
a(L_4+M)+b(L_5+M)=r\omega,
\end{equation}
then $r=0$.  Indeed, we make this first calculation explicit: let us
assume that $M= nH + \sum_{i=1}^5 a_iL_i$ with $a_1=0$. Then it
follows that $r=0$, in which case, we must also have $n=0$ and
$a_2=a_3=0$. Of course, we can assume that both $a$ and $b$
are not both zero, otherwise no nontrivial generator of the block
$\cat{E}$ descends.  In fact, since $\Pic(S_\ksep)$ is torsion-free,
we can further take $a$ and $b$ to be coprime.

With this in mind, equation \eqref{eq:L4L5} yields
$$
a(L_4 + a_4L_4 + a_5L_5) + b(L_5 + a_4L_4 + a_5L_5)=0.
$$
It follows, looking at the coefficients of $L_4$ and $L_5$,
respectively, that
\begin{equation}\label{eq:calcul-of-a-b}
\left\{ \begin{array}{l}
a + (a+b) a_4 = 0 \\
b + (a+b) a_5 = 0
          \end{array}\right.
\end{equation}
Since $a$ and $b$ are coprime, $a+b\neq 0$ and hence the only integer
solutions to this system of equations has $a_4=a_5=0$ (which implies
$a=b=0$, a contradiction) or $a+b=\pm 1$.

Suppose that $a+b=1$. From \eqref{eq:calcul-of-a-b}, we get that
$a_4=-a$ and $a_5=a-1$, so that the possibilities to descend the block
$\cat{E}$ are obtained by tensoring the Karpov--Nogin 3-block
decomposition over $S_\ksep$ by $M=-aL_4 + (a-1)L_5$, for some integer
$a$. Now we consider the block $\cat{F}$. After tensoring with
$\ko(M)$, the block $\cat{F}$ is generated, over $k\sep$, by
$\ko(H+M)$ and $\ko(2H-L_1-L_2-L_3+M)$. If the block descends to $k$,
we have integers $\alpha$, $\beta$, and $\rho$ such that
\begin{equation}\label{eq:with-alph-bet}
\alpha(H+M)+\beta(2H-L_1-L_2-L_3+M) = \rho \omega.
\end{equation}
Looking at coefficients of $L_1$ in \eqref{eq:with-alph-bet}, we get
$\beta=-\rho$. Then considering the coefficient of $H$ in
\eqref{eq:with-alph-bet} gives $\alpha=-\rho$.  Finally, the
coefficients of $L_4$ and $L_5$ give $\alpha\rho=\rho$ and
$(1-\alpha)\rho=\rho$, respectively. From this, it follows that
$\rho=0$, so that $\alpha=\beta=0$, hence no nontrivial generator of
the block $\cat{F}$ can descend.

If we suppose that $a+b=-1$, then $M=aL_4 - (a+1) L_5$, and similar
arguments show that no nontrivial generator of $\cat{F}$ can descend.
Using Corollary~\ref{prop:single-exc-bundle-G-inv}, this means that
there is no way to descend both the blocks $\cat{E}$ and
$\cat{F}$ to $k$ at the same time.

\medskip

{\it Degree 3, case $(i)$.} The block $\cat{E}$ is generated by
$\ko(L_1)$, $\ko(L_2)$, and $\ko(L_3)$.  We consider the equation
$$
a(L_1+M) + b(L_2+M) + c(L_3+M) = r\omega.
$$
As before, up to tensoring by
multiples of $\omega$, we can assume that $M = nH + \sum_{i=1}^6
a_iL_i$ with $a_4=0$, from which we similarly conclude that $r=0$,
hence $c_1(\cat{E})=0$, and $n=a_4=a_5=a_6=0$.

The block $\cat{F}$ is generated by $\ko(H-L_4)$, $\ko(H-L_5)$,
$\ko(H-L_6)$, so that we are looking for nontrivial integers $\alpha$, $\beta$,
and $\gamma$ such that
$$
\alpha(H-L_4+M)+\beta(H-L_5+M)+\gamma(H-L_6+M)=\rho\omega
$$
where $M=a_1L_1 + a_2L_2+ a_3L_3$, as just shown. From this, we
get $\alpha+\beta+\gamma = -3\rho$ (considering the coefficients of
$H$) and $\alpha+\beta+\gamma=-\rho$ (considering the coefficients of $L_4$). It
follows that $\rho=0$, and hence $c_1(\cat{F})=0$. Notice that the latter
does not mean that the coefficients $\alpha$, $\beta$, and $\gamma$ are trivial, so
we can't appeal to Corollary~\ref{prop:single-exc-bundle-G-inv} here.
However, looking at the coefficients of $L_1$, $L_2$ and $L_3$, we
also get $a_1=a_2=a_3=0$, from which we have that $M$ is a multiple of
$\omega$.

Over $\ksep$, the block $\cat{G}$ contains $\omega$ as part of an
exceptional sequence (according to Table~\ref{table:blocks}), hence
after tensoring by $M$, the block $\cat{G}$ contains a multiple
$\omega^{\otimes m}$ for some integer $m$.
The category generated by this
$\omega^{\otimes m}$ is a proper admissible subcategory of $\cat{G}$.
If $\cat{G}$ descends to an admissible subcategory $\Db(K,A)$ of
$\Db(S)$, then this subcategory admits $\langle \omega^{\otimes m}
\rangle$ as a proper admissible subcategory. Then
Lemma~\ref{lem:indecomposable} shows that $K$ cannot be a field.

\medskip

{\it Degree 3, case $(ii)$.} The block $\cat{E}$ is generated by a rank 2
vector bundle $T_6$ with $c_1(T_6)=H$. We have $c_1(T_6 \otimes M) = H
+ 2M$, and for the block to descend, we need $H+2M = r\omega$.  Taking
$M$ (up to multiples of $\omega$) with $a_1=0$, we conclude that
$r=0$.  As a result, $a_i=0$ for all $i$, so that $M=nH$ is a multiple
of $H$, and then $H+2M=(2n+1)H$. We conclude that the only multiple of
$T_6$ that can descend is the trivial one. This contradicts Corollary~\ref{prop:single-exc-bundle-G-inv}.

\medskip

{\it Degree 2, case $(i)$.} This case is very similar to the case of {\it
Degree 3, case $(ii)$}.  Indeed, the block $\cat{F}$ is generated by a
single rank 2 vector bundle with first Chern class $H$.

\medskip

{\it Degree 2, case $(ii)$.} 
In this case, $\cat{F}$ is generated by
$\ko(L_4), \ldots, \ko(L_7)$.  Arguing as in the case of {\it Degree
4}, we deduce that if there exists a linear combination of $L_4+M,
\dotsc, L_7+M$ (a necessary condition for $\cat{F}$ to descend) then
$r=0$ and $M=\sum_{i=4}^7 a_iL_i$. 
The block $\cat{E}$ is generated
by two rank 2 vector bundles of first Chern classes $H + \omega$ and
$-H+L_{\geq4}$, respectively. As a necessary condition for $\cat{E}$ to descend, we are looking for a pair of
integers $a$ and $b$ such that
$$
a(H+\omega+2M) + b(-H+L_{\geq4}+2M)=r\omega.
$$
Since $M=a_4L_4+\ldots+a_7L_7$, considering the coefficient of $L_1$,
we arrive at $a=r$, from which it follows that $r(H+2M) +
b(-H+L_{\geq4}+2M)=0$, by subtracting $r\omega$ on both sides.  Now,
looking at the coefficient of $H$, we arrive at $b=r$, hence we
conclude that $r(4M+L_{\geq 4})=0$. Once again considering the
coefficient of $H$, we see that $r\neq 0$ is impossible.  It follows
that $a=0$ and hence $b=0$, as well. This contradicts Corollary~\ref{prop:single-exc-bundle-G-inv}.

\medskip

{\it Degree 2, case $(iii)$.}  In this case, the block $\cat{E}$ is
generated, over $\ksep$, by a rank 3 vector bundle with first Chern
class $L_{\geq 4}$.  A necessary condition for $\cat{E}$ to descend is
that there exists an integer $a$ such that $a(L_{\geq
4}+3M)=r\omega$.  Taking $M$ with $a_1=0$, we easily get $r=0$ and
$M=a_4L_4+ \ldots + a_7L_7$.  As in the case of {\it Degree 2, case $(ii)$}
we find a contradiction to Corollary~\ref{prop:single-exc-bundle-G-inv}.

\medskip

{\it Degree 1, case $(i)$.} This case is similar to that of {\it Degree 3,
case $(ii)$}. Indeed, in both cases the block $\cat{E}$ is generated by a
single vector bundle.  In this case, its first Chern class is
$-H+2\omega$.  Arguing as in the previous case, we arrive at a
contradiction to Corollary~\ref{prop:single-exc-bundle-G-inv}.

\medskip

{\it Degree 1, case $(ii)$.} In this case the block $\cat{E}$ is generated
by a single vector bundle of rank $4$ and first Chern class
$2H-L_1-L_2-L_3$.  Modifying $M$ by multiples of $\omega$, we can
choose $a_4=0$, and we can then conclude that
$a(2H-L_1-L_2-L_3+4M)=r\omega$ implies that $r=0$ and
$M=nH+\sum_{i=4}^7a_iL_i$. Looking at the coefficients of $H$, we see
that if $a\neq 0$, there is no $n$ such that the latter equation
holds.  Hence $a=0$ and we find a contradiction to Corollary~\ref{prop:single-exc-bundle-G-inv}.

\medskip

{\it Degree 1, case $(iii)$.} In this case, the block $\cat{E}$ is generated
by two rank 3 vector bundles with first Chern classes
$L_4+L_5+L_6+L_7$ and $L_4+L_5+L_6+L_8$, respectively. Hence
a necessary condition for $\cat{E}$ to descend is that there exist
integers $a$ and $b$ such that
$$
a(L_4+L_5+L_6+L_7+3M) + b(L_4+L_5+L_6+L_8+3M)=r\omega.
$$
As before, we can assume that $a$ and $b$ are both nonzero and
coprime.  Choosing $M=nH + \sum_{i=1}^8 a_iL_i$ with $a_1=0$, we
arrive at $r=0$ (by considering the coefficient of $L_1$),
and hence also $n=a_1=a_2=a_3=0$. Now, looking to the coefficients of $L_7$
and $L_8$, we arrive at a system of equations
$$\left\{ \begin{array}{l}
a + 3(a+b) a_7 = 0 \\
b + 3(a+b) a_8 = 0
          \end{array}\right.$$
However, this system has no integer solutions when $a$ and $b$ are
coprime, as seen by reducing modulo 3.
This contradicts Corollary \ref{prop:single-exc-bundle-G-inv}.

\medskip

{\it Degree 1, case $(iv)$.} In this case, $\cat{E}$ is generated by a
single rank 5 vector bundle with Chern class $-2\omega+ L_{\geq
4}$. Similarly as in the case {\it Degree 3, case $(ii)$}, the descent of
$\cat{E}$ yields a contradiction to
Corollary~\ref{prop:single-exc-bundle-G-inv}.

Thus in each case of degree $\leq 4$, the assumptions that all three
blocks can descend to simple categories leads to a contradiction and
the proof is complete.
\end{proof}

\begin{corollary}
Let $S$ be a del Pezzo surface of degree $d \leq 4$ and Picard rank one.
Then $S$ is not categorically representable in dimension 0.
\end{corollary}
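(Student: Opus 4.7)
The plan is to deduce the corollary directly from Theorem \ref{thm:decos-of-minimal-dps}, which was just proved. Suppose for contradiction that $S$ is categorically representable in dimension $0$. By Lemma \ref{lem:0-dim=etale-algebra}, this means there is a semiorthogonal decomposition
$$
\Db(S) = \langle \Db(K_1/k), \ldots, \Db(K_r/k) \rangle
$$
in which each $K_i$ is an \'etale $k$-algebra.

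Next I would refine this into a decomposition whose components are derived categories of field extensions. Each \'etale algebra $K_i$ splits as a finite product of finite separable field extensions $K_i \cong l_{i,1} \times \cdots \times l_{i,m_i}$, and under the induced product decomposition $\Db(K_i/k) \simeq \prod_j \Db(l_{i,j}/k)$, the factors $\Db(l_{i,j}/k)$ are admissible and pairwise completely orthogonal; by Lemma \ref{lem:indecomposable}(iii) they are the only admissible subcategories of $\Db(K_i/k)$. Reindexing, I obtain
$$
\Db(S) = \langle \Db(l_1/k), \ldots, \Db(l_n/k) \rangle,
$$
with each $l_j$ a finite separable field extension of $k$.

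Now I invoke Corollary \ref{coro:minimal-have-3-blocks}: since $S$ has Picard rank $1$, the number of components in such a semiorthogonal decomposition is forced to be $n = 2 + \rho(S) = 3$. But this is precisely the shape of decomposition ruled out by Theorem \ref{thm:decos-of-minimal-dps}, taking $\alpha_i$ to be the trivial Brauer class in each $\Br(l_i)$. This contradiction shows that no such categorical representation can exist.

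The substantive work, namely the case-by-case analysis through the Karpov--Nogin 3-block classification, has already been carried out in Theorem \ref{thm:decos-of-minimal-dps}, so the only step requiring care in the present corollary is the passage from an arbitrary \'etale decomposition to one by field extensions; this is immediate from Lemma \ref{lem:indecomposable} and presents no real obstacle.
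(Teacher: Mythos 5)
Your proof is correct and takes essentially the same route as the paper: the paper simply cites Corollary~\ref{coro:minimal-have-3-blocks} to obtain a decomposition of the shape~\eqref{eq:semiortho-for-minimal} with trivial Brauer classes and then contradicts Theorem~\ref{thm:decos-of-minimal-dps}, whereas you unfold the content of that corollary from Lemma~\ref{lem:0-dim=etale-algebra} and Lemma~\ref{lem:indecomposable} before reaching the same contradiction.
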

\begin{proof}
By Corollary~\ref{coro:minimal-have-3-blocks}, categorical
representability of $S$ would be given by a semiorthogonal
decomposition as \eqref{eq:semiortho-for-minimal} with $\alpha_i=0$.
\end{proof}

On the other hand, given any geometrically rational surface $S$, the
line bundle $\ko_S$ (or the line bundle $\omega_S$) defines an
exceptional object in $\Db(S)$, hence we always have a nontrivial
semiorthogonal decomposition.

\begin{proposition}\label{cor:useful-formulation}
Let $S$ be a geometrically rational surface over $k$. Then there is a
semiorthogonal decomposition
$$
\Db(S)= \langle \Db(k), \cat{A}_S \rangle.
$$
If $\rho(S)=1$, then $K_0(\cat{A}_S)_{\QQ}=\QQ^{\oplus
2}$. Furthermore, if $S$ is a del Pezzo surface of degree
$\leq 4$, then there is no semiorthogonal decomposition
$$
\cat{A}_S= \langle \Db(l_1/k,\alpha_1), \Db(l_2/k, \alpha_2)\rangle
$$
with $l_i$ fields and $\alpha_i$ in $\Br(l_i)$.
\end{proposition}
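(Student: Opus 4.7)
The plan is to break the proposition into its three assertions and dispatch each by reducing to statements already available in the excerpt.

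For the decomposition $\Db(S) = \sod{\Db(k), \cat{A}_S}$, my approach is to verify that $\ko_S$ is a $k$-exceptional object, after which $\sod{\ko_S} \equi \Db(k)$ is an admissible subcategory and we can define $\cat{A}_S = \sod{\ko_S}^{\perp}$. Since $S$ is geometrically integral, $\Hom(\ko_S,\ko_S) = H^0(S,\ko_S) = k$. For the vanishing of $\Ext^i(\ko_S,\ko_S) = H^i(S,\ko_S)$ for $i = 1, 2$, I would base change to $\ksep$, invoke Proposition \ref{prop:sep_rat} to conclude that $S_{\ksep}$ is smooth projective and rational, and use the standard fact that $H^1(\ko) = H^2(\ko) = 0$ is a birational invariant of smooth projective surfaces, together with flat base change.

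For the $K_0$-computation, the plan is to combine the additivity of $K_0$ along semiorthogonal decompositions with Proposition \ref{prop:K_0-minimal}. The decomposition $\Db(S) = \sod{\Db(k), \cat{A}_S}$ gives $K_0(S)_{\QQ} \isom K_0(k)_{\QQ} \oplus K_0(\cat{A}_S)_{\QQ}$. Since $K_0(k)_{\QQ} \isom \QQ$ and $K_0(S)_{\QQ} \isom \QQ^{2+\rho(S)} = \QQ^{\oplus 3}$ under the hypothesis $\rho(S) = 1$, we read off $K_0(\cat{A}_S)_{\QQ} \isom \QQ^{\oplus 2}$.

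For the non-existence assertion, my plan is a short reduction to Theorem \ref{thm:decos-of-minimal-dps}. Assume for contradiction that $\cat{A}_S \equi \sod{\Db(l_1/k,\alpha_1), \Db(l_2/k,\alpha_2)}$ with $l_i$ fields and $\alpha_i \in \Br(l_i)$. Splicing this into the first part of the proposition produces
\[
\Db(S) = \sod{\Db(k),\; \Db(l_1/k,\alpha_1),\; \Db(l_2/k,\alpha_2)},
\]
where the first component $\Db(k) = \Db(k/k,0)$ is itself of the required form. By Corollary \ref{coro:minimal-have-3-blocks} such a three-term decomposition forces $\rho(S) = 2 + \rho(S) - 2 = 1$, so $S$ has Picard rank one and degree $\leq 4$; but this is exactly the situation ruled out by Theorem \ref{thm:decos-of-minimal-dps}, yielding the desired contradiction. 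The only point requiring care is making sure that the first block $\Db(k)$ fits into the format $\Db(l/k,\alpha)$ assumed in Theorem \ref{thm:decos-of-minimal-dps}, which is immediate by taking $l = k$ and $\alpha = 0$; the main substance of the proof lies in Theorem \ref{thm:decos-of-minimal-dps} itself, which has already been established by the case-by-case Karpov--Nogin analysis.
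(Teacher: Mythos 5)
Your proof is correct and follows essentially the same route as the paper's (which is a one-line proof citing the exceptionality of $\ko_S$, the $K_0$-computation, and Theorem \ref{thm:decos-of-minimal-dps}); you have simply filled in the verifications. One small remark: in the third part, the intermediate step via Corollary \ref{coro:minimal-have-3-blocks} (to derive $\rho(S)=1$, incidentally garbled as ``$\rho(S)=2+\rho(S)-2$'' when you mean $3 = 2 + \rho(S)$) is unnecessary, since Theorem \ref{thm:decos-of-minimal-dps} already applies to any del Pezzo surface of degree $\leq 4$ without a Picard rank hypothesis and absorbs that deduction internally.
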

\begin{proof}
The admissible subcategory $\Db(k)$ is generated by the exceptional
object $\ko_S$, hence the semiorthogonal decomposition exists. The
calculation of $K_0(\cat{A}_S)$ is straightforward, and the last
statement is a consequence of Theorem~\ref{thm:decos-of-minimal-dps}.
\end{proof}

\begin{proposition}
Let $S$ be a del Pezzo surface of degree $d \leq 4$ and Picard rank
1, and suppose that, if $d=4$, then $\ind(S) > 1$.  Let $S'$ be
birational to $S$. Then there is a semiorthogonal decomposition
$$
\cat{A}_{S'} = \langle \cat{T}, \cat{A}_S \rangle,
$$
where $\cat{T}$ is representable in dimension 0, and $\cat{T}=0$ if
and only if $S \simeq S'$.
\end{proposition}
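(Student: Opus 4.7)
The plan is to combine Iskovskikh's rigidity/semirigidity classification for non-rational del Pezzo surfaces of Picard rank $1$ with an iterated application of Orlov's blow-up formula.

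First, I would verify that under our hypotheses, any smooth projective surface $S'$ birational to $S$ admits a birational morphism $\pi : S' \to S$ factoring as a sequence of blow-ups of closed points. Proposition~\ref{prop:isko-rigidity-of-nonrat-dps} places $S$ in case \ref{isko.i} (birationally rigid) or case \ref{isko.ii} (birationally semirigid). In degree $d=4$ the restriction $\ind(S) > 1$ is exactly what is needed: since $\ind(S)$ divides $d=4$ (as $K_S^2$ gives a $0$-cycle of degree $4$), one has $\ind(S) \in \{2,4\}$. If $\ind(S)=4$, then $S$ has no closed point of degree $<4$, placing $S$ in case \ref{isko.i}; if $\ind(S)=2$, then either $S$ has no closed point of degree $<4$ (again case \ref{isko.i}), or $S$ has a degree-$2$ point, in which case $\gcd(2,3)=1$ forces no point of degree $1$ or $3$ alongside it, placing $S$ in case \ref{isko.ii}. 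Case \ref{isko.iv} is thereby excluded. For $d \leq 3$, cases \ref{isko.i} and \ref{isko.ii} are exhaustive by Proposition~\ref{prop:isko-rigidity-of-nonrat-dps}. In all situations, the unique minimal model in the birational class of $S$ is $k$-isomorphic to $S$, so iteratively contracting Galois orbits of exceptional lines on $S'$ produces the desired $\pi$ as a composition of blow-ups at closed points $z_1, \dotsc, z_n$ with residue fields $K_i/k$ finite separable.

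Second, I would apply Orlov's blow-up formula~\cite{orlovprojbund} $n$ times to get
\[
\Db(S') = \langle \Db(K_n/k), \Db(K_{n-1}/k), \dotsc, \Db(K_1/k), \pi^* \Db(S) \rangle.
\]
Using the full faithfulness of $\pi^*$ and substituting the decomposition $\Db(S) = \langle \ko_S, \cat{A}_S \rangle$ from Proposition~\ref{cor:useful-formulation} yields $\pi^* \Db(S) = \langle \ko_{S'}, \pi^* \cat{A}_S \rangle$. Right-mutating each $\Db(K_i/k)$ past the admissible subcategory $\langle \ko_{S'} \rangle$ produces equivalent subcategories $\cat{T}_i \equi \Db(K_i/k)$ on the right of $\ko_{S'}$, hence
\[
\Db(S') = \langle \ko_{S'}, \cat{T}_n, \dotsc, \cat{T}_1, \pi^* \cat{A}_S \rangle.
\]
Setting $\cat{T} = \langle \cat{T}_n, \dotsc, \cat{T}_1 \rangle$ and identifying $\pi^* \cat{A}_S$ with $\cat{A}_S$ gives the decomposition $\cat{A}_{S'} = \langle \cat{T}, \cat{A}_S \rangle$, with $\cat{T}$ representable in dimension $0$ by Lemma~\ref{lem:0-dim=etale-algebra}.

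Finally, the equivalence $\cat{T}=0 \iff S \simeq S'$ follows: if $\pi$ is an isomorphism then $n=0$ and $\cat{T}$ is trivial; conversely, $\cat{T}=0$ forces each $\cat{T}_i \equi \Db(K_i/k)$ to vanish, which is impossible for any closed point, so $n=0$ and $\pi$ is an isomorphism. The main technical point I anticipate is the careful degree-$4$ dichotomy ruling out Iskovskikh's case \ref{isko.iv}; the rest of the argument is a direct mutation calculation using Orlov's formula.
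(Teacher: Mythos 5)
Your proof is correct and follows essentially the same approach as the paper: Iskovskikh's classification (Proposition~\ref{prop:isko-rigidity-of-nonrat-dps}) forces the unique minimal model of $S'$ to be $k$-isomorphic to $S$, so $S'$ is an iterated blow-up of $S$ and Orlov's blow-up formula yields the decomposition. Your degree-$4$ case analysis is in fact more explicit than the paper's one-line appeal to birational rigidity, which tacitly also needs the semirigid subcase of that Proposition; both subcases give what the argument actually uses, namely that any minimal birational model of $S$ is isomorphic to $S$, and the mutation bookkeeping to reorder $\cat{T}$ and $\pi^*\cat{A}_S$ is sound but not essential.
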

\begin{proof}
Under the assumptions, $S$ is birationally rigid. Hence, if $S'$ is
minimal, $S' \simeq S$, and if $S'$ is not minimal, there is a blow-up
$S' \to S$.  Then we conclude by the blow-up formula.
\end{proof}

\begin{remark}
If $S$ is a del Pezzo surface of degree $\leq 4$,
notice from Table \ref{table:blocks} that there is no 3 block
decomposition with a block generated by a single exceptional line
bundle.  It follows that the category $\cat{A}_S$ does not base-change
to a category generated by two exceptional blocks.
\end{remark}

We end this section with a conjecture on the structure of the category
$\cat{A}_S$ for a del Pezzo surface of degree $\leq 4$.  The highly
non-trivial noncommutative structure of $\cat{A}_S$ should be a
reflection of the more complicated arithmetic behavior of $S$.

\begin{conjecture}\label{conj:indecomp-fot-low-deg}
If $S$ is a del Pezzo surface of degree $\leq 4$ and $\rho(S)=1$, then
the category $\cat{A}_S$ has no nontrivial semiorthogonal decomposition.
\end{conjecture}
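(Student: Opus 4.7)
The plan is to argue by contradiction: assume $\cat{A}_S = \langle \cat{B}_1, \dotsc, \cat{B}_n \rangle$ is a nontrivial semiorthogonal decomposition with $n \geq 2$. By Proposition \ref{cor:useful-formulation} we have $K_0(\cat{A}_S)_\QQ \cong \QQ^2$, and any SOD splits $K_0$ as a direct sum of the $K_0$ of its components. Hence either some $\cat{B}_i$ has $K_0(\cat{B}_i)_\QQ = 0$ (a phantom subcategory) or $n=2$ and both $K_0(\cat{B}_i)_\QQ \cong \QQ$. The first alternative must be excluded separately; I focus on the second, where each $\cat{B}_i$ is ``small'' in $K$-theory, essentially generated by a single indecomposable class.

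Next, extend scalars to $\ksep$: by Lemma \ref{lem:descent_admissible} both $(\cat{B}_i)_{\ksep}$ remain admissible in $\Db(S_{\ksep})$, and prepending $\langle \ko_{S_{\ksep}} \rangle$ yields a nontrivial 3-term SOD of $\Db(S_{\ksep})$. The Karpov--Nogin classification (Proposition \ref{prop:karpov-nogin}) lists the possible 3-block decompositions of $\Db(S_{\ksep})$ on the totally split surface, and a sequence of mutations should allow relating our 3-term SOD to one of these blocks, up to reshuffling. To make this precise one needs the structural input that each $(\cat{B}_i)_{\ksep}$ is itself generated by exceptional objects (possibly of higher rank), so that via Proposition \ref{prop:exc-coll-and-dg-algebras} its dg-model is a finite-dimensional $\ksep$-algebra and, tracking the degree of the underlying \'etale algebra, its $K_0$ is compatible with the $K_0$-rank bound from step one.

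Granting that, the analysis proceeds case by case in degrees $d = 4,3,2,1$, mirroring the proof of Theorem \ref{thm:decos-of-minimal-dps}. For each possible pair of Karpov--Nogin-type configurations realizing $(\cat{B}_1)_{\ksep}$ and $(\cat{B}_2)_{\ksep}$, one tensors all generators by a putative line bundle $\ko(M) = \ko(nH + \sum a_i L_i)$ on $S_{\ksep}$ to account for the Weyl-group orbit, and then imposes the condition from Corollary \ref{prop:single-exc-bundle-G-inv} combined with $\rho(S) = 1$: namely that the first Chern class of any Galois-invariant generator must lie in $\ZZ\omega$. The explicit linear equations already derived in the proof of Theorem \ref{thm:decos-of-minimal-dps} then rule out the expected configurations, with additional bookkeeping to cover decompositions that are not of the pure 3-block form (e.g.\ where exceptional pairs inside $(\cat{B}_i)_{\ksep}$ have nonzero $\Ext^1$).

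The hard part will be the structural input: ruling out phantom admissible subcategories of $\cat{A}_{S_{\ksep}}$, and, more generally, classifying 3-term SODs of $\Db(S_{\ksep})$ whose first component is $\langle \ko_{S_{\ksep}} \rangle$. Over algebraically closed fields the absence of phantoms in derived categories of rational surfaces is widely expected but far from being a routine theorem, and arbitrary SODs are not classified by the Karpov--Nogin theory (which handles only 3-block decompositions). A possible workaround is to exploit Galois-equivariant invariants stable under scalar extension, such as Hochschild homology and the Euler form on $K_0$, which together with the $K_0$-rank constraint $K_0(\cat{B}_i)_\QQ \cong \QQ$ may be enough to reduce to the Karpov--Nogin setting without requiring a full classification of admissible subcategories. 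Once that reduction is in place, the Chern-class case analysis should close the argument in the same spirit as Theorem \ref{thm:decos-of-minimal-dps}.
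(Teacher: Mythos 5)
This statement is a \emph{conjecture} in the paper (Conjecture~\ref{conj:indecomp-fot-low-deg}), not a theorem: there is no proof in the paper to compare against, and the authors explicitly flag it as open. So the first thing to say is that a complete proof here would be a genuine contribution beyond the paper, not a reconstruction of it.

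Your proposal is an outline with acknowledged gaps rather than a proof, and the gaps you flag are exactly what make this conjectural. The $K_0$-rank reduction and the base-change step via Lemma~\ref{lem:descent_admissible} are sound and mirror the paper's strategy for the much weaker Theorem~\ref{thm:decos-of-minimal-dps}. But the crucial difference is this: Theorem~\ref{thm:decos-of-minimal-dps} only rules out SODs of the very special form $\langle \Db(l_1/k, \alpha_1), \Db(l_2/k,\alpha_2), \Db(l_3/k, \alpha_3) \rangle$, precisely because Proposition~\ref{prop:descend-a-block} then \emph{forces} the base change to $\ksep$ to be a 3-block decomposition, putting one squarely in the Karpov--Nogin setup where the lattice computations with $c_1$ close the case analysis. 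For an arbitrary nontrivial SOD $\cat{A}_S = \langle \cat{B}_1, \cat{B}_2 \rangle$, after base change the categories $(\cat{B}_i)_{\ksep}$ are merely admissible subcategories of $\Db(S_{\ksep})$; they need not be completely orthogonal blocks, need not be generated by subcollections of a full exceptional collection, and could in principle contain or be phantoms. Karpov--Nogin classifies 3-block decompositions, not admissible subcategories, so the ``sequence of mutations should relate our SOD to one of these blocks'' step has no justification, and Corollary~\ref{prop:single-exc-bundle-G-inv} (which presupposes the base change is a block) is not applicable. Replacing this by Hochschild homology or Euler-form bookkeeping is a reasonable hope, but as you say yourself it is not a theorem; at the time of this paper even the no-phantom question for $\PP^2$ was open. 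In short: you have correctly located the two structural inputs that would be needed (no phantoms over $\ksep$; every admissible subcategory of $\Db(S_{\ksep})$ is generated by part of a full exceptional collection, up to mutation), but neither is established, and without them the reduction to the paper's Chern-class computations does not go through. The Conjecture remains a conjecture.
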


We now describe the few known facts about $\cat{A}_S$ in
degrees $3$ and $4$.  If $S$ has degree $4$, then the anticanonical
embedding $S \to \PP^4_k$ realizes $S$ as the intersection of two
quadric hypersurfaces. The following result was shown in
\cite{auel-berna-bolo}, as a consequence of Homological Projective
Duality (see also \cite{kuznetquadrics}).

\begin{theorem}\label{thm:A-for-dp4}
Let $S$ be a del Pezzo surface of degree $4$ and $X \to \PP^1$ be
the associated pencil of quadrics of $\PP^4$ containing $S$, with
associated even Clifford algebra $\kc_0$ over $\PP^1$. Then there is a
semiorthogonal decomposition
$$
\Db(S) = \langle \Db(k), \Db(\PP^1,\kc_0)\rangle.
$$
\end{theorem}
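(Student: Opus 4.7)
The plan is to compare two semiorthogonal decompositions of the fourfold $X$, following Kuznetsov's strategy for quadric fibrations \cite{kuznetquadrics}. Via its anticanonical embedding, $S$ is the smooth base locus of a pencil of quadrics in $\PP^4_k$, and $X = \{(x,[s:t]) \in \PP^4_k \times \PP^1_k : x \in Q_{[s:t]}\}$ carries two natural maps: the first projection $p : X \to \PP^4_k$ is the blow-up along $S$ with exceptional divisor $E$, while the second projection $\pi : X \to \PP^1_k$ is a flat quadric fibration of relative dimension $3$, with associated even Clifford algebra $\kc_0$.

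First I apply Kuznetsov's theorem on derived categories of quadric fibrations \cite{kuznetquadrics} to $\pi$, combined with Beilinson's decomposition $\Db(\PP^1_k) = \langle \ko, \ko(1) \rangle$. Since the relative dimension is $3$, this yields
\[
\Db(X) = \langle \Db(\PP^1_k, \kc_0),\; \pi^*\Db(\PP^1_k) \otimes \ko_\pi(1),\; \pi^*\Db(\PP^1_k) \otimes \ko_\pi(2),\; \pi^*\Db(\PP^1_k) \otimes \ko_\pi(3) \rangle,
\]
so $\Db(X)$ decomposes into $\Db(\PP^1_k, \kc_0)$ together with six exceptional line bundles. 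Letting $H = p^* \ko_{\PP^4_k}(1)$, one has $\ko_\pi(1) = \ko_X(H)$.

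Second I apply Orlov's blow-up formula to $p$, combined with Beilinson's decomposition $\Db(\PP^4_k) = \langle \ko, \ko(1), \ldots, \ko(4) \rangle$, to obtain
\[
\Db(X) = \langle \Phi(\Db(S)),\; \ko_X,\; \ko_X(H),\; \ko_X(2H),\; \ko_X(3H),\; \ko_X(4H) \rangle,
\]
where $\Phi : \Db(S) \to \Db(X)$ is the fully faithful embedding associated with the blow-up. The $K$-theoretic lengths of the two decompositions match the identity $6 + \mathrm{len}(\Db(\PP^1_k,\kc_0)) = 5 + \mathrm{len}(\Db(S))$ and already suggest that $\Db(S)$ should contain $\Db(\PP^1_k,\kc_0)$ as a codimension-one component.

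The crux of the proof is to implement this cancellation by mutations. The geometric identity $\pi^* \ko_{\PP^1_k}(1) = \ko_X(2H - E)$, coming from the fact that the pencil is cut out by quadratic forms vanishing along $S$, lets one rewrite the six exceptional line bundles of the first decomposition in terms of $H$ and $E$. By a sequence of mutations one brings five of them into the positions of $\ko_X(iH)$, $i = 0, \ldots, 4$, leaving a single exceptional line bundle as the extra generator. After cancelling the common admissible subcategory $\langle \ko_X, \ldots, \ko_X(4H) \rangle$, comparing what remains inside $\Phi(\Db(S))$ yields $\Db(S) \equi \langle \Db(k), \Db(\PP^1_k, \kc_0) \rangle$. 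The main obstacle is precisely the bookkeeping of these mutations and the verification that the residual exceptional object is (a twist of) $\ko_S$; this is the technical core of \cite{kuznetquadrics}, which I would invoke to conclude.
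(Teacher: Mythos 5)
The paper does not prove this theorem but cites \cite{auel-berna-bolo}, where it is deduced from Homological Projective Duality, with \cite{kuznetquadrics} flagged as the earlier source. Your proof instead follows Kuznetsov's direct approach in \cite{kuznetquadrics}: compare two semiorthogonal decompositions of the fourfold $X$, one from Orlov's blow-up formula for $p : X \to \PP^4$ and the other from the quadric fibration formula for $\pi : X \to \PP^1$, then cancel the common exceptional line bundles by mutation. This is a genuinely different route---more elementary and concrete---than the HPD argument the paper cites, and it has the minor advantage of exhibiting the exceptional generator of the $\Db(k)$ factor explicitly as $\ko_S$. Your geometric facts check out: $p$ is the blow-up along $S$ because every point of $\PP^4$ off $S$ lies on a unique quadric of the pencil; $\ko_\pi(1) = \ko_X(H)$ because $X \subset \PP^4\times\PP^1 = \PP_{\PP^1}(\ko^{\oplus 5})$; and the identity $\pi^*\ko_{\PP^1}(1) = \ko_X(2H - E)$ follows by comparing the two expressions for $K_X$ (the $(2,1)$-divisor gives $-3H - L$, the blow-up gives $-5H + E$, so $L = 2H - E$). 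The count of exceptional objects ($6$ versus $5$) is consistent with what the theorem predicts.

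One caveat you should make explicit: \cite{kuznetquadrics} is written over $\CC$, whereas the theorem---and the whole point of the paper---is over an arbitrary field $k$. In particular, in characteristic $2$ the even Clifford algebra $\kc_0$ and the fully faithful embedding $\Db(\PP^1, \kc_0) \hookrightarrow \Db(X)$ require the treatment in \cite[Thm.\ 2.2.1]{auel-berna-bolo}, which the paper itself invokes in \S\ref{subsec:dP8}. Once that input is supplied, Orlov's blow-up formula and the mutation calculus are field-independent, so your sketch closes. Deferring the detailed mutation bookkeeping to the reference is reasonable here, since the paper does likewise.
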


If $S(k)\neq \varnothing$ then the quadric threefold fibration $X \to
\PP^1$ can be reduced by hyperbolic splitting (cf.\
\cite[\S1.3]{auel-berna-bolo}) to a conic bundle $Y \to \PP^1$ with
Clifford algebra $\kc_0'$, and there is an equivalence between
$\Db(\PP^1,\kc_0)$ and $\Db(\PP^1,\kc_0')$ by
\cite[Thm.~3]{auel-berna-bolo}.  We will see another (more explicit)
way to describe the orthogonal complement $\cat{A}_S$ via a conic
bundle in Appendix~\ref{subs:links-of-type-1}.

If $S$ has degree $3$, then the anticanonical embedding $S \to
\PP^3_k$ realizes $S$ as a cubic hypersurface, so that we can use
Kuznetsov's calculation (see \cite{kuznetsov:v14}).

\begin{theorem}\label{thm:A-for-dp3}
If $S$ is a del Pezzo surface of degree $3$, there is a semiorthogonal decomposition
$$\Db(S) = \langle \Db(k), \cat{A}_S \rangle,$$
with $\cat{A}_S$ a Calabi--Yau category of dimension $\frac{4}{3}$.
\end{theorem}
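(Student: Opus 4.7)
The plan is to realize $S$ as a smooth cubic hypersurface in $\PP^3_k$ via its anticanonical embedding and then invoke Kuznetsov's computation of the Serre functor on Kuznetsov components of Fano hypersurfaces. Since $S$ is a del Pezzo surface of degree $3$, the anticanonical line bundle $\omega_S\dual$ is very ample and embeds $S$ as a smooth cubic surface in $\PP^3_k$, and by adjunction $\omega_S = \OO_S(-1)$. The first step is to verify that $\OO_S$ is $k$-exceptional: $\End(\OO_S) = H^0(S, \OO_S) = k$ because $S$ is geometrically integral and projective, while $\Ext^i(\OO_S, \OO_S) = H^i(S, \OO_S) = 0$ for $i > 0$ because $S$ is geometrically rational (so that $p_g = q = 0$ over $\kalg$) and cohomology commutes with flat base change. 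Hence $\OO_S$ generates an admissible subcategory equivalent to $\Db(k)$, producing the semiorthogonal decomposition $\Db(S) = \langle \Db(k), \cat{A}_S \rangle$, where $\cat{A}_S$ is the orthogonal complement.

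To establish the Calabi--Yau structure of dimension $4/3$---by definition, an isomorphism $\mathbb{S}_{\cat{A}_S}^3 \simeq [4]$, where $\mathbb{S}_{\cat{A}_S}$ denotes the Serre functor of $\cat{A}_S$---I would compute $\mathbb{S}_{\cat{A}_S}$ using the standard formula for the Serre functor of an admissible subcategory: for $F \in \cat{A}_S$, $\mathbb{S}_{\cat{A}_S}(F)$ is obtained by first applying the Serre functor $F \mapsto F \otimes \omega_S[2] = F(-1)[2]$ of $\Db(S)$ and then projecting back to $\cat{A}_S$ via a mutation through $\OO_S$. Iterating three times, $\mathbb{S}_{\cat{A}_S}^3$ assembles three twists by $\OO_S(-1)$, a total shift of $[6]$, and three intervening mutations through $\OO_S$; the goal is to verify that these contributions conspire to yield $\mathbb{S}_{\cat{A}_S}^3 \simeq [4]$.

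The main obstacle, and the heart of the argument, is the explicit control over the three mutations through $\OO_S$. This is exactly the content of Kuznetsov's analysis of Kuznetsov components of smooth Fano hypersurfaces in \cite{kuznetsov:v14}. The key technical input is his rotation-functor formalism, which relates the Serre functor on $\cat{A}_X$ for a smooth cubic hypersurface $X$ to the twist functor $F \mapsto F(1)$ on $\Db(X)$; specializing the resulting computation to the surface case produces the desired isomorphism $\mathbb{S}_{\cat{A}_S}^3 \simeq [4]$. The fact that the Lefschetz decomposition of $\Db(S)$ reduces to the single line bundle $\OO_S$ in the surface case simplifies the bookkeeping substantially compared to higher-dimensional cubic hypersurfaces.
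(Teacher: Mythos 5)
Your proposal is correct and takes essentially the same route as the paper, which simply cites Kuznetsov's Serre functor computation for Kuznetsov components of Fano hypersurfaces (specializing the general formula $\mathbb{S}_{\cat{A}_X}^{d} \simeq [(\dim X + 2)(d-2)]$ for a degree-$d$ hypersurface to $d=3$, $\dim X = 2$). Your elaboration of the mechanism — the anticanonical embedding into $\PP^3_k$, exceptionality of $\OO_S$ over $k$ via geometric rationality and flat base change, and the iterated twist-shift-mutation bookkeeping behind Kuznetsov's rotation-functor periodicity — is accurate and in fact supplies more detail than the paper itself gives for this statement.
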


The category $\cat{A}_S$ can be described via Matrix Factorization as in
\cite[Thm. 40]{orlov-matrix} and via Homological Projective Duality as in \cite{BDFIK-higher-veronese}.

\section{Severi--Brauer surfaces (del Pezzo surfaces of degree 9)}
\label{subsec:dP9}

If $S$ is a del Pezzo surface of degree 9 over $k$, then $S$ is the
Severi--Brauer surface associated to a degree 3 central simple algebra
$A$ over $k$. Denote by $\alpha \in \Br(k)$ the Brauer class of $A$.

\begin{proposition}
\label{propo-sbsurf}
Let $S=\SB(A)$ be a Severi--Brauer surface. Then the following are equivalent:
\begin{enumerate}
 \item\label{propo-sbsurf.1} $S$ has a $k$-point,
 \item\label{propo-sbsurf.2} $S$ is $k$-rational,
 \item\label{propo-sbsurf.3} $S$ is categorically representable in dimension 0,
 \item\label{propo-sbsurf.4} $\cat{A}_S$ is categorically representable in dimension 0.
\end{enumerate}
\end{proposition}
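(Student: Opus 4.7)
My plan is to run the cycle (1)$\Leftrightarrow$(2)$\Rightarrow$(3)$\Rightarrow$(4)$\Rightarrow$(1), with all content concentrated in the last implication. The equivalence (1)$\Leftrightarrow$(2) is Châtelet's theorem for Severi--Brauer varieties as recalled in Example~\ref{exam:SB}. For (2)$\Rightarrow$(3)$\Rightarrow$(4), if $S\cong\PP^2_k$, Beilinson's full exceptional collection $\langle\OO,\OO(1),\OO(2)\rangle$ exhibits $\Db(S)=\langle\Db(k),\Db(k),\Db(k)\rangle$ and therefore $\cat{A}_S=\langle\Db(k),\Db(k)\rangle$, both of which are representable in dimension $0$ by Lemma~\ref{lem:0-dim=etale-algebra}.

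The serious implication is (4)$\Rightarrow$(1). The plan is as follows. By Lemma~\ref{lem:0-dim=etale-algebra}, a hypothetical representability of $\cat{A}_S$ in dimension $0$ gives a semiorthogonal decomposition $\cat{A}_S=\langle\cat{A}_1,\cat{A}_2\rangle$ with each $\cat{A}_i\simeq\Db(l_i)$ for an \'etale $k$-algebra $l_i$. Together with $\OO_S$, this yields a three-term decomposition
\[
\Db(S)=\langle\Db(k),\cat{A}_1,\cat{A}_2\rangle,
\]
whose base change to $\ksep$ is a semiorthogonal decomposition of $\Db(\PP^2_{\ksep})$ by three indecomposable admissible subcategories. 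By Proposition~\ref{prop:descend-a-block} each $(\cat{A}_i)_{\ksep}$ is generated by an exceptional block, and by the Karpov--Nogin classification (Proposition~\ref{prop:karpov-nogin}) combined with the triviality of the Weyl group for $\PP^2$, every three-block decomposition of $\Db(\PP^2_{\ksep})$ is obtained from $\langle\OO,\OO(1),\OO(2)\rangle$ by tensoring with a line bundle and internal mutation. In particular each block is generated by a single line bundle $\OO(n_i)$.

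Now the descent step. Since $(\cat{A}_i)_{\ksep}\simeq\Db(\ksep)$, Theorem~\ref{thm:toen} gives $\cat{A}_i\simeq\Db(k,B_i)$ for a central simple $k$-algebra $B_i$, and the Brauer class of $B_i$ is precisely the Brauer obstruction of the Galois-invariant line bundle $\OO(n_i)$ on $S$, which by Example~\ref{exam:SB} equals $n_i[A]\in\Br(k)$. The assumption $\cat{A}_i\simeq\Db(l_i)$ with $l_i$ a product of field extensions then forces, via Corollary~\ref{cor:Caldararu_conj} applied to simple components, that $l_i=k$ and $B_i$ is split, i.e.\ $n_i[A]=0$ for each $i$. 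Since the three integers $\{n_0,n_1,n_2\}$ appearing in any twist $\langle\OO(n),\OO(n+1),\OO(n+2)\rangle$ of Beilinson's collection contain some $n_i$ coprime to $3$, and $[A]$ has order dividing $3$, we deduce $[A]=0$, whence $S\cong\PP^2_k$ and $S(k)\neq\varnothing$.

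The main conceptual obstacle is ensuring that the base-change decomposition of $\Db(\PP^2_{\ksep})$ really is a three-block decomposition in the Karpov--Nogin sense (so that each $(\cat{A}_i)_{\ksep}$ is generated by line bundles rather than by arbitrary exceptional objects), which is precisely what Proposition~\ref{prop:descend-a-block} provides once indecomposability of the $\cat{A}_i$ is recorded. The rest is arithmetic of the Brauer obstruction map $\Pic(\PP^2_{\ksep})^{G_k}\to\Br(k)$ from Remark~\ref{rem:Leray}, which sends $\OO(n)$ to $n[A]$ by Example~\ref{exam:SB}.
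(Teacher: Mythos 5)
Your overall strategy --- run the cycle (1)$\Leftrightarrow$(2)$\Rightarrow$(3)$\Rightarrow$(4)$\Rightarrow$(1), with the content in the last step via Karpov--Nogin uniqueness plus a Brauer-class argument --- is essentially the paper's, but there is a genuine gap in how you invoke Karpov--Nogin, and a secondary gap in the Brauer obstruction identification.

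The main problem is the sentence ``every three-block decomposition of $\Db(\PP^2_{\ksep})$ is obtained from $\langle\OO,\OO(1),\OO(2)\rangle$ by tensoring with a line bundle and \emph{internal} mutation. In particular each block is generated by a single line bundle $\OO(n_i)$.'' Proposition~\ref{prop:karpov-nogin} does \emph{not} say this: it says any $3$-block decomposition is obtained from the minimal one by a finite number of mutations, and for single-object blocks these are mutations \emph{between} blocks, which strictly increase the rank. For instance mutating $\OO(2)$ past $\OO(1)$ produces a rank-$2$ exceptional bundle. So the block generators $W_i$ of $(\cat{A}_i)_{\ksep}$ need not be line bundles, and the direct appeal to the Brauer obstruction $d(\OO(n_i))=n_i[A]$ of Remark~\ref{rem:Leray} does not apply to them. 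The fix is the one the paper makes explicit: mutations commute with base change and preserve equivalence classes (hence representability in dimension $0$), so one first mutates the $k$-linear decomposition $\Db(S)=\langle\Db(k),\cat{A}_1,\cat{A}_2\rangle$ until its base change equals $\langle\OO,\OO(1),\OO(2)\rangle$, and only then reads off the line-bundle generators. You should state that step; as written the claim is false.

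A second, smaller gap: Theorem~\ref{thm:toen} gives $\cat{A}_i\simeq\Db(k,B_i)$ for some central simple $B_i$, but it does not by itself identify $[B_i]$ with the Brauer obstruction of the generating line bundle. That identification needs either Corollary~\ref{cor:the-chern-class} (applied to the pure indecomposable descent $V_i$ of $\OO(n_i)$, so that $B_i=\End(V_i)$ has class $\alpha(\OO(n_i))=n_i[A]$), or, as the paper does, a direct comparison with the known decomposition $\Db(S)=\langle\Db(k),\Db(k,A),\Db(k,A^{-1})\rangle$ from \cite{bernardara:brauer_severi} followed by Corollary~\ref{cor:Caldararu_conj}. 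The paper's route is cleaner because it avoids the Brauer-obstruction arithmetic altogether: once the assumed decomposition is mutated to match $\langle\Db(k),\Db(k,A),\Db(k,A^{-1})\rangle$, one reads off $\Db(k,A)\simeq\Db(l_i/k)$ and Corollary~\ref{cor:Caldararu_conj} immediately forces $[A]=0$, with no need for the coprimality-with-$3$ observation. Your coprimality trick is correct, but it is an artifact of the line-bundle route and becomes superfluous once you know the block classes are $0$, $[A]$, $[A^{-1}]$.
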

\begin{proof}
It's a result of Ch\^atelet (cf.\ Example~\ref{exam:SB}) that
\ref{propo-sbsurf.1} is equivalent to \ref{propo-sbsurf.2} is
equivalent to $S \isom \PP^2$ is equivalent to the triviality of $A$
in the Brauer group.  In turn, this implies \ref{propo-sbsurf.3} and
\ref{propo-sbsurf.4} by considering the full exceptional collection
$\{ \ko, \ko(1), \ko(2) \}$ of $\Db(\PP^2)$ described by Beilinson
\cite{beilinson} and Bern{\v{s}}te{\u\i}n--Gelfand--Gelfand
\cite{BGG}.  It then suffices to prove that \ref{propo-sbsurf.3}
implies that $A$ has trivial Brauer class.

In general, a semiorthogonal decomposition
\begin{equation}
\label{equa:deco-for-bs-surface_full}
\Db(S) = \langle \Db(k), \Db(k,A), \Db(k,A^{-1}) \rangle
\end{equation} 
was constructed in \cite{bernardara:brauer_severi}, which base changes
to the semiorthogonal decomposition $\Db(\PP^2_\ksep)=\langle \ko,
\ko(1), \ko(2) \rangle$, see Example~\ref{ex:SB_decomposition}.

Assuming \ref{propo-sbsurf.3}, then by
Corollary~\ref{coro:minimal-have-3-blocks}, there are fields $l_i$ and
a semiorthogonal decomposition
\begin{equation}\label{eq:BS-catrep}
\Db(S) = \langle \Db(l_1/k), \Db(l_2/k), \Db(l_3/k) \rangle,
\end{equation}
which by Proposition~\ref{prop:karpov-nogin}, base changes to a
3-block exceptional collection, unique up to mutation and tensoring by
line bundles on $S$. Hence, up to mutations, the decomposition
\eqref{eq:BS-catrep} base changes to the decomposition
$\Db(\PP^2_\ksep) = \sod{\ko(i),\ko(i+1),\ko(i+2)}$.
Twisting by powers of the canonical bundle and performing one more
mutation, we can assume that $i=0$. Hence the decomposition
\eqref{eq:BS-catrep} is equivalent to the decomposition
\eqref{equa:deco-for-bs-surface_full}.  In particular, we get that
$\Db(k,A) \simeq \Db(l_i/k)$ for some $i=1,2$, which by
Corollary~\ref{cor:Caldararu_conj}, implies that $l_i = k$ and that
$A$ is split.
\end{proof}

\begin{table}
\centering \small
\begin{tabular}{||c||c||c|c|c|c|c||c|c|c|c|c||}
\hline\hline
$S$ & $\ind(S)$ & $A_1$ & $Z$ & $\ind$ & $c_2$ & $(V_1)_{k\sep}$ & $A_2$ & $Z$ & $\ind$ & $c_2$ & $(V_2)_{k\sep}$ \\
\hline
\hline
$\SB(A)$ & 3 & $A$ & $k$ & 3 & 3 & $\ko(1)^{\oplus 3}$ & $A^{-1}$ & $k$ & 3 & 12 & $\ko(2)^{\oplus 3}$ \\
\hline
$\PP^2_k$ & 1  & $k$ & $k$ & 1 & 0 & $\ko(1)$ & $k$ & $k$ & 1 & 0 & $\ko(2)$    \\
\hline\hline
\end{tabular}
\caption{The invariants of a Severi--Brauer surface $S$ (a del Pezzo
surface of degree 9). Here, the algebras $A_i=\End(V_i)$ are listed up
to Morita equivalence; $Z$ and $\ind$ refer to the center and
index of $A_i$; and $c_2$ refers to the
2nd Chern class of $V_i$.} 
\label{table:dp9}
\end{table}

Now we verify that the Griffiths--Kuznetsov component $\cat{GK}_S$ is
well defined.

\begin{proposition}
Let $S=\SB(A)$ be a nonrational Severi--Brauer surface.  Then the
Griffiths--Kuznetsov component is a well defined birational invariant
in the following sense: if $S_1 \dashrightarrow S$ is a birational map
then there is a semiorthogonal decomposition
$$\Db(S_1)=\sod{\cat{T},\Db(k,\alpha),\Db(k,\alpha\inv)}$$ where
$\cat{T}$ is representable in dimension 0.
\end{proposition}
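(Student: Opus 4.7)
The plan is to reduce to the minimal case via Orlov's blow-up formula, then identify the minimal model of $S_1$ as another Severi--Brauer surface using deg-rigidity, and finally pin down its Brauer class via Amitsur's theorem before substituting the explicit decomposition \eqref{deco-for-bs} from Example \ref{ex:SB_decomposition}.

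First, choose a sequence of blow-downs contracting $(-1)$-curves to a minimal model $\pi : S_1 \to S_1'$. Iterating Lemma \ref{lem:blow-up_catrep2} (Orlov's blow-up formula) yields a semiorthogonal decomposition
$$
\Db(S_1) = \sod{\pi^*\Db(S_1'),\,\Db(Z_1), \ldots, \Db(Z_n)},
$$
where each $Z_i$ is a closed zero-dimensional subscheme of an intermediate surface, so $\sod{\Db(Z_1),\ldots,\Db(Z_n)}$ is representable in dimension $0$. Now $S_1'$ is minimal and birational to the nonrational Severi--Brauer surface $S$, hence itself nonrational; Proposition \ref{prop:isko-rigidity-of-nonrat-dps}\ref{isko.iii} (deg-rigidity of degree-9 del Pezzo surfaces) forces $\deg(S_1') = 9$. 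Being a minimal del Pezzo surface of degree $9$, $S_1'$ is a Severi--Brauer surface $\SB(A')$ for some central simple $k$-algebra $A'$ of degree $3$, with $\alpha' := [A']$ nontrivial in $\Br(k)$.

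By Amitsur's theorem on birational Severi--Brauer varieties, $\alpha'$ and $\alpha$ generate the same cyclic subgroup of $\Br(k)$. Since a nontrivial central simple $k$-algebra of degree $3$ is a cyclic division algebra, the class $\alpha$ has order $3$, so $\alpha' \in \{\alpha,\alpha^{-1}\}$. Applying \eqref{deco-for-bs} to $S_1' = \SB(A')$ gives
$$
\Db(S_1') = \sod{\Db(k),\, \Db(k,\alpha'),\, \Db(k,(\alpha')^{-1})} = \sod{\Db(k),\, \Db(k,\alpha),\, \Db(k,\alpha^{-1})},
$$
possibly after swapping the last two components. Substituting into the blow-up decomposition and collecting $\sod{\Db(k),\,\Db(Z_1),\ldots,\Db(Z_n)}$ into a single representable-in-dimension-$0$ subcategory $\cat{T}$ (using mutations if necessary to achieve the stated order) yields the desired decomposition.

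The only nontrivial input is the identification of $\alpha'$ with $\alpha^{\pm 1}$, which is precisely Amitsur's theorem; everything else is routine application of the blow-up formula, Lemma \ref{lem:exclude-non-minimal}, and the explicit form \eqref{deco-for-bs}. A minor bookkeeping point is to check that the $\Db(k)$ factor arising from $\Db(S_1')$ and the point factors $\Db(Z_i)$ can be mutated past one another without disturbing the two Brauer components, which is immediate since all these categories are mutually semiorthogonal and representable in dimension $0$.
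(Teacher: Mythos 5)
Your proof is correct and takes essentially the same route as the paper: reduce to a minimal model via Orlov's blow-up formula, use deg-rigidity (Proposition~\ref{prop:isko-rigidity-of-nonrat-dps}) to identify the minimal model as a Severi--Brauer surface, invoke Amitsur's theorem to pin down its Brauer class as $\alpha^{\pm 1}$, and substitute the decomposition \eqref{deco-for-bs}. The only difference is cosmetic: the paper additionally notes that Amitsur's conclusion can be recovered purely from the behavior of tilting bundles under elementary links (Proposition~\ref{prop:amitsur-revisited} in the Appendix), whereas you invoke Amitsur's theorem directly, which is also how the paper phrases the main step.
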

\begin{proof}
If $S_1$ is minimal then $S_1 = \SB(B)$ is a Severi--Brauer surface by
Proposition~\ref{prop:isko-rigidity-of-nonrat-dps}. Amitsur's theorem
\cite{amitsur} implies that $B=A$ or $B=A\inv$.  Indeed, in the
Appendix (see Proposition~\ref{prop:amitsur-revisited}), we can show
how the decomposition of the birational map $S_1 \dashrightarrow S$
gives either $\Db(k,A) \simeq \Db(k,B)$ or $\Db(k,A) \simeq
\Db(k,B^{-1})$ only using the description of tilting bundles and their
behavior under birational maps.  Then the statement follows, possibly
up to a mutation, from the semiorthogonal decomposition
\eqref{equa:deco-for-bs-surface_full}.  If $S_1$ is not minimal, there
is a minimal model $S_1 \to S_0$, and we conclude using
Lemma~\ref{lem:blow-up_catrep2} and the first part of the proof.
\end{proof}

\begin{remark}
Recall the vector bundle $V$ of rank 3 on $S$ constructed by
Quillen~\cite[\S8.4]{quillen:higher_K-theory} (see
Example~\ref{ex:SB_decomposition}) such that $V_\ksep = \ko(1)^{\oplus
3}$.  More explicitly, if $K/k$ is a separable degree 3 extension
splitting $A$, then $V_i = \tr_{K/k}\ko_{\PP^2_K}(i)$ by
Theorem~\ref{thm:trace_from_maximal_subfield}.  We set $V_1=V^{\min}$
and $V_2 = (V\dual\tensor\omega_S\dual)^{\min}$.  Remark that
$V_{2,\ksep}$ is either $\ko(2)$ or $\ko(2)^{\oplus 3}$.  These vector
bundles are tilting bundles for the blocks $\cat{F}$ and $\cat{G}$,
respectively, and $A_1=\End(V_1)$ is Morita equivalent to $A$,
$A_2=\End(V_2)$ is Morita equivalent to $A\inv$, and and $V_i$ are
indecomposable.  We list the ranks and 2nd Chern classes of the vector
bundles $V_i$ on Table~\ref{table:dp9}.

The calculation of the second Chern classes of the vector bundles
$V_1$ and $V_2$ is easily obtained by their description over
$k\sep$. In particular, we notice that
$\ind(S)=\mathrm{gcd}(c_2(V_1),c_2(V_2))$ when $S$ is nonsplit and 
$\mathrm{gcd}(c_2(V_1^{\oplus 2}),c_2(V_2^{\oplus 2}))=\ind(S)$. 
\end{remark}

\begin{remark}
The second Chern classes of the generators of $\Db(k,A)$ and
$\Db(k,A^{-1})$ are not stable under mutations. The values $3$ and
$12$ are obtained for the specific choices of bundles listed in
Table~\ref{table:dp9}.  This same remark applies to all other degrees.
\end{remark}

\section{Involution varieties (del Pezzo surfaces of degree 8)}
\label{subsec:dP8}

If the degree of $S$ is $8$, either $S$ is an involution surface (cf.\
Example~\ref{exam:involution}), or $S$ is the blow-up of
$\PP^2_{\ksep}$ at a $k$-rational point (cf.\ Example~\ref{exam:dP78}). In the
latter case, $S$ is rational, not minimal, 
and has a semiorthogonal
decomposition
$$
\Db(S) = \langle \ko , \ko(H) , \ko(2H) , \ko(L_1)\rangle = \langle
\Db(k), \Db(k), \Db(k), \Db(k) \rangle
$$
where $H$ is the pull back of the hyperplane class from $\PP^2_k$ to
$S$ and $L_1$ is the exceptional divisor of the blow-up.  There is no
3-block decomposition in this case. 

So we focus our attention on involution surfaces.  In this case, $S$
is associated to a degree 4 central simple $k$-algebra $(A,\sigma)$
with quadratic pair.  The even Clifford algebra $C_0(A,\sigma)$ (see
Example~\ref{exam:involution}) is a
quaternion algebra over its center $l$, which is the \'etale quadratic
discriminant extension of $S$. Denote by $\alpha$ the Brauer class of
$A$ and $\gamma$ the Brauer class of $C_0(A,\sigma)$ over the
discriminant extension $l$.  When $l=k^2$, we write $\gamma =
(\gamma_+,\gamma_-) \in \Br(k^2) = \Br(k)\times \Br(k)$.

The fundamental relations for the Clifford
algebra of an algebra with involution
\cite[Thm.~9.14]{book_of_involutions} imply that $\alpha =
\cor_{l/k}\gamma \in \Br(k)$.  We record another important fact from
the algebraic theory of quadratic forms. 

\begin{lemma}
\label{lem:isotropy_dim4}
Let $S$ be an involution surface over a field $k$.  Then $S(k) \neq
\varnothing$ if and only if $\gamma \in \Br(l)$ is trivial.
\end{lemma}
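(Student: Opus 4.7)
The plan is to reduce the question to the classical Châtelet theorem via the Weil restriction description of involution surfaces already recalled in Example~\ref{exam:involution}. Specifically, any involution surface $S$ is of the form $S = R_{l/k}\SB(B)$, where $l$ is the discriminant extension and $B = C_0(A,\sigma)$ is the even Clifford algebra, a quaternion algebra over $l$.

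By the universal property of Weil restriction, for any $k$-algebra $R$ one has
\[
S(R) = (R_{l/k}\SB(B))(R) = \SB(B)(R \otimes_k l),
\]
so in particular $S(k) = \SB(B)(l)$. Thus the question reduces to deciding when the Severi--Brauer variety $\SB(B)$ has an $l$-point. By Châtelet's theorem (recalled in Example~\ref{exam:SB}), $\SB(B)(l) \neq \varnothing$ if and only if $B$ is split over $l$, i.e.\ if and only if the Brauer class $\gamma = [B] \in \Br(l)$ is trivial.

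It remains to verify that this argument works uniformly whether or not $l$ is a field. If $l$ is a field, this is immediate. If $l \isom k\times k$ is the split discriminant extension, then $B \isom B_+ \times B_-$ is a pair of quaternion $k$-algebras and $R_{l/k}\SB(B) \isom \SB(B_+) \times_k \SB(B_-)$, so $S(k) \neq \varnothing$ if and only if both Severi--Brauer conics have $k$-points, which by Châtelet is equivalent to $\gamma = (\gamma_+,\gamma_-) = 0$ in $\Br(k)\times \Br(k) = \Br(l)$. No real obstacle is anticipated — the only subtlety is keeping the two cases of $l$ uniformly in view, which the Weil restriction formalism handles automatically.
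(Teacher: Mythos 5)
Your proof is correct and takes a genuinely different route from the paper's. The paper reduces to the split case: it first notes that $S(k)\neq\varnothing$ implies $\SB(A)(k)\neq\varnothing$ so $\alpha$ is trivial, and conversely that the fundamental relations give $\alpha = \cor_{l/k}\gamma$ so $\gamma$ trivial implies $\alpha$ trivial; it then specializes to the case where $(A,\sigma)$ is adjoint to a rank $4$ quadratic form $q$ over $k$, and cites the classical theorem (Knus--Parimala--Sridharan, or Scharlau/Baeza in specific characteristics) that $q$ is isotropic if and only if $C_0(q)$ is split over its center. Your argument instead invokes the Weil restriction presentation $S = R_{l/k}\SB(C_0(A,\sigma))$ already recalled in Example~\ref{exam:involution}, whence the adjunction $S(k) = \SB(C_0)(l)$ and Ch\^atelet's theorem over $l$ (checked componentwise if $l\isom k\times k$) finish the job. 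Both proofs lean on serious structural input --- the paper on the isotropy criterion for rank 4 quadratic forms, yours on the exceptional isomorphism underlying the Weil restriction description of involution surfaces --- but yours avoids the reduction-to-the-split-case step and makes the role of $\gamma$ as the class of $C_0$ over $l$ more transparent; the cost is that you outsource more of the work to the Weil restriction identification, which itself encodes much of the same low-dimensional Clifford algebra theory.
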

\begin{proof}
If $S(k) \neq \varnothing$ then $\SB(A)(k) \neq \varnothing$, hence
$\alpha$ is split.  Also, if $\gamma \in \Br(l)$ is trivial, then
$\alpha \in \Br(k)$ is trivial by the fundamental relations.  Thus we
can reduce to the case when $(A,\sigma)$ is adjoint to a quadratic
form $q$ of dimension 4 over $k$, which in this case $q$ is isotropic
if and only if $C_0(q)$ is split over its center, i.e., $\gamma \in
\Br(l)$ is trivial, by \cite[Thm.~6.3]{knus_parimala_sridharan:rank_4}
(also see \cite[2~Thm.~14.1,~Lemma~14.2]{scharlau} in characteristic
$\neq 2$ and \cite[II~Prop.~5.3]{baeza:semilocal_rings} in
characteristic $2$).
\end{proof}

\begin{proposition}
\label{prop:involution_decomposition}
Let $S$ be an involution surface over a field $k$.  Then the following
are equivalent:
\begin{enumerate}
 \item\label{prop:involution_decomposition.1} $S$ has a $k$-point,
 \item\label{prop:involution_decomposition.2} $S$ is $k$-rational,
 \item\label{prop:involution_decomposition.3} $S$ is categorically representable in dimension 0,
 \item\label{prop:involution_decomposition.4} $\cat{A}_S$ is categorically representable in dimension 0.
\end{enumerate}
\end{proposition}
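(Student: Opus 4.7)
The plan is to prove the cycle $(1) \Rightarrow (2) \Rightarrow (3) \Rightarrow (4) \Rightarrow (1)$. Since $d = 8 \geq 5$, the equivalence $(1) \iff (2)$ is immediate from Manin's theorem (Theorem \ref{thm:manin-unirat-param}). For $(2) \Rightarrow (3)$: if $S$ is rational then $\gamma \in \Br(l)$ is trivial by Lemma \ref{lem:isotropy_dim4}, forcing also $\alpha = \cor_{l/k}\gamma = 0$, so the Picard-rank analysis of Example \ref{exam:involution} identifies $S$ as either $\PP^1_k \times \PP^1_k$ or the split smooth quadric in $\PP^3_k$; both admit classical full exceptional collections of line bundles exhibiting $\Db(S)$ as categorically representable in dimension $0$. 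The implication $(3) \Rightarrow (4)$ follows by using Proposition \ref{prop:karpov-nogin} and base-change compatibility of mutations (Section \ref{subsec:Scalar_extension_of_triangulated_categories}) to arrange one component of the representability decomposition to be $\sod{\ko_S} \simeq \Db(k)$, so that the complement is a representability decomposition of $\cat{A}_S$.

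The main content is $(4) \Rightarrow (1)$. Assuming $\cat{A}_S$ is representable in dimension $0$, concatenating with $\sod{\ko_S}$ yields
$$
\Db(S) = \sod{\Db(k), \Db(K_1/k), \ldots, \Db(K_r/k)}
$$
for \'etale $k$-algebras $K_i$. Base change via Lemma \ref{lemma-base-change-of-semiorth} produces a full exceptional collection on $\Db(S_\ksep) = \Db(\PP^1_\ksep \times \PP^1_\ksep)$ of length $1 + \sum_i [K_i : k] = 4$, whose objects are grouped into blocks of completely orthogonal elements inside each $\Db(K_i/k)_\ksep$. By Proposition \ref{prop:karpov-nogin} and Table \ref{table:blocks}, every such collection is obtained from the canonical 3-block $\sod{\{\ko\},\{\ko(1,0),\ko(0,1)\},\{\ko(1,1)\}}$ up to mutations, line-bundle twists, and the Weyl action. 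In particular, after mutation there is an admissible $k$-subcategory of $\cat{A}_S$ whose base change is the middle block $\{\ko(1,0),\ko(0,1)\}$ of two completely orthogonal line bundles associated to the rulings of the quadric, permuted by the Galois action of the discriminant extension $l/k$. By Proposition \ref{prop:descend-a-block}, this subcategory is equivalent to $\Db(l/k, \gamma)$ with $\gamma \in \Br(l)$ the class of $C_0(A,\sigma)$; by hypothesis and Corollary \ref{cor:Caldararu_conj}, $\gamma$ must be trivial, and Lemma \ref{lem:isotropy_dim4} then yields $S(k) \neq \varnothing$.

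The main obstacle is ensuring that an \emph{arbitrary} representability decomposition of $\cat{A}_S$ produces, after mutations, the descent $\Db(l/k, \gamma)$ of the ruling block. This rests on invoking the Karpov--Nogin uniqueness of the 3-block decomposition together with mutation-invariance of the equivalence class of the middle block, and hence of the Brauer obstruction $\gamma$. A secondary subtle point arises in the Picard-rank-$2$ case, where $l = k\times k$ and $S = C_1\times C_2$ is a product of Severi--Brauer curves: the two rulings are individually Galois-invariant, so the middle block may split into two admissible subcategories with obstructions $\gamma = (\gamma_+,\gamma_-) \in \Br(k)\times \Br(k)$; the argument still applies componentwise to force $\gamma_\pm = 0$, hence each $C_i$ is $k$-rational, and again $S(k) \neq \varnothing$.
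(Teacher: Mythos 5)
Your argument follows the same route as the paper's proof: compare the representability decomposition with the Karpov--Nogin 3-block structure over $\ksep$, isolate the admissible $k$-subcategory descending the two-object spinor block, and conclude via Corollary \ref{cor:Caldararu_conj} together with Lemma \ref{lem:isotropy_dim4}. Two citation imprecisions are worth flagging, though they do not change the substance: Proposition \ref{prop:descend-a-block} alone only produces \emph{some} Azumaya algebra over an \'etale quadratic extension, and the identification of its Brauer class with $[C_0(A,\sigma)]$ rests on Kapranov's computation of the endomorphism algebra of $\ko(1,0)\oplus\ko(0,1)$, which the paper invokes explicitly when constructing \eqref{equa:deco-for-quad-surf}; moreover, in the Picard-rank-$2$ case the representability decomposition of $\cat{A}_S$ base-changes to \emph{three} size-one blocks rather than the two-block spinor/$\ko(1,1)$ structure, so the matching requires Rudakov's mutation theorem for arbitrary full exceptional collections on $\PP^1_{\ksep}\times\PP^1_{\ksep}$ (as the paper does) rather than Karpov--Nogin 3-block uniqueness.
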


\begin{table}
\centering\small
\begin{tabular}{||c||c||c||c||c|c|c|c|c||c|c|c|c|c|c||}
\hline\hline
& $S$ & $\ind(S)$ & $\rho(S)$ & $A_1$ & $Z$ & $\ind$ & $c_2$ & $\rk$& $A_2$ & $Z$ & $\per$ & $\ind$ & $c_2$ & $\rk$  \\
\hline
\hline
8.1 & $S \subset \SB(A)$ & 4 & 1 & $C_0$ & $l$ & 2 & 4 & 4& $A$ & $k$ & 2 & 4 & 12 & 4  \\
\hline
8.2 & $\SB(B) \times \SB(B')$ & 4 & 2 & $B \times B'$ & $k^2$ & 2 & 4 & 4& $B \otimes B'$ & $k$ & 2 & 4 & 12 & 4  \\
\hline
8.3 & $S \subset \SB(A)$ & 2 & 1 & $C_0$ & $l$ & 2 & 4 & 4& $A$ & $k$ & 2 & 2 & 2 & 2  \\
\hline
8.4 & $S \subset \PP^3_k$ & 2 & 1 & $C_0$ & $l$ & 2 & 4 & 4& $k$ & $k$ & 1 & 1 & 0 & 1  \\
\hline
8.5 & $\SB(B) \times \SB(B')$ & 2 & 2 & $B \times B'$ & $k^2$ & 2 & 4 & 4& $B \otimes B'$ & $k$ & 2 & 2 & 2 & 2  \\
\hline
8.6 & $\SB(B) \times \SB(B)$ & 2 & 2 & $B \times B$ & $k^2$ & 2 & 4 & 4& $k$ & $k$ & 1 & 1 & 0 & 1  \\
\hline
8.7 & $\SB(B) \times \PP^1$ & 2 & 2 & $B \times k$ & $k^2$ & 2 & 4 & 4& $B$ & $k$ & 2 & 2 & 2 & 2 \\
\hline
8.8 & $S \subset \PP^3_k$ & 1 & 1 & $l$ & $l$ & 1 & 1 & 2& $k$ & $k$ & 1 & 1 & 0 & 1  \\
\hline
8.9 & $\PP^1 \times \PP^1$ & 1 & 2 & $k^2$ & $k^2$ & 1 & 1 & 2& $k$ & $k$ & 1 & 1 & 0 & 1  \\
\hline\hline
\end{tabular}
\caption{The invariants of an involution surface $S$ (a minimal del
Pezzo of degree 8).  Here: the algebras $\End(V_1)$ and
$\End(V_2)$ are Morita equivalent to  $A_1=C_0$ and $A_2=A$; also $Z$,
$\per$, and $\ind$ refer to the center, period, and index of $A_i$;
and $c_2$ and $\rk$ refer to the 2nd Chern class and rank of $V_i$.
Note that $(V_1)_{k\sep}$ is a direct sum of spinor bundles while
$(V_2)_{k\sep}$ is a direct sum of hyperplane classes. Also $l$ stands for
the separable quadratic discriminant extension of $k$. Recall that $S$
is rational if and only if $\ind(S)=1$. In all these cases, $S$ is
minimal. We give a brief geometric description of all cases in
Remark~\ref{rem:dP8_geom}.
 }
\label{table:dp8}
\end{table}

\begin{proof}
Our aim is to produce a semiorthogonal decomposition that base changes
to the 3-block decomposition from Table \ref{table:blocks}. In
characteristic $\neq 2$, this is a result of
Blunk~\cite[\S7]{blunk-gen-bs}, who works with tilting bundles and
does not explicitly mention semiorthogonal decompositions.  We will
give a sketch of an alternate proof that works in any characteristic. First,
under the closed embedding $S \subset \SB(A)=X$, we get $V_2$ as the
pull back of the indecomposable vector bundle on $X$ of pure type
$\ko_{\PP^3}(1)$. Then $V_2$ has rank dividing 4 and
$\End(V_2)$ is Morita equivalent to $A$. Second, the fully
faithful embedding of $\Db(k,C_0)$ can be seen as the twisted version
of Kuznetsov's result \cite{kuznetquadrics} (see
\cite[Thm.~2.2.1]{auel-berna-bolo} for the case of a quadric over a
general field). More explicitly, $\ko_{S_\ksep}(1,0) \oplus
\ko_{S_\ksep}(0,1)$ is a Galois invariant vector bundle, with the
Galois group of $l/k$ acting by switching the factors (when the
discriminant is nontrivial), and there is a
unique indecomposable vector bundle $V_1$ of this pure type by
\S\ref{subsec:classical_descent}.  Hence over $\ksep$, by comparing
with the usual decomposition (cf.\ \cite[Lemma 4.14]{kuznetquadrics},
which is non other than the 3-block decomposition in Table
\ref{table:blocks}), we find a semiorthogonal decomposition
\begin{equation}
\label{equa:deco-for-quad-surf}
\Db(S) = \langle \Db(k),\Db(k,C_0),\Db(k,A)\rangle.
\end{equation}
The fact that the endomorphism algebra of $\ko_{S_{\ksep}}(1,0)\oplus
\ko_{S_{\ksep}}(0,1)$ is Morita-equivalent to the even Clifford
algebra $C_0$ goes back to Kapranov~\cite[\S4.14]{kapranovquadric}.

Now we proceed with the proof of the equivalences. It's a classical
result (cf.\ Example~\ref{exam:involution}) that
\ref{prop:involution_decomposition.1} is equivalent to
\ref{prop:involution_decomposition.2}.  By
Lemma~\ref{lem:isotropy_dim4}, condition
\ref{prop:involution_decomposition.1} is equivalent to the triviality
of $\gamma \in \Br(l)$ (and also $\alpha \in\Br(k)$).  In particular,
\ref{prop:involution_decomposition.1} implies
\ref{prop:involution_decomposition.3} and
\ref{prop:involution_decomposition.4}, since then the semiorthogonal
decomposition just constructed is of the form $\Db(S) = \langle
\Db(k),\Db(l),\Db(k)\rangle$, with the first block generated by
$\ko_S$.  Hence both $S$ and $\cat{A}_S$ is categorically
representable in dimension 0 by Lemma~\ref{lem:0-dim=etale-algebra}.

Finally, we are left to proving that
\ref{prop:involution_decomposition.3} or
\ref{prop:involution_decomposition.4} implies the triviality of
$\gamma \in \Br(l)$.

First, assume that $S$ has Picard rank 1.  If $S$ is categorically
representable in dimension 0, then by
Corollary~\ref{coro:minimal-have-3-blocks}, there is a semiorthogonal
decomposition
\begin{equation}
\label{eq:quad-catrep}
\Db(S) = \langle \Db(l_1/k), \Db(l_2/k), \Db(l_3/k) \rangle,
\end{equation}
which by Proposition~\ref{prop:karpov-nogin}, base-changes to a
3-block exceptional collection.  Hence, up to mutation, tensoring by
line bundles on $S$, and the Weyl group action, we can assume that
$\Db(l_1/k)$ base changes to $\sod{\ko}$ and $\Db(l_3/k)$ base changes
to $\sod{\ko(1,1)}$.  Hence the decomposition \eqref{eq:quad-catrep} base
changes to the decomposition \eqref{equa:deco-for-quad-surf}. In
particular, we get that $\Db(k,C_0) \simeq \Db(l_2/k)$ and $\Db(k,A)
\simeq \Db(l_3/k)$, which by Corollary~\ref{cor:Caldararu_conj},
implies that $l=l_2$ and $C_0$ is split over $l$ and that $A$ is split
over $l_3=k$.  

Second, assume that $S$ has Picard rank $2$. In this case, we have $S
= C \times C'$ for Severi--Brauer curves $C=\SB(B)$ and
$C'=\SB(B')$, and then $C_0=B \times B'$ and $A = B \tensor B'$, cf.\
Example~\ref{exam:involution}.  Hence we have a semiorthogonal
decomposition
\begin{equation}\label{eq:C-times-C}
\cat{A}_S=\langle \Db(k,B), \Db(k,B'), \Db(k,A) \rangle.
\end{equation}
If $\cat{A}_S$ is representable in dimension $0$, then by
Corollary~\ref{coro:minimal-have-3-blocks}, there is a semiorthogonal
decomposition
\begin{equation}
\label{eq:quad-catrep_A}
\cat{A}_S = \langle \Db(l_1/k), \Db(l_2/k), \Db(l_3/k) \rangle,
\end{equation}
Since the Picard rank of $S$ is stable under base change, it follows
that $l_i=k$ for $i=1,2,3$. Thus $\cat{A}_S$ is generated by three
$k$-exceptional objects, hence $\Db(S)$ is generated by four
$k$-exceptional objects. Over $k\sep$, we can appeal to
Rudakov~\cite{rudakov-quadric} to mutate the semiorthogonal
decomposition \eqref{eq:quad-catrep_A} into the decomposition
\eqref{eq:C-times-C}. Using Theorem~\ref{thm:Caldararu_conj}, we
deduce that the Brauer classes of $A$, $B$, $B'$, and hence also
$C_0$, are trivial.
\end{proof}

We now want to show that the Griffiths--Kuznetsov component is well
defined, except possibly when $S$ has index 2 and Picard rank 1.  In
the Appendix \ref{subs:links-of-type-1}, we will see how this case
should be thought of as a conic bundle of degree 6 from the
categorical point of view.

\begin{proposition}
Let $S$ be an involution surface.  Then the Griffiths--Kuznetsov
component $\cat{GK}_S$ is well defined as a birational invariant in
the following cases.  Letting $S_1 \dashrightarrow S$ be a birational
map, we have:
\begin{itemize}
\item $\ind(S)=4$ if and only if $\alpha \in \Br(k)$ has index 4;
there is a semiorthogonal decomposition $\cat{A}_{S_1} = \langle
\cat{T}, \Db(l,\gamma), \Db(k,\alpha)\rangle$ 

\item $\ind(S)=2$ and $\rho(S)=2$ then $\gamma$ is never trivial; if $\alpha$ is
trivial then there is a semiorthogonal decomposition $\cat{A}_{S_1} =
\langle \cat{T}, \Db(l,\gamma)\rangle$ and if $\alpha$ is nontrivial
then there is a semiorthogonal decomposition $\cat{A}_{S_1} = \langle
\cat{T}, \Db(l,\gamma), \Db(k,\alpha)\rangle$
\end{itemize}
where $\cat{T}$ always denotes a category representable in dimension
0.
\end{proposition}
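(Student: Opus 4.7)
The plan is to reduce everything to minimal models and then invoke Iskovskikh's birational classification together with the explicit semiorthogonal decompositions \eqref{equa:deco-for-quad-surf} and \eqref{eq:C-times-C}. Given any birational $S_1 \dashrightarrow S$, one can factor $S_1$ into a sequence of blow-ups of a minimal model $S_1'$. By Orlov's blow-up formula (compare Lemma \ref{lem:blow-up_catrep2}), this introduces only components of the form $\Db(\kappa/k)$ with $\kappa$ an \'etale $k$-algebra, all representable in dimension $0$; these are absorbed into the $\cat{T}$ term of the claimed decomposition. So it suffices to identify $\cat{A}_{S_1'}$ for each minimal model $S_1'$ birational to $S$.

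For the two characterizations, inspection of Table \ref{table:dp8} shows that $\ind(S) = 4$ occurs exactly in rows 8.1 and 8.2, both having $\ind(A) = 4$; conversely, if $\ind(A) \leq 2$, then $A$ is split or a quaternion and the table gives $\ind(S) \leq 2$. The nontriviality of $\gamma$ in the case $\ind(S) = 2$, $\rho(S) = 2$ follows from Lemma \ref{lem:isotropy_dim4}: $\gamma$ trivial would imply $S(k) \neq \varnothing$, whence $\ind(S) = 1$ by Proposition \ref{prop:involution_decomposition}, contradicting the hypothesis.

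For the minimal model analysis I would proceed case by case. If $\rho(S) = 1$ and $\ind(S) = 4$, then Proposition \ref{prop:isko-rigidity-of-nonrat-dps}\ref{isko.ii} applies (degree $8$, point of degree $4$, no lower-degree point): $S$ is birationally semirigid, so any minimal $S_1'$ birational to $S$ is isomorphic to $S$, and the decomposition \eqref{equa:deco-for-quad-surf} directly gives $\cat{A}_{S_1'} = \langle \Db(l,\gamma), \Db(k,\alpha)\rangle$. If $\rho(S) = 2$, then $S \isom \SB(B) \times \SB(B')$ is itself a minimal conic bundle; one then shows that any minimal $S_1'$ birational to it carries the same unordered Brauer pair $\{[B],[B']\}$, hence the same triple $(l,\gamma,\alpha)$. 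Such an $S_1'$ is either another product of Severi--Brauer curves or (when $\ind(S) = 2$) an involution surface of Picard rank $1$, and in each case the decomposition of $\cat{A}_{S_1'}$ is obtained from \eqref{eq:C-times-C} or \eqref{equa:deco-for-quad-surf}, with the $\Db(k,\alpha)$ factor collapsed into $\cat{T}$ precisely when $\alpha$ is trivial.

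The main obstacle is the Picard rank $2$ analysis. Unlike the rigidity statement in Picard rank $1$, here one must enumerate all Sarkisov links beginning at a product of Severi--Brauer curves and verify that the unordered Brauer pair is preserved. This relies on the conic bundle interpretation of $\cat{A}_S$ developed in the Appendix, which connects the Picard rank $2$ involution surfaces to degree-$6$ conic bundles; it is precisely because this conic bundle data is not uniquely controlled in every regime that the proposition must restrict to the cases where $\alpha$ has index $4$ or $\rho(S) = 2$.
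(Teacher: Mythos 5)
Your overall strategy — reduce to minimal models via the blow-up formula, dispatch Picard rank $1$ by semirigidity, and analyse Picard rank $2$ via the conic bundle interpretation — tracks the paper's approach at the coarse level. But the $\rho(S)=2$ analysis contains two concrete errors that would break the argument if carried out as you describe.

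First, your assertion that any minimal $S_1'$ birational to $\SB(B)\times\SB(B')$ "carries the same unordered Brauer pair $\{[B],[B']\}$" is false. What Proposition~\ref{prop:birational_involution_partners} actually shows is that the \emph{Klein four subgroup} $\langle [B],[B']\rangle \subset \Br(k)$ is preserved, and the example $C\times C \sim \PP^1\times C$ (with $C = \SB(B)$) exhibits birational involution surfaces with the unordered pairs $\{[B],[B]\}$ and $\{0,[B]\}$. Your claim happens to hold when $\ind(S)=4$, since then $[B\otimes B']$ is distinguished as the unique class of index $4$ in the Klein four group, but it requires a separate argument when $\ind(S)=2$, and as written the step is unjustified.

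Second, and more seriously, your enumeration of minimal models — "either another product of Severi--Brauer curves or (when $\ind(S)=2$) an involution surface of Picard rank $1$" — is incomplete. Sarkisov links of type II starting from a degree $8$ conic bundle produce degree $8$ conic bundles that are geometrically isomorphic to Hirzebruch surfaces $\FF_n$ with $n \geq 2$; these are minimal, lie in the same birational class, and are neither products of Severi--Brauer curves nor involution surfaces. Your route through Proposition~\ref{prop:birational_involution_partners} simply does not see them, so it cannot verify that their $\cat{A}$-components match. The paper closes this gap by replacing the classification of birational involution partners with Proposition~\ref{prop:GK-for-cb8}, which directly analyzes links of type II and IV among \emph{all} conic bundles of degree $8$ and shows the Brauer triple $(\alpha,\beta,\alpha\otimes\beta)$ appearing in \eqref{eq:deco-of-cbdeg8} is preserved. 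You would need that argument (or an equivalent one ruling out the extra minimal models) to complete the proof.
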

\begin{proof}
By Lemma~\ref{lem:isotropy_dim4}, $\gamma$ is trivial if and only if
$S(k) \neq \varnothing$.  Furthermore, we can always find a quadratic
extension $l'/l$ that splits $C_0$ over $l$.  Hence $S(l') \neq
\varnothing$.  Since $l'/k$ has degree 4, it follows that $S$ has a
closed point of degree $4$ so that $\ind(S)$ divides $4$.

Since $S \subset \SB(A)$, we have that $\ind(S)$ must be a multiple of
$\ind(\SB(A))=\ind(A)$.  In particular, $\ind(A)=4$ if and only if
$\ind(S)=4$.  Also if $\ind(A)=2$, then a generalization of Albert's
result on common splitting fields for quaternion algebras, cf.
\cite[Cor.~16.28]{book_of_involutions}, shows that there is a
quadratic extension of $k$ splitting $C_0$, hence also $A$ by the
fundamental relations, and thus $\ind(S)=2$.

Now suppose that $\ind(S)=4$.  Then both $\gamma$ and $\alpha$ are
nontrivial.  If $\rho(S)=1$ then $S$ is birationally rigid by
Proposition~\ref{prop:isko-rigidity-of-nonrat-dps}.  If $\rho(S)=2$,
then $S \isom \SB(B)\times\SB(B')$ such that $A=B\tensor B'$ has index
4 and $C_0=B\times B'$.  In \S\ref{app:conic_bundles}, we show that
$S_1$ admits a birational morphism $S_1 \to S_0$ where $S_0$ is a
conic bundle over either $\SB(B)$ or $\SB(B')$ and has the required
semiorthogonal decomposition.

Now suppose that $\ind(S)=2$ and $\rho(S)=2$, then $S_1$ admits a
birational morphism $S_1 \to S_0$ where $S_0$ is a conic bundle of
degree 8, and in the Appendix \S\ref{app:conic_bundles}, we show that
it has the required semiorthogonal decomposition.
\end{proof}

\begin{remark}
\label{rem:dP8_geom}
We now describe the geometry of the all possible cases listed in
Table~\ref{table:dp8}.  Any involution surface is minimal.  If
$\rho(S)=1$ (equivalently, $l$ is a field) then $\Pic(S)$ is generated
either by the anticanonical bundle or its square root.  In the second
case, $\ind(S)|2$ and $S \subset \PP^3$ is a quadric surface, which
can either be isotropic (case 8.8) or anisotropic (case 8.4).  If the
Picard group is generated by the anticanonical bundle, then $S \subset
\SB(A)$ is a degree 2 divisor of a Severi--Brauer threefold and
$\ind(S)=\ind(A)$ (cases 8.1 and 8.3).  In the case when $\rho(S)=2$
then $S$ is isomorphic to a product of Severi--Brauer curves $\SB(B)$
and $\SB(B')$.  In this case, $C_0 = B \times B'$ and $A=B\tensor B'$
and the possible cases are: $B$ not equivalent to $B'$ and both
nontrivial (cases 8.2 and 8.5 according to the index of $A$); $B=B'$
nontrivial (case 8.6); and $B$ nontrivial and $B'$ trivial (case 8.7);
and both $B$ and $B'$ trivial (case 8.9).  We remark that cases 8.6
and 8.7 are $k$-birational to each other for any given $B$.
\end{remark}

\begin{remark}
There is a natural vector bundle $W$ of rank 4 on $S$ such that
$W_{\ksep}=\ko(1,0)^{\oplus 2}\oplus\ko(0,1)^{\oplus 2}$.  There is
also a natural rank 4 vector bundle $U$ on $\SB(A)$ such that
$U_{\ksep} = \ko(1)^{\oplus 4}$.  We let $V_1 = W^{\min}$ and $V_2 =
U|_S^{\min}$ (recall Definition~\ref{def:reduced_part}).  These vector
bundles are tilting bundles for the blocks $\cat{F}$ and $\cat{G}$,
respectively, and have the following properties: $A_1=\End(V_1)$ is
Morita equivalent to $C_0$ and $A_2=\End(V_2)$ is Morita equivalent to
$A$; $V_1$ is indecomposable if and only if $l$ is a field; and $V_2$
is always indecomposable.  We list the ranks and 2nd Chern classes of
the vector bundles $V_i$ on Table~\ref{table:dp8}.

The calculation of the second Chern classes of the vector bundles
$V_1$ and $V_2$ is easily obtained by their description over
$k\sep$. In particular, we notice that
$\ind(S)=\mathrm{gcd}(c_2(V_1),c_2(V_2))$, except when $S$ is an
anisotropic quadric surface, in which case
$\mathrm{gcd}(c_2(V_1),c_2(V_2^{\oplus 2}))=\ind(S)$. Here, we use the
convention that $\mathrm{gcd}(a,0)=a$.
\end{remark}

\section{del Pezzo surfaces of degree $7$}
\label{subsec:dP7}

A del Pezzo surface $S$ of degree $7$ is the blow up of $\PP^2_k$
along a point of degree 2, see Example~\ref{exam:dP78}.  If the
residue field of the center of blow up is $l$, 
then there is a
semiorthogonal decomposition
$$
\Db(S) = \langle \ko, \ko(H), \ko(2H), V \rangle = \langle \Db(k),
\Db(k), \Db(k), \Db(l)\rangle
$$
where $H$ is the pull back of the hyperplane class from $\PP^2_k$ to
$S$, and $V$ is a
rank 2 vector bundle on $S$ such that $\End(V) = l$ and $V \tensor
\ksep = \ko(L_1)\oplus\ko(L_2)$, where $L_1$ and $L_2$ are the exceptional
divisors on $k_\sep$.  In particular, $S$ is $k$-rational
and is categorically representable in dimension 0 by
Lemma~\ref{lem:blow-up_catrep2}.  However, there is no 3-block
decomposition of $S$.

\section{del Pezzo surfaces of degree $6$} 
\label{subsec:dP6}

Let $S$ be a del Pezzo surface of degree 6. There is an associated
quaternion Azumaya algebra $Q$ over a cubic \'etale $k$-algebra $L$
and an associated degree 3 Azumaya algebra $B$ over a quadratic
\'etale $k$-algebra $K$.  Blunk~\cite{blunk-dp6} gives an
interpretation of these algebras in terms of a toric presentation of
$S$, building on the geometric construction of Colliot-Th\'el\`ene,
Karpenko, and Merkurjev~\cite{colliot-karpenko-merkurev}.  Let $\kappa
\in \Br(L)$ and $\beta \in \Br(K)$ denote the Brauer classes of $Q$
and $B$, respectively.  Blunk, Sierra, and Smith
\cite{blunk-sierra-smith} provide a semiorthogonal decomposition
\begin{equation}
\label{equa:deco-for-dp6}
\Db(S) = \langle \Db(k), \Db(k,Q), \Db(k,B) \rangle = \langle \Db(k), \Db(L/k,\kappa), \Db(K/k,\beta) \rangle,
\end{equation}
Our first task is to prove that the semiorthogonal decomposition
\eqref{equa:deco-for-dp6} descends the minimal three block
decomposition from \cite{karpov-nogin}. This will give an alternative
description of the tilting bundles generating the blocks.

\begin{proposition}\label{propo:mutation-for-dp6}
The base change of the semiorthogonal decomposition
\eqref{equa:deco-for-dp6} coincides with the minimal 3-block
decomposition of $\Db(S_{\ksep})$.
\end{proposition}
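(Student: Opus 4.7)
The plan is to verify the identification by base-changing the semiorthogonal decomposition \eqref{equa:deco-for-dp6} component-by-component and then matching with the unique (up to the standard operations) minimal 3-block decomposition given by Karpov--Nogin.

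First, since $S_{\ksep}$ is totally split, both Azumaya algebras $Q$ and $B$ become Morita-trivial after base change: indeed $L \otimes_k \ksep \isom \ksep \times \ksep \times \ksep$ and the class $\kappa \otimes_k \ksep$ is trivial in $\Br(\ksep^3)$, giving a $\ksep$-linear equivalence $\Db(L/k,\kappa)_{\ksep} \simeq \Db(\ksep^3)$. Analogously $\Db(K/k,\beta)_{\ksep} \simeq \Db(\ksep^2)$. By the base change principle for semiorthogonal decompositions (see Lemma \ref{lemma-base-change-of-semiorth} and \S\ref{subsec:Scalar_extension_of_triangulated_categories}), the decomposition \eqref{equa:deco-for-dp6} base changes to
$$
\Db(S_{\ksep}) = \sod{\Db(\ksep),\, \Db(\ksep^3),\, \Db(\ksep^2)},
$$
so each component is generated by a completely orthogonal collection of $\ksep$-exceptional line bundles, of cardinalities $1$, $3$, and $2$, respectively. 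In particular, the base change is a 3-block decomposition in the Karpov--Nogin sense.

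Next, I would identify the blocks explicitly. The first component is generated by $\ko_S$, which base-changes to $\ko_{S_{\ksep}}$, matching $\cat{E}$ in Table \ref{table:blocks}. For the other two components, I would use the explicit construction of the tilting generators from \cite{blunk-sierra-smith}: the generator of $\Db(L/k,\kappa)$ is a vector bundle $V$ pure of type $\ko(2H-L_1-L_2)\oplus \ko(2H-L_1-L_3)\oplus \ko(2H-L_2-L_3)$, with Galois acting on the set $\{L_1,L_2,L_3\}$ through the cubic \'etale algebra $L$; and the generator of $\Db(K/k,\beta)$ is a vector bundle $W$ pure of type $\ko(H) \oplus \ko(2H-L_1-L_2-L_3)$, with Galois swapping these two summands through the quadratic \'etale algebra $K$. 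Using Corollary \ref{prop:single-exc-bundle-G-inv}, the first Chern classes of $V_{\ksep}$ and $W_{\ksep}$ must be $G_k$-invariant combinations of the indicated line bundles, and this forces $V_{\ksep}$ and $W_{\ksep}$ to have exactly these Krull--Schmidt decompositions. Consequently the base-changed blocks coincide on the nose with $\cat{G}$ and $\cat{F}$ of Table \ref{table:blocks}.

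Finally, I would invoke Proposition \ref{prop:karpov-nogin}: for $d=6$ we have $s = \max\{1, 5-6\} = 1$, so up to mutation, tensoring by a line bundle, and the Weyl group action, there is a unique minimal 3-block decomposition of $\Db(S_{\ksep})$. Since the base change of \eqref{equa:deco-for-dp6} is a 3-block decomposition with blocks of cardinalities $(1,2,3)$ and with exceptional line bundles as generators, it must be the minimal one. The main obstacle is step two, namely the explicit identification of the first Chern classes of the generators of $\Db(L/k,\kappa)$ and $\Db(K/k,\beta)$ after base change. This requires keeping track of the full Galois action on the hexagon of exceptional lines (which factors through the dihedral group $S_3 \times \ZZ/2$, with $S_3$ acting via $L$ and $\ZZ/2$ via $K$) and applying the descent results of \S\ref{subsec:classical_descent} to conclude that the descended bundles are uniquely characterized by their types and endomorphism algebras.
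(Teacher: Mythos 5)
Your high-level strategy (base-change each component, observe you get a 3-block decomposition, appeal to the uniqueness in Proposition \ref{prop:karpov-nogin}) does prove the proposition in the weak reading ``coincides up to mutation, tensoring by a line bundle, and the Weyl group action.''  But the paper's own Remark following the statement flags this as exactly the argument it is declining to be content with: the proof is designed to deliver the \emph{explicit} identification of each block with the base change of the Blunk--Sierra--Smith tilting bundle, because that identification is used in later sections (e.g.\ the elementary-link calculations).  So as a substitute for the paper's proof, your proposal is under-shooting the target.

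There are also two concrete problems in your attempted explicit identification.  First, the type you assign to the generator of $\Db(L/k,\kappa)$ is wrong.  The Blunk--Sierra--Smith bundle $J$ has
$$
J_{\ksep} = J_1 \oplus J_2 \oplus J_3, \qquad J_i \isom \ko(H-L_i)^{\oplus 2},
$$
(since $L_3 + M_2 = L_2 + M_3 = H - L_1$, etc.), so $J_{\ksep}$ is pure of type $\bigoplus_{i=1}^3 \ko(H-L_i)$ --- not $\bigoplus \ko(2H-L_i-L_j)$ as you claim.  In $\Pic(S_{\ksep})$ these are genuinely different classes (their difference is not a multiple of $\omega_{S_{\ksep}}$), so the subcategory $\langle \ko(H-L_1), \ko(H-L_2), \ko(H-L_3)\rangle$ is not the block $\cat{G}$ of Table~\ref{table:blocks}, but a mutation of it.  Your identification of $I_{\ksep}$ with $\ko(H)\oplus\ko(2H-L_1-L_2-L_3)$ is correct, but the $\cat{G}$-block mismatch shows you would not recover Table~\ref{table:blocks} ``on the nose.''  Second, your invocation of Corollary~\ref{prop:single-exc-bundle-G-inv} is circular: that corollary tells you the possible Chern classes of objects in $\cat{T}$ \emph{once you already know} the generators of $\cat{T}_{\ksep}$; it cannot be used to ``force'' the Krull--Schmidt type of $V_{\ksep}$, which is what you were trying to determine.

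The paper's actual route is different and cleaner for what it needs: it runs Orlov's blow-up formula for $\pi: S_{\ksep} \to \PP^2_{\ksep}$, performs explicit mutations to land on the 3-block decomposition
$$
\Db(S_{\ksep}) = \langle \ko \,|\, \ko(H-L_1), \ko(H-L_2), \ko(H-L_3) \,|\, \ko(H), \ko(-K_{S_{\ksep}}-H) \rangle,
$$
and then matches the second and third blocks with the base changes of $J$ and $I$ using the correct computations of $J_{\ksep}$ and $I_{\ksep}$ and Proposition~\ref{prop:exc-coll-and-dg-algebras}.  The resulting decomposition is a single explicit mutation of the Karpov--Nogin presentation in Table~\ref{table:blocks}, which is good enough for ``coincides'' and, crucially, tells you exactly which $\ksep$-bundles the $Q$-block and the $B$-block sit over.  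If you want to rescue your argument you should replace the claimed type of $V_{\ksep}$ with $\bigoplus \ko(H-L_i)$ and drop the appeal to Corollary~\ref{prop:single-exc-bundle-G-inv}; but even then you will not have literally reproduced Table~\ref{table:blocks}, only a mutate of it --- which the paper is careful to state.
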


\begin{remark}
We can appeal to Proposition~\ref{prop:karpov-nogin} for the fact that
the semiorthogonal decomposition \eqref{equa:deco-for-dp6} coincides,
up to mutation, tensoring by a line bundle, and the Weyl group action,
with the minimal 3-block decomposition. Hence in
Proposition~\ref{propo:mutation-for-dp6}, we prove slightly more:\
using Blunk's work \cite{blunk-dp6}, we explicitly describe the
generators of the semiorthogonal components of
\eqref{equa:deco-for-dp6} that base change to the 3-block collection
from Table~\ref{table:blocks}.  Aside from being necessary for the
sequel, we believe that the direct proof clarifies the connection
between the beautiful geometry and arithmetic of del Pezzo surfaces of
degree 6 and its derived category.
\end{remark}

Before giving the proof, we recall the construction in
Blunk \cite{blunk-dp6}, and Blunk, Sierra, and Smith
\cite{blunk-sierra-smith}, of certain vector bundles on $S$. The del
Pezzo surface $S_{\ksep}$ of degree 6 over $\ksep$ is the blow-up of
$\PP^2_{\ksep}$ in three noncolinear points $p_1,p_2,p_3$. There are
six exceptional lines, coming in two pairs of three lines, say $L_1,
L_2, L_3$ and $M_1, M_2, M_3$. The intersection products are $M_i.M_j
= L_i.L_j = -\delta_{ij}$, and $M_i.L_j = \delta_{ij}$. So there is a
map $\pi: S_{\ksep} \to \PP^2_{\ksep}$, whose exceptional divisors are
the $L_i$ (with the convention that $L_i$ is over $p_i$).  The other
three exceptional lines $M_i$ are the strict transform of the lines in
$\PP^2_{\ksep}$ joining pairs of the three points (with the convention
that $M_i$ corresponds to the line not going through $p_i$).  There is
another birational morphism $\eta: S_{\ksep} \to \PP^2_{\ksep}$
contracting the $M_i$ to three points $q_1,q_2,q_3$, and sending $L_i$
to lines joining two of those three points. We end up with the
following diagram
\begin{equation}\label{diagram-for-dp6}
\xymatrix{
& S_{\ksep} \ar[rd]^{\pi} \ar[dl]_{\eta} & \\
\PP^2_{\ksep} \ar@{-->}[rr]^{\phi} & & \PP^2_{\ksep},
}
\end{equation}
where $\phi$ is the well-known Cremona involution, a birational
self-map of the projective plane of degree 2.

This description allows us to present the Picard group of $S_{\ksep}$
in a way convenient to compare the base change of the semiorthogonal
decompositions of Blunk--Sierra--Smith and Karpov--Nogin 3-block
decomposition.  Indeed, the Picard group of $S_{\ksep}$ has rank 4 and
is generated by the exceptional lines $L_i$ and $M_i$, with the
relations $L_i + M_j = L_j + M_i$. If we denote by $H = \pi^*
\ko_{\PP^2_{\ksep}}(1)$, we have that $H= L_1 + L_2 + M_3$. The
anticanonical divisor $K_{S_{\ksep}}$ is then $K_{S_{\ksep}} = L_1 +
L_2 + L_3 + M_1 + M_2 + M_3$.  On the other hand, if we denote by $H'
= \eta^* \ko_{\PP^2_{\ksep}}(1)$, we have $H' = M_1 + M_2 + L_3 =
-K_{S_{\ksep}} - H$.

To describe the semiorthogonal decomposition
\eqref{equa:deco-for-dp6}, Blunk, Sierra, and Smith construct vector
bundles over $S_{\ksep}$ that descend to $S$ in the following way
\cite{blunk-sierra-smith}.  The first one is just $\ko_{S_{\ksep}}$.

To define the second vector bundle, consider the following rank 2
vector bundles
\begin{equation}\label{presentation-J1-J2-J3}
\begin{array}{c}
J_1 = \ko(L_3 + M_2) \oplus \ko(L_2+M_3), \\
J_2 = \ko(L_1 + M_3) \oplus \ko(L_3+M_1), \\
J_3 = \ko(L_1 + M_2) \oplus \ko(L_2+M_1),
\end{array}
\end{equation}
on $S_{\ksep}$.  The presentation \eqref{presentation-J1-J2-J3} shows
that $\ol{J} = J_1 \oplus J_2 \oplus J_3$ is Galois invariant.
Blunk, Sierra, and Smith assert that $\ol{J}$ descends to a vector
bundle $J$ of rank $6$ on $S$ and they consider $Q = \End(J)$.  On
$S_{\ksep}$, we remark that $J_i = \ko(H-L_i)^{\oplus 2}$. Thus, by
base change, we get that $Q \tensor \ksep = \End(\ko(H-L_1)^{\oplus
2} \oplus \ko(H-L_2)^{\oplus 2} \oplus \ko(H-L_3)^{\oplus 2})$,
which is Morita equivalent to $\End\bigl(\bigoplus_{i=1}^3
\ko(H-L_i)\bigr)$.

To define the third vector bundle, consider the two rank 3 vector
bundles
\begin{equation}\label{presentation-I1-I2}
\begin{array}{c}
I_1 = \ko(L_1 + M_2 + M_3) \oplus \ko(M_1+L_2+M_3) \oplus \ko(M_1+M_2+L_3), \\
\\
I_2 = \ko(L_1 + L_2 + M_3) \oplus \ko(L_1+M_2+L_3) \oplus \ko(M_1+L_2+L_3),
\end{array}
\end{equation}
on $S_{\ksep}$.  The presentation \eqref{presentation-I1-I2} shows
that the sum $\overline{I}:= I_1 \oplus I_2$ is Galois invariant.
Blunk, Sierra, and Smith assert that $\ol{I}$ descends to a vector
bundle $I$ of rank $6$ on $S$ and they consider $B = \End(I)$.  On
$S_{\ksep}$, we remark that $I_1 = \ko(H')^{\oplus 3}=
\ko(-K_{S_{\ksep}}-H)^{\oplus 3}$, and $I_2 = \ko(H)^{\oplus 3}$. In
particular, by base change, we get that $B \tensor \ksep
= \End(\ko(H)^{\oplus 3} \oplus \ko(-K_{S_{k\sep}}-H)^{\oplus 3})$,
which is Morita equivalent to $\End(\ko(H) \oplus
\ko(-K_{S_{k\sep}}-H))$.

\begin{proof}[Proof of Proposition~\ref{propo:mutation-for-dp6}]
Let us now recall the construction of the 3 block decomposition over
$S_{\ksep}$ described by Karpov and Nogin \cite{karpov-nogin}. We
provide a slightly different presentation.  Consider the birational
morphism $\pi: S_{\ksep} \to \PP^2_{\ksep}$, which is the blow up of
three points with exceptional divisors $L_1$, $L_2$ and $L_3$. A
semiorthogonal decomposition of $\Db(S_{\ksep})$ is given by Orlov's
formula (see \cite{orlovprojbund}) as follows:
$$
\Db(S_{\ksep})= \langle \pi^* \Db(\PP^2_{\ksep}), \ko_{L_1},
\ko_{L_2}, \ko_{L_3} \rangle.
$$
Choosing the full exceptional collection $\{ \ko(-1),\ko,\ko(1) \}$ on
$\PP^2_{\ksep}$, we get the semiorthogonal decomposition
$$
\Db(S_{\ksep})= \langle \ko(-H),\ko,\ko(H),\ko_{L_1}, \ko_{L_2},
\ko_{L_3} \rangle.
$$
Mutating $\ko(-H)$ to the left with respect to the whole orthogonal
complement we get
$$
\Db(S_{\ksep})= \langle \ko,\ko(H),\ko_{L_1}, \ko_{L_2}, \ko_{L_3},
\ko(-K_{S_{\ksep}}-H) \rangle,
$$
using \cite[Prop.~3.6]{bondal_kapranov:reconstructions}. Now we mutate
the three exceptional objects $\ko_{L_i}$ to the left with respect to
$\ko(H)$. An easy calculation with the evaluation sequences for $L_i$
gives
\begin{equation}
\label{equa-3-block-of-KN}
\Db(S_{\ksep}) = \langle \ko, \ko(H-L_1), \ko(H-L_2), \ko(H-L_3), \ko(H), \ko(-K_{S_{\ksep}}-H) \rangle.
\end{equation}
The decomposition \eqref{equa-3-block-of-KN} is a mutation of the
3-block decomposition \cite[(3)]{karpov-nogin}. The latter is indeed
obtained mutating the
three exceptional objects $\ko_{L_i}$ to the right with respect to
$\ko(-K_{S_{\ksep}}-H)=\ko(2H-L_1-L_2-L_3)$, as Karpov and Nogin do.
The presentation \eqref{equa-3-block-of-KN} allows the following
description of the three blocks:
$$
\cat{E}=\langle \ko \rangle, \quad
\cat{G}= \langle \ko(H-L_1), \ko(H-L_2), \ko(H-L_3) \rangle, \quad
 \cat{F}= \langle \ko(H), \ko(-K_{S_{\ksep}}-H) \rangle.
$$
So, the block $\cat{E}$ corresponds to the first component of
\eqref{equa:deco-for-dp6}. Recall that $\End\bigl(\bigoplus_{i=1}^3
\ko(H-L_i)\bigr)$ is Morita equivalent to $Q \tensor \ksep$,
and that $\End(\ko(H) \oplus \ko(-K_{S_{\ksep}}-H))$ is Morita equivalent to
$B \tensor \ksep$. The claim follows now by Proposition \ref{prop:exc-coll-and-dg-algebras}.
\end{proof}

\begin{table}
\centering\tiny
\begin{tabular}{||c||c||c||c||c|c|c|c|c||c|c|c|c|c||}
\hline\hline
& $S$ & $\ind(S)$ & $\rho(S)$ & $A_1$ & $Z$ & $\ind$ & $c_2$ & $\rk$ & $A_2$ & $Z$ & $\ind$ & $c_2$ & $\rk$ \\
\hline
\hline
6.1& $S \subset R_{K/k}\SB(B)$ & 6 & 1 & $Q$ & $L$ & 2 & 12 & 6 & $B$ & $K$ & 3 & 24 & 6 \\
\hline
6.2& $S \subset R_{K/k}\SB(B)$ & 3 & 1 & $L$ & $L$ & 1 & 3 & 3 & $B$ & $K$ & 3 & 24 & 6 \\
\hline
6.3& $S \subset \SB(A) \times \SB(A^{-1})$ & 3 & 2 & $L$ & $L$ & 1 & 3 & 3 & $A \times A^{-1}$ & $k^2$ & 3 & 24 & 6 \\
\hline
6.4& $S  \subset R_{K/k}\PP^2$ & 2 & 1 & $Q$ & $L$ & 2 & 12 & 6 & $K$ & $K$ & 1 & 2 & 2 \\
\hline
6.5& $S  \subset R_{K/k}\PP^2$ & 2 & 2 & $Q'' \times Q'$ & $k\times L'$ & 2 & 12 & 6 & $K$ & $K$ & 1 & 2 & 2 \\
\hline
6.6& $S  \subset R_{K/k}\PP^2$ & 2 & 2 & $k \times Q'$ & $k\times L'$ & 2 & 8 & 5 & $K$ & $K$ & 1 & 2 & 2 \\
\hline
6.7& $S  \subset R_{K/k}\PP^2$ & 2 & 3 & $Q'\times Q''\times Q'''$ & $k^3$ & 2 & 12 & 6 & $K$ & $K$ & 1 & 2 & 2 \\
\hline
6.8& $S  \subset R_{K/k}\PP^2$ & 2 & 3 & $k\times Q' \times Q'$ & $k^3$ & 2 & 8 & 5 & $K$ & $K$ & 1 & 2 & 2 \\
\hline
6.9& $S  \subset R_{K/k}\PP^2$ & 1 & 1 & $L$ & $L$ & 1 & 3 & 3 & $K$ & $K$ & 1 & 2 & 2 \\
\hline
6.10& $S  \subset R_{K/k}\PP^2$ & 1 & 2 & $k \times L'$ & $k\times L'$ & 1 & 3 & 3 & $K$ & $K$ & 1 & 2 & 2 \\
\hline
6.11& $S  \subset \PP^2 \times \PP^2$ & 1 & 2 & $L$ & $L$ & 1 & 3 & 3 & $k^2$ &
$k^2$ & 1 & 2 & 2 \\
\hline
6.12& $S  \subset R_{K/k}\PP^2$ & 1 & 3 & $k^3$ & $k^3$ & 1 & 3 & 3 & $K$ & $K$ & 1 & 2 & 2 \\
\hline
6.13& $S  \subset \PP^2 \times \PP^2$ & 1 & 3 & $k \times L'$ & $k\times L'$ & 1 & 3 &
3 & $k^2$ & $k^2$ & 1 & 2 & 2 \\
\hline
6.14& $S \subset \PP^2 \times \PP^2$ & 1 & 4 & $k^3$ & $k^3$ & 1 & 3 & 3 & $k^2$ & $k^2$ & 1 & 2 & 2 \\
\hline\hline
\end{tabular}
\caption{The invariants of a del Pezzo surface $S$ of degree 6. Here,
the algebras $\End(V_1) = A_1=Q$ and $\End(V_2) = A_2=B$ up
to Morita equivalence; $l_1=Z(A_1)=L$ and
$l_2=Z(A_2)=K$ are separable cubic and quadratic extensions of $k$;
the columns $Z$ and $\ind$ refer to the center and index of $A_i$; and
the columns $c_2$
and $\rk$ refer to the 2nd Chern class and rank of $V_i$. Note that
$(V_1)_{k\sep}$ is a direct sum of $\oplus(\ko(H-L_i))$, while
$(V_2)_{k\sep}$ is a direct sum of $\ko(H)\oplus \ko(H')$. 
Recall that $S$ is rational if and only if $\ind(S)=1$, see 
\cite[\S2]{coray:del_Pezzo}. See Remark~\ref{rem:dP6_geom} for a
geometric description of each case.}
\label{table:dp6}
\end{table}

\begin{remark}
\label{rem:dP6_splitting_conditions}
A consequence of \cite[Thm.~4.1]{blunk-dp6} is that $B$ comes with a
natural $K/k$-unitary involution, equivalently, the corestriction of
$B$ from $K$ to $k$ is split.  This involution on $B$ was already
constructed by Colliot-Th\'el\`ene, Karpenko, and Merkurjev
\cite{colliot-karpenko-merkurev}.  Furthermore, the corestriction of
$Q$ from $L$ to $k$ is split.  Also, $B$ is split by $L$ and $Q$ is
split by $K$.  Otherwise, any choices of $K$ and $L$ are possible and
any choices of algebras $B/K$ and $Q/L$ are possible, subject to the above
restrictions, see \cite[Thm.~2.2]{blunk-dp6}.
\end{remark}

\begin{remark}\label{rmk:morita-eq-for-dp6s}
Given a del Pezzo surface $S$ of degree 6, Blunk constructs the triple
$(Q,B,KL)$ and shows that a toric presentation of $S$ is
uniquely determined by the equivalence class under pairwise
$L$-isomorphisms of $Q$ and $K$-isomorphisms of $B$, see
\cite[Thm. 2.4]{blunk-dp6}. On the other hand, a consequence of
Blunk's work is that the isomorphism class of $S$ is uniquely
determined by the equivalence class under pairwise $k$-isomorphisms of
$Q$ and $B$, see \cite[Prop. 3.2]{blunk-dp6}.  By
Theorem~\ref{thm:Caldararu_conj}, the semiorthogonal decomposition
\eqref{equa:deco-for-dp6} determines $Q$ and $B$ up to pairwise
$k$-linear Morita equivalence, hence $k$-isomorphism, since the
algebras involved are semi-simple of finite rank.  We conclude that
the semiorthogonal decomposition \eqref{equa:deco-for-dp6} identifies
the isomorphism class of $S$.
\end{remark}

Now we prove that rationality is equivalent to categorical
representability in dimension 0.

\begin{proposition}
\label{prop-one-dir-for-dp6}
Let $S$ be a del Pezzo surface $S$ of degree 6. The following are equivalent:
\begin{enumerate}
 \item \label{prop-one-dir-for-dp6.1} $S$ has a $k$-rational point
 \item \label{prop-one-dir-for-dp6.2} $S$ is $k$-rational
 \item \label{prop-one-dir-for-dp6.3} $S$ is categorically representable in dimension 0
 \item \label{prop-one-dir-for-dp6.4} $\cat{A}_S$ is representable in dimension 0
\end{enumerate}
\end{proposition}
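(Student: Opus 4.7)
The plan is to close the cycle $(1) \Leftrightarrow (2) \Rightarrow (3) \Rightarrow (4) \Rightarrow (1)$. The equivalence $(1) \Leftrightarrow (2)$ is Manin's Theorem~\ref{thm:manin-unirat-param} applied to degree $d=6 \geq 5$. The implication $(3) \Rightarrow (4)$ is general and purely categorical: if $\Db(S)$ is representable in dimension~$0$, then by Corollary~\ref{coro:minimal-have-3-blocks} there is a decomposition $\Db(S) = \langle \Db(l_1/k), \dotsc, \Db(l_n/k)\rangle$ with $n = 2+\rho(S)$ and each $l_i$ an \'etale $k$-algebra. After base change to $\ksep$, this is equivalent, via mutations and the Weyl group action, to a refinement of the three-block decomposition of Karpov--Nogin (Proposition~\ref{prop:karpov-nogin}), and by Proposition~\ref{propo:mutation-for-dp6} this matches the base change of \eqref{equa:deco-for-dp6}. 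Up to such mutations, the component $\langle \ko_S \rangle \simeq \Db(k)$ can be split off, and what remains yields a decomposition of $\cat{A}_S$ by derived categories of \'etale $k$-algebras.

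For $(1) \Rightarrow (3)$: suppose $S(k) \neq \varnothing$. I would argue that both Brauer classes $\kappa \in \Br(L)$ and $\beta \in \Br(K)$ appearing in \eqref{equa:deco-for-dp6} must be trivial. Indeed, the natural morphisms $S \to R_{L/k}\SB(Q)$ and $S \to R_{K/k}\SB(B)$ (implicit in Blunk's construction, see also Remark~\ref{rem:dP6_splitting_conditions}) send a $k$-rational point of $S$ to $k$-rational points on the Weil restrictions, which by the universal property correspond to $L$-points of $\SB(Q)$ and $K$-points of $\SB(B)$; Ch\^atelet's theorem then splits $Q$ over $L$ and $B$ over $K$. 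Hence \eqref{equa:deco-for-dp6} becomes the semiorthogonal decomposition
\[
\Db(S) = \langle \Db(k), \Db(L/k), \Db(K/k)\rangle,
\]
which exhibits $S$ as representable in dimension $0$ by Lemma~\ref{lem:0-dim=etale-algebra}.

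For $(4) \Rightarrow (1)$: suppose $\cat{A}_S$ is representable in dimension $0$. Then so is $\Db(S)=\langle \Db(k), \cat{A}_S \rangle$. Arguing exactly as in the Severi--Brauer case (Proposition~\ref{propo-sbsurf}), we obtain a semiorthogonal decomposition of $\Db(S)$ by derived categories of \'etale $k$-algebras whose base change to $\ksep$ is, up to mutation, the minimal three-block decomposition. Combining Proposition~\ref{propo:mutation-for-dp6} with Proposition~\ref{prop:descend-a-block} and Corollary~\ref{cor:Caldararu_conj}, this forces the middle and right components of \eqref{equa:deco-for-dp6} to be $k$-linearly equivalent to $\Db(L/k)$ and $\Db(K/k)$, respectively. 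In particular $\kappa \in \Br(L)$ and $\beta \in \Br(K)$ are both trivial. Finally I would invoke the classical arithmetic characterization (due in various forms to Colliot-Th\'el\`ene, building on \cite{colliot-karpenko-merkurev} and explicit in \cite[Thm.~3.2]{blunk-dp6}): for a del Pezzo surface of degree $6$ over $k$, the triviality of the associated algebras $Q$ and $B$ is equivalent to $S(k)\neq \varnothing$. This yields $(1)$.

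The main obstacle is the arithmetic input required in $(4)\Rightarrow(1)$: all categorical steps reduce the problem cleanly to the vanishing of $\kappa$ and $\beta$, but passing from this vanishing to the existence of a $k$-rational point is not categorical and must be taken from the literature on del Pezzo surfaces of degree $6$. A secondary, more technical point is ensuring in $(3)\Rightarrow(4)$ that the \'etale decomposition of $\Db(S)$ can actually be mutated so that $\sod{\ko_S}$ appears as a component; this requires the uniqueness (up to mutations and the Weyl group) of the three-block decomposition from Proposition~\ref{prop:karpov-nogin}, and for higher Picard rank cases one must further split the blocks into their indecomposable pieces, which is allowed by Lemma~\ref{lem:indecomposable}.
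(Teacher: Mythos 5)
Your proof follows essentially the same route as the paper's: base change, match the \'etale decomposition with the Blunk--Sierra--Smith decomposition \eqref{equa:deco-for-dp6} via the Karpov--Nogin uniqueness, and then invoke the arithmetic characterization that $S(k)\neq\varnothing$ iff both $\kappa$ and $\beta$ vanish (the paper's reference is \cite[Cor.~3.5]{blunk-dp6}). The main structural difference is your cycle $(1)\Leftrightarrow(2)\Rightarrow(3)\Rightarrow(4)\Rightarrow(1)$ versus the paper's $(1)\Leftrightarrow(2)\Rightarrow(3)$, then $(3)\Rightarrow\{\kappa,\beta \text{ trivial}\}\Rightarrow (1)\text{ and }(4)$ in the rank-one case. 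A nice addition on your part is the explicit argument that $S(k)\neq\varnothing$ forces $\kappa$ and $\beta$ to vanish via the morphisms $S\to R_{L/k}\SB(Q)$ and $S\to R_{K/k}\SB(B)$ plus Ch\^atelet's theorem; the paper just cites Blunk.

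The point that needs attention is the case $\rho(S)>1$. Your matching argument in $(3)\Rightarrow(4)$ and $(4)\Rightarrow(1)$ appeals to Proposition~\ref{propo:mutation-for-dp6}, which only compares \eqref{equa:deco-for-dp6} with the minimal 3-block decomposition of $\Db(S_{\ksep})$; when $\rho(S)>1$ the \'etale decomposition from Corollary~\ref{coro:minimal-have-3-blocks} has $n=2+\rho(S)>3$ components, so it base changes to a strictly finer decomposition, and the identification with \eqref{equa:deco-for-dp6} is no longer immediate. You gesture at this by saying one can ``further split the blocks via Lemma~\ref{lem:indecomposable},'' but that lemma is about decomposing the \'etale pieces themselves, not about matching a finer decomposition against the 3-block one; some mutation bookkeeping is still required. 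The paper avoids this entirely: for $\rho(S)>1$, a degree 6 del Pezzo surface is never minimal (cf.\ Table~\ref{table:dp6}), so one passes to the minimal model $S'$, which has degree $\geq 7$, and invokes the already-established equivalences in those degrees together with Lemma~\ref{lem:exclude-non-minimal} and the blow-up formula. That reduction is cleaner and you should adopt it (or at minimum spell out the refined matching argument) rather than leaving the higher-rank case as a remark.
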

\begin{proof}
By \cite[\S2]{coray:del_Pezzo}, $S$ is rational if and only if $S(k)
\neq \varnothing$ if and only if $\ind(S)=1$.  By
\cite[Cor.~3.5]{blunk-dp6}, $S(k) \neq \varnothing$ is equivalent to
the triviality of the Brauer classes $\kappa$ and $\beta$ and so
\eqref{equa:deco-for-dp6} becomes a semiorthogonal decomposition
$$
\Db(S) = \langle \Db(k), \Db(L/k), \Db(K/k) \rangle.
$$
Hence \ref{prop-one-dir-for-dp6.1} is equivalent
to \ref{prop-one-dir-for-dp6.2}, which
implies \ref{prop-one-dir-for-dp6.3}.

On the other hand, suppose that $S$ has Picard rank 1. If $S$ is
categorical representable in dimension 0, then there is a
semiorthogonal decomposition
\begin{equation}\label{eq:dp6-catrep}
\Db(S) = \langle \Db(l_1/k), \Db(l_2/k), \Db(l_3/k) \rangle,
\end{equation}
base-changing to a 3-block exceptional collection over $\ksep$. Hence,
up to mutation, the decomposition \eqref{eq:dp6-catrep} equals the
decomposition \eqref{equa:deco-for-dp6} and hence $\cat{A}_S$ is
representable in dimension 0 and both the Brauer classes of $Q$ and
$B$ are trivial, hence $S(k) \neq \varnothing$ by
\cite[Cor.~3.5]{blunk-dp6}. Thus \ref{prop-one-dir-for-dp6.3} implies
\ref{prop-one-dir-for-dp6.4}, which implies
\ref{prop-one-dir-for-dp6.1}. If the Picard rank of $S$ is $>1$, then
$S$ is not minimal and we can consider its minimal model $S'$, which
is a del Pezzo surface of degree $\geq 7$.  We conclude in this case
by appealing to the results from the previous sections.
\end{proof}

\begin{remark}
Colliot-Th\'el\`ene, Karpenko, and Merkurjev
\cite[Rem.~4.5]{colliot-karpenko-merkurev} provide a geometric
argument to show that the splitting of $K$ implies the non-minimality
of $S$.

This can also be seen via the description of $B = \End(I)$,
where $I$ is the vector bundle constructed above.  Indeed, the
splitting of $K$, i.e., $B \isom A \times A\op$, means that both
$H^{\oplus 3}$ and $(H')^{\oplus 3}$ descend to vector bundles of rank
3 on $S$, hence both $\overline{H}$ and $\ol{H}'$ are Galois invariant
line bundles, with Brauer obstruction to descent being the Brauer
classes of $A$ and $A\op$, respectively.  Hence (cf.\
\cite[Prop.~8]{stix:period-index}) we get birational morphisms $S \to
\SB(A)$ and $S \to \SB(A^{-1})$ (compare with
\cite[Rem.~4.5]{colliot-karpenko-merkurev}).  As an interesting
consequence, the birational Cremona involution $\phi$ from diagram
\eqref{diagram-for-dp6} descends to a $k$-birational map $\SB(A)
\dashrightarrow \SB(A^{-1})$.
\end{remark}

\begin{lemma}
\label{lem:effective_index_dP6}
Let $S$ be a del Pezzo surface of degree 6.  The index of $X$ divides
$6$ and there is always a closed point whose degree equals the index. 
\end{lemma}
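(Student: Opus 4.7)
The plan is to prove the two assertions ($\ind(S)\mid 6$ and existence of a closed point of degree equal to $\ind(S)$) separately, using Galois orbit analysis on the hexagon of exceptional curves for the first, and a case decomposition driven by the semiorthogonal decomposition \eqref{equa:deco-for-dp6} and the classification in Table \ref{table:dp6} for the second.

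For the divisibility $\ind(S)\mid 6$, I would exploit the action of $G_k$ on the six $(-1)$-curves of $S_{\ksep}$, which factors through the Weyl group $W(A_2\oplus A_1)=S_3\times S_2$ (where $S_3$ permutes the three pairs of opposite sides of the hexagonal configuration and $S_2$ is generated by the Cremona involution $L_i\leftrightarrow M_i$). A direct enumeration of the subgroups of $S_3\times S_2$ shows that the orbit structures on the six curves are exactly $\{1^6\}$, $\{1^2,2^2\}$, $\{2^3\}$, $\{3^2\}$, $\{4,2\}$, or $\{6\}$; in particular, every orbit structure contains at least one orbit whose size divides $6$, since the orbit of size $4$ (which arises only from a Klein four subgroup) is always accompanied by an orbit of size $2$. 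Any Galois orbit of size $n$ on the exceptional curves descends to a closed subscheme of $S$ isomorphic to $\PP^{1}_{k'}$ (viewed as a $k$-scheme) for some extension $k'/k$ of degree $n$, and a $k'$-rational point of $\PP^{1}_{k'}$ gives a closed point of $S$ of degree $n$, yielding $\ind(S)\mid 6$.

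For the second assertion, I would split into cases according to $\ind(S)\in\{1,2,3,6\}$, which by Table~\ref{table:dp6} correspond to specific values of the indices of $Q/L$ and $B/K$. When $\ind(S)=1$, rationality of $S$ (equivalent to $\ind(S)=1$ by \cite[\S2]{coray:del_Pezzo}, cf.\ Proposition~\ref{prop-one-dir-for-dp6}) gives $S(k)\neq\varnothing$. When $\ind(S)=2$, the algebra $B$ is split over $K$, so $\ko_{S_{\ksep}}(H)$ descends to a line bundle on $S_K$; this defines a birational morphism $S_K\to\PP^{2}_K$, making $S_K$ a $K$-rational surface and producing a closed point of $S$ of degree $[K:k]=2$. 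When $\ind(S)=3$, the algebra $Q$ is split over the cubic field $L$, so the sum $\bigoplus_{i}\ko(H-L_i)$ descends to a line bundle $M$ on $S_L$; the linear system $|M|$ defines a conic bundle $S_L\to\PP^{1}_{L}$ with exactly two singular fibers, and one checks that $S_L$ admits an $L$-rational point (constructed using the conic bundle structure together with the compatibility of the descent through the compositum $K\cdot L$), yielding a closed point of $S$ of degree $[L:k]=3$. Finally, when $\ind(S)=6$, we are in case~6.1 of Table~\ref{table:dp6}: both $Q$ and $B$ are non-split, so the previous constructions fail, while the orbit argument supplies a closed point of degree exactly $6$, which is minimal by the definition of the index.

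The main obstacle will be the case $\ind(S)=3$: one must verify that the conic bundle $S_L\to\PP^{1}_L$ produced by the splitting of $Q$ over $L$ admits an $L$-rational point, which requires a careful analysis of the Brauer class of the generic fiber in $\Br(L(t))$ and its degeneration at the two singular fibers, or equivalently an explicit use of the common splitting of $Q$ and $B$ over $K\cdot L$ (cf.\ Remark~\ref{rem:dP6_splitting_conditions}).
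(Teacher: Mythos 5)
Your approach diverges from the paper's, and the divergence introduces genuine gaps.

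For the divisibility $\ind(S)\mid 6$, the paper simply observes that the $6$ nodes of the hexagon of exceptional curves form a Galois-invariant set of $\ksep$-points, giving a closed point of degree $6$ outright. Your orbit analysis on the curves themselves has a flaw: the curves in a Galois orbit need not be pairwise disjoint (e.g.\ an orbit $\{L_i,M_i\}$ consists of intersecting lines), so such an orbit does \emph{not} descend to $\PP^1_{k'}$ for a field $k'$ of degree $n$. The union descends to a nodal curve, and the argument that one obtains a closed point of degree $n$ from it breaks. The conclusion is salvageable, but not by the route you propose.

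For the second assertion, the more serious problem is circularity. You deduce that $B$ is split over $K$ when $\ind(S)=2$, and that $Q$ is split over $L$ when $\ind(S)=3$, by appeal to Table~\ref{table:dp6}. But that table records the \emph{conclusions} of the analysis --- in particular it relies on Theorem~\ref{thm:obstruction-to-low-deg-points}, whose proof uses the present lemma. Absent the lemma, $\ind(S)=2$ only says that the gcd of degrees of closed points is $2$; it does not a priori produce a closed point of degree $2$, and it is precisely such a point that would force $\beta=0$. The same chicken-and-egg issue affects the $\ind(S)=3$ case, where on top of the unjustified splitting of $Q$ over $L$, you flag (correctly) that the existence of an $L$-point on the resulting conic bundle is the main unresolved obstacle.

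The paper sidesteps all of this with a purely geometric argument. By \cite[Lemme~1]{colliot-thelene:dP6}, the Galois action on the hexagon always yields a quadratic extension $K'/k$ over which three skew exceptional curves become rational, so $S_{K'}$ blows down to a Severi--Brauer surface $S_0$ over $K'$; and a cubic extension $L'/k$ over which two skew exceptional curves become rational, so $S_{L'}$ blows down to an involution surface $S_1$ over $L'$. Using $\ind(S_{K'})\mid\ind(S)$ and $\ind(S_0)\mid\ind(S_{K'})$: if $\ind(S)=2$ then $\ind(S_0)\mid 2$, but a degree-$9$ Severi--Brauer surface has index $1$ or $3$, so $S_0\isom\PP^2_{K'}$ and $S(K')\neq\varnothing$. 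Similarly if $\ind(S)=3$ then $\ind(S_1)\mid 3$, while involution surfaces have index dividing $4$, so $\ind(S_1)=1$, and Springer's theorem for quadrics gives $S_1(L')\neq\varnothing$, hence $S(L')\neq\varnothing$. No information about $Q$ or $B$ is needed. I would recommend replacing your argument in the two middle cases with this blow-down-plus-Springer argument, and replacing your orbit analysis by the direct observation about the hexagon's nodes.
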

\begin{proof}
There is always a closed point of degree 6 on $S$.  Indeed, the 6
points of intersection of the hexagon of exceptional curves defined
over $\ksep$ is Galois invariant, hence defines a point of degree
6. Since the index is the greatest common divisor of the degrees of
all closed points, the index of $S$ divides $6$.  

Considering the Galois action on the hexagon of exceptional curves
defined over $\ksep$, there exists a quadratic extension $K/k$ (resp.\
cubic extension) such that $S_K$ has a triple (resp.\ pair) of skew
exceptional curves, cf.\ \cite[Lemme~1]{colliot-thelene:dP6}.  Blowing
down, we have $S_K \to S_0$ where $S_0$ is a Severi--Brauer surface
defined over $K$ and also $S_L \to S_1$ where $S_1$ is a del Pezzo of
degree 8 defined over $L$. In this later case, a lattice computation
over $\ksep$ shows that $S_1$ is actually an involution surface.  If
we assume that $S$ has index 2, then so does $S_0$, hence $S_0 \isom
\PP^2_K$.  In particular, $S(K) \neq \varnothing$ so $S$ has a closed
point of degree 2.  Similarly, if we assume that $S$ has index 3, then
so does $S_1$, hence $S_1(L) \neq 0$ by Springer's theorem.  In
particular, $S(L) \neq \varnothing$ so $S$ has a closed point of
degree 3.  Finally, if we assume that $S$ has index 1, then it must
have a point of degree relatively prime to $6$, hence $S(k) \neq
\varnothing$ by \cite{coray:del_Pezzo}.
\end{proof}

We now want to show that the Griffiths--Kuznetsov component is well
defined.  This also gives a strengthening of \cite[Lemma~4.6]{colliot-karpenko-merkurev}.

\begin{proposition}
Let $S$ be a del Pezzo surface of degree 6.  Then the
Griffiths--Kuznetsov component $\cat{GK}_S$ is well defined as a
birational invariant as follows.  Letting $S_1 \dashrightarrow S$ be a
birational map, we have:
\begin{itemize}
\item $\ind(S)=6$ if and only if both $\kappa$ and $\beta$ are
nontrivial; there is a semiorthogonal decomposition $\cat{A}_{S_1} = \langle
\cat{T}, \Db(L/k,\kappa), \Db(K/k,\beta)\rangle$  
  
\item $\ind(S)=3$ if and only if $\kappa$ is trivial and $\beta$ is
nontrivial; there is a semiorthogonal decomposition $\cat{A}_{S_1} =
\langle \cat{T}, \Db(K/k,\beta)\rangle$

\item $\ind(S)=2$ if and only if $\kappa$ is nontrivial and $\beta$
trivial; there is a semiorthogonal decomposition $\cat{A}_{S_1} =
\langle \cat{T}, \Db(L/k,\kappa)\rangle$
\end{itemize}
where $\cat{T}$ always denotes a category representable in dimension
0.
\end{proposition}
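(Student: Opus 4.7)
The proof will have two main logical strands: (a) identifying $\ind(S)$ with the triviality pattern of the pair $(\kappa,\beta)$, and (b) verifying birational invariance and extracting the stated decomposition of $\cat{A}_{S_1}$.

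For (a), Lemma~\ref{lem:effective_index_dP6} reduces matters to characterizing when $S$ has a closed point of degree $2$ or $3$, since $\ind(S)\in\{1,2,3,6\}$ and is realized by a closed point. The case $\ind(S)=1$ is covered by Proposition~\ref{prop-one-dir-for-dp6}. For the remaining cases, I would prove: $S$ has a closed point of degree $2$ iff $\beta=0$, and a closed point of degree $3$ iff $\kappa=0$. For the first, use the blow-down $S_K\to S_0$ from the proof of Lemma~\ref{lem:effective_index_dP6}: $S_0$ is the Severi--Brauer surface over $K$ of class $\beta$, so $S_0(K)\ne\varnothing$ iff $\beta=0$; and a closed point of degree $2$ on $S$ lifts to a $K$-point of $S_K$, which maps to $S_0(K)$. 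For the second, $S_1$ is an involution surface over $L$ whose underlying central simple algebra has Brauer class $\kappa$, and by Proposition~\ref{prop:involution_decomposition}, $S_1(L)\ne\varnothing$ iff $\kappa=0$. The resulting trichotomy matches the stated one.

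For (b), let $S_1\dashrightarrow S$ be a birational map. First apply Lemma~\ref{lem:blow-up_catrep2} to reduce to a minimal model $S_0$ of $S_1$: each blow-up contributes only factors of the form $\Db(k'/k)$ for finite separable extensions $k'/k$, all representable in dimension~$0$, which can be absorbed into $\cat{T}$. Since $\ind$ is a birational invariant and $\ind(S)>1$, the surface $S_0$ is non-rational, so Proposition~\ref{prop:isko-rigidity-of-nonrat-dps}\ref{isko.iii} (deg-rigidity) forces $\deg(S_0)=6$. Hence $S_0$ is itself a degree $6$ del Pezzo, and has its own associated pair $(\kappa',\beta')$ of Brauer classes over its cubic and quadratic discriminant extensions $L',K'$. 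It remains to identify $(L',K',\kappa',\beta')$ with $(L,K,\kappa,\beta)$; this is proved by analyzing each elementary Sarkisov link of degree $6$, as computed in the Appendix.

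Finally, applying the Blunk--Sierra--Smith decomposition \eqref{equa:deco-for-dp6} to $S_0$ yields
\[
\cat{A}_{S_0} = \langle \Db(L/k,\kappa),\, \Db(K/k,\beta)\rangle,
\]
so that $\cat{A}_{S_1} = \langle \cat{T}_0,\, \Db(L/k,\kappa),\, \Db(K/k,\beta)\rangle$ with $\cat{T}_0$ representable in dimension~$0$. In the case $\ind(S)=3$, the factor $\Db(L/k,\kappa)=\Db(L/k)$ is itself representable in dimension $0$ (Lemma~\ref{lem:0-dim=etale-algebra}) and can be mutated into $\cat{T}$; analogously for $\ind(S)=2$. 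In the case $\ind(S)=6$, neither factor can be absorbed, and the decomposition has the stated form with $\cat{T}=\cat{T}_0$.

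The main obstacle is step (b), namely verifying that the pair $(\kappa,\beta)$ is genuinely preserved under every Sarkisov link between non-rational degree $6$ models, rather than merely the degree of the surface. Deg-rigidity is not on its own sufficient, since a priori two non-isomorphic minimal degree $6$ del Pezzo surfaces could share a birational class (by Remark~\ref{rmk:morita-eq-for-dp6s}, the pair determines the surface up to isomorphism, so this is equivalent to showing birational equivalence implies isomorphism). The link-by-link computation in the Appendix, combined with the toric description of Blunk, is what ultimately supplies this invariance.
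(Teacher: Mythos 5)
Your proposed split of the argument into ``classification'' (a) and ``invariance'' (b) is natural, but it misreads the logical structure of the paper's proof and in doing so creates two gaps.

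\textbf{The gap in (a).} Your argument for the direction ``degree-3 point $\Rightarrow \kappa$ trivial'' is incorrect. You claim that $S_1$ (the degree-8 surface obtained by blowing down $S_L$ over the cubic extension $L$) is an involution surface over $L$ ``whose underlying central simple algebra has Brauer class $\kappa$'' and then invoke Proposition~\ref{prop:involution_decomposition}. But that proposition characterizes $S_1(L) \neq \varnothing$ by the triviality of the Clifford algebra class $\gamma$ of $S_1$, which lives in $\Br(l)$ for $l$ a quadratic \'etale extension \emph{of $L$} --- it does not live in $\Br(L)$, so it cannot be equal to $\kappa = [Q]\in\Br(L)$. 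Similarly, $[A(S_1)] \in \Br(L)$ is not a priori $\kappa$, and even if it were, triviality of $[A]$ is not equivalent to $S_1(L)\neq\varnothing$. The paper proves the implication ``$\kappa$ trivial $\Rightarrow S(L)\neq\varnothing$'' entirely differently, by observing that $\beta_L$ is automatically split (Remark~\ref{rem:dP6_splitting_conditions}), hence $S_L$ is categorically representable in dimension~$0$ and therefore $L$-rational by Proposition~\ref{prop-one-dir-for-dp6}. Your argument for the analogous degree-$2$ statement has a milder version of the same slip (a degree-2 point need not have residue field $K$), though there it is rescued by the content of Lemma~\ref{lem:effective_index_dP6}.

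\textbf{The deeper structural gap.} You treat the converse implications ($\ind(S)=2 \Rightarrow \beta$ trivial, $\ind(S)=3 \Rightarrow \kappa$ trivial) as if they were already settled in step (a) and then use step (b) only to identify $(\kappa',\beta')$ with $(\kappa,\beta)$. In fact the paper's proof does not separate these: the mutation computations in the Sarkisov link analysis simultaneously prove that the relevant Brauer class is trivial (so the iff's hold) and that the remaining nontrivial component $\Db(K/k,\beta)$ (resp.\ $\Db(L/k,\kappa)$) is transported to the corresponding component of $\cat{A}_{S'}$. Specifically, for a link $M_{6,3}$ resolved by a degree-3 del Pezzo $X$, the paper performs explicit mutations in $\Db(X_\ksep)$ using the base-change relations in \S\ref{subs:Sarkisov-deg6} to show $\cat{G}\simeq\cat{H}'$, hence $\Db(k,Q)\simeq\Db(k(x')/k)$, hence (by Theorem~\ref{thm:Caldararu_conj}) $\kappa$ is trivial; and likewise $\cat{F}\simeq\cat{F}'$, hence $\Db(k,B)\simeq\Db(k,B')$. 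Reducing (b) to ``the Appendix link computations'' undersells this: the Appendix provides only the Picard-lattice matrices, and the categorical transfer requires the several further pages of mutations carried out in the body of the proof.

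\textbf{A smaller point.} Your closing remark that well-definedness is ``equivalent to showing birational equivalence implies isomorphism'' is not what is needed (or proved): in the index-$2$ and index-$3$ cases, birational minimal models of degree $6$ may genuinely fail to be isomorphic. What the link analysis shows is the weaker (and sufficient) statement that the single nontrivial component of $\cat{A}_S$, say $\Db(K/k,\beta)$, is preserved; the full triple $(\kappa,\beta,K\times L)$ is not asserted to be a birational invariant, only a biregular one (Remark~\ref{rmk:morita-eq-for-dp6s}).

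The overall skeleton --- reduce to a minimal model via the blow-up formula, use deg-rigidity to pin down the degree, invoke birational rigidity in the index-$6$ case, analyze links in the remaining cases --- does match the paper. But as written, step (a) is wrong for the $\kappa$/degree-3 direction, and step (b) omits the mutation argument that is actually the crux of the proof.
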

\begin{proof}
We can reduce to the case when $S$ is minimal, equivalently, has
Picard rank 1.  Indeed, if $S$ is not minimal (and not rational), then
there is a birational morphism $S \to S_0$ where $S_0$ is either a
nonsplit involution surface (when $\ind(S)=2$) or $S'$ is a nonsplit
Severi--Brauer surface (when $\ind(S)=3$).  We have already treated
the Griffiths--Kuznetsov component in these cases, see
\S\ref{subsec:dP9} and \S~\ref{subsec:dP8}.  For the interpretations
of $\kappa$ and $\beta$ in the nonminimal cases, see Remark~\ref{rem:dP6_geom}.

First, we remark that if $\beta$ is trivial then $S(K) \neq
\varnothing$, hence $\ind(S)|2$ by
\cite[Rem.~4.5]{colliot-karpenko-merkurev}.  Similarly, we will argue
that if $\kappa$ is trivial, then $S(L) \neq \varnothing$, hence
$\ind(S)|3$.  Indeed, $\beta_L$ is split by
Remark~\ref{rem:dP6_splitting_conditions}, so assuming that $\kappa$
is split implies that $S_L$ is categorically representable in dimension
0, hence is rational by Proposition~\ref{prop-one-dir-for-dp6}.  As a
consequence, if $\ind(S)=6$, then both $\beta$ and $\kappa$ are
nontrivial and also $S$ is birationally rigid by
Proposition~\ref{prop:isko-rigidity-of-nonrat-dps}.

Now assume that $S$ has index 3.  Then $S$ has a degree 3 point $x$ by
Lemma \ref{lem:effective_index_dP6}, and we can appeal to the
description of elementary links detailed in
\S\ref{subs:Sarkisov-deg6}. There is only one
elementary link $\phi:S \dashrightarrow S'$ defined by degree
three points $x$ on $S$ and $x'$ on $S'$.  Let $X$ be the del Pezzo
surface of degree $3$ obtained as a resolution of $\phi$ with the
following diagram (over $k\sep$):
$$
\xymatrix{
& X \ar[dl]_{\sigma} \ar[dr]^\tau &\\
S \ar[d]_{\sigma_0}\ar@{-->}[rr]^{\phi} & & S' \ar[d]^{\tau_0} \\
\PP^2 \ar@{-->}[rr]^{\phi_0}& & \PP^2 
}
$$
Denote by $G = \tau_0^*\ko_{\PP^2}(1)$ and $F_i$ be the exceptional
divisors of $\tau_0$ and by abuse of notations, we write $L$ for
$\sigma^*L$ and $G$ for $\tau^*G$.  Finally, $L_4, L_5, L_6$ are the
exceptional divisors of $\sigma$ and $F_4,F_5,F_6$ the exceptional
divisors of $\tau$ (over $\ksep$ we are blowing up three points). 
As calculated in Appendix \ref{app:links1}, we have that
\begin{equation}\label{eq:hom-syst-for-M36}
\begin{array}{rl}
G=& 5H - \sum_{j=1}^6 2E_j \\
F_i =& 2H - \sum_{j \neq i+3} E_j \,\,\,\text{  for } i=1,2,3\\ 
F_i =& 2H - \sum_{j \neq i-3} E_j \,\,\,\text{  for } i=4,5,6.
\end{array}
\end{equation}

We
perform a series of mutations in $\Db(X)$ over $k\sep$ in order to
compare $\End(I)$, $\End(J)$, $\End(I')$, $\End(J')$ and the residue
fields $k(x)$ and $k(x')$.  Let us choose the following 3-block
semiorthogonal decomposition
\begin{equation}\label{equa1}
\Db(S') = \langle \ko_{S'} | \ko(G), \ko(-K_{S'}-G) |
\ko(-K_{S'}-G+F_1),\ko(-K_{S'}-G+F_2),\ko(-K_{S'}-G+F_3) \rangle,
\end{equation}
which is the original 3-block decomposition of Karpov--Nogin
\cite{karpov-nogin}. We Denote $\cat{E}'$, $\cat{F}'$ and $\cat{G}'$ the
three blocks in the order given in \eqref{equa1} as in
Table~\ref{table:blocks}. In particular, $\cat{E}'$ descends to
$\Db(k)$, $\cat{F}'$ descends to $\Db(k,B')$ and $\cat{G}'$ descends
to $\Db(k,Q')$. We mutate the second and the third block of
\eqref{equa1} to the left with respect to $\ko_{S'}$ to obtain
\begin{equation}\label{equa2}
\Db(S') = \langle  \ko(K_{S'}+G), \ko(-G) |
\ko(-G+F_1),\ko(-G+F_2),\ko(-G+F_3) | \ko_{S'} \rangle,
\end{equation}
since the top left mutation amounts to tensoring with $\omega_{S'}=\ko(K_{S'})$.
Via the blow-up $\tau$, we get the following four-block decomposition of $X$:
\begin{equation}\label{equa3}
\Db(X) = \langle \ko(K_{S'}+G), \ko(-G) |
\ko(-G+F_1),\ko(-G+F_2),\ko(-G+F_3) | \ko_X | \ko_{F_4}, \ko_{F_5}, \ko_{F_6}
\rangle.
\end{equation}
The decomposition \eqref{equa3} is made of the four blocks $\cat{F}'$,
$\cat{G}'$, $\cat{E}'$, and a new block $\cat{H}'$ arising from the
blow up, descending to $\Db(k(x')/k)$. Finally, if we mutate
$\cat{H}'$ to the left with respect to $\ko_X$, the evaluation
sequence holds:
\begin{equation}\label{equa4}
\Db(X)=\langle \ko(K_{S'}+G), \ko(-G) |
\ko(-G+F_1),\ko(-G+F_2),\ko(-G+F_3) | \ko(-F_4), \ko(-F_5), \ko(-F_6) | \ko_X \rangle.
\end{equation}
Now we rewrite all line bundles in terms of $H$ and $L_i$ using
the relations \eqref{eq:hom-syst-for-M36}.
This makes \eqref{equa4} into:
\begin{equation}\label{equa5}
\begin{array}{rl}
\Db(X) \, = & \langle \ko(2K_X + 2H-L_1-L_2-L_3), \ko(2K_X+H) | \ko(K_X+L_4),
\ko(K_X+L_5), \ko(K_X +L_6) | \\
& \qquad\qquad\qquad \ko(K_X+H-L_1), \ko(K_X+H-L_2),  \ko(K_X+H-L_3) | \ko_X
\rangle 
\end{array}
\end{equation}
where the blocks are now $\cat{F}'$, $\cat{G}'$, $\cat{H}'$, and
$\cat{E}'$.  We apply the autoequivalence $\tensor \omega_X\dual$ and
mutate the first block $\cat{F}'$ to the right with respect to its
right orthogonal to obtain:
\begin{equation}\label{equa6}
\Db(X) = \langle \ko(L_4), \ko(L_5), \ko(L_6) | \ko(H-L_1),
\ko(H-L_2), \ko(H-L_3) | \ko(K_X) | \ko(-\sigma^*K_S - H), \ko(H)
\rangle,
\end{equation}
where the blocks are now $\cat{G}'$, $\cat{H}'$, $\cat{E}'$, and
$\cat{F}'$. 

Now consider the semiorthogonal decomposition
\begin{equation}\label{equa7}
\Db(S) = \langle \ko_S | \ko(H-L_1), \ko(H-L_2), \ko(H-L_3) | \ko(H), \ko(-K_{S}-H) \rangle,
\end{equation}
as in Proposition \ref{propo:mutation-for-dp6}. This decomposition has
blocks $\cat{E}$ (descending to $\Db(k)$), $\cat{G}$ (descending to
$\Db(k,Q)$) and $\cat{F}$ (descending to $\Db(k,B)$) in the order
presented in \eqref{equa7}, as in Table~\ref{table:blocks}.  This
presentation provides, via $\sigma^*$, equivalences $\cat{F}\simeq
\cat{F}'$, whence $\Db(k,B) \simeq
\Db(k,B')$. On the other hand, $\cat{G} \simeq \cat{H}'$, whence
$\Db(k,Q) \simeq \Db(k(x')/k)$.  By symmetry, we have $\Db(k,Q')
\simeq \Db(k(x)/k)$. Using Theorem~\ref{thm:Caldararu_conj}, we
conclude that $\kappa \in \Br(L)$ is trivial.  If in addition $\beta
\in \Br(K)$ is trivial, then $S(k) \neq \varnothing$ and $S$ is
rational.  Otherwise, if $\beta \in \Br(K)$ is nontrivial, the
category $\Db(K/k,\beta)$ is a birational invariant.  Indeed, in this
case, the index of $S$ is 3 so can have no point of degree 2 and all
birational maps $S \dashrightarrow S'$ decompose into elementary links
of type $M_{6,3}$.  We have proved that $\ind(S)|3$ implies that
$\kappa$ is trivial.

Now we handle the case where $S$ has index $2$. Then $S$ has a degree
2 point $x$ (cf.\ Lemma \ref{lem:effective_index_dP6}),
and we can appeal to the description of elementary links
detailed in \S \ref{subs:Sarkisov-deg6}. Let $\phi:S \dashrightarrow
S'$ be the elementary link defined by the two degree two points $x$ on
$S$ and $x'$ on $S'$.  Let $X$ be the del Pezzo surface of degree $4$
obtained as a resolution of $\phi$ with the following diagram (over
$k\sep$):
$$\xymatrix{
& X \ar[dl]_{\sigma} \ar[dr]^\tau &\\
S \ar[d]_{\sigma_0}\ar@{-->}[rr]^{\phi} & & S' \ar[d]^{\tau_0} \\
\PP^2 \ar@{-->}[rr]^{\phi_0}& & \PP^2 }$$
Denote by $G = \tau_0^*\ko_{\PP^2}(1)$ and $F_i$ be the exceptional
divisors of $\tau_0$ and by abuse of notations, we write $L$ for
$\sigma^*L$ and $G$ for $\tau^*G$.  Finally, $L_4$ and $L_5$ are the
exceptional divisors of $\sigma$ and $F_4$ and $F_5$ the exceptional
divisors of $\tau$ (over $\ksep$ we are blowing up two points).  As in
the index 3 case, we perform a series of mutations in $\Db(X)$ over
$k\sep$ in order to compare $\End(I)$, $\End(J)$, $\End(I')$,
$\End(J')$ and the residue fields $k(x)$ and $k(x')$.  We perform the
same first mutation as in the index 3 case, and consider the
decomposition \eqref{equa2} of $\Db(S'_{\ksep})$.  Via the blow-up
$\tau$, we get the following four-block decomposition of $X$:
\begin{equation}\label{equa3t}
\Db(X) = \langle \ko(K_{S'}+G), \ko(-G) |
\ko(-G+F_1),\ko(-G+F_2),\ko(-G+F_3) | \ko_X | \ko_{F_4}, \ko_{F_5}
\rangle.
\end{equation}
The decomposition \eqref{equa3t} is made of the four blocks $\cat{F}'$,
$\cat{G}'$, $\cat{E}'$, and $\cat{H}'$. The latter arises from the
blow-up and descends to $\Db(k(x')/k)$. Finally, if we mutate
$\ko_{F_4}$ and $\ko_{F_5}$ to the left with respect to $\ko_X$, the
evaluation sequence holds:
\begin{equation}\label{equa4t}
\Db(X)=\langle \ko(K_{S'}+G), \ko(-G) | \ko(-G+F_1),\ko(-G+F_2),\ko(-G+F_3) | \ko(-F_4), \ko(-F_5) | \ko_X \rangle.
\end{equation}
Now we rewrite all line bundles in terms of $H$ and the $L_i$ using
the relations in \S\ref{subs:Sarkisov-deg6} for the link $M_{6,2}$:
\begin{align*}
G {} & = 3H-L_1-L_2-L_3-L_4-2L_5 \\
F_i {} & = H-L_i-L_5, \qquad i=1,\dotsc,4\\
F_5 {} & = 2H - L_1-L_2-L_3-L_4-L_5
\end{align*}
This makes \eqref{equa4t} into:
\begin{equation}\label{equa5t}
\begin{array}{rl}
\Db(X) \; = & \langle \ko(K_X + L_4), \ko(K_X+L_5) | \ko(K_X+H-L_1),
\ko(K_X+H-L_2), \ko(K_X + H-L_3) | \\
{}& \qquad\qquad \ko(-H+L_4+L_5), \ko(K_X+H) | \ko_X
\rangle, 
\end{array}
\end{equation}
where the blocks are $\cat{F}'$, $\cat{G}'$, $\cat{H}'$, and
$\cat{E}'$. Now we mutate the left orthogonal to $\ko_X$ to the right
to obtain:
\begin{equation}\label{equa6t}
\Db(X) = \langle \ko_X | \ko(L_4), \ko(L_5) | \ko(H-L_1),
\ko(H-L_2), \ko(H-L_3) | \ko(-\sigma^*K_S-H), \ko(H)
\rangle 
\end{equation}
where the blocks are $\cat{E}'$, $\cat{F}'$, $\cat{G}'$, and $\cat{H}'$. Now consider the semiorthogonal decomposition
\begin{equation}\label{equa7t}
\Db(S) = \langle \ko_S | \ko(H-L_1), \ko(H-L_2), \ko(H-L_3) | \ko(H), \ko(-K_{S}-H) \rangle,
\end{equation}
as in Proposition \ref{propo:mutation-for-dp6}. This decomposition has
blocks $\cat{E}$ (descending to $\Db(k)$), $\cat{G}$ (descending to
$\Db(k,Q)$) and $\cat{F}$ (descending to $\Db(k,B)$) in the order
presented in \eqref{equa7t}, as in Table~\ref{table:blocks}. 

This presentation provides, via $\sigma^*$, equivalences
$\cat{E}\simeq \cat{E}'$, and $\cat{G}\simeq \cat{G}'$, whence
$\Db(k,Q) \simeq \Db(k,Q')$. On the other hand, $\cat{F} \simeq
\cat{H}'$, whence $\Db(k,B) \simeq \Db(k(x')/k)$.  By symmetry, we
have $\Db(k,B') \simeq \Db(k(x)/k)$. Using
Theorem~\ref{thm:Caldararu_conj}, we conclude that $\beta \in \Br(K)$
is trivial.  If in addition $\kappa \in \Br(L)$ is trivial, then $S(k)
\neq \varnothing$ and $S$ is rational.  Otherwise, if $\kappa \in
\Br(L)$ is nontrivial, the category $\Db(L/k,Q)$ is a birational
invariant.  Indeed, in this case, the index of $S$ is 2 so can have no
point of degree 3 and all birational maps $S \dashrightarrow S'$
decompose into elementary links of type $M_{6,2}$.  We have proved
that $\ind(S)|2$ implies that $\beta$ is trivial.
\end{proof}

\begin{remark}
\label{rem:dP6_geom}
We now describe the geometry of the all possible cases listed in
Table~\ref{table:dp6}.  In particular, for the nonminimal cases, we
describe how the classes $\kappa$ and $\beta$ are related to the
Brauer classes arising from their minimal model. 

Cases 6.1, 6.2, 6.4, and 6.9 are minimal since they have Picard rank
1.  Cases 6.9--6.14 are rational by Proposition~\ref{prop-one-dir-for-dp6}.

Case 6.3 is the blow up of the Severi--Brauer surface $\SB(A)$ in a
point $x$ of degree 3 with residue field $L$, via the natural
projection of $S \subset \SB(A)\times\SB(A\inv)$.  In fact, $S$
resolves the standard Cremona quadratic transformation $\SB(A)
\dashrightarrow \SB(A\inv)$.  Recall that $B = \End(I)
= \End(I_1\oplus I_2)$ and notice that $I_1$ and $I_2$ are the
pull-backs of the natural rank 3 vector bundles on $\SB(A)$ and
$\SB(A\inv)$, respectively.  Over $\ksep$, the block $\cat{G}$ is
obtained by right mutation of the category generated by the
exceptional divisors of $S_{\ksep} \to \PP^2_{\ksep}$.  From this, we
see that $\Db(L/k,\kappa) \simeq \Db(k(x)/k)$.  Hence $\kappa$ is
trivial and $k(x)=L$.

We now argue that cases 6.5--6.8 are blow ups of an involution variety
associated to $(A,\sigma)$ in a point $x$ of order 2 with residue
field $K$, where the center of $C_0$ is isomorphic to $L'$ (or
$k^2$). The minimal model $\pi : S \to S_0$ has index 2 and degree
$>6$, hence must be an involution surface of index 2.  Over $\ksep$,
consider the diagram:
$$
\xymatrix{
 & S \ar[d]^\eta \ar[ddl]_\pi & \\
 & S_1 \ar[dl]^(.4)\sigma \ar[dr]_(.4)\tau & \\
S_0 = \PP^1\times\PP^1 & &\PP^2
}
$$
where $\sigma$ blows up a point $q$ with exceptional divisor $F$,
$\tau$ blows up two points $p_1$ and $p_2$ with exceptional divisors
$L_1$ and $L_2$, and $\eta$ blows up a point $p_3$ with exceptional
divisor $L_3$.  Here, only $\pi$ is defined over $k$ and the
exceptional divisor is $L_3+F$.  Let us denote by $H_1$ and $H_2$ the
two ruling of $\PP^1\times\PP^1$ and by $H$ the hyperplane section of
$\PP^2$, and by abuse of notations, all their pullbacks.  Then we have
$H_i = H-L_i$ for $i=1,2$ and $F = H-L_1-L_2$. Now we consider the
3-block decomposition
$$
\Db(\PP^1\times\PP^1) = \langle \ko(-H_1-H_2) | \ko |
\ko(H_1),\ko(H_2) \rangle
$$
where the first block descends to $\Db(k,A)$ and the third block
descends to $\Db(k,C_0)$.  Via the blow up $\pi$, we obtain
$$
\Db(S) = \langle \ko(-L_3),\ko(-F) | \ko(-H_1-H_2) | \ko | \ko(H_1),
\ko(H_2) \rangle
$$
where the first block, call it $\cat{H}$, descends to $\Db(k(x)/k)$.
Mutating this block to the right with respect to $\ko(-H_1-H_2)$ and
mutating $\ko(-H_1-H_2)$ to the right with respect to its right
orthogonal, and substituting the previous relations, we obtain:
$$
\Db(S) = \langle \cat{H} | \ko | \ko(H-L_1), \ko(H-L_2) | \ko(H-L_3) \rangle
$$
so that the last two blocks descend together to $\Db(L/k,\kappa)$.  We
conclude that $\Db(L/k,\kappa) = \Db(k,C_0) \times \Db(k,A)$.  It also
follows that $\cat{H}$ descends to $\Db(K/k,\beta)$ and we conclude
that $\beta$ is trivial and $K=k(x)$.

By comparing with the index 2 cases on Table~\ref{table:dp8}, we see
that: case 6.5 is the blow up of case 8.3, $Q'$ is Morita equivalent to
$C_0(A,\sigma)$ and also $Q''$ is the corestriction of $Q'$ from
$L'/k$ and is Morita equivalent to $A$;  case 6.6 is the blow up of
case 8.4 (a quadric of Picard rank 1) and $Q'$ is Morita equivalent to
$C_0(A,\sigma)$; case 6.7 is the blow up of case 8.5 and $Q' \tensor
Q'' \tensor Q'''$ is trivial; and case 6.8 is the blow up of either cases
8.6 or 8.7 (which are anyway birational to each other), in fact, it is
the resolution of this birational map. 

Finally, when $S$ is rational, we can see that: case 6.10 is the blow
up of a rational quadric of Picard rank 1 along a point of degree 2 or
and is not the blow up of $\PP^2$; case 6.11 is the blow up of $\PP^2$
in a point of degree 3 with residue field $L$ and cannot be the blow
up of a quadric; case 6.12 is the blow up of $\PP^1\times\PP^1$ in a
point of degree 2 or the blow up of a rational quadric of Picard rank
1 in two rational points (this resolves the rational map between these
two); case 6.13 is the blow up of $\PP^1\times\PP^1$ in a point of
degree 2 with residue field $L'$ or of $\PP^2$ in a union of a rational
point and a point of degree 2 with residue field $L'$ (this resolves a
birational map between the quadric and the Hirzebruch surface of
degree 1). Case 6.14 is totally split.
\end{remark}

\begin{remark}
Let $V_1 = J^{\min}$ and $V_2 = I^{\min}$ (recall
Definition~\ref{def:reduced_part}).  These vector bundles are tilting
bundles for the blocks $\cat{F}$ and $\cat{G}$, respectively, and have
the following properties: $A_1=\End(V_1)$ is Morita equivalent to $Q$
and $A_2=\End(V_2)$ is Morita equivalent to $B$; $V_1$ is
indecomposable if and only if $L$ is a field; and $V_2$ is
indecomposable if and only if $K$ is a field.  In particular, both
$V_i$ are indecomposable if and only if $S$ is minimal.  We list the
ranks and 2nd Chern classes of the vector bundles $V_i$ on Table~\ref{table:dp6}.

The calculation of the second Chern classes of the vector bundles
$V_1$ and $V_2$ is easily obtained by their description over
$k\sep$. In particular, we notice that
$\ind(S)=\mathrm{gcd}(c_2(V_1),c_2(V_2))$ unless $\ind(S)=6$, in which
case $\mathrm{gcd}(c_2(V_1),c_2(V_2))=12$.  When $\ind(S)=6$, we have
to appeal to a particular generator ($\omega_S^{\oplus 2}$ works) of
the remaining block to obtain a bundle with second Chern class equal
to 6.
\end{remark}

\section{del Pezzo surfaces of degree $5$}
\label{subsec:dP5}

Let $S$ be a Del Pezzo surface of degree $5$. It is a classical fact
(announced by Enriques \cite{enriques} and proved first by
Swinnerton-Dyer \cite{swinnerton-dyer}), that $S(k) \neq \varnothing$
(see \cite{skorobogatov-toulouse} for a different proof).

The base
change $S_{\ksep}$ is the blow-up of $\PP^2_{\ksep}$ at four points in
general position.  Such a surface has 10 exceptional lines.  Fix $x
\in S(k)$. If $x$ lies in the intersection of two exceptional lines,
then $S$ is not minimal (see \cite[29.4.4.(v)]{manin-book}). So we can
suppose that $x$ does not lie on the intersection of two exceptional
lines and consider the geometric construction described by Manin to
show that $S$ is rational: let $X \to S$ be the blow-up of $x$, and
$D$ its exceptional divisor. Then there are 5 pairwise
nonintersecting exceptional lines $L_1, \ldots, L_5$, where $L_5=D$, on $X_{\ksep}$.

Manin shows that on $X$ there is an exceptional divisor $Z \subset X$
whose contraction gives a birational map onto a del Pezzo surface of
degree 9. Since the target also has a rational point, we have a birational
map $\pi: X \to \PP^2_k$.
We have a diagram of birational morphisms
$$
\xymatrix{
& X \ar[dl]_{\pi} \ar[dr]^\varepsilon \\
\PP^2_k & & S, }
$$
where $\pi: X \to \PP^2_k$ is the blow-up of a closed point of degree
5 in general position.  

Passing to the algebraic closure, we can describe these birational
maps in the following way: let $p_1, \ldots, p_5$ be five points in
general position on $\PP^2_{\ksep}$. Then $X_{\ksep}$ is a del Pezzo
of degree four which has 16 exceptional lines: five of them are the
exceptional divisors $E_1, \ldots, E_5$ of $\overline{\pi}$, ten of
them are the strict transforms of the lines $L_{i,j}$ passing through
the points $p_i$ and $p_j$. The last one is the strict transform $D$
of the conic through the five points $p_i$. This is, indeed, the
exceptional divisor of the blow-up $\overline{\varepsilon}: X_{\ksep}
\to S_{\ksep}$. On the other hand, the lines $E_1, \ldots, E_5$ all
meet twice $D$, and it can be checked that they are the only
exceptional lines on $X_\ksep$ with such property.  Since $D$ is
defined over $k$; it follows that $E_1,\ldots,E_5$ are
Galois-invariant and hence the cycle $Z$ is the descent of the
disjoint union of the $E_i$'s.  Conversely, given any point of degree
5 (geometrically) in general position on $\PP^2_k$, we can blow it up,
and then blow down the strict transform of the conic through the point
to obtain a del Pezzo surface of degree 5.

Moreover, the surface $S_{\ksep}$ is a del Pezzo surface of degree $5$
and can therefore be realized as the blow up of $\PP^2_{\ksep}$ in
four points in general positions. Given $S_{\ksep}$, this can be
realized by the choice of 4 pairwise nonintersecting exceptional lines $L_1, \ldots, L_4$. It is easy to see, via the previous construction,
that we have five choices, one for each of the points $p_i$. Once we
fix such a point, it is then enough to consider all the lines joining
it to the other four points, which are blown up by $\overline{\pi}$ to
exceptional lines (call them loosely $L_1, \ldots, L_4$), which are,
in turn, blown down by $\overline{\varepsilon}$ to 4 exceptional nonintersecting lines. Let us then fix $p_5$, so that $L_i$ is the
(strict transform of) the line through $p_5$ and $p_i$, and consider
the blow-down $\eta: S_{\ksep} \to \PP^2_{\ksep}$.  This latter map is
not, in general, defined over $k$, so we will avoid the ``overline''
notation to mark this difference.

We end up with the following diagram:
\begin{equation}\label{eq-big-diagram-dp5}
\xymatrix{
&  \{E_1,\ldots, E_5\} \ar@{^{(}->}[r] & X_{\ksep} \ar[ddl]_{\overline{\pi}} \ar[dr]^{\overline{\varepsilon}} & \{D,L_1, \ldots, L_4\} \ar@{_{(}->}[l]\\
& & & S_{\ksep} \ar[d]^{\eta} & \{x,L_1, \ldots, L_4\} \ar@{_{(}->}[l] \\
\{p_1, \ldots, p_5\} \ar@{^{(}->}[r] & \PP^2_{\ksep} \ar@{-->}[rr]^{\phi} & & \PP^2_{\ksep} & \{x,q_1,\ldots,q_4\}, \ar@{_{(}->}[l]
 }
\end{equation}
where $q_i$ are the points blown-up by $\eta$, and $\phi$ the birational map obtained by composition.

Let us denote by $\ko_{S_{\ksep}}(H) = \eta^*
\ko_{\PP^2_{\ksep}}(1)$. We can assume that this line bundle
is not defined over $k$, since we can suppose that $S$ is
minimal. Otherwise, $S$ is the blow-up of $\PP^2_k$.

We will explicitly use this construction to show that the three-block
exceptional collection described by Karpov and Nogin
\cite{karpov-nogin} over $\ksep$ descends to a zero-dimensional
semiorthogonal decomposition of $\Db(S)$.

\begin{table}
\centering
\begin{tabular}{||c||c||c|c|c||c|c|c||}
\hline\hline
$S$ & $\ind(S)$ & $A_1$   & $c_2$ & $\rk$ & $A_2$  & $c_2$ & $\rk$ \\
\hline
\hline
$S \subset \PP^5_k$ & 1 & $k$ & 2 & 2 & $l$ & 20 & 5 \\
\hline\hline
\end{tabular}
\caption{The invariants of a del Pezzo surface $S$ of degree 5 of Picard rank 1.  Here,
the algebras $\End(V_1) = k$ and $\End(V_2) = l$ are listed up
to Morita equivalence; $l$ is an \'etale $k$-algebra of
degree 5, and is a field if and only if $\rho(S)=1$; and $Z$, $\per$, and $\ind$ refer to the center, period, and
index of $A_i$; and $c_2$ and $\rk$ refer to the
2nd Chern class and rank of $V_i$. Note that $(V_1)_{k\sep}$ is the unique extension of
$\ko(H)$ by $\ko(K_S-H)$, while $(V_2)_{k\sep}$ is the direct sum
$\ko(H)\oplus \bigoplus_{i=1}^4 \ko(L_i-K_S-H)$.} 
\label{table:dp5}
\end{table}

\begin{proposition}\label{propo:dp5}
Any del Pezzo surface of degree $5$ is $k$-rational and is
categorically representable in dimension 0. In particular, there is a
degree $5$ \'etale $k$-algebra $l$ and a semiorthogonal decomposition
$$
\cat{A}_S = \langle \Db(k), \Db(l/k)\rangle
$$
hence $\cat{A}_S$ is also representable in dimension 0.
Moreover, $l$ is a field if and only if $\rho(S)=1$. 
\end{proposition}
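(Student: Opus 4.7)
The plan is to first obtain $k$-rationality from classical results, then construct the semiorthogonal decomposition by descending the Karpov--Nogin 3-block decomposition of $\Db(S_{\ksep})$ recorded in Table \ref{table:blocks} for $d=5$. Since Enriques--Swinnerton-Dyer (cited at the opening of \S\ref{subsec:dP5}) gives $S(k)\neq\varnothing$, Theorem \ref{thm:manin-unirat-param} immediately yields $k$-rationality; the work lies in producing the categorical decomposition.

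Over $\ksep$, the Karpov--Nogin decomposition is $\Db(S_{\ksep})=\sod{\cat{E},\cat{F},\cat{G}}$ with $\cat{E}=\sod{\ko}$, $\cat{F}=\sod{F}$ for a single rank-2 exceptional bundle, and $\cat{G}$ a completely orthogonal block of five line bundles. I would show that each block descends to $k$, giving components equivalent to $\Db(k)$, $\Db(k)$, and $\Db(l/k)$ respectively, and then assemble the result via Lemma \ref{lemma-base-change-of-semiorth}. The block $\cat{E}$ descends tautologically since $\ko_S$ is defined over $k$.

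For the block $\cat{G}$, the key observation is the geometric content of diagram \eqref{eq-big-diagram-dp5}: the five line bundles are exactly the pullbacks $\ko(H_i)$ of the hyperplane classes under the five possible blow-downs $S_{\ksep}\to\PP^2_{\ksep}$, equivalently indexed by the five points $p_1,\dotsc,p_5$. The Galois group $G_k$ permutes these five blow-down structures, and the quotient of this $G_k$-set is an \'etale $k$-algebra $l$ of degree $5$. The sum of the five line bundles is $G_k$-invariant, and since each summand is a line bundle on $S_{\ksep}$ with no Brauer obstruction (by Corollary \ref{cor:the-chern-class}, the obstruction class would have to live in a subgroup of $\Br(k)$ detected by $c_1$, which vanishes for any single-line-bundle pure descent on a rational surface with a rational point), it descends to a vector bundle on $S$ with endomorphism algebra Morita equivalent to $l$. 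Proposition \ref{prop:descend-a-block} then identifies $\cat{G}$ with $\Db(l/k)$; moreover $l$ is a field iff $G_k$ acts transitively on the five blow-downs, and comparing with the decomposition $\Pic(S_{\ksep})=\ZZ\omega_S\oplus\omega_S^\perp$ one sees this is equivalent to $\rho(S)=1$.

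For the block $\cat{F}$, I would either use the anticanonical embedding $S\hookrightarrow\mathrm{Gr}(2,5)$ defined over $k$ (available because $S(k)\neq\varnothing$, realizing $S$ as a $k$-linear section of the Grassmannian) and take the restriction of the tautological rank-2 subbundle, or equivalently construct such a bundle by Serre's extension applied to $\ki_x$ for a $k$-rational point $x$. Either construction gives a rank-2 bundle on $S$ whose base change to $\ksep$ is isomorphic to the Karpov--Nogin bundle $F$, and whose endomorphism algebra is forced to be $k$ since $F$ is $\ksep$-exceptional. The main technical obstacle will be verifying that the descended bundle has exactly the right base change (matching the rank-2 bundle of the 3-block decomposition up to mutation) and that no Brauer twist appears; once this is checked, $\cat{F}$ descends to $\Db(k)$. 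Assembling, $\Db(S)=\sod{\Db(k),\Db(k),\Db(l/k)}$, and taking the right-orthogonal to $\sod{\ko_S}$ gives $\cat{A}_S=\sod{\Db(k),\Db(l/k)}$ as claimed.
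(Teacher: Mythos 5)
Your overall strategy---descend the Karpov--Nogin $3$-block decomposition of $\Db(S_{\ksep})$---coincides with the paper's, but your treatment of the two nontrivial blocks diverges from the paper in both places, and each divergence deserves comment.

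For the block $\cat{G}$, your geometric identification of the five line bundles as the hyperplane classes under the five blow-downs $S_{\ksep}\to\PP^2_{\ksep}$ is correct and a nice way to see the \'etale degree-$5$ algebra $l$ appear. However, your parenthetical justification that the Brauer obstruction is ``detected by $c_1$'' is garbled: Corollary~\ref{cor:the-chern-class} applies to a \emph{single} $G_k$-invariant bundle $W$, whereas here the five line bundles are permuted, not individually fixed, so that result does not apply as stated. The correct and simple argument you want is: a $k$-rational point $x$ gives a retraction $x^*:\Br(S)\to\Br(k)$, hence $\Br(k)\to\Br(S)$ is injective, and by base change $\Br(l)\to\Br(S_l)$ is injective for any $l/k$; so for the degree-$5$ extension $l$ stabilizing one of the five hyperplane classes, that Galois-invariant line bundle on $S_{\ksep}$ descends to $S_l$, and its trace down to $S$ gives the desired rank-$5$ tilting bundle with endomorphism algebra $l$. (The paper instead obtains descent of $\cat{G}$ by explicitly pulling back to the blow-up $X$ and recognizing $\overline{\varepsilon}^*V = \omega_X^{\vee}\otimes\bigoplus_i\ko(-E_i)$ as manifestly defined over $k$; both routes are fine.)

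For the block $\cat{F}$, your Grassmannian approach is a genuinely different and arguably cleaner argument than the paper's. The paper argues indirectly: it shows $\cat{F}$ descends to $\Db(k,\alpha)$ for some $\alpha\in\Br(k)$ by To\"en's theorem (being the orthogonal complement of descended pieces), and then kills $\alpha$ by a long chain of mutations on $\Db(X)$ combined with uniqueness of $3$-block decompositions on $\PP^2$. Your route---produce a rank-$2$ bundle on $S$ directly by restricting the tautological subbundle along $S\hookrightarrow\mathrm{Gr}(2,5)$, so that the descended bundle has rank exactly $2$ and the Brauer index is forced to be $1$---is more concrete and avoids the mutation calculus. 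It does require the (true, but nontrivial) fact that the Grassmannian embedding is defined over $k$; one way to see this is that forms of $S$ lie in $H^1(k,S_5)$ and the composite $H^1(k,S_5)\to H^1(k,\GL_5)\to H^1(k,\PGL_5)$ factors through $H^1(k,\GL_5)=\{*\}$, so the ambient Grassmannian is always split. By contrast, your fallback Serre-construction suggestion does not work as stated: an extension built from $\ki_x$ for a single $k$-rational point $x$ gives a rank-$2$ bundle with $c_2=1$, whereas the Karpov--Nogin bundle $F$ has $c_2(F)=(-K-H)\cdot H=2$ (see Table~\ref{table:dp5}); moreover the two line bundles $\ko(H)$ and $\ko(-K-H)$ in the defining sequence \eqref{eq:def-of-F} are not defined over $k$ when $\rho(S)=1$, so a naive Serre extension on $S$ does not produce the right object.
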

\begin{proof}
Over $\ksep$, Karpov and Nogin \cite[\S4]{karpov-nogin} provide the
following 3-block decomposition:
\begin{equation}
\label{eq-3-block-for-dp5}
\Db(S_\ksep)=\langle \ko_{S_{\ksep}} \, \vert \, F \, \vert \, \ko_{S_{\ksep}}(H),
\ko_{S_{\ksep}}(L_1 - K_{S_{\ksep}} - H), \ldots, \ko_{S_{\ksep}}(L_4 - K_{S_{\ksep}} - H)\rangle,
\end{equation}
where $F$ is the rank 2 vector bundle given by the nontrivial
extension
\begin{equation}\label{eq:def-of-F}
0 \longrightarrow \ko_{S_\ksep} (-K_{S_\ksep}-H) \longrightarrow F \longrightarrow \ko_{S_\ksep}(H) \longrightarrow 0. 
\end{equation}
These blocks are denoted $\cat{E}$, $\cat{F}$, and $\cat{G}$.
Consider the rank 5 vector bundle on $S_\ksep$
\begin{equation}\label{eq-def-of-V-dp5}
V = \bigoplus_{i=1}^4 \ko_{S_{\ksep}}(L_i- K_{S_{\ksep}} - H) \oplus \ko_{S_{\ksep}}(H), 
\end{equation}
and $B_V = \End_{S_{\ksep}}(V)$. Since the five line
bundles form an exceptional block, we have that $B_V \simeq
(\ksep)^5$ and $V$ is a tilting bundle for the block $\cat{G}$.
We are going to show that $V$ descends to $k$, hence $B_V$ descends to
a degree 5 extension $l$ of $k$, and $\cat{G}$ descends to $k$.  To
this end, recall that the functors $\varepsilon^*: \Db(S) \to \Db(X)$
and $\overline{\varepsilon}^*:\Db(S_{\ksep}) \to \Db(X_{\ksep})$ are
fully faithful. We analyze the pull-back of the three-block collection
\eqref{eq-3-block-for-dp5} as an exceptional collection in
$\Db(X_{\ksep})$ to deduce the descent. In order to do that, we first
stress the structure of the Picard group of $X_{\ksep}$.

On one hand, we have the line bundle $\ko_{X_{\ksep}}(H) =
\overline{\varepsilon}^* \ko_{S_{\ksep}}(H)= \overline{\varepsilon}^*
(\eta^* \ko_{\PP^2_{\ksep}}(1))$. We have the five exceptional lines
$D$ (the exceptional locus of $\varepsilon$) and $L_1, \ldots, L_4$
(by abuse of notation, we denote $L_i = \overline{\varepsilon}^*
L_i$). We have that $K_{X_{\ksep}} = \overline{\varepsilon}^*
K_{S_{\ksep}} - D$.

On the other hand, we have the line bundle $\ko_{X_{\ksep}}(G) =
\overline{\pi}^* \ko_{\PP^2_{\ksep}}(1)$, and the five exceptional
lines $E_1, \ldots, E_5$ of the blow-up $\overline{\pi}$. We have that
$K_{X_{\ksep}}= -3G + \sum_{i=1}^5 E_i$.

The divisor $D$ is the strict transform via $\pi$ of the conic passing
through the five points $p_i$. Hence $D = 2G - \sum_{i=1}^5 E_i$, so
that $-K_{X_{\ksep}}=D+G$. For each $1 \leq i \leq 4$, the divisor
$L_i$ is the strict transform of the line through $p_5$ and $p_i$, so
that $L_i= G - E_5 - E_i$.

Finally, the birational map $\phi$ is given by the system of cubics
through the five points $p_i$ which have multiplicity 2 in
$p_5$. Indeed, the map $\phi$ is not defined over the curve given by
the conic $D$ and the four lines joining $p_i$ to $p_5$. This is a
curve $C$ of degree $6$.  The map $\phi$ can be written as three
homogeneous polynomials of the same degree $d$, which we proceed to determine. The degeneracy locus
$C$ of $\phi$ is then the zero locus of the determinant of the
Jacobian matrix. This is a $3 \times 3$ matrix with entries of degree
$d-1$, hence the polynomial defining $C$ has degree $3(d-1)$.
Thus $6 = \mathrm{deg}(C)= 3(d-1)$, from which we deduce that
$d=3$.  This implies that $\phi$ is given by a linear system of
cubics.  The hyperplane section $H$ of the target $\PP^2_{\ksep}$
corresponds then to the linear system $3G- \sum_{i=1}^n m_i x_i$,
where $x_i$ are the points of multiplicity $m_i > 0$ of the map
$\phi$.  By construction, it is clear that the points of positive
multiplicity are exactly the $p_i$ (they are transformed via $\phi$
into lines), so we get the linear system $3G - \sum_{i=1}^5 m_i
p_i$. This linear system must have degree 1, so we get $9 -
\sum_{i=1}^5 m_i^2 = 1$.  Since $\phi$ is given by the cubics passing
through the points $p_i$, with multiplicity two in $p_5$, we deduce
that $m_5=2$ and $m_i=1$ for $1 \leq i \leq 4$.

From this, we get that 
\begin{equation}\label{eq-pullback-of-L}
\overline{\varepsilon}^* H= 3G - 2E_5 - \sum_{i=1}^4 E_i = 3G - \sum_{i=1}^5 E_i - E_5 = -K_{X_{\ksep}} - E_5.
\end{equation}

On the other hand, consider, for any $1 \leq i \leq 4$, the divisor
$L_i - K_{S_{\ksep}} - H$ and pull it back via
$\overline{\varepsilon}$. Recall that $\overline{\varepsilon}^*
K_{S_{\ksep}} = K_{X_{\ksep}} + D$, and that $L_i = H - E_i - E_5$
over $X_{\ksep}$. We then get
$$
\overline{\varepsilon}^* (L_i -  K_{S_{\ksep}} - H) = G - E_5 - E_i - K_{X_{\ksep}} + D - H.$$
using equation \eqref{eq-pullback-of-L}, we substitute $L$ to get:
\begin{equation}\label{eq-pullback-of-Li}
\overline{\varepsilon}^* (L_i -  K_{S_{\ksep}} - H) = 
G + D - E_i = -K_{X_{\ksep}} - E_i.
\end{equation}
Using equations \eqref{eq-pullback-of-L} and
\eqref{eq-pullback-of-Li}, the block $\cat{G}$ pulls back via
$\overline{\varepsilon}$ to the exceptional block
\begin{equation}\label{eq:block-for-dp5}
\langle\ko_{X_{\ksep}}(-K_{X_{\ksep}}-E_1),\ldots \ko_{X_{\ksep}}(-K_{X_{\ksep}}-E_5)\rangle,
\end{equation}
in $\Db(X_\ksep)$, and where we have performed a mutation of the
completely orthogonal bundles in the block to arrive at this
ordering. It follows that
$$
\overline{\varepsilon}^* V = 
\bigoplus_{i=1}^5 \ko_{X_{\ksep}}(-K_{X_{\ksep}}-E_i) =
\omega_{X_\ksep}\dual \tensor \, \bigoplus_{i=1}^5 \ko_{X_\ksep}(-E_i),
$$
hence $\overline{\varepsilon}^* V$ descends to a vector bundle of rank
5 on $X$ and $V$ descends to a vector bundle (again denoted by $V$) of
rank 5 on $S$ since $\varepsilon^*$ is fully faithful.  We see that
$\End(V)$ is then isomorphic to the structure sheaf $l$ of the degree
5 point in $\PP^2_k$ that is blown up by $\pi$, which is an
$k$-\'etale algebra of degree 5.  We conclude that $\cat{G}$ descends
to a block over $k$ equivalent to $\Db(l/k)$.

It is now sufficient to prove that $F$ descends to $k$, which would
imply that $\cat{F} \simeq \Db(k)$.  Since $\cat{E}$ and $\cat{G}$
descend to blocks of $\Db(S)$ defined over $k$, so does $\cat{F}$,
being the orthogonal complement.  Hence by Theorem~\ref{thm:toen},
$\cat{F}$ descends to a block equivalent to $\Db(k,\alpha)$ for some
$\alpha \in \Br(k)$.  We proceed to show that $\alpha$ is trivial.
To this end, consider the semiorthogonal decomposition
\eqref{eq-3-block-for-dp5}.  Orlov's formula applied to the
blow up $\ol{\varepsilon}$ gives the following 4-block semiorthogonal
decomposition:
\begin{equation}\label{eq:4-block-for-x}
\Db(X_\ksep) = \sod{\ko_{X_\ksep}(-D) | \ko_{X_\ksep} | \overline{\varepsilon}^*F | \ko_{X_\ksep}(-K_{X_\ksep}-E_1),\ldots, \ko_{X_\ksep}(-K_{X_\ksep}-E_5) }
\end{equation}
where we used the identifications \eqref{eq-pullback-of-L} and
\eqref{eq-pullback-of-Li} in writing $\cat{G}$.  Mutating $\cat{G}$ to
the left with respect to its left orthogonal, we obtain:
\begin{equation}\label{eq:4-block-for-x2}
\Db(X_\ksep) = \sod{\ko_{X_\ksep}(-E_1),\ldots,
\ko_{X_\ksep}(-E_5) | \ko_{X_\ksep}(-D) | \ko_{X_\ksep} | \overline{\varepsilon}^*F }.
\end{equation}
As the first block, the mutation of $\cat{G}$, is generated by the
exceptional divisors of the blow up $\ol{\pi}$, by Orlov's blow up
formula, it follows that $\sod{\ko_{X_\ksep}(-D) | \ko_{X_\ksep} |
\overline{\varepsilon}^*F}$ can be identified with
$\ol{\pi}^*\Db(\PP^2_\ksep)$.  Since $\pi$, as well as the line
bundles $\ko_{X}(-D)$ and $\ko_X$, is defined over $k$, we arrive at a
3-block decomposition $\pi^*\Db(\PP^2_k) =
\sod{\ko_X(-D),\ko_X,\Db(k,\alpha)}$.  By the uniqueness of 3-block
decompositions on $\PP^2$ (see \cite{gorodentsev-rudakov} or
Proposition~\ref{prop:karpov-nogin}), and by
Corollary~\ref{cor:Caldararu_conj}, we conclude that $\alpha$ is
trivial.  Moreover, $\ol{\varepsilon}^*F$ can be mutated into an
exceptional line bundle $\ol{\pi}^*\ko_{\PP^2_\ksep}(i)$ via a
sequence of mutations inside $\ol{\pi}^*\Db(\PP^2_\ksep)$, which are a
posteriori, all defined over $k$.  It follows that $\varepsilon^* F$
can be mutated to $\pi^*\ko_{\PP^2}(i)$, hence $F$ descends to a
$k$-exceptional vector bundle of rank 2.
\end{proof}

\begin{remark}
Let $V_1 = F$ and $V_2 = V$.  These vector bundles are tilting bundles
for the blocks $\cat{F}$ and $\cat{G}$, respectively, and have the
following properties: $A_1=\End(V_1)$ is Morita equivalent to $k$ and
$A_2=\End(V_2)$ is Morita equivalent to $l$; $V_1$ is always
indecomposable; and $V_2$ is indecomposable if and only if $l$ is a
field.  In particular, both $V_i$ are indecomposable if and only if
$S$ is minimal.  We list the ranks and 2nd Chern classes of the vector
bundles $V_i$ on Table~\ref{table:dp5}.

The calculation of the second Chern classes of the vector bundles
$V_1$ and $V_2$ is easily obtained by their description over
$k\sep$. In particular, we notice that
$1=\ind(S)=\mathrm{gcd}(c_2(V_1),c_2(V_2),c_2(\omega_S^{\oplus 2}))$
showing that we must include a generator of each block.
\end{remark}

\appendix

\part{Appendix: Explicit calculations with elementary links}

\section{Elementary links for non-rational minimal del Pezzo surfaces}\label{app:links1}

In this appendix, we consider all possible links between two
non-rational minimal del Pezzo surfaces $S$ and $S'$. Let us briefly
sketch the notion of elementary link in Sarkisov's program from
\cite{isko-sarkisov-complete}.  We consider $\pi: S \to Z$ to be a
minimal geometrically rational surface with an extremal contraction.
Hence one obtains that either $Z$ is a point and $S$ a minimal surface
with Picard number 1, or $Z$ is a Severi--Brauer curve and $S \to Z$ is a
minimal conic bundle, and the Picard number of $S$ is $2$.

If $S \to Z$ and $S' \to Z'$ are such extremal contractions, an \linedef{elementary link} is a birational map $\phi: S \dashrightarrow S'$
of one of the following types:

\begin{enumerate}
 \item[Type I)] There is a commutative diagram
 $$\xymatrix{S \ar[d] & S' \ar[l]_\sigma \ar[d]\\
 Z & Z' \ar[l]_\psi}$$
 where $\sigma: S' \to S$ is a Mori divisorial elementary contraction and
 $\psi: Z' \to Z$ is a morphism. In this case, $Z = \Spec(k)$,
 $\rho(S)=1$, $S$ is a minimal del Pezzo, and $S' \to Z'$ is a
 conic bundle over a Severi--Brauer curve.

\medskip

\item[Type II)]  There is a commutative diagram
 $$\xymatrix{S \ar[d] & X \ar[l]_\sigma \ar[r]^\tau& S' \ar[d] \\
 Z & & Z' \ar[ll]_\isom}$$
 where $\sigma:X \to S$ and $\tau: X \to S'$ are Mori divisorial
 elementary contractions.  In this case, $S$ and $S'$ have the same
 Picard number.

\medskip

\item[Type III)]  There is a commutative diagram
 $$\xymatrix{S \ar[d]\ar[r]^\sigma  & S' \ar[d]\\
 Z \ar[r]^\psi& Z' }$$
where $\sigma: S \to S'$ is a Mori divisorial elementary contraction and $\psi: Z \to Z'$ is 
a morphism. These links are inverse to links of type I.

\medskip

\item[Type IV)] There is a commutative diagram
 $$\xymatrix{S \ar[d]\ar[rr]^\isom & & S' \ar[d]\\
 Z \ar[rd]^\psi& & Z' \ar[ld]_{\psi'} \\
 & \Spec(k) & }$$
 where $Z$ and $Z'$ are Severi--Brauer curves and $\psi$ and $\psi'$
 are the structural morphisms. This link amounts to a change of conic
 bundle structure on $S$.
\end{enumerate}

Iskovskikh shows that any birational map $S \dashrightarrow S'$
between minimal geometrically rational surfaces factors into a finite
sequence of elementary links \cite{isko-sarkisov-complete}.  We are
interested in the case where $S$ and $S'$ are both non $k$-rational del Pezzo surfaces
of Picard rank 1.

Thanks to Iskovskikh's classification, a link of type I (resp.\ III)
can happen in the non $k$-rational cases only if $S$ (resp.\ $S'$) has either degree $8$ and a point
of degree $2$, or has degree $4$ and a rational point
\cite[Thm.~2.6]{isko-sarkisov-complete}.  It follows that if we assume
$S$ to not be of this type, then we only have to consider links of
type II where $\rho(S) = \rho(S') = 1$. By Iskovskikh's
classification, there is a finite list of such links. In particular,
if we assume $S$ to not be $k$-rational, and $S'$ not isomorphic to
$S$, then we have that $\deg(S) = \deg(S')$ can be only $6$, $8$ or $9$,
and we are left with five possible links.

Let $\phi: S \dashrightarrow S'$ be a link of type II between
non-$k$-rational non-isomorphic surfaces, and recall that we assume
that if $S$ has degree $8$ (resp. $4$), there is no degree 2
(resp. rational) point on $S$.  Then $\deg(S)=\deg(S')$ and there is a
closed point $x$ in $S$ of degree $d$ such that $\phi$ is resolved as
$$
\xymatrix{& X \ar[dr]^\tau \ar[dl]_{\sigma} & \\
S & & S'
}
$$
where $\sigma$ is the blow-up of $x$ and $\tau$ is the blow up of a
point $x'$ of degree $d$ on $S'$.  Let $E$ be the exceptional divisor
of $\sigma$ and $F$ be the exceptional divisor of $\tau$.  If one
considers the $\ZZ$-bases $(\sigma^* \omega_S, E)$ and
$(\tau^*\omega_{S'},F)$ of $\Pic(X)$, the birational map $\phi$ can be
described by the transformation matrix between these two bases.  We
list all possibilities from \cite[Thm. 2.6]{isko-sarkisov-complete}
in Table~\ref{table:links}.

\begin{table}
\centering
\begin{tabular}{||c||c|c||}
\hline\hline
$\deg(S)$ & $\deg(x)$ & Transformation Matrix \\
\hline
\hline
{\multirow{2}{*}9} & 3 & $M_{9,3} = \left(\begin{array}{cc}
                                          2 & 1 \\ -3 & -2  
                                          \end{array}\right)$\\
\cline{2-3}                                          & 6 & $M_{9,6} = \left(\begin{array}{cc}
                                          5 & 4 \\ -6 & -5  
                                          \end{array}\right)$\\

\hline
8 & 4 & $M_{8,4} = \left(\begin{array}{cc}
                                          3 & 2 \\ -4 & -3  
                                          \end{array}\right)$\\
\hline
{\multirow{2}{*}6} & 2 & $M_{6,2} = \left(\begin{array}{cc}
                                          2 & 1 \\ -3 & -2  
                                          \end{array}\right)$\\
\cline{2-3}                                          & 3 & $M_{6,3} = \left(\begin{array}{cc}
                                          3 & 2 \\ -4 & -3  
                                          \end{array}\right)$\\
\hline\hline
\end{tabular}
\caption{The possible links of type II between nonisomorphic non-$k$-rational Del Pezzo surfaces.
The transformation matrix expresses the base change from $\sigma^* \omega_S, E$ to $\tau^* \omega_{S'},F$.} 
\label{table:links}
\end{table}

In order to understand the behavior of the semiorthogonal
decompositions of $S$ and $S'$ under birational maps, it is enough to
consider the links listed in Table \ref{table:links}.  We will proceed
as follows: given a link $\phi: S \dashrightarrow S'$, we describe the
birational map $\overline{\phi}: S_{k\sep}\dashrightarrow
S'_{k\sep}$. Notice that $\overline{\phi}$ is not a link, since over
$k\sep$, we can factor $\sigma$ into a finite sequence of blow-ups
(actually, $\deg(x)$ of them).

In order to describe $\overline{\phi}$ we will consider the
description of the Picard group of $S_{k\sep}$.  If $\deg(S) =9$, then
$\overline{\phi}$ is described by a linear system on $\PP^2_{k\sep}$,
the so-called \linedef{homaloidal system} of $\overline{\phi}$. If
$\deg(S) = 8$, we find similarly a homaloidal system on the quadric
$S_\ksep \subset \PP^3_{k\sep}$. Finally, if $\deg(S)=6$, we have
to choose models $S_\ksep \to \PP^2_{k\sep}$ and $S'_\ksep
\to \PP^2_{k\sep}$, and describe how $\overline{\phi}$ corresponds to a
homaloidal system on $\PP^2_{k\sep}$.

In general, let us consider a linear system on $\PP^2$ of the form
$G=nH - \sum_{i=1}^r m_i p_i$, where $n>0$ and $m_i>0$ are integers,
$H$ denotes the hyperplane divisor, and $p_i$ are points on $\PP^2$.
Such a linear system defines a birational map $\PP^2 \dashrightarrow \PP^2$ if and
only if $\deg(G)=1$ and the curves in the linear system are
rational. We can resolve the birational map by blowing up the points
$p_i$, and call $X$ the blow up. We obtain then the
following conditions on $n$ and $m_i$:
\begin{equation}\label{eq:conds-on-hom-system}\left\lbrace \begin{array}{ll}
  3n-3 = \sum_{i=1}^r m_i & \text{since } G.K_X=3\\
  n^2 -1  = \sum_{i=1}^r m_i^2 & \text{since } G^2=1 
\end{array}\right.\end{equation}
In order to describe the system, we will extensively use the
Cauchy--Schwartz inequality
$$
\Bigl(\sum_{i=1}^r m_i\Bigr)^2 \leq r \sum_{i=1}^r m_i^2
$$
to bound the possible values of $n$. In particular, we obtain that
$9(n-1) \leq r(n+1)$.  Moreover, it is clear that if $n=1$, then $G=H$
defines an isomorphism. Hence we have that $r \geq 3$. Let us spell
out all the possible birational transforms with $3 \leq r \leq 6$.

If $r=3$, then $n=2$. Conditions \eqref{eq:conds-on-hom-system} give $m_i=1$. These are the standard quadratic transformations
$\phi_2: \PP^2 \dashrightarrow \PP^2$.

If $r=4$, then $n=2$. Conditions \eqref{eq:conds-on-hom-system} give
$\sum_{i=1}^4 m_i^2 = 3$, which is impossible since we only consider
$m_i > 0$. It follows in particular that $n=2$ if and only if $r=3$.

If $r=5$, then $n=3$. Conditions \eqref{eq:conds-on-hom-system} give
$m_1= \ldots = m_4=1$ and $m_5=2$, so that there is only one
possibility (up to renumbering the points). We denote these birational
maps by $\phi_3: \PP^2 \dashrightarrow \PP^2$.

If $r=6$, then $n \leq 5$. Conditions \eqref{eq:conds-on-hom-system}
give two possibilities.  The first one is $n=5$, $m_i=2$. We denote
these birational maps by $\phi_5: \PP^2 \dashrightarrow \PP^2$.  The
second possibility is $n=4$, $m_1=m_2=m_3=1$ and $m_4=m_5=m_6=2$ (up
to renumbering the points).  One can check that the birational map
$\phi_4: \PP^2 \dashrightarrow \PP^2$ is the composition of two
standard quadratic transforms, the first one $\phi_2 : \PP^2
\dashrightarrow \PP^2$ blows up $p_1$, $p_2$, and $p_3$, so that $p_4$,
$p_5$, and $p_6$ belong to the target $\PP^2$. The second standard
quadratic transform blows-up $p_4$, $p_5$, and $p_6$.

With this calculation in mind, we are able to describe the homaloidal
systems of $\PP^2_{k\sep}$ for the links in degree 6, 8, and 9 del Pezzo
surfaces in Table~\ref{table:links}. 

\subsection{Degree 9}\label{subs:Sarkisov-deg9}
If $\deg(S) = 9$, then $S_\ksep \simeq \PP^2_{k\sep}$. Hence we are considering a birational
map $\overline{\phi}: \PP^2_{k\sep} \to \PP^2_{k\sep}$. 

\medskip

The link $M_{9,3}$ base changes to the following diagram (we omit the overlines for ease of notations):
$$\xymatrix{
& X \ar[dl]_{\sigma} \ar[dr]^\tau &\\
\PP^2 \ar@{-->}[rr]^{\phi}& & \PP^2\\
}$$
where $\sigma$ blows up three points $p_1$, $p_2$, and $p_3$.  Hence
$\phi$ is the standard quadratic transformation. Since
$G=2H-L_1-L_2-L_3$, we have that
$\sigma^*\ko(-1) = \tau^*\ko(1) \otimes \omega_X$.

\medskip

The link $M_{9,6}$ base changes to the following diagram (we omit the overlines for ease of notations):
$$\xymatrix{
& X \ar[dl]_{\sigma} \ar[dr]^\tau &\\
\PP^2 \ar@{-->}[rr]^{\phi}& & \PP^2\\
}$$
where $\sigma$ blows up six points $p_1 \ldots, p_6$.  As explained
above, we have two possibilities: $\phi$ is either of type $\phi_5$ or
$\phi_4$.  Checking the action of the matrix $M_{9,6}$ one gets that
$\phi$ is of type $\phi_5$, since all $m_i$ must be equal.  Since
$G=5H-2L_1-2L_2-2L_3-2L_4-2L_5-2L_6$, we have that $\sigma^*\ko(-1) =
\tau^*\ko(1) \otimes \omega_X^{\otimes 2}$.

These considerations lead to a simple proof of Amitsur's theorem in
the case of degree 3 central simple algebras.

\begin{proposition}\label{prop:amitsur-revisited}
Let $S$ be a non-$k$-rational minimal surface of degree $9$, and let
$\cat{T}_S^+$ (resp. $\cat{T}_S^-$) be the category generated by the
descent of a hyperplane section $\ko_{S_\ksep}(1)$
(resp. $\ko_{S_\ksep}(-1)$).  For any birational map $\phi: S
\dasharrow S'$ to a minimal surface $S'$ (which must be of degree 9),
we have either an equivalence $\cat{T}^+_S \simeq \cat{T}^+_{S'}$, or
an equivalence $\cat{T}^+_S \simeq \cat{T}^-_{S'}$.
\end{proposition}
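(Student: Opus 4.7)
The plan is to factor $\phi$ into elementary links via the Sarkisov program and then track the behavior of the two blocks $\cat{T}^\pm$ across each link using the identities between pulled-back hyperplane classes computed in \S\ref{subs:Sarkisov-deg9}. By Proposition~\ref{prop:isko-rigidity-of-nonrat-dps}, any $k$-birational map $\phi: S \dashrightarrow S'$ between minimal non-$k$-rational surfaces of degree $9$ decomposes into a finite sequence of elementary links, and by inspecting Table~\ref{table:links}, the only elementary links that can appear between such surfaces are of types $M_{9,3}$ and $M_{9,6}$. Hence it will suffice to show that each individual elementary link swaps $\cat{T}^+$ with $\cat{T}^-$; the full composition will then yield $\cat{T}^+_S \simeq \cat{T}^+_{S'}$ or $\cat{T}^+_S \simeq \cat{T}^-_{S'}$ according to the parity of the number of links in the factorization.

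Next I would fix an elementary link of type $M_{9,d}$ with $d \in \{3,6\}$, resolved by a common blow-up $\sigma : X \to S$ and $\tau : X \to S'$ along closed points of degree $d$. The homaloidal analysis of \S\ref{subs:Sarkisov-deg9} provides, in $\Pic(X_{\ksep})$, the identity
$$
\tau_0^* \ko_{\PP^2_{\ksep}}(1) \simeq \sigma_0^* \ko_{\PP^2_{\ksep}}(-1) \otimes \omega_{X_{\ksep}}^{-n}
$$
with $n = 1$ for $M_{9,3}$ and $n = 2$ for $M_{9,6}$. Let $V^{\pm}_S$ denote the pure indecomposable rank $3$ vector bundle on $S$ descending $\ko_{S_{\ksep}}(\pm 1)^{\oplus 3}$, whose existence and uniqueness are ensured by Proposition~\ref{prop:exists_pure}; by construction $V^{\pm}_S$ generates $\cat{T}^{\pm}_S$, and one defines $V^{\pm}_{S'}$ analogously. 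The bundles $\tau^*V^+_{S'}$ and $\sigma^*V^-_S \otimes \omega_X^{-n}$ are Galois-invariant vector bundles on $X$ with the same base change to $X_{\ksep}$, so by the uniqueness of pure indecomposable descent (Proposition~\ref{prop:exists_pure}, cf.\ Definition~\ref{def:reduced_part}) their reduced parts are isomorphic. In particular they generate the same thick admissible subcategory of $\Db(X)$.

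Since $\sigma^*$ and $\tau^*$ are fully faithful and $(-) \otimes \omega_X^{-n}$ is an autoequivalence of $\Db(X)$ defined over $k$, we would then conclude
$$
\cat{T}^+_{S'} \simeq \tau^*\cat{T}^+_{S'} = \sigma^*\cat{T}^-_S \otimes \omega_X^{-n} \simeq \sigma^*\cat{T}^-_S \simeq \cat{T}^-_S,
$$
so a single elementary link swaps the sign, which is exactly what is needed. The main technical subtlety is the descent step: passing from an identity of Galois-invariant line-bundle classes on $X_{\ksep}$ to an equality of admissible subcategories of $\Db(X)$ relies on invoking the uniqueness in Proposition~\ref{prop:exists_pure} for rank-$3$ pure indecomposable bundles, and on checking that both $\tau^*V^+_{S'}$ and $\sigma^*V^-_S \otimes \omega_X^{-n}$ really are pure of the same indecomposable type, which uses that $S$ (hence $S'$) is non-$k$-rational so that the Brauer class of the associated central simple algebra has period exactly $3$.
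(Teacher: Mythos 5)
Your proposal is correct and follows essentially the same route as the paper's one-line proof: factor $\phi$ into elementary Sarkisov links (all of type $M_{9,3}$ or $M_{9,6}$), and observe that each link $X \to S$, $X \to S'$ swaps $\cat{T}^+$ and $\cat{T}^-$ via the relation $\sigma^*\ko(-1) = \tau^*\ko(1) \otimes \omega_X^{\otimes n}$ from \S\ref{subs:Sarkisov-deg9}. What you add, and the paper leaves implicit, is the bookkeeping that lifts this identity of line bundle classes on $X_{\ksep}$ to an equality of admissible subcategories over $k$ via the uniqueness of pure indecomposable descent (Proposition~\ref{prop:exists_pure}) applied to the rank-$3$ bundles $\tau^*V^+_{S'}$ and $\sigma^*V^-_S \otimes \omega_X^{-n}$.
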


\begin{proof}
It is easy to see that any elementary link $S \dashrightarrow S'$ gives an equivalence between $\cat{T}_S^+$ and $\cat{T}_{S'}^-$,
just by pull-back to $X$ and tensor by either $\omega_X$ or $\omega^{\otimes 2}_X$.
\end{proof}

\subsection{Degree 8}\label{subs:Sarkisov-deg8}
If $\deg(S) = 8$ and $S$ is an involution surface, then $S_\ksep
\subset \PP^3_{k\sep}$ is a quadric surface. We are interested in the
hyperplane section $\ko(1)$ and in $\ko(2)=\omega_{S_\ksep}\dual$,
the anticanonical divisor. The latter is always defined on $S$.  
\medskip

The link $M_{8,4}$ base changes to the following diagram (we omit the overlines for ease of notations):
$$\xymatrix{
& X \ar[dl]_{\sigma} \ar[dr]^\tau &\\
S \ar@{-->}[rr]^{\phi}& & S'\\
}$$
where $\sigma$ blows up four points $p_1, \ldots, p_4$.  Using the
action of the matrix on the Picard group of $X$, we get that
$\sigma^*\ko(1) = \tau^*\ko(1) \otimes \omega_X^{\otimes 2}$.

\bigskip

As a corollary, we can see that involution surfaces with Picard rank
one and no point of degree $\leq 2$ are birationally semirigid.  We
can give a further refinement of this result purely using the
algebraic theory of quadratic forms.  Recall that an involution
surface has Picard rank 2 or 1 depending on whether it has trivial or
nontrivial discriminant, respectively, see Example~\ref{exam:involution}.

\begin{proposition}
\label{prop:dP8_semi-rigid}
Let $S$ and $S'$ be $k$-birational involution surfaces over an
arbitrary field $k$.  Then $S$ and $S'$ are $k$-isomorphic in the
following cases:
\begin{enumerate}
\item\label{prop:dP8_semi-rigid.1} 
$S$ and $S'$ are anisotropic quadrics in $\PP^3$,

\item\label{prop:dP8_semi-rigid.2} 
$S$ (and hence $S'$) has nontrivial discriminant and no rational point,

\item\label{prop:dP8_semi-rigid.3} 
$S$ (and hence $S'$) has index 4.
\end{enumerate}
 \end{proposition}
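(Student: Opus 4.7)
The plan is to translate the problem into the language of quadratic forms and algebras with quadratic pair, and then invoke classical results from that theory. Recall from Example~\ref{exam:involution} that an involution surface $S$ corresponds to a pair $(l,B)$ with $l/k$ the \'etale quadratic discriminant extension and $B$ a quaternion algebra over $l$, via $S = R_{l/k}\SB(B)$; two involution surfaces are $k$-isomorphic if and only if the associated pairs agree. The first step in each case will be to argue that the discriminant extension itself is a $k$-birational invariant under the hypotheses at hand (in cases \ref{prop:dP8_semi-rigid.2} and \ref{prop:dP8_semi-rigid.3} this is automatic since $\rho(S)=1$ and $l$ is then intrinsically recovered as the unique quadratic extension over which the Picard rank grows; in case \ref{prop:dP8_semi-rigid.1} both discriminants are trivial or both nontrivial, since anisotropy forces the relevant invariants to descend).

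For part \ref{prop:dP8_semi-rigid.1}, the algebra $A$ is split, so $S$ and $S'$ are the vanishing loci in $\PP^3_k$ of anisotropic $4$-dimensional quadratic forms $q,q'$, and a $k$-birational map gives a $k$-isomorphism $k(q) \isom k(q')$ of function fields. I would invoke the classical theorem on function fields of anisotropic quadrics (due to Wadsworth and Ohm, and extended in characteristic $2$ by Hoffmann--Laghribi) to conclude that $q$ and $q'$ are similar; similar forms define projectively isomorphic quadric hypersurfaces, whence $S \isom S'$.

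For part \ref{prop:dP8_semi-rigid.2}, the discriminant extension $l/k$ is a field and coincides with $l'$ by the above. Base changing to $l$, the Weil restriction identifies $S_l \isom \SB(B) \times_l \SB(\ol{B})$ (and similarly for $S'_l$), where the bar denotes the nontrivial Galois conjugate; since $S(k) = \varnothing$, Lemma~\ref{lem:isotropy_dim4} ensures that all four quaternion algebras are nonsplit. An $l$-birational map between these products of nonsplit Severi--Brauer curves forces $\{B,\ol{B}\}=\{B',\ol{B}'\}$ as unordered pairs in $\Br(l)$ by a Galois-equivariant refinement of Amitsur's theorem applied factor by factor; any matching compatible with the Galois switching gives $(l,B) \isom (l',B')$, hence $S \isom S'$.

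For part \ref{prop:dP8_semi-rigid.3}, the assumption $\ind(S)=4$ means $\ind(A)=4$, so $[A] \in \Br(k)$ has period $2$, index $4$, and the embedding $S \subset \SB(A)$ is a degree $2$ hypersurface. By Amitsur's theorem applied to the generic splitting behavior inside $k(S) \supset k(\SB(A))$, the subgroup $\langle [A] \rangle \subset \Br(k)$ is a birational invariant of $S$, giving $[A]=[A']$; fixing an isomorphism $A\isom A'$, the remaining task is to identify the quadratic pairs $\sigma,\sigma'$, which I would carry out via the associated Albert form of dimension $6$ with trivial discriminant and Clifford invariant $[A]$, reducing to a statement about similarity of anisotropic quadratic forms and thus (modulo hyperbolic summands that are excluded by the anisotropy input) again to the Wadsworth--Ohm type result used in part \ref{prop:dP8_semi-rigid.1}. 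The hard part will be this last step: carefully setting up the correspondence between the birational geometry of the involution surface $S$ and its Albert form, and verifying that the relevant anisotropy is preserved under our index hypothesis, so that the classical similarity theorem may be applied.
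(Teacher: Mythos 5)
Part~\ref{prop:dP8_semi-rigid.1} matches the paper's approach (the paper cites \cite[XII.2.2]{lam-book}, which is the Wadsworth--Ohm theorem you invoke). Parts~\ref{prop:dP8_semi-rigid.2} and \ref{prop:dP8_semi-rigid.3}, however, have genuine gaps, and the central missing ingredient is Tao's description of $\ker(\Br(k) \to \Br(k(S)))$, which is what makes the Brauer-theoretic bookkeeping actually work.

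In part~\ref{prop:dP8_semi-rigid.2}, your key step is ``a Galois-equivariant refinement of Amitsur's theorem applied factor by factor'' to pass from an $l$-birational map $\SB(B)\times\SB(\ol B) \dashrightarrow \SB(B')\times\SB(\ol{B'})$ to an equality of unordered pairs $\{B,\ol B\}=\{B',\ol{B'}\}$ in $\Br(l)$. No such result exists off the shelf: Amitsur's theorem concerns birational Severi--Brauer \emph{varieties}, not products of Severi--Brauer curves, and the factors of a product are not birational invariants. Indeed, the paper's own Proposition~\ref{prop:birational_involution_partners} shows that for products of Severi--Brauer curves the birational class only pins down the Klein four subgroup $\langle [B],[\ol B]\rangle \subset \Br(l)$ together with an index-$\leq 2$ constraint, not the unordered pair; and that proposition is proved \emph{using} the very statement you are trying to establish, plus Tao's theorem. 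So your reduction is circular unless you supply an independent proof of the unordered-pair claim. The paper instead proves $A\isom A'$ via Tao's kernel, shows the quadratic pairs stay anisotropic over $F=k(\SB(A))$ (which requires nontrivial input from Karpenko on Witt-index divisibility in the index~2 case and from Tao/Karpenko in the index~4 case), applies part~\ref{prop:dP8_semi-rigid.1} over $F$, and then descends via Lemma~\ref{lem:anne} --- none of which appears in your sketch.

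In part~\ref{prop:dP8_semi-rigid.3}, two issues: first, after part~\ref{prop:dP8_semi-rigid.2} the only case left is index~4 with \emph{trivial} discriminant, and there Tao's theorem gives $\ker(\Br(k)\to\Br(k(S))) = \langle C_0^+,C_0^-\rangle$, a Klein four group, not the cyclic group $\langle[A]\rangle$; your appeal to ``Amitsur's theorem applied to $k(S)\supset k(\SB(A))$'' does not by itself isolate $[A]$ as a birational invariant. (It does work out --- $[A]$ is the unique index-4 element of the Klein four group --- but that argument goes through Tao, which you haven't invoked.) Second and more seriously, the Albert form of $A$ is an invariant of the biquaternion \emph{algebra} $A$, not of the quadratic pair $(A,\sigma)$: it cannot distinguish two nonisomorphic quadratic pairs on the same $A$. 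Reducing the identification of $\sigma$ with $\sigma'$ to a similarity statement about Albert forms therefore loses exactly the data you need. The paper instead uses the classification $(A,\sigma)\isom(C_0^+,\tau_0^+)\tensor(C_0^-,\tau_0^-)$ and matches the unordered pairs of Clifford components via the fundamental relations, which is a genuinely different --- and correct --- mechanism.
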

\begin{proof}
We know that $S$ is rational as soon as it has a rational point, in
which case it can have $k$-birational yet nonisomorphic partners.
Hence we can assume that $S(k)=\varnothing$.  By considering the
Galois action on the the Picard groups, we see that $k$-birational
involution surfaces have the same discriminant.
Now let $S$ and $S'$ be associated to quadratic pairs $(A,\sigma)$ and
$(A',\sigma')$.  

Part \ref{prop:dP8_semi-rigid.1} is a classical result in the theory
of quadratic forms, see \cite[XII.2.2]{lam-book}.
This handles the case when both $A$ and $A'$ are split, hence we may
assume that $A$ is not split.

We recall a result proved by Arason for quadric surfaces and
generalized by Tao~\cite[Thm.~4.8(b)]{tao:involution_variety} to
involution surfaces:
\begin{equation}
\label{eq:tao}
\ker(\Br(k) \to \Br(k(S))) = 
\begin{cases}
\langle A \rangle & \text{if the discriminant is nontrivial} \\
\langle C_0^+,C_0^- \rangle & \text{if the discriminant is trivial}
\end{cases}
\end{equation}
Any $k$-birational isomorphism between $S$ and $S'$ induces a
$k$-isomorphism of function fields $k(S) \isom k(S')$, under which the
Brauer group kernels $\ker(\Br(k) \to \Br(k(S)))$ and $\ker(\Br(k) \to
\Br(k(S')))$ coincide.  We now proceed according to cases.

For part \ref{prop:dP8_semi-rigid.2}, $S$ and $S'$ have nontrivial
discriminant, so \eqref{eq:tao} implies that the cyclic subgroups of
the Brauer group generated by $A$ and $A'$ are the same.  However,
since both algebras carry involutions of the first kind, they are of
period 2 in the Brauer group (we are assuming they are not split).
Thus $A$ and $A'$ are Brauer equivalent, hence are $k$-isomorphic
since they have the same degree.  We will now show that the quadratic
pairs $(A,\sigma)$ and $(A',\sigma')$ become adjoint to anisotropic
quadratic forms over $F = k(\SB(A))\isom k(\SB(A'))$ (and $S_F$ and
$S_F'$ are still $F$-birational), which by case
\ref{prop:dP8_semi-rigid.1} implies that they are isomorphic over $F$,
which in turn, by the following Lemma~\ref{lem:anne}, implies that
they are isomorphic over $k$.  This anisotropicity statement follows,
in the case when $S$ (hence $S'$) has index 4, i.e., that $A \isom A'$
is division, from
Tao~\cite[Prop.~4.18,~Cor.~4.20,~Cor.~4.21]{tao:involution_variety}
(see also Karpenko~\cite[Thm.~5.3]{karpenko:anisotropy_orthogonal} in
greater generality).  In the case when $S$ (hence $S'$) has index 2,
which under our assumptions implies that $A \isom A'$ has index 2, a
result of Karpenko~\cite[Thm.~3.3]{karpenko:isotropy_quadratic_pair},
stating that the Witt index of a quadratic pair $(A,\sigma)$ over $F$
is divisible by the index of the $A$, implies that the quadratic pairs
$\sigma$ and $\sigma'$ are either anisotropic or hyperbolic over $F$
(see also \cite{becher_dolphin}).  The later is impossible, since by
assumption the quadratic pairs have nontrivial discriminant, which
remains nontrivial over $F$.  Another way to see the index 2 case, at
least when the characteristic is not 2, is by writing $A = M_2(H)$,
where $H$ is a quaternion algebra, and interpreting the involution
$(A,\sigma)$ as a $(-1)$-hermitian form of rank 2 over $(H,\tau)$,
where $\tau$ is the standard involution, and similarly for
$(A',\sigma')$, then invoking the result of Parimala, Sridharan, and
Suresh~\cite{parimala_sridharan_suresh:hermitian} that the involutions
remain anisotropic over $k(\SB(H))$, hence over $F$, since
$F/k(\SB(H))$ is purely transcendental.

To finish part \ref{prop:dP8_semi-rigid.3}, we need only deal with the
case of index 4 and trivial discriminant, in which case \eqref{eq:tao}
implies an equality of two Klein four subgroups of the Brauer group.
By the fundamental relations for Clifford algebras
\cite[Thm.~9.14]{book_of_involutions}, we have the equality of Brauer
classes $[A] = [C_0^+] + [C_0^-]$, and we can rule out $[A] =
[C_0^\pm]$ since $\ind(A) = 4$ while $\ind(C_0^\pm) \leq 2$ (in fact,
we see that $\ind(C_0^\pm)=2$), and similarly for $A'$.  We deduce
that $A$ and $A'$ are each the unique element of index 4 in their
respective Klein four Brauer group kernels.  Hence $A$ and $A'$ are
Brauer equivalent, thus are $k$-isomorphic since they have the same
degree.  Also, the unordered pairs of Clifford algebra components
$C_0^+$ and $C_0^-$, associated to $A$ and $A'$, are isomorphic.
Thus, by the classification of quadratic pairs of degree 4 and trivial
discriminant \cite[\S15.B]{book_of_involutions}, both $(A,\sigma)$ and
$(A',\sigma')$ are isomorphic to $(C_0^+,\tau_0^+)\tensor
(C_0^-,\tau_0^-)$, where $\tau_0^\pm$ is the standard involution on
the quaternion algebra $C_0^\pm$.
\end{proof}

The following result, in characteristic $\neq 2$, can be seen as a
consequence of general hyperbolicity results for orthogonal
involutions due to Karpenko \cite{karpenko:hyperbolicity}.  The
following direct argument in the case of degree 4 algebras, using the
results of \cite[\S15.B]{book_of_involutions}, was communicated to us
by Anne Qu\'eguiner-Mathieu and works over any field.

\begin{lemma}
\label{lem:anne}
Let $\sigma_1$ and $\sigma_2$ be quadratic pairs on a central simple
algebra $A$ of degree 4 over a field $k$ and let $F = k(\SB(A))$.  If
$\sigma_1$ and $\sigma_2$ become isomorphic quadratic pairs over $F$
then they are isomorphic over $k$.
\end{lemma}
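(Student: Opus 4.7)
The plan is to apply the KMRT classification of quadratic pairs on degree-$4$ algebras to reduce the descent question to a Brauer-class computation, then dispatch the latter using Amitsur's theorem over the discriminant extension.

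First, since the discriminant of a quadratic pair is invariant under isomorphism and under base change, $\sigma_1$ and $\sigma_2$ share a common discriminant extension $l/k$. By \cite[\S15.B]{book_of_involutions}, the $k$-isomorphism class of $(A,\sigma_i)$ is determined by the Clifford algebra $C_i := C_0(A,\sigma_i)$ (a quaternion algebra over $l$) up to the Galois action of $l/k$ on its center, which swaps factors when $l = k\times k$ and acts as the nontrivial automorphism when $l$ is a field. By functoriality of the Clifford algebra, the hypothesis yields an isomorphism $C_{1,F} \cong C_{2,F}$ over $F$ up to this Galois action, so after possibly replacing $C_2$ by its conjugate, $[C_1\otimes_l C_2\op] \in \ker(\Br(l)\to\Br(l_F))$, where $l_F := l\otimes_k F$.

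Next I would apply Amitsur's theorem over $l$: since $l_F$ is (a product of copies of) the function field of $\SB(A)\times_k l = \SB(A_l)$ over $l$, the kernel $\ker(\Br(l)\to\Br(l_F))$ is generated by $[A_l]$ (componentwise in the split case). Thus either $[C_1] = [C_2]$ in $\Br(l)$, in which case $C_1\cong C_2$ as $l$-algebras and the classification immediately yields $(A,\sigma_1)\cong(A,\sigma_2)$, or else $[C_1\otimes_l C_2\op] = [A_l]$. The nontrivial case would then be handled via the fundamental relations for Clifford algebras \cite[\S9,~\S15]{book_of_involutions}, valid in any characteristic. When $l = k\times k$, writing $C_i = C_i^+\times C_i^-$, the relation $[A] = [C_i^+] + [C_i^-]$ in $\Br(k)$ forces $[C_1^+] = [C_2^-]$ and $[C_1^-] = [C_2^+]$, so the unordered pairs $\{[C_1^+],[C_1^-]\}$ and $\{[C_2^+],[C_2^-]\}$ agree in $\Br(k)$. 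When $l/k$ is a quadratic field extension, the relation $[A_l] = [C_i] + [\bar C_i]$ in $\Br(l)$ (obtained from $\cor_{l/k}[C_i] = [A]$ via the projection formula, with $\bar{\phantom{C}}$ denoting Galois conjugation) forces $C_2 \cong \bar C_1$ as $l$-algebras. In both situations, $C_1$ and $C_2$ coincide modulo the Galois action on $l$, so the classification delivers $(A,\sigma_1)\cong(A,\sigma_2)$ over $k$.

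The main obstacle I anticipate is a careful formulation of the KMRT classification that cleanly incorporates the Galois ambiguity (especially in the nontrivial-discriminant case, where the underlying $k$-algebra of $C_i$ coincides with that of $\bar C_i$), together with a verification of the fundamental relation $[A_l] = [C] + [\bar C]$ in arbitrary characteristic, where the Clifford algebra must be constructed via the general framework of \cite[\S8]{book_of_involutions}. Once these structural facts are correctly recorded, the Brauer-group computations are routine and the proof follows.
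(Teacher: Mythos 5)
Your proof is correct and takes essentially the same route as the paper's: both reduce to a common discriminant extension $l/k$, translate the question into an isomorphism problem for the quaternion Clifford algebra $C_0(A,\sigma_i)$ over $l$ (the paper packages the same classification via the norm construction $(A,\sigma_i)=N_{l/k}(H_i,\tau_i)$, with $H_i \cong C_0(A,\sigma_i)$), and then apply Amitsur's theorem over $l$ together with the fundamental relation $[A_l]=[C_i]+[\bar C_i]$ to dispatch the nontrivial Brauer-class possibility. One small imprecision in the split case $l=k\times k$: the kernel $\ker(\Br(l)\to\Br(l\otimes_k F))$ is $\langle[A]\rangle\times\langle[A]\rangle$, not the cyclic group $\langle[A_l]\rangle$, so the dichotomy you state omits the mixed elements $([A],0)$ and $(0,[A])$; combined with $[A]=[C_i^+]+[C_i^-]$ these force $[A]=0$, whence $F/k$ is purely transcendental and the conclusion is immediate, so no harm is done (the paper's own argument elides the same case).
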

\begin{proof}
Since $(A,\sigma_1)$ and $(A,\sigma_2)$ become isomorphic over $F$,
their discriminants coincide over $F$, hence coincide over $k$, since
the map $H^1(k,\ZZ/2\ZZ) \to H^1(F,\ZZ/2\ZZ)$ is injective.  Let $K/k$ be
the discriminant extension.  By the low dimension classification of
algebras of degree 4 with quadratic pair
\cite[\S15.B]{book_of_involutions}, we have that $(A,\sigma_i) =
N_{K/k}(H_i,\tau_i)$, where $\tau_i$ is the standard involution on the
quaternion algebra $H_i$ over $K$.

Over $K$, we get $(A,\sigma_i)_K=(H_i,\tau_i)\otimes
({}^\iota H_i, {}^\iota\tau_i)$, where $\iota$ is the the non trivial
automorphism of $K/k$.  Let $KF$ be the compositum of $K$ and $F$.
Over $KF$, we have that $H_i$ and ${}^\iota H_i$ are isomorphic, hence
$A \isom \End(H_i)$ and $\sigma_i$ is adjoint to the quadratic norm
form of $H_i$.  Therefore, if the quadratic pairs $\sigma_i$ are
isomorphic over $F$, then the norm forms of $H_i$ over $KF$ are
isomorphic.  In particular, $H_1 \tensor H_2$ is split over $KF$.
Hence, either $H_1 \tensor H_2$ is already split over $K$, or, by
Amitsur's theorem, $H_1\tensor H_2=H_1\tensor {}^\iota H_1$.  Thus
$H_2$ is either isomorphic to $H_1$ or ${}^\iota H_1$.  In both cases,
we get an isomorphism of quadratic pairs $\sigma_1$ and $\sigma_2$.
\end{proof}

In the remaining cases, we will classify all possible nonisomorphic
birational involution surface partners in
Proposition~\ref{prop:birational_involution_partners}.

\subsection{Degree 6}\label{subs:Sarkisov-deg6}
Let $S$ be a degree $6$ non-$k$-rational del Pezzo surface. Then, as
recalled in Table \ref{table:links}, there are two possible types of
elementary links. Consider $S_{k\sep}$, and recall that there are six
exceptional lines, coming into two triples of non-intersecting
lines. Each of these triples gives a map $S_{k\sep} \to
\PP^2_{k\sep}$, so that we have two ways of blowing down $S_{k\sep}$
to $\PP^2_{k\sep}$. The previous considerations show that $S_{k\sep}$
can be seen as the resolution of a standard quadratic transformation.
Let $H'$ and $H$ denote the pull back of the generic line from each of
the $\PP^2$, and let $\{L_i\}$ and $\{L_i'\}$ be the two sets of
exceptional divisors.

In particular, we have two $\ZZ$-bases for $\Pic(S_{k\sep})$, one
given by $H$ and the $L_i$, and the other given by $H'$ and the
$L_i'$.  We have $H'=2H-L_1-L_2-E$, and $L_i'= H - L_j - L_k$ for
$i\neq j \neq k \neq i$.  If $\phi: S \dashrightarrow S'$ is an
elementary link, we suppose that we have chosen the triple $L_i$
(and hence $L_i'$) to describe $\overline{\phi}$ as coming from a
homaloidal system on $\PP^2_{k\sep}$.

\medskip

The link $M_{6,2}$ base changes to the following diagram (we omit the overlines for ease of notations):
$$
\xymatrix{
& X \ar[dl]_{\sigma} \ar[dr]^\tau &\\
S \ar[d]_{\sigma_0}\ar@{-->}[rr]^{\phi} & & S \ar[d]^{\tau_0} \\
\PP^2 \ar@{-->}[rr]^{\phi_0}& & \PP^2
}
$$
where $\sigma_0$ blows up three points $p_1$, $p_2$, and $p_3$, and
$\sigma$ blows up two points $p_4$ and $p_5$.  Hence $\phi_0$ is
resolved by blowing up $5$ points. In this case, we should calculate
which one of the five points has coefficient 2 in the homaloidal system
of $\phi_0$. Let us then denote $H = \sigma^* \sigma_0^* \ko(1)$,
$G = \tau^* \tau_0^* \ko(1)$, $L_i$ the exceptional divisor
over $p_i$, and $F_i$ the exceptional divisor over $q_i$.

The matrix $M_{6,2}$ in Table~\ref{table:links} is the transformation matrix from $\sigma^* \omega_S, E$ to $\tau^* \omega_{S'}, F$.
Since $\omega_S = -3H + L_1 + L_2 + L_3$ and $\omega_{S'}=-3G + F_1 + F_2 + F_3$, we get the following conditions:
\begin{equation}\label{eq:first-crazy-cont} \left\lbrace \begin{array}{l}
3G - F_1 - F_2 - F_3 = 6H - 2L_1 - 2L_2 - 2L_3 - 3L_4 - 3L_5 \\
F_4 + F_5 = 3H - L_1 - L_2 - L_3 -2L_4 - 2L_5.
             \end{array} \right.
\end{equation}
Since $X$ is a del Pezzo of degree $4$, there are only $16$ exceptional lines on $X$, which can be described, in the
base $H, L_i$ as follows:

\begin{itemize}
 \item The five exceptional lines $L_i$,
 \item The ten strict transforms $L_{i,j}$ of the lines through two of the $p_i$'s. They are of the form
 $H-L_j-L_i$ for any $i \neq j$.
 \item The strict transform $D$ of the conic through the $p_i$'s. It is of the form $2H - \sum_{i=1}^r L_i$.
\end{itemize}
Using the above description (and the fact that $F_i$ is not of type $L_j$), it is easy to check that (up to switching $4$ and $5$) we get that $F_5=D$
and $F_i = L_{i,5}$ for $i \neq 5$. Hence the homaloidal system of $\phi_0$ is $3H - L_1 - \ldots - L_4 - 2L_5$, which is indeed the only
linear system with 5 base points. Our calculation aims at finding the
point with coefficient $-2$.  Notice that while we could have switched
$4$ and $5$, the coefficients of $L_1$, $L_2$, and $L_3$ must be $1$.

\medskip

The link $M_{6,3}$ base changes to the following diagram (we omit the overlines for ease of notations):
$$\xymatrix{
& X \ar[dl]_{\sigma} \ar[dr]^\tau &\\
S \ar[d]_{\sigma_0}\ar@{-->}[rr]^{\phi} & & S \ar[d]^{\tau_0} \\
\PP^2 \ar@{-->}[rr]^{\phi_0}& & \PP^2\\
}$$
where $\sigma_0$ blows-up three points $p_1$, $p_2$ and $p_3$, and $\sigma$ blows-up three points $p_4$, $p_5$ and $p_6$.
Hence $\phi_0$ is resolved by blowing-up $6$ points.
In this case we have two possibilities for the homaloidal system of $\phi_0$, and we appeal to the form of the matrix $M_{6,3}$
to understand which one we are indeed considering. Let us then denote by $H:=\sigma^* \sigma_0^* \ko(1)$, 
and $G:= \tau^* \tau_0^* \ko(1)$, by $L_i$ the exceptional divisor over $p_i$, and by $F_i$ the exceptional divisor over $q_i$.

The matrix $M_{6,3}$ in Table \eqref{table:links} is the
transformation matrix from $(\sigma^* \omega_S, E)$ to $(\tau^*
\omega_{S'}, F)$.  Since $\omega_S = -3H + L_1 + L_2 + L_3$ and
$\omega_{S'}=-3G + F_1 + F_2 + F_3$, we get the following conditions:
\begin{equation}\label{eq:first-crazy-cond} \left\lbrace \begin{array}{l}
3G - F_1 - F_2 - F_3 = 9H - 3L_1 - 3L_2 - 3L_3 - 4L_4 - 4L_5 -4L_6 \\
F_4 + F_5 + F_6 = 6H - 2L_1 - 2L_2 - 2L_3 - 3L_4 - 3L_5 - 3L_6.
             \end{array} \right.
\end{equation}
Since $X$ is a del Pezzo of degree $3$, there are only $27$
exceptional lines on $X$, which can be described, in the basis $(H,
L_i)$, as follows:

\begin{itemize}
 \item The six exceptional lines $L_i$.
 \item The fifteen strict transforms $L_{i,j}$ of the lines through
 two of the $p_i$'s. They are of the form $H-L_j-L_i$ for any $i \neq
 j$.
 \item The six strict transforms $D_j$ of the conic through the $p_i$'s for $i \neq j$. It is of the form $2H - \sum_{i\neq j} L_i$.
\end{itemize}

Now we use that $\phi_0$ is a birational map resolved by a cubic surface,
we have hence two possibilities to write $G$ in the basis $H$, $L_i$. The first one
is $G=4H - 2L_1 - 2L_2 - 2L_3 - L_4 - L_5 - L_6$, in which case $\phi_0$ is the map we called $\phi_4$
above, and we observed that this map is the composition of two standard quadratic tranformations.
It is easy to check that in this case $S$ would not be minimal, because it
would admit a birational morphism onto a Severi--Brauer surfce.

We are left with the case where 
$G= 5H - \sum_{i=1}^6 2L_i$, in which case $\phi_0$ is the map we called
$\phi_5$ above. 
Using the above description of the exceptional lines on the cubic and the action
of the matrix $M_{3,6}$ (and the fact that $F_i$ is not of type
$L_j$), it is easy to check that (up to internal permutations of $1,2,3$ and of
$4,5,6$) we get that $F_1=D_4$, $F_2 = D_5$, and
$F_3=D_6$, while $F_4=D_1$, $F_5=D_2$, and $F_6=D_3$.

Hence, if $S$ is minimal with a point of degree $3$, the birational
map $\phi$ corresponds over $k\sep$ to the homaloidal system
of quintics passing twice to six points in general position in $\PP^2_{k\sep}$,
three of which are Galois-conjugate and correspond to the closed point of degree 3
on $S$.

\section{Links of type I, and minimal del Pezzo surfaces of ``conic bundle type''}\label{subs:links-of-type-1}
Let $S$ be a minimal non-rational del Pezzo surface, which is not
deg-rigid. Then $S$ has either degree $8$ and a point of degree $2$,
or degree $4$ and a point of degree $1$. In both cases, blowing up the
given point gives a conic bundle $S' \to C$ over a conic, of degree
either $6$ or $3$, respectively.

In this appendix, we would like to show how these two special cases
should be thought of, from a derived categorical (or a noncommutative)
point of view as conic bundles instead of del Pezzo surfaces. The main
point is describing a semiorthogonal decomposition of $S$ whose
nonrepresentable components can be seen as the ``natural''
nonrepresentable components of some conic bundle $S'$.  First of all,
let us recall a result of Kuznetsov on the derived category of a conic
bundle, as a special case of a quadric fibration (see
\cite{kuznetquadrics}, and \cite{auel-berna-bolo} for a statement over
any field).

\begin{proposition}\label{prop:semi-deco-for-cbunldes}
Let $\pi:X \to C$ be a conic bundle over a genus zero curve, $A$ the Azumaya algebra associated to
$C$, and $C_0$ the even Clifford algebra associated to $\pi$. Then there is a semiorthogonal
decomposition
$$\Db(X) = \langle \Db(k), \Db(k,A), \Db(C,C_0)\rangle,$$
where the two first components are the pull-back via $\pi$ of the natural semiorthogonal
decomposition of $\Db(C)$. In particular, the first one is generated by $\ko_X=\pi^*\ko_C$ and the
second one by the local form of $\pi^* \ko_C(1)$.
\end{proposition}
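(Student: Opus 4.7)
The plan is to reduce this to two pieces that are already available in the literature: the semiorthogonal decomposition of a genus zero curve pulled back along $\pi$, and Kuznetsov's residual category for quadric fibrations specialized to the case of conic bundles.

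First, I would establish the decomposition of $\Db(C)$. Since $C$ is a Severi--Brauer curve with associated quaternion algebra $A$, Example \ref{ex:SB_decomposition} (the case $n=1$) gives a $k$-linear semiorthogonal decomposition $\Db(C) = \langle \Db(k), \Db(k,A)\rangle$, where the first block is generated by $\ko_C$ and the second by a rank 2 vector bundle $V$ (the descended local form of $\ko_{\PP^1_\ksep}(1)^{\oplus 2}$) with $\End(V)$ Morita equivalent to $A$. Next I would pull back via $\pi^*$: since $\pi$ is a flat proper morphism with $R\pi_*\ko_X = \ko_C$ (conic bundles have rational fibers), the projection formula shows that $\pi^*:\Db(C) \to \Db(X)$ is fully faithful, and transports the decomposition to two admissible subcategories of $\Db(X)$ generated by $\ko_X = \pi^*\ko_C$ and by $\pi^*V$, respectively.

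Second, I would produce the third block $\Db(C,C_0)$ by invoking Kuznetsov's construction \cite{kuznetquadrics}, in its form valid over an arbitrary base field as spelled out in \cite{auel-berna-bolo}. The relevant input is a Fourier--Mukai kernel built from a spinor-type object on $X \times_C X$; this realizes a fully faithful $C$-linear functor $\Db(C,C_0) \to \Db(X)$ whose image lies in the right orthogonal of $\sod{\pi^*\Db(C)}$. The semiorthogonality of the third block against the first two follows from the projection formula together with the vanishing of $R\pi_*$ applied to the relevant sheaves of half-spinors, exactly as in the quadric fibration argument.

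Third, I would verify that the three subcategories generate $\Db(X)$. One clean way is to base-change to $\ksep$: over $\ksep$, after possibly a further quadratic extension that trivializes $C_0$, the conic bundle becomes a $\PP^1$-bundle, and the claimed decomposition reduces to Orlov's projective bundle formula \cite{orlovprojbund}. Since base change of fully faithful functors of Fourier--Mukai type commutes with scalar extension (cf.\ Lemma \ref{lem:descent_admissible} and Lemma \ref{lemma-base-change-of-semiorth}), the generation statement descends to $k$.

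The main technical point, and the step I expect to require the most care, is the construction and verification of the embedding $\Db(C,C_0) \hookrightarrow \Db(X)$ at degenerate fibers of $\pi$: when the discriminant divisor of the conic bundle is nonempty, $C_0$ fails to be Azumaya and the spinor kernel must be handled as a sheaf with singularities along the discriminant. This is precisely the content of Kuznetsov's theorem in \cite{kuznetquadrics} (and the twisted version needed here appears in \cite[Thm.~2.2.1]{auel-berna-bolo}); in the proof I would simply cite this result rather than reprove it, and concentrate on checking that the first two blocks are the $\pi^*$-pullback of the Severi--Brauer decomposition, which is the new content specific to the statement.
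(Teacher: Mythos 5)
The paper does not actually prove this proposition: it is presented as a recollection of Kuznetsov's quadric-fibration theorem from \cite{kuznetquadrics}, in the form valid over an arbitrary field given in \cite{auel-berna-bolo}. Your first three steps correctly package that citation — the Severi--Brauer decomposition $\Db(C)=\langle \Db(k),\Db(k,A)\rangle$ from Example~\ref{ex:SB_decomposition}, full faithfulness of $\pi^*$ from $R\pi_*\ko_X\isom\ko_C$ via the projection formula, and the embedding of the Clifford component — and this is the intended route.

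Your step~4, however, contains a gap. You propose to check generation by passing to $\ksep$ and observing that ``after possibly a further quadratic extension that trivializes $C_0$, the conic bundle becomes a $\PP^1$-bundle.'' This fails whenever $\pi$ has degenerate fibers: the discriminant divisor is a geometric invariant, it persists under any base extension, and $C_0$ remains non-Azumaya along it, so $X\to C$ never becomes a $\PP^1$-bundle and Orlov's formula \cite{orlovprojbund} does not apply. (There is also a smaller slip: over $\ksep$ there is no further quadratic extension of the base; the generic fiber is already split by Tsen's theorem, yet the degenerate fibers remain.) The proposition as stated allows degenerate fibers — the paper only restricts to the degenerate-free, degree-$8$ case in the subsequent discussion of Appendix~\ref{app:conic_bundles}. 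The clean fix is not to verify generation independently at all: Kuznetsov's theorem (equivalently \cite[Thm.~2.2.1]{auel-berna-bolo}) already asserts that $\Db(C,C_0)$ together with $\pi^*\Db(C)$ generates $\Db(X)$. Merge your steps~3 and~4 into a single appeal to that full statement, and then the only remaining content — exactly as you identify — is matching $\pi^*\Db(C)$ with the two Severi--Brauer blocks generated by $\ko_X$ and the local form of $\pi^*\ko_C(1)$, with Lemma~\ref{lemma-base-change-of-semiorth} available if you wish to verify anything by base change.
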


In particular, given a conic bundle, we have two natural potentially nonrepresentable components,
one of which is representable in dimension 0 if and only if $C \simeq \PP^1$.

\subsection*{Del Pezzo of degree 4 with a rational point}
Let $S$ be a degree $4$ del Pezzo surface with a rational point. Any
such surface can be realized in $\PP^4$ as an intersection of two
quadrics. In Theorem \ref{thm:A-for-dp4}, we recalled how $\cat{A}_S
\simeq \Db(\PP^1,C_0)$, where $C_0$ is the Clifford algebra of the
quadratic form spanned by the two quadrics. Since $S$ has a point, the
fibration has a regular section, so that it can be reduced by
hyperbolic splitting (see \cite[\S1.3]{auel-berna-bolo}) to a conic
bundle with Clifford algebra $C_0'$ over $\PP^1$, so that
$\Db(\PP^1,C_0) \simeq \Db(\PP^1,C_0')$.

On the other hand, one can blow up the point and obtain a degree 3
surface $S'$, with a structure of conic bundle $S' \to \PP^1$ (see,
e.g., \cite[Thm.~2.6(i)]{isko-sarkisov-complete}). Let us denote by
$B_0$ the Clifford algebra of such a conic bundle.

\begin{theorem}[{\cite[\S4]{auel-berna-bolo}}]\label{thm:cliffords-for-dp4s}
For $S$ a del Pezzo of degree 4 with a rational point, the
$\ko_{\PP^1}$-algebras $C_0$, $C_0'$, and $B_0$ described above are
all Morita equivalent.  In particular, $\cat{A}_S \simeq
\Db(\PP^1,B_0)$.
\end{theorem}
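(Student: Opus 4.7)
The plan is to compute the derived category of $S' := \mathrm{Bl}_x S$, where $x \in S(k)$ is the chosen rational point, in two independent ways and then compare the resulting semiorthogonal decompositions. On one hand, $\pi : S' \to S$ is the blow-up of a smooth point, so Orlov's blow-up formula combined with $\Db(S) = \langle \Db(k), \cat{A}_S \rangle$ (Proposition \ref{cor:useful-formulation}) and Theorem \ref{thm:A-for-dp4} (identifying $\cat{A}_S \simeq \Db(\PP^1, C_0)$) yields, after one mutation to collect the $k$-exceptional line bundles coming from $\ko_S$ and from the exceptional divisor, a decomposition
\[
\Db(S') \simeq \langle \Db(k), \Db(k), \Db(\PP^1, C_0) \rangle.
\]

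On the other hand, projection from $x$ endows $S'$ with a conic bundle structure $S' \to \PP^1$ whose fibers are the proper transforms of the hyperplane sections of $S$ through $x$, and whose associated even Clifford algebra is $B_0$. Proposition \ref{prop:semi-deco-for-cbunldes} then gives
\[
\Db(S') = \langle \Db(k), \Db(k), \Db(\PP^1, B_0) \rangle,
\]
where the first two summands are the pullbacks of the canonical exceptional collection on $\Db(\PP^1)$. Arranging mutations so that the first two exceptional components of the two decompositions coincide as admissible subcategories of $\Db(S')$ (generated by $\ko_{S'}$ and a conic class), the residual components must be equivalent as $k$-linear subcategories of $\Db(S')$, giving a $k$-linear equivalence $\Db(\PP^1, C_0) \simeq \Db(\PP^1, B_0)$. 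Invoking a relative version of the C\v{a}ld\v{a}raru-Morita principle (the extension of Theorem \ref{thm:Caldararu_conj} to projective schemes, via the works cited just before it) converts this derived equivalence into a Morita equivalence $C_0 \sim B_0$ over $\PP^1$. Combined with the already-invoked Morita equivalence $C_0 \sim C_0'$ from hyperbolic splitting, this closes the loop.

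The main obstacle is the alignment step: a priori the two decompositions share only the global shape, and one must exhibit concrete exceptional line bundles on $S'$ whose associated admissible subcategory is common to both pictures, so that the residual blocks may be genuinely compared (rather than merely counted). A cleaner alternative, which avoids the derived comparison entirely, is a direct geometric identification of the hyperbolic reduction of the quadric surface pencil $X \to \PP^1$ (taken along the constant section $\sigma_x : \PP^1 \to X$ provided by $x \in S$) with the conic bundle $S' \to \PP^1$ itself: each quadric surface fiber $Q_t \ni x$ becomes, after projection from $x$, the corresponding conic fiber of $S' \to \PP^1$. Under this identification the even Clifford construction is functorial, yielding $C_0' \cong B_0$ as $\OO_{\PP^1}$-algebras and hence the Morita equivalence without appeal to C\v{a}ld\v{a}raru.
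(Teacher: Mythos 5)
The paper itself offers no proof of this theorem; it is stated with a citation to \cite[\S4]{auel-berna-bolo}, whose argument follows your second, ``cleaner alternative'' route: identify the hyperbolic reduction of the quadric bundle $X \to \PP^1$ along the constant section $\sigma_x$ geometrically with the conic bundle $S' \to \PP^1$, then use functoriality of the even Clifford construction to get $C_0' \cong B_0$ as $\ko_{\PP^1}$-algebras. So you have located the right mechanism, and it yields more than Morita equivalence.

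However, there is a dimension slip you need to fix. The fibers $Q_t$ of $X \to \PP^1$ are quadric \emph{threefolds}, since $S \subset \PP^4$ is the base locus of a pencil of quadric hypersurfaces, and hyperbolic reduction along the section drops rank by two, not one --- it is not ``projection from $x$'' but restriction of $Q_t$ to the tangent hyperplane $T_x Q_t$ (which cuts out a quadric cone with vertex $x$) followed by projection from $x$, producing a conic $C_t$. The identification with the conic bundle then requires matching the two base $\PP^1$'s (the pencil parameter $t$ against the pencil of hyperplanes tangent to $S$ at $x$, via $t \mapsto T_x Q_t$) and checking that the proper transform of the nodal anticanonical curve $T_x Q_t \cap S$ under the blow-up $\mathrm{Bl}_x S \to S$ coincides with $C_t$ in $S' \hookrightarrow \PP^3$; that is the actual content being asserted. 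As for your first approach: the gap you flag is real, not cosmetic. Two semiorthogonal decompositions of $\Db(S')$ with the same numerical shape need not share any components as admissible subcategories, so exhibiting common exceptional generators and mutating into alignment is unavoidable, and you do not carry it out; moreover, even granting the alignment, the relative C\v{a}ld\v{a}raru theorem only produces a Morita equivalence up to pull-back by a $k$-automorphism of $\PP^1$, which is weaker than the $\ko_{\PP^1}$-algebra isomorphism the geometric argument delivers outright.
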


\subsection*{Del Pezzo of degree 8 with a degree 2 point}
In the case where $S$ is involution surface with a point of degree 2, the nonrepresentable
component can also be described by a Clifford algebra over $\PP^1$.

\begin{theorem}
Let $S$ be a minimal nonrational del Pezzo surface of degree $8$, with
a closed point $x$ of degree $2$. Let $S' \to S$ the blow-up of $S$
along $x$ and $\pi: S' \to C$ the associated conic bundle.  Write
$C=\SB(A')$ and $C'_0$ for the even Clifford algebra of
$\pi$.  Recall the semiorthogonal decomposition
\begin{equation}\label{eq:recall-decoquad}
\Db(S)= \langle \Db(k), \Db(k,A), \Db(k,C_0) \rangle,
\end{equation}
where $A$ is the underlying degree 4 central simple algebra defining
$S$, i.e., $S \hookrightarrow \SB(A)$, and $C_0$ is the even Clifford
algebra associated to $S$.  Then $A$ and $A'$ are Brauer equivalent
and there is a semiorthogonal decomposition
$$\Db(C,C'_0) = \langle \Db(l/k), \Db(k,C_0)\rangle,$$
where $l$ is the residual field of $x$.
\end{theorem}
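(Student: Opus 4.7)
My plan is to produce two semiorthogonal decompositions of $\Db(S')$ and compare them via mutations. Applying Orlov's blow-up formula to $\sigma : S' \to S$ along the degree-two point $x$ (with residue field $l$) and substituting the given decomposition \eqref{eq:recall-decoquad} yields
$$
\Db(S') = \langle \Db(l/k),\, \sigma^*\Db(k),\, \sigma^*\Db(k,A),\, \sigma^*\Db(k,C_0)\rangle. \quad (\dagger)
$$
On the other hand, Proposition~\ref{prop:semi-deco-for-cbunldes} applied to the conic bundle $\pi : S' \to C = \SB(A')$ yields
$$
\Db(S') = \langle \pi^*\Db(k),\, \pi^*\Db(k,A'),\, \Db(C,C'_0)\rangle. \quad (\ddagger)
$$
Since $\sigma^*\mathcal{O}_S = \mathcal{O}_{S'} = \pi^*\mathcal{O}_C$, the initial $\Db(k)$-components of $(\dagger)$ and $(\ddagger)$ coincide, both generated by $\mathcal{O}_{S'}$, and their common left orthogonal is the category $\cat{A}_{S'}$.

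\textbf{Matching the Severi--Brauer block and Brauer equivalence.} To establish $A \sim A'$, I would mutate $\Db(l/k)$ in $(\dagger)$ to the right past $\sigma^*\Db(k)$, producing an equivalent admissible subcategory $\cat{E}\simeq \Db(l/k)$ sitting in second position. The key claim is that the admissible subcategories $\sigma^*\Db(k,A)$ and $\pi^*\Db(k,A')$ agree inside $\Db(S')$ up to internal mutation within $\cat{A}_{S'}$. To prove this I would base-change to $\ksep$, where $S'_\ksep$ is a totally split del Pezzo of degree 6 (since $S'$ has degree $8-2=6$ and $S_\ksep$ is a smooth quadric). Both base-changed subcategories are indecomposable of $K_0$-rank one, and must descend from a single exceptional object in the Karpov--Nogin minimal $3$-block decomposition of $\Db(S'_\ksep)$ (unique up to mutation and the Weyl group action, by Proposition~\ref{prop:karpov-nogin} and Table~\ref{table:blocks}). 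Matching the generators via their first Chern classes (the pullback to $S'$ of a hyperplane class of $\SB(A)$ restricted to $S$ versus the pullback of the hyperplane class on $\SB(A')$) identifies the two subcategories after a sequence of mutations, and Corollary~\ref{cor:Caldararu_conj} (affine C\v{a}ld\v{a}raru) then forces $[A]=[A']\in\Br(k)$.

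\textbf{Decomposition of $\Db(C, C'_0)$.} With $A\sim A'$ in hand, the admissible subcategories $\pi^*\Db(C) = \langle \mathcal{O}_{S'},\, \pi^*\Db(k,A')\rangle$ and $\langle \mathcal{O}_{S'},\, \sigma^*\Db(k,A)\rangle$ of $\Db(S')$ coincide (after a suitable mutation inside $\cat{A}_{S'}$). Taking right orthogonals on both sides and comparing the remaining blocks of $(\dagger)$ and $(\ddagger)$ gives an equivalence
$$
\Db(C, C'_0) \simeq \langle \cat{E}',\, \sigma^*\Db(k,C_0)\rangle
$$
for an admissible subcategory $\cat{E}'\simeq \Db(l/k)$ obtained from $\cat{E}$ by a final mutation; a semiorthogonality check on first Chern classes of the natural generators then gives the order $\langle \Db(l/k), \Db(k,C_0)\rangle$ stated in the theorem.

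\textbf{Main obstacle.} The genuine difficulty is in the Severi--Brauer matching step: numerically the two decompositions are forced to be compatible, but extracting an actual $k$-linear equivalence of admissible subcategories requires real geometric input. The cleanest route is to mimic the explicit link analysis of \S\ref{subsec:dP6} (in particular the treatment of the elementary link $M_{6,2}$ in \S\ref{subs:Sarkisov-deg6}), resolving the birational picture on a common resolution of $\sigma$ and $\pi$ and tracking how the exceptional bundles on $S_\ksep$ transform under blow-up and contraction. An additional subtlety is the case distinction by $\rho(S) = 1$ or $2$: in the latter case one must verify separately that the residue field $l$ of $x$ is a quadratic field extension (rather than split $k\times k$), which is necessary for the $K_0$-ranks on both sides of the decomposition $\Db(C,C_0')=\langle \Db(l/k),\Db(k,C_0)\rangle$ to balance.
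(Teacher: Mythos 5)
Your overall strategy — produce two semiorthogonal decompositions of $\Db(S')$ from the blow-up and from the conic bundle, reconcile them by mutation, and invoke Corollary~\ref{cor:Caldararu_conj} — is exactly the paper's, and your handling of the $\Db(l/k)$-block is in fact cleaner than what the paper does: the paper exhibits an explicit tilting bundle $\pi^*V\otimes W$ with endomorphism algebra $A\otimes_k l$ and then uses that $l=k(x)$ splits $A$, whereas you only need the observation that a mutation of an admissible subcategory equivalent to $\Db(l/k)$ is again equivalent to $\Db(l/k)$. Your closing worry about $l$ possibly being $k\times k$ is moot: $l$ is by definition the residue field of a closed point, hence a field.

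The gap is in the Severi--Brauer matching step. You invoke Proposition~\ref{prop:karpov-nogin} to conclude that $\sigma^*\Db(k,A)$ and $\pi^*\Db(k,A')$ must coincide after a mutation, but that result classifies only \emph{three-block} decompositions (each component an exceptional block) up to mutation and Weyl action, and neither $(\dagger)$ nor $(\ddagger)$ is of that type: $(\dagger)$ has four components, and the component $\Db(C,C'_0)$ in $(\ddagger)$ is not an exceptional block, since the conic bundle $\pi$ has two degenerate fibers (as $\deg S'=6$). Nor do the first Chern classes of the natural generators match on the nose: over $\ksep$ these generators are $\sigma^*\ko(1,1)=\ko_{S'_\ksep}(G)$ and $\pi^*\ko_C(-1)=\ko_{S'_\ksep}(-H)$ with $H\neq G$ in $\Pic(S'_\ksep)$. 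The essential ingredient — which you flag as the ``main obstacle'' but leave unsupplied — is the homaloidal identity $H=G-L_1-L_2$, coming from the fact that $\phi:S\dashrightarrow C$ is cut out by the linear system $\ko(1,1)-x_1-x_2$, together with an explicit chain of mutations of $k$-defined admissible subcategories carrying the blow-up decomposition to the conic-bundle one. Your fallback to mimicking the $M_{6,2}$ link analysis is the right move and is what the paper does; citing Karpov--Nogin uniqueness as a shortcut is not a valid substitute.
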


\begin{proof}
Consider the diagram:
$$\xymatrix{
& S' \ar[dl]_\sigma \ar[d]^\pi \\
S \ar@{-->}[r]^{\phi} & C, }$$
where $\phi$ is a rational map that is resolved by the blow up
$\sigma: S' \to S$ of $x$ to the conic bundle $\pi:S' \to C$.  Denote
by $E$ the exceptional divisor of $\sigma$.  Over $\ksep$, we have
that $C \isom \PP^1_\ksep$ and $S \isom \PP^1_\ksep \times
\PP^1_\ksep$ and $x$ decomposes into points $x_1$ and $x_2$.  Let $G =
\sigma^* \ko(1,1)$ and $H = \pi^*\ko(1)$. 

As one can check, comparing parts a) and b) of the case $K^2=8$ of
\cite[Thm.~2.6(i)]{isko-sarkisov-complete}, the rational map $\phi$ is
defined, over $\ksep$, by the linear system $\ko_{S_\ksep}(1,1) -
x_1-x_2$.  In particular, since $\pi$ resolves $\phi$, we have $H = G
- L_1-L_2$ over $\ksep$, where $E=L_1+L_2$. The semiorthogonal
decomposition \eqref{eq:recall-decoquad} can then be written:
$$
\Db(S_\ksep) = \langle \ko, \ko(1,1), \cat{F}\rangle,
$$
where $\cat{F}$ is the block descending to $\Db(k,C_0)$. 
Consider the semiorthogonal decompositions of $\Db(S')$ given
respectively by the blow up and by the conic bundle formulas:
\begin{equation}\label{eq:two-decos-to-compare}
\Db(S'_\ksep) = \sod{\ko_{S'_\ksep}, \ko_{S'_\ksep}(G),
\sigma^*\cat{F}, \ko_{L_1}, \ko_{L_2}}
\end{equation}
Now consider the decomposition \eqref{eq:two-decos-to-compare}, and
mutate $\sigma^*\cat{F}$ to the right with respect to
$\sod{\ko_{L_1},\ko_{L_2}}$, then mutate $\sod{\ko_{L_1},\ko_{L_2}}$
to the left with respect to $\ko_{S'_\ksep}(G)$, then we mutate
$\sod{\ko_{S'_\ksep}(G), \sigma^*\cat{F}}$ to the left with respect to
its left orthogonal, then we mutate $\sigma^*\cat{F}$ to the left with
respect to its left orthogonal, then we mutate $\sigma^*\cat{F}$ to the
right with respect to its right orthogonal to arrive at a decomposition:
\begin{equation}\label{eq:two-decos-to-compare2}
\Db(S'_\ksep) = \sod{\ko_{S'_\ksep}(-G+L_1+L_2), \ko_{S'_\ksep}, \ko_{S'_\ksep}(G-L_1), \ko_{S'_\ksep}(G-L_2),\sigma^*\cat{F}}
\end{equation}
where we have used the evaluation sequence and the fact that
$\omega_{S'_\ksep}=\ko_{S'_\ksep}(-2G+L_1+L_2)$.  Rewriting in terms
of $H$, we have the decomposition
\begin{equation}\label{eq:two-decos-to-compare3}
\Db(S'_\ksep) = \sod{\ko_{S'_\ksep}(-H), \ko_{S'_\ksep}, \ko_{S'_\ksep}(H+L_2), \ko_{S'_\ksep}(H+L_1),\sigma^*\cat{F}}
\end{equation}
in which we can identify $\sod{\ko_{S'_\ksep}(-H), \ko_{S'_\ksep}}$
with $\pi^*\Db(C_\ksep)$.  Since $\ko_{S'_\ksep}(G)$ is the mutation
of $\ko_{S'_\ksep}(-H)$, and these mutations are defined over $k$, we
conclude that they $\sod{\ko_{S'_\ksep}(G)}$ and
$\sod{\ko_{S'_\ksep}(-H)}$ descend to equivalent categories over $k$.
It follows that $\Db(k,A) \simeq \Db(k,A')$, hence by
Corollary~\ref{cor:Caldararu_conj}, $A$ and $A'$ are Brauer
equivalent.  

The block $\sod{\ko_{S'_\ksep}(H+L_2), \ko_{S'_\ksep}(H+L_1)}$ has
tilting bundle $\ko_{S'_\ksep}(H+L_2)\oplus \ko_{S'_\ksep}(H+L_1) =
\ko_{S'_\ksep}(H)\tensor(\ko_{S'_\ksep}(L_2)\oplus
\ko_{S'_\ksep}(L_1))$.  There is a vector bundle $V$ on $C$ of rank 2
such that $V_\ksep = \ko(H)^{\oplus 2}$ and there is a vector bundle
$W$ on $S'$ of rank 2 such that $W_\ksep = \ko(L_1)\oplus \ko(L_2)$.
Then $\pi^*V \tensor W$ is a tilting bundle for a category base
changing to the block $\sod{\ko_{S'_\ksep}(H+L_2),
\ko_{S'_\ksep}(H+L_1)}$.  It follows that the later descends to a
category equivalent to $\Db(k(x)/k,A)$.  Since $k(x)$ is the residue
field of a point of $S \subset \SB(A)$, we have that $k(x)$ splits the
algebra $A$.  When $\Db(k(x)/k,A) \simeq \Db(k(x)/k)$.
In conclusion, we get a decomposition $\pi^*\Db(C,C_0') =
\sod{\Db(l/k),\cat{F}}$ and we recall that $\cat{F} \simeq \Db(k,C_0)$. 
\end{proof}

\section{Links of type II between conic bundles of degree 8}
\label{app:conic_bundles}

In this section, we consider conic bundles of degree $8$ and study their semiorthogonal decompositions under links of type II
and IV.
The upshot is to show that the Griffiths--Kuznetsov component is well defined in these cases, which include in particular all involution
surfaces of Picard rank 2.

Let us first consider a conic bundle $\pi: S \to C$ over a Severi--Brauer curve $C=\SB(A)$. Such a conic bundle
has an associated Clifford algebra $\kc_0$, a locally free sheaf over $C$. Kuznetsov provides a semiorthogonal
decomposition:
$$\Db(S) = \sod{\Db(C), \Db(C,\kc_0)},$$
and shows that there is a root stack structure $\widehat{C}$, obtained by the natural $\ZZ/2\ZZ$-action on points
of $C$ where the fiber is degenerate, and a Brauer class $\beta$ in
$\Br(C)$ such that $\kc_0$ pulls back
to $\widehat{C}$ to an Azumaya algebra with class $\beta$. Recall that a conic bundle over $C$ has degree $8-r$, where
$r$ is the number of degenerate fibers. If $S$ has degree $8$, then it has no degenerate fibers and hence $\widehat{C} \isom C$
since the root stack structure is trivial. Recall moreover that, denoting by $\alpha \in \Br(k)$ the class of $A$,
there is a semiorthogonal decomposition $\Db(C)=\sod{\Db(k),\Db(k,\alpha)}$. We finally obtain a semiorthogonal
decomposition
\begin{equation}
\label{eq:deco-of-cbdeg8}
\Db(S)= \sod{\Db(k), \Db(k,\alpha), \Db(k,\beta), \Db(k,\alpha \otimes \beta)}.
\end{equation}
We can show that the nontrivial components of this decomposition are a birational invariant, which allows
us to conclude that the Griffiths--Kuznetsov component is well defined.

\begin{proposition}\label{prop:GK-for-cb8}
Suppose that $S \to C$ is a conic bundle of degree $8$ and that $S_1 \dashrightarrow S$ is a birational map.
Then there is a semiorthogonal decomposition
$$\cat{A}_{S'} = \sod{ \cat{T}, \Db(k,\alpha), \Db(k,\alpha'),\Db(k, \alpha \otimes \beta)}$$ 
\end{proposition}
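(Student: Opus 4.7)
The plan is to reduce to the case where $S_1$ is minimal and then run through the possible Sarkisov links of type II and IV that connect a minimal model to the conic bundle $S$.

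First, I would observe that if $S_1$ is not minimal, then there is a birational morphism $S_1 \to S_0$ onto a minimal model which is a finite composition of blow-ups of closed points. By Orlov's blow-up formula (as invoked in Lemma \ref{lem:blow-up_catrep2}), this produces a semiorthogonal decomposition $\Db(S_1) = \sod{\cat{T}_0, \Db(S_0)}$ where $\cat{T}_0$ is generated by derived categories of the residue fields of the blow-up centers, hence is representable in dimension 0. This reduces the problem to establishing the desired decomposition of $\cat{A}_{S_0}$ for every minimal model $S_0$ admitting a birational map $S_0 \dashrightarrow S$; absorbing $\cat{T}_0$ into $\cat{T}$ completes the passage from $S_0$ back to $S_1$.

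Next, I would classify the possible minimal models $S_0$. Since $S$ is a conic bundle of degree 8 (equivalently, $S_\ksep \isom \PP^1_\ksep \times \PP^1_\ksep$), any birational minimal model of $S$ is itself geometrically rational, and by the classification recalled in Section \ref{surfaces} it is either (a) another conic bundle of degree 8 (possibly with a different ruling), or (b) a minimal del Pezzo surface of degree 8 with Picard rank 1, i.e., an involution surface of index 2 considered in case 8.3/8.4 of Table \ref{table:dp8}; any other minimal del Pezzo would either be rational (no contradiction, decomposition trivially of the required form) or be deg-rigid by Proposition \ref{prop:isko-rigidity-of-nonrat-dps} and thus not birational to $S$. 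In case (a), the birational map $S_0 \dashrightarrow S$ factors by the Iskovskikh-Sarkisov program \cite{isko-sarkisov-complete} into links of type II between conic bundles of degree 8 (which can only contract and blow up a rational section, producing another degree 8 conic bundle with isomorphic Clifford data) and links of type IV (which swap the two rulings on $S_\ksep$). Under a type IV link, the decomposition \eqref{eq:deco-of-cbdeg8} is preserved as the pair $(\alpha,\beta)$ is interchanged with $(\beta,\alpha)$, leaving the unordered collection $\{\alpha,\beta,\alpha\tensor\beta\}$ invariant.

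Case (b) is handled using the explicit mutation arguments of Section \ref{subsec:dP8}: an involution surface $S_0$ of index 2 birational to a degree 8 conic bundle $S$ fits into Proposition \ref{prop:dP8_semi-rigid} (resp.\ the conic bundle construction of Appendix \ref{subs:links-of-type-1}), and one compares the 3-block decomposition of $\cat{A}_{S_0}$ described in Proposition \ref{prop:involution_decomposition} with \eqref{eq:deco-of-cbdeg8} via Corollary \ref{cor:Caldararu_conj} to match up the Brauer classes $(A,C_0)$ on the involution surface side with $(\alpha,\beta,\alpha\tensor\beta)$ on the conic bundle side (using the fundamental Clifford relation $[A] = \cor_{l/k}[C_0]$). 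The main obstacle will be the bookkeeping in this last identification: one must show that the pair of Brauer classes produced by the two conic bundle rulings on $S$ agrees, after applying Theorem \ref{thm:Caldararu_conj} to the mutated semiorthogonal decompositions on a common resolution of $S_0 \dashrightarrow S$, with the pair $([B],[B'])$ factoring the quaternion pair structure on the involution side. Once this identification is carried out, the cyclic nature of the Brauer group relations forces $\alpha' = \beta$ (and $\alpha \tensor \beta$ to agree with $[A]$), giving the required decomposition of $\cat{A}_{S_1}$.
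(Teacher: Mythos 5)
There is a genuine gap in the treatment of the type II links, which is where the real work of this proposition lies. You assert that a type II link between degree-$8$ conic bundles ``can only contract and blow up a rational section, producing another degree $8$ conic bundle with isomorphic Clifford data,'' but this is both factually off and proof-free. By Iskovskikh's classification, the type II links in question are elementary transformations centered at a closed point of degree $2$ or $4$ (the degree-$1$ case is excluded because it would force $S$ to be rational), not contractions of a section. More importantly, it is not at all obvious that the Brauer class $\beta$ of the Clifford algebra is preserved, or that the block $\Db(k,\alpha\otimes\beta)$ matches up on the two sides. The paper's proof establishes this by an explicit computation in $\Pic(X_{\ksep})$ on the resolution $X$ of the link: writing $F=F'$, $E = d\Sigma' - E'$, $\Sigma = \Sigma' - E'$, etc., it shows that the autoequivalence $\otimes\,\ko(-E')$ of $\Db(X)$ (which is defined over $k$ because $E'$ is) carries the exceptional line bundles $\ko(\Sigma)$, $\ko(\Sigma+F)$ to $\ko(\Sigma')$, $\ko(\Sigma'+F')$, yielding equivalences of the descended blocks via Theorem \ref{thm:Caldararu_conj}. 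Nothing in your sketch replaces this; ``isomorphic Clifford data'' is precisely what needs to be proved.

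Your reduction to minimal $S_1$ and your treatment of type IV links (swapping the two rulings of $C\times C'$, which merely permutes the unordered triple of Brauer classes) do agree with the paper. However, your case (b), in which $S_1$ is taken to be a Picard-rank-one involution surface of index $2$, is a detour the paper avoids and which you leave unfinished. By Proposition \ref{prop:isko-rigidity-of-nonrat-dps}, an index-$4$ involution surface of Picard rank one is birationally rigid (so cannot be birational to the rank-two surface $S$), while an index-$2$ one blows up to a conic bundle of degree $6$, not $8$. The paper sidesteps all this: a minimal degree-$8$ conic bundle has no $k$-rational $(-1)$-curves (over $\ksep$ it is $\mathbb{F}_n$ with $n\neq 1$), so the Sarkisov chain from $S$ can only use links of types II and IV and never reaches a Picard-rank-one del Pezzo. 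Your appeal to the Clifford fundamental relation and ``the cyclic nature of the Brauer group relations'' in case (b) is a heuristic, not an argument, and in any case is aimed at a situation the proof does not actually need to address.
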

\begin{remark}
Notice that if $\alpha=0$ (i.e. $C = \PP^1$), we can include $\Db(k,\alpha)$ in $\cat{T}$, and similarly for $\beta=0$ (i.e. $\pi$ has a section),
we can include $\Db(k,\beta)$ in $\cat{T}$
\end{remark}

\begin{proof}
Notice that, over $\ksep$, the conic bundle $S_{\ksep}$ is isomorphic to a Hirzebruch surface
$\FF_n$, that is, a $\PP^1$-bundle $\overline{\pi}: S_\ksep \to \PP^1_\ksep$. One can check that the semiorthogonal
decomposition \eqref{eq:deco-of-cbdeg8} base-changes to the semiorthogonal decomposition
\begin{equation}
\label{eq:deco-of-Fn}
\Db(S_\ksep)= \sod{\ko_{S_\ksep},\ko_{S_\ksep}(F),\ko_{S_\ksep}(\Sigma),\ko_{S_\ksep}(\Sigma+F)},
\end{equation}
where we denoted by $F$ and $\Sigma$ the fiber and the section of $\overline{\pi}$ respectively.

Suppose first that $S_1$ is minimal, so that $S$ can be decomposed in a series of links of type either II or IV (a link
of type III is a blow-down, so $S_1$ is not minimal).
Let us first consider links of type II, which are described by Iskovskikh \cite[Thm. 2;6 (ii)]{isko-sarkisov-complete}.
Let $\pi: S \to C$ be a conic bundle of degree 8, and 
$$\xymatrix{S \ar[d] & X \ar[l]_\sigma \ar[r]^\tau& S' \ar[d] \\
C & & C' \ar[ll]_\isom}$$
where $\sigma:X \to S$ and $\tau: X \to S'$ are blow-ups in a point $x$ and $x'$ respectively of the same degree $d$, and $S' \to C'$ is a conic bundle. 
We denote by $E$ and $E'$ the exceptional divisors of the blow ups. We first
notice that $S'$ has also degree 8 and that $C'\simeq C$. The degree $d$ of the point can be either $1$, $2$, or $4$.
It is easy to check that,
over $\ksep$, the link is just a composition of $d$ elementary transformations of the Hirzebruch surface $S_{\ksep}$. In particular,
if there is a point of degree $1$, then we obtain a birational map between $S$ and a quadric with a rational point, so that $S$ is already
a Hirzebruch surface and there is nothing to prove. So we assume that $d$ is either $2$ or $4$. Moreover, as one can check over $\ksep$,
we have $-K_S = 2\Sigma - (n-2)F$ and $-K_{S'}= 2\Sigma' - (n+d-2)F'$. 
In the Picard group of $X_{\ksep}$ we have the following relations (we omit the pull-back notation):
\begin{align}
F &= F', \\
E &= d\Sigma' - E',\\
-K_S &= -K_{S'} + dF' + 2E',\\
\Sigma &= \Sigma' - E',
\end{align}
where the last equality, is obtained combining the first and the third one.
In particular, we obtain an identification $\ko(F)$ with $\ko(F')$ and that the the equivalence 
$\otimes \ko(-E')$ of $\Db(X_{\ksep})$ sends the exceptional line bundle $\ko(\Sigma)$
to the exceptional line bundle $\ko(\Sigma')$ and the exceptional line bundle $\ko(\Sigma+F)$
to the exceptional line bundle $\ko(\Sigma'+F')$. We notice that, since $E'$ is defined over $k$,
the latter equivalence is defined over $k$ as well. From this we obtain equivalences
$\Db(k,\alpha)\simeq \Db(k,\alpha')$ (the identity), $\Db(k, \beta) \simeq \Db(k,\beta')$ and
$\Db(k,\alpha\otimes\beta) \simeq \Db(k,\alpha'\otimes\beta')$ (induced by $\otimes\ko(-E')$).

Let us now consider links of type IV. Thanks to Iskovskikh, this is possible only in the cases where
$S=S'=C\times C'$ is the product of two Severi--Brauer curves and the conic bundle structures are
given by the two projections. Let $\alpha$ and $\alpha'$ in $\Br(k)$ be the classes of $C$ and $C'$
respectively. Then there are natural decompositions:
\begin{equation}\label{eq:change-of-cb-strct}
\begin{array}{ll}
\Db(C)=\sod{\Db(k),\Db(k,\alpha)} & \,\,\,\, \Db(C,\kc_0)=\sod{\Db(k,\alpha'),\Db(\alpha\otimes\alpha')}\\
\\
\Db(C')=\sod{\Db(k),\Db(k,\alpha')} & \,\,\,\, \Db(C,\kc'_0)=\sod{\Db(k,\alpha),\Db(\alpha'\otimes\alpha)}
\end{array}
\end{equation}
It follows that the decompositions \eqref{eq:deco-of-cbdeg8} obtained by the conic bundle structures 
are composed by equivalent categories.

This proves the Proposition for $S_1$ minimal. If $S_1$ is not minimal, then just consider a minimal
model $S_1 \to S_0$ and use the blow-up formula.
\end{proof}

We finally classify birational equivalent involution surfaces
based on algebraic methods. This can be seen as the algebraic
interpretation of the classification of all type IV links for such surfaces.

\begin{proposition}
\label{prop:birational_involution_partners}
Let $S$ and $S'$ be nonrational involution varieties over a field
$k$.  Then $S$ and $S'$ are $k$-birational if and only if either they
are isomorphic or $S = \SB(B_1) \times \SB(B_2)$ and $S' = \SB(B_1')
\times \SB(B_2')$ and the Klein four subgroups of the Brauer group
generated by $B_i$ and $B_i'$ coincide and consist solely of
quaternion algebras.
\end{proposition}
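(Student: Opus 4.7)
The plan is to combine the rigidity result of Proposition \ref{prop:dP8_semi-rigid} with Tao's Brauer-kernel formula \eqref{eq:tao} and an Amitsur-type manipulation of function fields of Severi--Brauer varieties. First I would reduce to the case where both $S$ and $S'$ have Picard rank $2$ and index $2$: parts (i)--(iii) of Proposition \ref{prop:dP8_semi-rigid} force $S \iso S'$ whenever they are anisotropic quadrics in $\PP^3$, of Picard rank $1$ with no rational point, or of index $4$. As observed in that proof, the discriminant extension (and hence the Picard rank) is a birational invariant of nonrational involution surfaces, so we may assume $\rho(S) = \rho(S') = 2$; the nonrationality then forces the index to be $2$. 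Write $S = \SB(B_1) \times \SB(B_2)$ and $S' = \SB(B_1') \times \SB(B_2')$ with quaternion $B_i, B_i'$; the index $2$ hypothesis forces $A = B_1 \otimes B_2$ and $A' = B_1' \otimes B_2'$ to be quaternion as well.

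For the only-if direction, a $k$-birational equivalence induces $k(S) \iso k(S')$, hence equality of the Brauer kernels of the function field extensions. By the second case of \eqref{eq:tao}, applied in the Picard rank $2$ setting (where $C_0 = B_1 \times B_2$ and $C_0' = B_1' \times B_2'$), these kernels are exactly $\langle B_1, B_2 \rangle$ and $\langle B_1', B_2' \rangle$. So the two subgroups coincide, and their elements $0, B_1, B_2, A$ are all represented by quaternion algebras.

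For the if direction, by symmetry and iteration it suffices to exhibit a $k$-birational equivalence between $\SB(B_1) \times \SB(B_2)$ and $\SB(B_1) \times \SB(A)$. The key is an Amitsur move: since $K := k(\SB(B_1))$ is the generic splitting field of $B_1$, the Brauer classes of $B_2$ and $A = B_1 \otimes B_2$ coincide in $\Br(K)$, so the Severi--Brauer curves $\SB(B_2)_K$ and $\SB(A)_K$ are $K$-isomorphic, yielding
\[
k(\SB(B_1) \times \SB(B_2)) = K(\SB(B_2)_K) \iso K(\SB(A)_K) = k(\SB(B_1) \times \SB(A)).
\]
Iterating the move links all unordered pairs drawn from $\{B_1, B_2, A\}$ in the Klein four case and, when $\langle B_1, B_2 \rangle$ is cyclic of order $2$, links $\SB(B) \times \SB(B)$ with $\SB(B) \times \PP^1$---thereby reproducing the birational equivalence between cases 8.6 and 8.7 of Table \ref{table:dp8}.

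The main subtlety will be the cyclic subcase: $\SB(B) \times \SB(B)$ is an anisotropic quadric in $\PP^3$, whereas its birational partner $\SB(B) \times \PP^1$ embeds in the Severi--Brauer threefold $\SB(B)$ rather than in $\PP^3$. This is consistent with Proposition \ref{prop:dP8_semi-rigid}(i) only rigidifying $S$ within the class of anisotropic quadrics in $\PP^3$; the Amitsur move above provides the explicit nonisomorphic birational partner outside that restricted class.
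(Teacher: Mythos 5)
Your proof is correct and follows essentially the same route as the paper's: reduce to the Picard rank $2$, index $2$ case via the rigidity statement of Proposition~\ref{prop:dP8_semi-rigid}, extract the subgroup equality from Tao's computation~\eqref{eq:tao}, and realize the birational equivalence between distinct generating pairs by the Amitsur-style passage $\SB(B_1)\times \SB(B_2) \sim \SB(B_1)\times \SB(B_1\otimes B_2)$ over $k(\SB(B_1))$. The only cosmetic difference is that you work directly with the product parametrization $S=\SB(B_1)\times\SB(B_2)$ from the start, whereas the paper first writes $(A,\sigma)\isom (C_0^+,\tau_0^+)\tensor(C_0^-,\tau_0^-)$ using the classification of degree $4$ quadratic pairs of trivial discriminant and only then extracts the two quaternion classes; the two routes yield the same subgroup $\langle B_1,B_2\rangle = \langle C_0^+,C_0^-\rangle$. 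Your closing observation about the degenerate cyclic subcase, with $\SB(B)\times\SB(B)$ sitting in $\PP^3$ while its birational partner $\SB(B)\times\PP^1$ does not, correctly explains why Proposition~\ref{prop:dP8_semi-rigid}(i) does not preclude the birational partnership; the paper records this only indirectly in the remark following the proof.
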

\begin{proof}
Let $S$ and $S'$ are be $k$-birational involution varieties associated
to quadratic pairs $(A,\sigma)$ and $(A',\sigma')$.  The only case not
ruled out by Proposition~\ref{prop:dP8_semi-rigid} is that of $S$
(hence $S'$) of index 2, trivial discriminant, and with (at least)
$(A,\sigma)$ nonsplit.
By the classification of degree 4 algebras with orthogonal involution
of trivial discriminant \cite[\S15.B]{book_of_involutions},
$(A,\sigma)$ is isomorphic to $(C_0^+,\tau_0^+)\tensor
(C_0^-,\tau_0^-)$, where $\tau_0^\pm$ is the standard involution on
the quaternion algebra $C_0^\pm$, and similarly for $(A',\sigma')$.
By \eqref{eq:tao} and the considerations in the proof of
Proposition~\ref{prop:dP8_semi-rigid}, the Klein four subgroups of
$\Br(k)$ generated by $C_0^\pm$ and $C_0'{}^\pm$ coincide and consist
solely of quaternion algebras (since $A$ and $A'$ have index at most
2).

Now we verify that if $B_1,B_2$ and $B_1',B_2'$ are Brauer classes
generating the same Klein four subgroup consisting of classes of index
at most 2, then $S=\SB(B_1)\times\SB(B_2)$ and
$S'=\SB(B_1')\times\SB(B_2')$ are $k$-birational.  If the unordered
pairs $(B_1,B_2)$ and $(B_1',B_2')$ are isomorphic then $S \isom S'$.
Otherwise, without loss of generality, we can write $B_1'=B_1$ and
$B_2'=H$, where $B_1\tensor B_2 \isom M_2(H)$.  However, since
$[H]=[B_1]+[B_2] \in \Br(k)$ and $\ker(\Br(k) \to \Br(\SB(B_1))) =
\langle [B_1]\rangle$, the images of $[H]$ and $[B_2]$ coincide in
$\Br(k(\SB(B_1)))$, and thus we conclude that $k(S') =
k(\SB(B_1))\tensor k(\SB(H)) \isom k(\SB(B_1))\tensor k(\SB(B_2)) =
k(S)$ so that $S$ and $S'$ are $k$-birational.
\end{proof}

As a special case, this result says that the involution surfaces $C
\times C$ and $\PP^1 \times C$ are $k$-birational for a any
Severi--Brauer curve $C$ over $k$.


\begin{thebibliography}{10}

\bibitem{at12}
N. Addington and R. Thomas, \emph{Hodge theory and derived categories
of cubic fourfolds}, Duke Math. J. {\bf 163} (2014), no.~10, 1885--1927.

\bibitem{amitsur}
S.\ A.~Amitsur, {\em Generic splitting fields of central simple algebras},
Ann.\ of Math.\ (2) {\bf 62} (1955), 8--43. 

\bibitem{an-au-ga-za}
A.~Ananyevskiy, A.~Auel, S.~Garibaldi, and K.~Zainoulline
\emph{Exceptional collections of line bundles on projective
homogeneous varieties}, Adv.\ Math. \textbf{236} (2013), 111--130. 

\bibitem{antieau:twists}
B.~Antieau, 
\emph{Etale twists in noncommutative algebraic geometry and the
twisted Brauer space}, to appear in Journal of Noncommutative Geometry.
arXiv:1211.6161, 2012.

\bibitem{antieau-caldararu}
B.~Antieau,
\emph{A reconstruction theorem for abelian categories of twisted
sheaves}, J.\ Reine Angew.\ Math, 2014.

\bibitem{antieau:affine_caldararu}
B.~Antieau,
\emph{Twisted derived equivalences for affine schemes}, to appear in
Brauer Groups and Obstruction Problems:\ Moduli Spaces and Arithmetic
(Palo Alto, 2013).
arXiv:1311.2332, 2013.

\bibitem{antieau-gepner}
B.~Antieau and D.~Gepner, 
\emph{Brauer groups and etale cohomology in
derived algebraic geometry}, Geometry and Topology 18 (2014), no. 2, 1149--1244.

\bibitem{antieau_krashen_ward}
B.~Antieau, D.~Krashen, and M.~Ward,
\emph{Derived categories of torsors for abelian schemes},
preprint arXiv:1409.2580, 2014.

\bibitem{arason_elman_jacob:indecomposable}
J.Kr.~Arason, R.~Elman, B.~Jacob,
\emph{On indecomposable vector bundles}, Communications in Algebra
\textbf{20} (1992), no.\ 5, 1323--1351.

\bibitem{ascher-dasaratha-perry-zhou}
K.~Ascher, K.~Dasaratha, A.~Perry, R.~Zhou 
\emph{Derived equivalences and rational points of twisted K3 surfaces}
preprint arXiv:1506.01374, 2015.
    
   

\bibitem{auel-berna-bolo}
A.~Auel, M.~Bernardara and M.~Bolognesi, \emph{Fibrations in complete
intersections of quadrics, Clifford algebras, derived categories, and
rationality problems}, Journal de Math. Pures et Appliquées 102, 249-291 (2014)

\bibitem{ABBV}
A.~Auel, M.~Bernardara, M.~Bolognesi, and A.~V\'arilly-Alvarado,
\emph{Cubic fourfolds containing a plane and a quintic del Pezzo surface}
Compositio Algebraic Geometry \textbf{2} (2014), 181--193.

\bibitem{auel_ojanguren_parimala_suresh:semilinear_morita}
A.~Auel, M.~Ojanguren, R.~Parimala, and V.~Suresh,
\emph{Semilinear {M}orita theory of {A}zumaya algebras},
unpublished, 2010.

\bibitem{baeza:semilocal_rings}
R.~Baeza, \emph{Quadratic forms over semilocal rings}, Lecture Notes in
  Mathematics, Vol. 655, Springer-Verlag, Berlin, 1978.

\bibitem{BDFIK-higher-veronese}
    M.~Ballard, D.~Deliu, D.~Favero, M.U.~Isik, L.~Katzarkov
    \emph{On the Derived Categories of Degree $d$ Hypersurface Fibrations},
 preprint   arXiv:1409.5568.

\bibitem{becher_dolphin}
K.J.~Becher and A.~Dolphin,
\emph{Non-hyperbolic splitting of quadratic pairs},
LAG server preprint 518, \texttt{www.math.uni-bielefeld.de/lag/}.
     
\bibitem{beilinson} 
A.A.~Beilinson,
{\em Coherent sheaves on $\PP^n$ and problems in linear algebra}. (Russian)
Funktsional. Anal. i Prilozhen. {\bf 12} (1978), no. 3, 68--69. 

\bibitem{beilinson2}
A.A.~Beilinson,
\emph{The derived category of
  coherent sheaves on {{\textbf{P}}$^n$}}, 
Selecta Math.\ Soviet. \textbf{3} (1983/84), 233--237. 

\bibitem{bernardara:brauer_severi}
M.~Bernardara, {\it A semiorthogonal decomposition for Brauer-Severi schemes}. Math. Nachr. {\bf 282} (2009), no. 10.


\bibitem{bolo_berna:conic}
M.~Bernardara and M.~Bolognesi, \emph{Derived categories and
  rationality of conic bundles}, Compositio Math. {\bf 149} (2013), no.~11, 1789-1817.

\bibitem{bolognesi_bernardara:representability}
M.~Bernardara and M.~Bolognesi, \emph{Categorical representability
and intermediate Jacobians of Fano threefolds}. EMS Ser. Congr. Rep.,
Eur. Math. Soc., 2013.

 \bibitem{berna-tabuada-decos}
 M.Bernardara, G.Tabuada,  \emph{Relations between the Chow motive and the noncommutative motive
 of a smooth projective variety}, J. Pure Appl. Algebr. \textbf{219} (2015), 5068--5077.

\bibitem{berna-tabuada-jacobians}
M.~Bernardara, G.~Tabuada, {\em From semi-orthogonal decompositions to polarized intermediate Jacobians via Jacobians
of noncommutative motives}, arXiv:1305.4687, to appear in Moscow Math Journal.
 
\bibitem{BGG}
I.N.~Bern{\v{s}}te{\u\i}n, I.M.~Gelfand, S.I.~Gelfand, 
\emph{Algebraic vector bundles on
  {{\textbf{P}}$^{n}$} and problems of linear algebra},
Funktsional.\ Anal.\ i Prilozhen.\ \textbf{12}
  (1978), 66--67.

 
\bibitem{blunk-dp6}
M.~Blunk,
{\em Del Pezzo surfaces of degree 6 over an arbitrary field}, J. Algebra {\bf 323} (2010), no. 1, 42--58. 

\bibitem{blunk-gen-bs}
M.~Blunk, {\em A Derived Equivalence for some Twisted Projective Homogeneous Varieties},
preprint arXiv:1204.0537.

\bibitem{blunk-sierra-smith}
M.~Blunk, S.J.~Sierra, and S.P. Smith,
{\em A derived equivalence for a degree 6 del Pezzo surface over an arbitrary field},
J. K-Theory {\bf 8} (2011), no. 3, 481--492. 

\bibitem{bondal:representations}
A.I.~Bondal, \emph{Representations of associative algebras and coherent
  sheaves}, Izv.\ Akad.\ Nauk SSSR Ser.\ Mat.\ \textbf{53} (1989),
  no.\ 1, 25--44.

\bibitem{bondal_kapranov:reconstructions}
A.I.~Bondal and M.M.~Kapranov, \emph{Representable functors, {S}erre
  functors, and reconstructions}, Izv.\ Akad.\ Nauk SSSR Ser.\ Mat.\ \textbf{53}
  (1989), no.\ 6, 1183--1205, 1337.

\bibitem{bondal_kapranov:enhanced}
A.I.~Bondal and M.M.~Kapranov, {\em Enhanced triangulated categories}, Mat. Sb. {\bf 181} (1990), no. 5,
669--683; translation in Math. USSR-Sb. {\bf 70} (1991), no. 1, 93--107.

\bibitem{bondal_orlov:ICM2002}
A.I.~Bondal and D.O.~Orlov,
\emph{Derived categories of coherent sheaves},
Proceedings of the International Congress of Mathematicians, Vol. II
(Beijing, 2002), 47--56, Higher Ed.\ Press, Beijing, 2002. 
   
\bibitem{bondal_orlov:semiorthogonal}
A.I.~Bondal and D.O.~Orlov, \emph{Semiorthogonal decomposition for
  algebraic varieties}, MPIM preprint 1995-15, 1995.

\bibitem{bridg-equiv-and-FM}
T~Bridgeland, \emph{Equivalences of triangulated categories and
{F}ourier--{M}ukai transforms}, Bull. London Math. Soc. \textbf{31} (1999),
no.~1, 25--34.

\bibitem{calabrese_groechenig}
J.~Calabrese and M.~Groechenig,
\emph{Moduli problems in abelian categories and the reconstruction
theorem},  Algebr. Geom. 2 (2015), no. 1, 1–18.

\bibitem{caldararu:elliptic_threefolds}
A.~C{\u{a}}ld{\u{a}}raru,
\emph{Derived categories of twisted sheaves on elliptic threefolds}, 
J.\ Reine Angew.\ Math.\ \textbf{544} (2002), 161--179.

\bibitem{canonaco_stellari:twisted_fourier_mukai}
A.~Canonaco and P.~Stellari,
\emph{Twisted {F}ourier-{M}ukai functors},
Adv.\ Math.\ \textbf{212} (2007), no.\ 2, 484--503.

\bibitem{canonaco_stellari:survey}
A.~Canonaco and P.~Stellari,
\emph{{F}ourier-{M}ukai functors: a survey},
EMS Ser. Congr. Rep., Eur. Math. Soc., 2013.

\bibitem{colliot-thelene:ICM86}
J.-L.~Colliot-Th\'el\`ene,
\emph{Arithm\'etique des vari\'et\'es rationnelles et probl\`emes birationnels}, Proceedings of the International Congress of Mathematicians, Berkeley, California 1986, (1987), Tome I, 641--653.

\bibitem{colliot-thelene:dP6}
J.-L.~Colliot-Th{\'e}l{\`e}ne,
\emph{Surfaces de del Pezzo de degr\'e 6},
C.\ R.\ Acad.\ Sc.\ Paris \textbf{275} (1972), 109--111.

\bibitem{colliot-thelene_coray}
J.-L.~Colliot-Th{\'e}l{\`e}ne and D.~Coray,
\emph{L'\'equivalence rationnelle sur les points ferm\'es des surfaces rationnelles fibr\'ees en coniques},
Compositio Math.\ \textbf{39} (1979), no.\ 3, 1979.

\bibitem{colliot-karpenko-merkurev}
J.-L.~Colliot-Th{\'e}l{\`e}ne, N.A.~Karpenko, and A.S.~Merkurʹev,
{\em Rational surfaces and the canonical dimension of the group {PGL6}},
Algebra i Analiz {\bf 19} (2007), no. 5, 159--178; translation in
St. Petersburg Math. J. {\bf 19} (2008), no. 5, 793--804. 

\bibitem{coombes:rational_surface_separably_split}
K.R.~Coombes,
\emph{Every rational surface is separably split},
Comment. Math. Helv.\ \textbf{63} (1988), no.\ 2, 305--311.

\bibitem{coray:del_Pezzo}
D.~Coray,
\emph{Points alg\'ebriques sur les surfaces de del Pezzo},
C.\ R.\ Acad.\ Sc.\ Paris \textbf{284} (1977), 1531--1534.

\bibitem{demazure}
M.~Demazure, 
\emph{Surfaces de Del Pezzo : II - \'Eclater $n$ points dans
$\mathbb{P}^2$}, S\'eminaire sur les singularit\'es des surfaces
(1976-1977), 1-13. 

\bibitem{elagin:semi-equi}
A.~Elagin,
\emph{Semiorthogonal decompositions of derived categories of
equivariant coherent sheaves},
Izv. Ross. Akad. Nauk Ser. Mat. 73 (2009), no. 5, 37--66; translation in
Izv. Math. 73 (2009), no. 5, 893--920.

\bibitem{elagin:descent-stacks}
A.~Elagin,
\emph{Cohomological descent theory for a morphism of stacks and for
equivariant derived categories},
Matematicheskiy Sbornik \textbf{202} (2011), no.\ 4, 31--64; Sbornik
Mathematics \textbf{202} (2011), no.\ 4, 495--526.

\bibitem{elagin:descent-semi}
A.~Elagin,
\emph{Descent theory for semiorthogonal decompositions},
Mat. Sb. 203 (2012), no. 5, 33--64; translation in Sb. Math. 203 (2012), no. 5-6, 645--676

\bibitem{enriques}
F.~Enriques, \emph{Sulle irrazionalit\`a da cui pu\`o farsi dipendere la risoluzione d'un'equazione
algebrica $f(xyz)=0$ con funzioni razionali di due parametri}, Math. Ann. {\bf 49} (1897), no. 1, 1--23. 

\bibitem{gille}
S.~Gille,
\emph{Permutation modules and Chow motives of geometrically rational
surfaces}, preprint arXiv:1505.07819, 2015. 



\bibitem{gille_szamuely}
P.~Gille and T.~Szamuely,
\textit{Central simple algebras and {G}alois cohomology}, 
Cambridge Studies in Advanced Mathematics, vol.\ 101, Cambridge
University Press, Cambridge, 2006.

\bibitem{gorodentsev-moving}
A.L.~Gorodentsev, {\em Exceptional bundles on surfaces with a moving anticanonical class},
Izv. Akad. Nauk SSSR Ser. Mat. {\bf 52} (1988), no. 4, 740--757, 895; translation in
Math. USSR-Izv. {\bf 33} (1989), no. 1, 67--83. 

\bibitem{gorodentsev-rudakov}
A.L.~Gorodentsev, and A.N.~Rudakov
{\em Exceptional vector bundles on the projective spaces}, Duke Math. J. {\bf 54}:1 (1987), 115--130.

\bibitem{EGA1}
A.~Grothendieck,
\emph{\'{E}l\'ements de g\'eom\'etrie alg\'ebrique. {I}. {L}e
langage des sch\'emas}, Inst. Hautes \'Etudes Sci. Publ. Math., no.\
4, 1960.

\bibitem{hassett-ratsurf}
B.~Hassett, {\em Rational surfaces over nonclosed fields}, in {\em Arithmetic geometry}
155--209, Clay Math. Proc., 8, Amer. Math. Soc., Providence, RI, 2009.

\bibitem{hassett_tschinkel:derived}
B.~Hassett and Y.~Tschinkel,
\emph{Rational points on K3 surfaces and derived equivalence},
preprint arXiv:1411.6259, 2014.

\bibitem{Huybook} D.~Huybrechts, {\em Fourier-Mukai transforms in algebraic geometry}. Oxford Mathematical Monographs.
The Clarendon Press, Oxford University Press, Oxford, 2006.

\bibitem{iskovskih:minimal_models}
V.A.~Iskovskih,
\emph{Minimal models of rational surfaces over arbitrary fields},
Izv.\ Akad.\ Nauk SSSR Ser.\ Mat. \textbf{43} (1979), 19--43, 237.

\bibitem{isko-sarkisov-complete}
V.A.~Iskovskikh, {\em Factorization of birational mappings of rational surfaces from the point of view of Mori theory} (Russian)
Uspekhi Mat. Nauk 51 (1996), no. 4(310), 3--72; translation in Russian Math. Surveys 51 (1996), no. 4, 585--652.

\bibitem{jacobson:clifford_algebra_involution}
N.~Jacobson,
\emph{Clifford algebras for algebras with involution of type {$D$}},
J.\ Algebra \textbf{1} (1964), 288--300.

\bibitem{kapranovquadric}
M.M.~Kapranov, \emph{The derived category of coherent sheaves on a
quadric}, Funkcionalniy analiz i ego pril., \textbf{20} (1986), 67.

\bibitem{karpenko:anisotropy_orthogonal}
N.A.~Karpenko, \emph{Anisotropy of Orthogonal Involutions}, J.\
Ramanujan Math.\ Soc.\ \textbf{15} (2000), no.\ 1, 1--22.

\bibitem{karpenko:isotropy_quadratic_pair}
N.A.~Karpenko,
\emph{On isotropy of quadratic pair},
Contemp.\ Math.\ \textbf{493} (2009), 211--218.

\bibitem{karpenko:hyperbolicity}
N.A.~Karpenko,
\emph{Hyperbolicity of orthogonal involutions, with an Appendix by Jean-Pierre Tignol},
Doc.\ Math.\ Extra Volume:\ Andrei Suslin's Sixtieth Birthday (2010),
371--392. 

\bibitem{karpov-nogin}
B.V.~Karpov, and D.Yu.~Nogin, \emph{Three-block exceptional sets on del Pezzo surfaces},
Izv. Ross. Akad. Nauk Ser. Mat. \textbf{62} (1998), no. 3, 3--38; translation in
Izv. Math. \textbf{62} (1998), no. 3, 429--463.

\bibitem{kashiwara_schapira:categories_sheaves}
M.~Kashiwara and P.~Schapira,
\emph{Categories and sheaves},
Grundlehren der Mathematischen Wissenschaften, vol.\ 332,
Springer-Verlag, Berlin, 2006.

\bibitem{keller:ICM} 
B.~Keller, 
{\em On differential graded categories}.
International Congress of Mathematicians (Madrid), Vol.~II,  151--190. Eur.~Math.~Soc., Z{\"u}rich (2006).

\bibitem{book_of_involutions}
M.-A.~Knus, A.~Merkurjev, M.~Rost, and J.-P.~Tignol,
\emph{The Book of Involutions},
Colloquium Publications, vol.\ 44, AMS, 1998.

\bibitem{knus_parimala_sridharan:rank_4}
M.-A.~Knus, R.~Parimala, and R.~Sridharan,
\emph{On rank 4 quadratic spaces with given {A}rf and {W}itt invariants},
Math.\ Ann.\ \textbf{274} (1986), no.\ 2, 181--198. 

\bibitem{kuleshov-orlov}
S.A.~Kuleshov, and D.O.~Orlov,
{\em Exceptional sheaves on Del Pezzo surfaces}, Izv. Ross. Akad. Nauk Ser. Mat. {\bf 58} (1994), no. 3, 53--87;
translation in Russian Acad. Sci. Izv. Math. {\bf 44} (1995), no. 3, 479--513. 

\bibitem{kunyav_skoro_tsfas}
B.Eh.~Kunyavskij, A.N.~Skorobogatov, and M.A.~Tsfasman,
\emph{Del Pezzo surfaces of degree four},
M\'emoires de la S.M.F., s\'erie 2, tome 37 (1989), 1--113.
 
\bibitem{kuznetquadrics}
A.~Kuznetsov, \emph{Derived categories of quadric fibrations and intersections of
  quadrics}, Adv.\ Math.\ \textbf{218} (2008), no.\ 5, 1340--1369.  

  \bibitem{kuznetsov:v14}
A.~Kuznetsov, \emph{Derived category of cubic and ${V}_{14}$ threefold}, Proc.
  V.A. Steklov Inst. Math. \textbf{246} (2004), 183--207.
  
\bibitem{kuznetsov:hpd}
A.~Kuznetsov, \emph{Homological projective duality}, Publ.\ Math.\ Inst.\
  Hautes {\'E}tudes Sci.\ (2007), no.\ 105, 157--220.  
  
\bibitem{kuznetbasechange} 
A. Kuznetsov, \emph{Base change for semiorthogonal decompositions}.
Compositio Math. {\bf 147} (2011), no.~3,
852--876.
 
\bibitem{kuz:4fold}
A.~Kuznetsov, \emph{Derived categories of cubic fourfolds}, , Cohomological and
  geometric approaches to rationality problems, Progr. Math., vol. 282,
  Birkh\"{a}user Boston, 2010, p.~219--243.
 
\bibitem{kuz:ICM2014}
A.~Kuznetsov, \emph{Semiorthogonal decompositions in algebraic
geometry}, preprint arXiv:1404.3143.

\bibitem{kuz:rationality-report}
A.~Kuznetsov, \emph{Derived categories view on rationality problems},
preprint arXiv:1509.09115.
 
\bibitem{lam-book} T.~Y.~Lam, {\em Introduction to quadratic forms over fields}. Graduate Studies in Math. {\bf 67}, AMS.

\bibitem{lieblich-olsson:FM}
M.~Lieblich and M.~Olsson,
\emph{Fourier-Mukai partners of K3 surfaces in positive
characteristic},
Ann. Sci. Ecole. Norm. S. \textbf{48} (2015), no.~5, 1001--1033.

\bibitem{Lunts-Orlov}
V.~Lunts and D.~Orlov, {\em Uniqueness of enhancement for triangulated categories}. J. Amer. Math. Soc. {\bf 23} (2010), 853--908.

\bibitem{lunts-schnu}
V.~Lunts, and O.~Schn\"urer, \emph{New enhancements of derived categories of coherent sheaves and applications},
preprint arXiv: 1406.7559.

\bibitem{manin-book}
Yu.I.~Manin, \emph{Cubic forms: algebra, geometry, arithmetic},
Translated from the Russian by M. Hazewinkel. North-Holland Mathematical Library, Vol. {\bf 4}.
North-Holland Publishing Co., Amsterdam-London; American Elsevier Publishing Co., New York, (1974).

\bibitem{manin-tsfasman}
Yu.I.~Manin, M.A.~Tsfasman,
{\em Rational varieties: algebra, geometry, arithmetic},
Uspekhi Mat. Nauk {\bf 41} (1986), no. 2(248), 43--94. 
  
\bibitem{orlovprojbund}
D.O. Orlov, \emph{Projective bundles, monoidal transformations and derived categories of coherent sheaves}, Russian
Math. Izv. \textbf{41} (1993), 133--141.

\bibitem{orlovequivabel}
D.O. Orlov, {\em Derived categories of coherent sheaves on abelian varieties and equivalences between them},
Izv. Math. \textbf{66} (2002), 569--594.

\bibitem{orlov-matrix}
D.O. Orlov, \emph{Derived categories of coherent sheaves and triangulated categories of singularities}, in 
\emph{Algebra, arithmetic, and geometry: in honor of Yu. I. Manin.} Vol. II, 503--531, Progr. Math., {\bf 270}, Birkh\"auser
(2009).

\bibitem{ottaviani}
G.~Ottaviani, \emph{Spinor bundles on quadrics},
Trans. Amer. Math. Soc. {\bf 307} (1988), no. 1, 301--316. 

\bibitem{panin:twisted_flag_varieties}
I.A.~Panin,
\emph{On the algebraic {$K$}-theory of twisted flag varieties},
$K$-Theory \textbf{8} (1994), no.\ 6, 541--585.

\bibitem{parimala_sridharan_suresh:hermitian}
R.~Parimala, R.~Sridharan, and V.~Suresh,
\textit{Hermitian analogue of a theorem of Springer},
J.\ Algebra \textbf{243} (2001), 780--789.

\bibitem{perego:coherent_twisted_sheaves}
A.~Perego,
\emph{A {G}abriel theorem for coherent twisted sheaves},
Math.\ Z.\ \textbf{262} (2009), no.\ 3, 571--583.

\bibitem{quillen:higher_K-theory}
D.~Quillen,
\emph{Higher algebraic $K$-theory. I},
Algebraic $K$-theory I: Higher $K$-theories
(Proc.\ Conf., Battelle Memorial Inst., Seattle, Wash., 1972), Lecture Notes in Math., vol.\ 341, Springer, Berlin,
1973, pp. 85--147. 

\bibitem{rudakov-quadric}
A.N.~Rudakov, {\em Exceptional bundles on a quadric}, Math. USSR Izv. {\bf 33} (1989), 115--138.

\bibitem{helices-book}
{\em Helices and vector bundles. Seminaire Rudakov}, London
Mathematical Society Lecture Note Series, {\bf 148}. A.N.~Rudakov, ed.
Cambridge University Press, Cambridge, 1990.

\bibitem{scharlau}
W.~Scharlau,
\emph{Quadratic and Hermitian forms},
Grundlehren Math.\ Wiss., vol.\ 270. Springer-Verlag, Berlin, 1985.

\bibitem{skorobogatov-toulouse}
A.N.~Skorobogatov, {\em On a theorem of Enriques--Swinnerton-Dyer},
Ann. de la fac. des sciences de Toulouse, S\'er. 6, 2 no. 3 (1993),
p. 429-440. 

\bibitem{skorobogatov:torsors_rational_points}
A.N.~Skorobogatov,
\emph{Torsors and rational points},
Cambridge Tracts in Mathematics \textbf{144}, Cambridge University Press, Cambridge, 2001.

\bibitem{sosna:scalar}
P.~Sosna,
\emph{Scalar extensions of triangulated categories},
Appl.\ Categ.\ Structures \textbf{22} (2014), no.\ 1, 211--227.

\bibitem{swan:quadric_hypersurfaces}
R.G.~Swan, 
\emph{{$K$}-theory of quadric hypersurfaces},
Ann.\ of Math.\ (2) \textbf{122}, no.\ 1, 1985, 113--153.

\bibitem{swinnerton-dyer}
H.P.F.~Swinnerton-Dyer, {\em Rational points on del Pezzo surfaces of degree 5}, in {\em Algebraic
geometry, Oslo 1970} (Proc. Fifth Nordic Summer School in Math.), Wolters-Noordhoff,
Groningen, 1972, pp. 287--290.


\bibitem{stix:period-index}
J.~Stix,
\emph{On the period-index problem in light of the section conjecture},
Amer.\ J.\ Math.\ \textbf{132} (2010), no.\ 1, 157--180.

\bibitem{tabuada-universal}
G.~Tabuada, {\em Invariants additifs de dg-cat\'egories},
Int. Math. Res. Not. {\bf 53} (2005),
3309--3339.
 
\bibitem{tabuada-survey} G.Tabuada,
{\em A guided tour through the garden of noncommutative motives}. Clay Mathematics Proceedings, Volume {\bf 16} (2012), 259--276.
 
 \bibitem{tabuada-CvsNC}
 G.~Tabuada, {\em Chow motives versus noncommutative motives},
 to appear in Journal of Noncommutative Geometry. 

\bibitem{tao:involution_variety}
D.~Tao,
\emph{A variety associated to an algebra with involution},
J.\ Algebra \textbf{168} (1994), 479--520.

\bibitem{toen:derived-azumaya}
B.~To\"en, {\em Derived Azumaya algebras and generators for twisted derived categories},
Invent.\ Math.\ {\bf 189} (2012), no. 3, 581--652.

\bibitem{toen:derived-morita} 
B.~To\"en, \textit{The homotopy theory
of dg-categories and derived Morita theory}, Invent.\ math.\
\textbf{167} (2007), 615--667.

\bibitem{thomason_trobaugh} 
R.W.~Thomason and T.~Trobaugh, 
\emph{Higher algebraic $K$-theory of schemes and of derived
categories}, The Grothendieck Festschrift, Vol.\ III, pp.\ 247--435,
Progr.\ Math.\ \textbf{88}, Birkh{\"a}user Boston, Boston, MA, 1990.

\bibitem{varilly:arithmetic_del_pezzo}
A.~V\'arilly-Alvarado, 
\emph{Arithmetic of Del Pezzo surfaces},
Simons Symposia: Birational Geometry, Rational Curves, and Arithmetic, 
293--319,
Springer, New York, 2013. 

\bibitem{vial-exceptional}
C.~Vial, \emph{Exceptional collections, and the N\'eron-Severi lattice for surfaces},
preprint arXiv:1504.01776, 2015.

\bibitem{yekutieli:dualizing_morita_derived}
A.~Yekutieli, \emph{Dualizing complexes, morita equivalence and the derived
  picard group of a ring}, J. Lond. Math. Soc., II. Ser. \textbf{60} (1999),
  no.~3, 723--746.

\end{thebibliography}
\end{document}